\documentclass[reqno,11pt]{amsart}
\usepackage{amsmath,amssymb,amscd,epsfig,amsthm}
\usepackage{color, graphics}
\usepackage{stackrel}
\usepackage{comment}
\usepackage[poly,all]{xy}
\usepackage{tikz}
\usetikzlibrary{snakes}
\usetikzlibrary{positioning}
\usetikzlibrary{matrix,calc}
\theoremstyle{theorem}\newtheorem{thm}{Theorem}[section]
\newtheorem{prop}[thm]{Proposition}
\newtheorem{lem}[thm]{Lemma}
\theoremstyle{definition}\newtheorem{df}[thm]{Definition}
\newtheorem{rem}[thm]{Remark}
\newtheorem{example}[thm]{Example}
\newtheorem{algo}[thm]{Algorithm}
\newtheorem*{algo-non}{Algorithm}
\renewenvironment{proof}{{\noindent \bfseries Proof.}}{\qed}
\newcommand{\Ld}{\Lambda}
\newcommand{\ld}{\lambda}
\newcommand{\bt}{\beta}
\newcommand{\g}{\gamma}
\newcommand{\G}{\Gamma}
\newcommand{\al}{\alpha}
\newcommand{\C}{\mathbb C}
\newcommand{\Z}{\mathbb Z}
\newcommand{\up}{\uparrow}
\newcommand{\down}{\downarrow}
\newcommand{\SW}{{\sf sw}}
\newcommand{\stan}{{\sf stan}}
\newcommand{\rstan}{{\sf stan}^*}
\newcommand{\st}{{\sf st}}
\newcommand{\rec}{{\rm rec}}
\newcommand{\rest}{{\rm rest}}
\newcommand{\D}{\mathcal{D}}
\newcommand{\B}{\mathcal{B}}
\newcommand{\rd}{{\rm \underline{{\textbf{read}}}}}
\newcommand{\w}{{\rm w}}
\newcommand{\wt}{{\rm wt}}
\newcommand{\ost}{{\rm st}}
\newcommand{\re}{{\sf re}}
\newcommand{\sh}{{\rm sh}}
\newcommand{\Rect}{{\rm Rect}}
\newcommand{\bfa}{{\mathbf{a}}}
\newcommand{\bfb}{{\mathbf{b}}}
\newcommand{\bfc}{{\mathbf{c}}}
\newcommand{\bfd}{{\mathbf{d}}}
\newcommand{\bfe}{{\mathbf{e}}}
\newcommand{\bff}{{\mathbf{f}}}
\newcommand{\bfw}{{\mathbf{w}}}
\newcommand{\bfi}{{\mathbf{i}}}
\newcommand{\bfj}{{\mathbf{j}}}
\newcommand{\bfk}{{\mathbf{k}}}
\newcommand{\bfm}{{\mathbf{m}}}
\newcommand{\bfn}{{\mathbf{n}}}
\newcommand{\bfs}{{\mathbf{s}}}
\newcommand{\bft}{{\mathbf{t}}}
\newcommand{\bfu}{{\mathbf{u}}}
\newcommand{\bfv}{{\mathbf{v}}}
\newcommand{\bfx}{{\mathbf{x}}}
\newcommand{\bfy}{{\mathbf{y}}}
\setlength{\textwidth}{14cm}
\setlength{\textheight}{21cm}
\setlength{\oddsidemargin}{1.2cm}
\setlength{\evensidemargin}{1.2cm}
\renewcommand{\baselinestretch}{1.2}

\begin{document}
\title[Shifted Tableau Switchings and shifted LR coefficients]
{Shifted Tableau Switchings and shifted Littlewood-Richardson coefficients}

\author{Seung-Il Choi}
\address{Department of Mathematics \\ Seoul National University\\Seoul 08826, Korea}
\email{ignatioschoi@snu.ac.kr}

\author{Sun-Young Nam}
\address{School of Computational Sciences \\ KIAS\\Seoul 02455, Korea}
\email{synam@kias.re.kr}

\author{Young-Tak Oh}
\address{Department of Mathematics \\ Sogang University\\Seoul 04107, Korea}
\email{ytoh@sogang.ac.kr}
\maketitle
\baselineskip=12pt

\begin{abstract}
We provide two shifted analogues of the tableau switching process due to Benkart, Sottile, and Stroomer;
the shifted tableau switching process and the modified shifted tableau switching process.
They are performed by applying a sequence of specially contrived elementary transformations called {\it switches}
and turn out to have some spectacular properties.
For instance, the maps induced from these algorithms are involutive and behave very nicely with respect to
shifted Young tableaux whose reading words satisfy the lattice property.
As an application, we give combinatorial interpretations of Schur $P$- and Schur $Q$-function identities.
We also demonstrate the relationship between the shifted tableau switching process and the shifted $J$-operation due to Worley.
\end{abstract}

\section{Introduction}
\renewcommand{\thefootnote}{}
\footnotetext{
\renewcommand{\baselinestretch}{1.2}
\hfill \break
\noindent
This research was supported by NRF Grant $\sharp$2012R1A1A2001635
and NRF Grant $\sharp$2015R1D1-A1A01056670.
\hfill \break 2010 MSC: 05E05
\hfill \break Keywords: shifted tableau switchings, shifted jeu de taquin, Schur $P$- and Schur $Q$-functions, shifted Littlewood-Richardson coefficients}

The (tableau) switching process, introduced by Benkart-Sottile-Stroomer \cite{BSS},
is an algorithm which takes a pair $(S,T)$ of semistandard Young tableaux
such that $T$ extends $S$, and moves them through each other,
yielding a pair $({}^ST, S_T)$ of semistandard Young tableaux such that 
the outer border of ${}^ST$ coincides with the inner border of $S_T$.
It has been one of the fundamental tools
in the study of Young tableaux and their applications to 
Schubert calculus, representation theory, geometry, and so on.
Particularly it unifies the previously defined switching algorithms
due to Haiman \cite{Hai}, James-Kerber \cite{JK}, Remmel \cite{Re}, Shimozono \cite{Shi}
and provides nice combinatorial interpretations of Schur function identities.
Naively speaking, this algorithm may be viewed as a mixture of Sch\"{u}tzenberger's jeu de taquin \cite{Sc1} and its inverse process
in the sense that the map induced from the algorithm is involutive and ${}^ST$ is the rectification of $T$ when $S \cup T$ is of normal shape.
Here the rectification of $T$ is
the unique Young tableau of normal shape
whose reading word is Knuth equivalent to that of $T$.

The purpose of the present is to provide two shifted analogues of the switching process;
{\it the shifted (tableau) switching process} and {\it the modified shifted (tableau) switching process}.
The former is an algorithm which takes and yields a pair of shifted Young tableaux,
whereas the latter takes and yields a pair of semistandard shifted Young tableaux.
Here a semistandard shifted Young tableau means a shifted Young tableau with no primed entry on the main diagonal.

We construct the shifted switching process by mimicking the strategy of \cite{BSS}.
We first establish the notion of shifted perforated $(\bfa,\bfb)$-pairs and then contrive the following seven elementary transformations called
{\it switches} to interchange two letters in a shifted perforated $(\bfa,\bfb)$-pair:
\begin{center}
\begin{tikzpicture}
\def \hhh{5mm}    
\def \vvv{5mm}    
\def \hhhhh{60mm}   
\node[] at (-\hhh*1.3,0) () {(S1)};
\draw[fill=black!30] (0,-\vvv*0.5) rectangle (\hhh*1,\vvv*0.5);
\draw[-] (\hhh*1,-\vvv*0.5) rectangle (\hhh*2,\vvv*0.5);
\draw[|->] (\hhh*3,\vvv*0) to (\hhh*4,\vvv*0);
\draw[-] (\hhh*5,-\vvv*0.5) rectangle (\hhh*6,\vvv*0.5);
\draw[fill=black!30] (\hhh*6,-\vvv*0.5) rectangle (\hhh*7,\vvv*0.5);
\node[] at (\hhh*0.5,\vvv*0) () {$\bfa$};
\node[] at (\hhh*1.5,\vvv*0) () {$\bfb$};
\node[] at (\hhh*5.5,\vvv*0) () {$\bfb$};
\node[] at (\hhh*6.5,\vvv*0) () {$\bfa$};
\node[] at (-\hhh*1.3+\hhhhh*1,0) () {(S2)};
\draw[fill=black!30] (0+\hhhhh*1,0) rectangle (\hhh*1+\hhhhh*1,\vvv*1);
\draw[-] (\hhh*0+\hhhhh*1,-\vvv*1) rectangle (\hhh*1+\hhhhh*1,0);
\draw[|->] (\hhh*2+\hhhhh*1,\vvv*0) to (\hhh*3+\hhhhh*1,\vvv*0);
\draw[-,black!0] (\hhh*4+\hhhhh*1,0) rectangle (\hhh*7+\hhhhh*1,\vvv*1);
\draw[-] (\hhh*4+\hhhhh*1,0) rectangle (\hhh*5+\hhhhh*1,\vvv*1);
\draw[fill=black!30] (\hhh*4+\hhhhh*1,-\vvv*1) rectangle (\hhh*5+\hhhhh*1,\vvv*0);
\node[] at (\hhh*0.5+\hhhhh*1,\vvv*0.5) () {$\bfa$};
\node[] at (\hhh*0.5+\hhhhh*1,-\vvv*0.5) () {$\bfb$};
\node[] at (\hhh*4.5+\hhhhh*1,\vvv*0.5) () {$\bfb$};
\node[] at (\hhh*4.5+\hhhhh*1,-\vvv*0.5) () {$\bfa$};
\end{tikzpicture}
\vskip 3mm
\begin{tikzpicture}
\def \hhh{5mm}    
\def \vvv{5mm}    
\def \hhhh{60mm}   
\node[] at (-\hhh*1.3,0) () {(S3)};
\draw[fill=black!30] (0,0) rectangle (\hhh*1,\vvv*1);
\draw[-] (\hhh*1,0) rectangle (\hhh*2,\vvv*1);
\draw[-] (\hhh*1,-\vvv*1) rectangle (\hhh*2,0);
\draw[|->] (\hhh*3,0) to (\hhh*4,0);
\draw[fill=black!30] (\hhh*1+\hhh*5,0) rectangle (\hhh*2+\hhh*5,-\vvv*1);
\draw[-] (\hhh*0+\hhh*5,0) rectangle (\hhh*1+\hhh*5,\vvv*1);
\draw[-] (\hhh*1+\hhh*5,0) rectangle (\hhh*2+\hhh*5,\vvv*1);
\node[] at (-\hhh*1.3+\hhhh*1,0) () {(S4)};
\draw[fill=black!30] (0+\hhhh,0) rectangle (\hhh*1+\hhhh,\vvv*1);
\draw[fill=black!30] (\hhh*1+\hhhh,0) rectangle (\hhh*2+\hhhh,\vvv*1);
\draw[-] (\hhh*1+\hhhh,-\vvv*1) rectangle (\hhh*2+\hhhh,0);
\draw[|->] (\hhh*3+\hhhh,0) to (\hhh*4+\hhhh*1,0);
\draw[-] (\hhh*0+\hhh*5+\hhhh*1,0) rectangle (\hhh*1+\hhh*5+\hhhh*1,\vvv*1);
\draw[fill=black!30] (\hhh*1+\hhh*5+\hhhh*1,0) rectangle (\hhh*2+\hhh*5+\hhhh*1,-\vvv*1);
\draw[fill=black!30] (\hhh*1+\hhh*5+\hhhh*1,0) rectangle (\hhh*2+\hhh*5+\hhhh*1,\vvv*1);
\node[] at (\hhh*0.5,\vvv*0.5) () {$\bfa$};
\node[] at (\hhh*1.5,\vvv*0.5) () {$b'$};
\node[] at (\hhh*1.5,-\vvv*0.5) () {$\bfb$};
\node[] at (\hhh*0.5+\hhh*5,\vvv*0.5) () {$\bfb$};
\node[] at (\hhh*1.5+\hhh*5,\vvv*0.5) () {$b$};
\node[] at (\hhh*1.5+\hhh*5,-\vvv*0.5) () {$\bfa$};
\node[] at (\hhh*0.5+\hhhh,\vvv*0.5) () {$\bfa$};
\node[] at (\hhh*1.5+\hhhh,\vvv*0.5) () {$a$};
\node[] at (\hhh*1.5+\hhhh,-\vvv*0.5) () {$\bfb$};
\node[] at (\hhh*0.5+\hhh*5+\hhhh*1,\vvv*0.5) () {$\bfb$};
\node[] at (\hhh*1.5+\hhh*5+\hhhh*1,\vvv*0.5) () {$a'$};
\node[] at (\hhh*1.5+\hhh*5+\hhhh*1,-\vvv*0.5) () {$\bfa$};
\end{tikzpicture}
\vskip 3mm
\begin{tikzpicture}
\def \hhh{5mm}   
\def \vvv{5mm}   
\def \hhhh{60mm}  
\def \mult{8}    
\node[] at (-\hhh*1.3,0) () {(S5)};
\draw[fill=black!30] (0,0) rectangle (\hhh*1,\vvv*1);
\draw[-] (\hhh*1,0) rectangle (\hhh*2,\vvv*1);
\draw[-] (\hhh*0,-\vvv*1) rectangle (\hhh*1,0);
\draw[|->] (\hhh*3,0) to (\hhh*4,0);
\draw[-] (\hhh*0+\hhh*5,0) rectangle (\hhh*1+\hhh*5,\vvv*1);
\draw[fill=black!30] (\hhh*1+\hhh*5,0) rectangle (\hhh*2+\hhh*5,\vvv*1);
\draw[-] (\hhh*0+\hhh*5,-\vvv*1) rectangle (\hhh*1+\hhh*5,0);
\node[] at (\hhh*0.5,\vvv*0.5) () {$\bfa$};
\node[] at (\hhh*1.5,\vvv*0.5) () {$b'$};
\node[] at (\hhh*0.5,-\vvv*0.5) () {$\bfb$};
\node[] at (\hhh*0.5+\hhh*5,\vvv*0.5) () {$b'$};
\node[] at (\hhh*1.5+\hhh*5,\vvv*0.5) () {$\bfa$};
\node[] at (\hhh*0.5+\hhh*5,-\vvv*0.5) () {$\bfb$};
\node[] at (-\hhh*1.3+\hhhh*1,0) () {(S6)};
\draw[fill=black!30] (0+\hhhh,0) rectangle (\hhh*1+\hhhh,\vvv*1);
\draw[-] (\hhh*1+\hhhh,0) rectangle (\hhh*2+\hhhh,\vvv*1);
\draw[-] (\hhh*0+\hhhh,-\vvv*1) rectangle (\hhh*1+\hhhh,0);
\draw[|->] (\hhh*3+\hhhh,0) to (\hhh*4+\hhhh,0);
\draw[-] (\hhh*1+\hhh*5+\hhhh*1,0) rectangle (\hhh*2+\hhh*5+\hhhh*1,\vvv*1);
\draw[-] (\hhh*0+\hhh*5+\hhhh*1,0) rectangle (\hhh*1+\hhh*5+\hhhh*1,\vvv*1);
\draw[fill=black!30] (\hhh*5+\hhhh*1,-\vvv*1) rectangle (\hhh*1+\hhh*5+\hhhh*1,0);
\node[] at (\hhh*0.5+\hhhh,\vvv*0.5) () {$\bfa$};
\node[] at (\hhh*1.5+\hhhh,\vvv*0.5) () {$b$};
\node[] at (\hhh*0.5+\hhhh,-\vvv*0.5) () {$\bfb$};
\node[] at (\hhh*0.5+\hhh*5+\hhhh*1,\vvv*0.5) () {$\bfb$};
\node[] at (\hhh*0.5+\hhh*5+\hhhh*1,-\vvv*0.5) () {$\bfa$};
\node[] at (\hhh*1.5+\hhh*5+\hhhh*1,\vvv*0.5) () {$b$};
\end{tikzpicture}
\vskip 3mm
\begin{tikzpicture}
\def \hhh{5mm}    
\def \vvv{5mm}    
\def \hhhhh{60mm}   
\node[] at (-\hhh*1.3,0) () {(S7)};
\draw[fill=black!30] (0,0) rectangle (\hhh*1,\vvv*1);
\draw[fill=black!30] (\hhh*1,0) rectangle (\hhh*2,\vvv*1);
\draw[-] (\hhh*1,-\vvv*1) rectangle (\hhh*2,0);
\draw[-] (\hhh*2,0) rectangle (\hhh*3,\vvv*1);
\draw[|->] (\hhh*4,0) to (\hhh*5,0);
\draw[-] (\hhh*6,0) rectangle (\hhh*7,\vvv*1);
\draw[fill=black!30] (\hhh*7,0) rectangle (\hhh*8,\vvv*1);
\draw[-] (\hhh*8,0) rectangle (\hhh*9,\vvv*1);
\draw[fill=black!30] (\hhh*7,0) rectangle (\hhh*8,-\vvv*1);
\draw[-,black!0] (\hhh*18,-\vvv*1) rectangle (\hhh*19,0);
\node[] at (\hhh*0.5,\vvv*0.5) () {$\bfa$};
\node[] at (\hhh*1.5,\vvv*0.5) () {$a$};
\node[] at (\hhh*2.5,\vvv*0.5) () {$b$};
\node[] at (\hhh*1.5,-\vvv*0.5) () {$\bfb$};
\node[] at (\hhh*6.5,\vvv*0.5) () {$\bfb$};
\node[] at (\hhh*7.5,\vvv*0.5) () {$a'$};
\node[] at (\hhh*8.5,\vvv*0.5) () {$b$};
\node[] at (\hhh*7.5,-\vvv*0.5) () {$\bfa$};
\end{tikzpicture}
\end{center}
\noindent
The above switches are so designed that they recover the shifted slides used in the shifted jeu de taquin
when $\bfa$-boxes are viewed as empty boxes.
Here the shifted jeu de taquin, introduced by Worley \cite{Wo} and Sagan \cite{Sa} independently as the shifted analogue of Sch\"{u}tzenberger's jeu de taquin,
is an algorithm which takes a shifted Young tableau and yields another shifted Young tableau with the same weight, but different shape.

Applying (S1) through (S7) in succession, we obtain the shifted switching process on the set of shifted perforated $(\bfa,\bfb)$-pairs $(A,B)$ such that $B$ extends $A$.
The most crucial step here 
is to show that the shifted perforateness is being maintained throughout
this process (Proposition \ref{prop-preserving-sp}).
Keeping this process, we finally obtain an algorithm
which takes a pair $(S,T)$ of shifted Young tableaux
such that $T$ extends $S$, and moves them through each other,
yielding a pair $({}^ST, S_T)$ of shifted Young tableaux such that $S_T$ extends ${}^ST$
(Theorem \ref{Thm-Sec3-SW}).
It should be pointed out that unlike the switching process,
our shifted switching process depends on the order of inner corners
to which switches are applied (Remark \ref{Remark-Diff}).

The shifted switching process so obtained has many similarities with
the switching process.
For instance, the map induced from the shifted switching process, 
called {\it the shifted tableau switching}, is involutive (Theorem \ref{thm-main-section4}).
Its proof, contrary to the case of the switching process,
is far from being obvious since our shifted switching process depends on the
order of inner corners.
And, when $S\cup T$ is of normal shape,
we can see that
${}^ST$ coincides with the rectification of $T$, the unique shifted Young tableau of normal shape whose reading word is shifted Knuth equivalent to that of $T$.
For other similarities, see Section \ref{sect-prop-ss}.

Despite of the above good properties, the shifted switching process is rather unsatisfactory in that
the shifted tableau switching cannot be reduced to a bijection on the set of pairs $(S, T)$ of semistandard shifted Young tableaux such that $T$ extends $S$
since the shifted jeu de taquin does not necessarily take a semistandard shifted Young tableau to another semistandard shifted Young tableau.
In more detail, if a switch is applied to the following $\bfa$-boxes
\vskip 2mm
\begin{center}
\begin{tikzpicture}
\def \hhh{5mm}    
\def \vvv{5mm}    
\draw[-] (-\hhh*2,\vvv*2) -- (-\hhh*1,\vvv*2) -- (-\hhh*1,\vvv*1) -- (\hhh*0,\vvv*1);
\draw[fill=black!30] (0,0) rectangle (\hhh*1,\vvv*1);
\draw[-] (\hhh*1,0) rectangle (\hhh*2,\vvv*1);
\draw[-] (\hhh*1,\vvv*0) -- (\hhh*1,-\vvv*1) -- (\hhh*2,-\vvv*1);
\node[] at (\hhh*0.5,\vvv*0.5) () {$a$};
\node[] at (\hhh*1.5,\vvv*0.5) () {$b'$};
\end{tikzpicture}
\hskip 14mm
\begin{tikzpicture}
\def \hhh{5mm}    
\def \vvv{5mm}    
\draw[-] (-\hhh*2,\vvv*1) -- (-\hhh*1,\vvv*1) -- (-\hhh*1,\vvv*0) -- (\hhh*0,\vvv*0);
\draw[fill=black!30] (0,0) rectangle (\hhh*1,\vvv*1);
\draw[-] (\hhh*0,-\vvv*1) rectangle (\hhh*1,0);
\draw[-] (\hhh*1,-\vvv*1) -- (\hhh*1,-\vvv*2) -- (\hhh*2,-\vvv*2);
\node[] at (\hhh*0.5,\vvv*0.5) () {$a'$};
\node[] at (\hhh*0.5,-\vvv*0.5) () {$b$};
\end{tikzpicture}
\hskip 14mm
\begin{tikzpicture}
\def \hhh{5mm}   
\def \vvv{5mm}   
\draw[-] (-\hhh*1,\vvv*1) -- (-\hhh*0,\vvv*1) -- (-\hhh*0,\vvv*0) -- (\hhh*1,\vvv*0);
\draw[fill=black!30] (\hhh*1,0) rectangle (\hhh*2,\vvv*1);
\draw[-] (\hhh*2,0) rectangle (\hhh*3,\vvv*1);
\draw[-] (\hhh*1,-\vvv*1) rectangle (\hhh*2,0);
\draw[-] (\hhh*2,-\vvv*1) -- (\hhh*2,-\vvv*2) -- (\hhh*3,-\vvv*2);
\node[] at (\hhh*1.5,\vvv*0.5) () {$a'$};
\node[] at (\hhh*2.5,\vvv*0.5) () {$b$};
\node[] at (\hhh*1.5,-\vvv*0.5) () {$b$};
\end{tikzpicture},
\end{center}
then the resulting pairs have a primed letter on the main diagonal.
To avoid this phenomenon,
we introduce the following three {\it modified switches}:
\vskip 2mm
\begin{center}
\begin{tikzpicture}
\def \hhh{5mm}    
\def \vvv{5mm}    
\def \hhhh{50mm}   
\def \hhhhh{90mm}   
\node[] at (-\hhh*1.3,0) () {(S1$'$)};
\filldraw[fill=black!30] (0,-\vvv*0.5) rectangle (\hhh*1,\vvv*0.5);
\draw[-] (\hhh*1,-\vvv*0.5) rectangle (\hhh*2,\vvv*0.5);
\draw[|->] (\hhh*2.5,\vvv*0) to (\hhh*3.5,\vvv*0);
\draw[-] (\hhh*4,-\vvv*0.5) rectangle (\hhh*5,\vvv*0.5);
\draw[fill=black!30] (\hhh*5,-\vvv*0.5) rectangle (\hhh*6,\vvv*0.5);
\node[] at (\hhh*0.5,\vvv*0) () {$a$};
\node[] at (\hhh*1.5,\vvv*0) () {$b'$};
\node[] at (\hhh*4.5,\vvv*0) () {$b$};
\node[] at (\hhh*5.5,\vvv*0) () {$a'$};
\node[] at (-\hhh*1.5+\hhhh,0) () {(S2$'$)};
\draw[fill=black!30] (0+\hhhh,0) rectangle (\hhh*1+\hhhh,\vvv*1);
\draw[-] (\hhh*0+\hhhh,-\vvv*1) rectangle (\hhh*1+\hhhh,0);
\draw[|->] (\hhh*1.5+\hhhh,\vvv*0) to (\hhh*2.5+\hhhh,\vvv*0);
\draw[-] (\hhh*3+\hhhh,\vvv*0) rectangle (\hhh*4+\hhhh,\vvv*1);
\draw[fill=black!30] (\hhh*3+\hhhh,-\vvv*1) rectangle (\hhh*4+\hhhh,\vvv*0);
\node[] at (\hhh*0.5+\hhhh,\vvv*0.5) () {$a'$};
\node[] at (\hhh*0.5+\hhhh,-\vvv*0.5) () {$b$};
\node[] at (\hhh*3.5+\hhhh,\vvv*0.5) () {$b'$};
\node[] at (\hhh*3.5+\hhhh,-\vvv*0.5) () {$a$};
\node[] at (-\hhh*1.5+\hhhhh,0) () {(S6$'$)};
\draw[|->] (\hhh*2.5+\hhhhh,0) to (\hhh*3.5+\hhhhh,0);
\draw[fill=black!30] (\hhh*0+\hhhhh,0) rectangle (\hhh*1+\hhhhh,\vvv*1);
\draw[-] (\hhh*1+\hhhhh,0) rectangle (\hhh*2+\hhhhh,\vvv*1);
\draw[-] (\hhh*0+\hhhhh,-\vvv*1) rectangle (\hhh*1+\hhhhh,0);
\draw[-] (\hhh*4+\hhhhh,\vvv*0) rectangle (\hhh*5+\hhhhh,\vvv*1);
\draw[-] (\hhh*5+\hhhhh,\vvv*0) rectangle (\hhh*6+\hhhhh,\vvv*1);
\draw[fill=black!30] (\hhh*4+\hhhhh,-\vvv*1) rectangle (\hhh*5+\hhhhh,\vvv*0);
\node[] at (\hhh*0.5+\hhhhh,\vvv*0.5) () {$a'$};
\node[] at (\hhh*1.5+\hhhhh,\vvv*0.5) () {$b$};
\node[] at (\hhh*0.5+\hhhhh,-\vvv*0.5) () {$b$};
\node[] at (\hhh*4.5+\hhhhh,\vvv*0.5) () {$b'$};
\node[] at (\hhh*5.5+\hhhhh,\vvv*0.5) () {$b$};
\node[] at (\hhh*4.5+\hhhhh,-\vvv*0.5) () {$a$};
\end{tikzpicture}
\end{center}

We construct the modified shifted switching process 
by applying (S1$'$), (S2$'$) and (S6$'$) in the above three cases and (S1) through (S7) in the other cases exactly in the same way
as the shifted switching process has been defined.
We first verify that the shifted perforateness is being maintained throughout the modified shifted switching process when it is applied to
any shifted perforated $(\bfa,\bfb)$-pair $(A,B)$ such that
$B$ extends $A$ and
no primed letters are on the main diagonal of $A \cup B$
(Proposition \ref{prop-main-sect6}).
Once the modified shifted switching process is established, we show that the induced map,
called {\it the modified shifted tableau switching}, is involutive
(Theorem \ref{thm-main-section6}).
It would be very nice to develop a modified version of shifted jeu de taquin which plays the same role as that of the shifted jeu de taquin in the shifted switching process.

Main applications of our shifted tableau switchings concern
Schur $P$- and Schur $Q$-function identities.
Let $P_\lambda(x)$ and $Q_\lambda(x)$ respectively denote the Schur $P$- and Schur $Q$-function associated with a strict partition $\ld$,
and $P_{\nu/\lambda}(x)$ and $Q_{\nu/\lambda}(x)$ the Schur $P$- and Schur $Q$-function associated with a skew shape ${\nu/\lambda}$.
Letting
$P_\lambda(x) P_\mu(x) = \sum_{\nu} f^{\nu}_{\lambda \mu} P_\nu(x),$
it is well known that $f^{\nu}_{\lambda \mu}$ counts
shifted Young tableaux $T$
of shape $\nu/\lambda$ and weight $\mu$ such that
\begin{itemize}
  \item[(a)] $w = \w (T)$ satisfies the lattice property;
  \item[(b)] the rightmost $i$ of $|w|$ is unmarked in $w$ $\left(1\leq i \leq \ell(\mu)\right)$.
\end{itemize}
These tableaux are called {\it Littlewood-Richardson-Stembridge (LRS) tableaux}.
Using the shifted switching process we give combinatorial interpretations of $f^{\nu}_{\lambda \mu}=f^{\nu}_{\mu \lambda}$
and
$Q_{\nu/\lambda}(x) = \sum_{\mu} f^{\nu}_{\lambda \mu} Q_\mu(x)$.
On the other hand, letting
$Q_\lambda(x)Q_\mu(x) = \sum_{\nu} g^{\nu}_{\lambda \mu} Q_\nu(x),$
it can be easily seen that $g^{\nu}_{\lambda \mu}$ counts
semistandard shifted Young tableaux $T$ of shape $\nu/\lambda$ and weight $\mu$ satisfying the condition (a).
These tableaux are called {\it modified Littlewood-Richardson-Stembridge (LRS) tableaux}.
Compared with LRS tableaux,
the study of modified LRS tableaux is not yet well established.
In the best knowledge of the authors, even the combinatorial interpretation of $g^{\nu}_{\lambda \mu}=g^{\nu}_{\mu\lambda}$ has not been known.
Using the modified shifted tableau switching we here give its combinatorial interpretation
and also that of $P_{\nu/\lambda}(x) = \sum_{\mu} g^{\nu}_{\lambda \mu} P_\mu(x)$.

Finally we remark that there have already been two operations similar to the shifted switching process.
The one is due to Buch-Kresch-Tamvakis \cite{BKT}, who exploited sliding unmarked and marked holes in any LRS tableau
to construct a bijection between NW and SE holed LRS tableaux.
These slides can be viewed as switches by identifying ${\rm o}$-holes with $a$-boxes and ${\rm o}'$-holes with $a'$-boxes.
Although some of the switches obtained in this way are different from our switches, it is quite interesting to note that
the sliding paths \cite{BKT} are exactly same to our shifted switching paths.
This fact plays a crucial role in the proof of Lemma \ref{lem-path-intersect}.
The other is about the shifted $J$-operation due to Worley \cite{Wo}.
He utilized this operation intensively to show the symmetric identity 
$f^{\nu}_{\ld \mu} = f^{\nu}_{\mu \ld}$
and properties about the coefficients of expansions involving $Q_{\ld/\mu}$.
We demonstrate the relationship between the shifted switching process 
and the shifted $J$-operation
in case where $S \cup T$ is of normal shape (Theorem \ref{thm-main-Section5}).

The present paper is organized as follows:
In Section \ref{Preliminaries}, we collect the definitions and notation
necessary to develop our arguments.
In Section \ref{sect-ss-proce}, we introduce the shifted tableau switching and its properties.
To do this we first define the shifted switching process on shifted perforated pairs (Algorithm \ref{algorithm1}).
Using this we define the shifted switching process on pairs of shifted Young tableaux
(Algorithm \ref{algorithm2}).
Section \ref{sect-prop-ss} is devoted to applications of the shifted tableau switching.
In Section \ref{sect-Worley-oper-J},
we provide a shifted analogue of the generalized evacuation in \cite{BKT}
and
redescribe the shifted switching process in the spirit of
the shifted $J$-operation.
The final section is devoted to 
the modified shifted tableau switching and its applications.

\section{Definitions, terminologies, and notation}
\label{Preliminaries}
In this section,
we collect definitions, terminologies and notation
required to develop our arguments.
More details can be found in
\cite{BKT, Ful, Sa, Ste1, Wo}.

For a strict partition $\lambda = (\lambda_1 > \lambda_2 > \cdots > \lambda_\ell > 0 )$,
let the {\it length} $\ell(\ld)$ of $\lambda$ be the number of positive parts of $\ld$
and the {\it size} $|\ld|$ of $\ld$ the sum of positive parts of $\ld$, that is,
$\ell(\ld)=\ell$ and $|\ld| = \sum \ld_i$.
Let $\Ld^+$ denote the set of all strict partitions.

For $\ld \in \Ld^+$, define the {\it shifted shape} $S(\lambda)$ of $\lambda$ by
$$
\left\{ (i,j) \in \mathbb{Z}^2 :
i \leq j \leq \lambda_i+i-1, \ 1 \leq i \leq \ell(\ld) \right\}
$$
and the {\em main diagonal} of $S(\lambda)$ by
$\{(i,i) \in S(\lambda)  : 1 \leq i \leq \ell(\lambda) \}$.
For $\lambda , \mu \in \Lambda^+$ with $\lambda_i \geq \mu_i$ for all $i\geq 1$,
we write $\lambda \supseteq \mu$ and let $|\ld/\mu|$ denote $|\ld|-|\mu|$.
In this case, the {\it skew shifted shape} $S(\lambda/\mu)$ is defined to be
$S(\lambda)\setminus S(\mu)$.
Particularly when $\mu = \emptyset$, $S(\lambda/\mu)$ is understood as $S(\lambda)$.
Throughout this paper, the elements of $S(\ld/\mu)$ are visualized as boxes in a plane with
matrix-style coordinates
and $S(\ld/\mu)$ is frequently denoted by $\ld/\mu$.
A {\it component} of $\lambda/\mu$ is defined to be a maximal connected subset of $\lambda/\mu$.

A skew shifted shape is called a {\it border strip} if
it contains no subset of the form $\{(i,j),(i+1,j+1)\}$
and a {\it double border strip} if
it contains no subset of the form
$\{(i,j),(i+1,j+1),(i+2,j+2)\}$.
Note that border and double border strips are allowed to be disconnected.
For instance,
\vskip 2mm
\begin{center}
\begin{tikzpicture}
\def \hhh{5mm}    
\def \vvv{5mm}    
\def \hhhhh{50mm}  
\draw[-,black!20] (\hhh*1+\hhhhh*0,0) rectangle (\hhh*2+\hhhhh*0,\vvv*1);
\draw[-,black!20] (\hhh*2+\hhhhh*0,0) rectangle (\hhh*3+\hhhhh*0,\vvv*1);
\draw[-] (\hhh*3+\hhhhh*0,0) rectangle (\hhh*4+\hhhhh*0,\vvv*1);
\draw[-] (\hhh*4+\hhhhh*0,0) rectangle (\hhh*5+\hhhhh*0,\vvv*1);
\draw[-] (\hhh*5+\hhhhh*0,0) rectangle (\hhh*6+\hhhhh*0,\vvv*1);
\draw[-] (\hhh*6+\hhhhh*0,0) rectangle (\hhh*7+\hhhhh*0,\vvv*1);
\draw[-] (\hhh*2+\hhhhh*0,-\vvv*1) rectangle (\hhh*3+\hhhhh*0,\vvv*0);
\draw[-] (\hhh*3+\hhhhh*0,-\vvv*1) rectangle (\hhh*4+\hhhhh*0,\vvv*0);
\draw[-] (\hhh*4+\hhhhh*0,-\vvv*1) rectangle (\hhh*5+\hhhhh*0,\vvv*0);
%
\draw[-] (\hhh*3+\hhhhh*0,-\vvv*2) rectangle (\hhh*4+\hhhhh*0,-\vvv*1);
\draw[-] (\hhh*4+\hhhhh*0,-\vvv*2) rectangle (\hhh*5+\hhhhh*0,-\vvv*1);
\draw[-,black!20] (-\hhh*1+\hhhhh*1,0) rectangle (\hhh*0+\hhhhh*1,\vvv*1);
\draw[-,black!20] (\hhh*0+\hhhhh*1,0) rectangle (\hhh*1+\hhhhh*1,\vvv*1);
\draw[-,black!20] (\hhh*1+\hhhhh*1,0) rectangle (\hhh*2+\hhhhh*1,\vvv*1);
\draw[-,black!20] (\hhh*2+\hhhhh*1,0) rectangle (\hhh*3+\hhhhh*1,\vvv*1);
\draw[-,black!20] (\hhh*3+\hhhhh*1,0) rectangle (\hhh*4+\hhhhh*1,\vvv*1);
\draw[-] (\hhh*4+\hhhhh*1,0) rectangle (\hhh*5+\hhhhh*1,\vvv*1);
\draw[-] (\hhh*5+\hhhhh*1,0) rectangle (\hhh*6+\hhhhh*1,\vvv*1);
\draw[-] (\hhh*6+\hhhhh*1,0) rectangle (\hhh*7+\hhhhh*1,\vvv*1);
\draw[-] (\hhh*7+\hhhhh*1,0) rectangle (\hhh*8+\hhhhh*1,\vvv*1);
\draw[-,black!20] (\hhh*0+\hhhhh*1,-\vvv*1) rectangle (\hhh*1+\hhhhh*1,\vvv*0);
\draw[-,black!20] (\hhh*1+\hhhhh*1,-\vvv*1) rectangle (\hhh*2+\hhhhh*1,\vvv*0);
\draw[-,black!20] (\hhh*2+\hhhhh*1,-\vvv*1) rectangle (\hhh*3+\hhhhh*1,\vvv*0);
\draw[-,black!20] (\hhh*3+\hhhhh*1,-\vvv*1) rectangle (\hhh*4+\hhhhh*1,\vvv*0);
\draw[-] (\hhh*4+\hhhhh*1,-\vvv*1) rectangle (\hhh*5+\hhhhh*1,\vvv*0);
\draw[-] (\hhh*5+\hhhhh*1,-\vvv*1) rectangle (\hhh*6+\hhhhh*1,\vvv*0);
\draw[-] (\hhh*6+\hhhhh*1,-\vvv*1) rectangle (\hhh*7+\hhhhh*1,\vvv*0);
\draw[-,black!20] (\hhh*1+\hhhhh*1,-\vvv*2) rectangle (\hhh*2+\hhhhh*1,-\vvv*1);
\draw[-,black!20] (\hhh*2+\hhhhh*1,-\vvv*2) rectangle (\hhh*3+\hhhhh*1,-\vvv*1);
\draw[-,black!20] (\hhh*3+\hhhhh*1,-\vvv*2) rectangle (\hhh*4+\hhhhh*1,-\vvv*1);
\draw[-] (\hhh*4+\hhhhh*1,-\vvv*2) rectangle (\hhh*5+\hhhhh*1,-\vvv*1);
\draw[-] (\hhh*5+\hhhhh*1,-\vvv*2) rectangle (\hhh*6+\hhhhh*1,-\vvv*1);
\draw[-] (\hhh*2+\hhhhh*1,-\vvv*3) rectangle (\hhh*3+\hhhhh*1,-\vvv*2);
\draw[-] (\hhh*3+\hhhhh*1,-\vvv*3) rectangle (\hhh*4+\hhhhh*1,-\vvv*2);
\draw[-] (\hhh*3+\hhhhh*1,-\vvv*4) rectangle (\hhh*4+\hhhhh*1,-\vvv*3);
\end{tikzpicture}
\end{center}
are double border strips, but
\vskip 2mm
\begin{center}
\begin{tikzpicture}[baseline=0mm]
\def \hhh{4mm}    
\def \vvv{4mm}    
\draw[-,black!20] (\hhh*1,0) rectangle (\hhh*2,\vvv*1);
\draw[-,fill=black!10] (\hhh*2,0) rectangle (\hhh*3,\vvv*1);
\draw[-] (\hhh*3,0) rectangle (\hhh*4,\vvv*1);
\draw[-] (\hhh*4,0) rectangle (\hhh*5,\vvv*1);
\draw[-] (\hhh*5,0) rectangle (\hhh*6,\vvv*1);
\draw[-] (\hhh*6,0) rectangle (\hhh*7,\vvv*1);
\draw[-] (\hhh*2,-\vvv*1) rectangle (\hhh*3,\vvv*0);
\draw[-,fill=black!10] (\hhh*3,-\vvv*1) rectangle (\hhh*4,\vvv*0);
\draw[-] (\hhh*4,-\vvv*1) rectangle (\hhh*5,\vvv*0);
\draw[-] (\hhh*3,-\vvv*2) rectangle (\hhh*4,-\vvv*1);
\draw[-,fill=black!10] (\hhh*4,-\vvv*2) rectangle (\hhh*5,-\vvv*1);
\end{tikzpicture}
\hskip 20mm
\begin{tikzpicture}[baseline=5mm]
\def \hhh{4mm}    
\def \vvv{4mm}    
\draw[-,black!20] (\hhh*0,\vvv*1) rectangle (\hhh*1,\vvv*2);
\draw[-,black!20] (\hhh*1,\vvv*1) rectangle (\hhh*2,\vvv*2);
\draw[-,black!20] (\hhh*2,\vvv*1) rectangle (\hhh*3,\vvv*2);
\draw[-,black!20] (\hhh*3,\vvv*1) rectangle (\hhh*4,\vvv*2);
\draw[-,black!20] (\hhh*4,\vvv*1) rectangle (\hhh*5,\vvv*2);
\draw[-] (\hhh*5,\vvv*1) rectangle (\hhh*6,\vvv*2);
\draw[-] (\hhh*6,\vvv*1) rectangle (\hhh*7,\vvv*2);
\draw[-] (\hhh*7,\vvv*1) rectangle (\hhh*8,\vvv*2);
\draw[-] (\hhh*8,\vvv*1) rectangle (\hhh*9,\vvv*2);
\draw[-,black!20] (\hhh*1,0) rectangle (\hhh*2,\vvv*1);
\draw[-,black!20] (\hhh*2,0) rectangle (\hhh*3,\vvv*1);
\draw[-,black!20] (\hhh*3,0) rectangle (\hhh*4,\vvv*1);
\draw[-,black!20] (\hhh*4,0) rectangle (\hhh*5,\vvv*1);
\draw[-] (\hhh*5,0) rectangle (\hhh*6,\vvv*1);
\draw[-] (\hhh*6,0) rectangle (\hhh*7,\vvv*1);
\draw[-] (\hhh*7,0) rectangle (\hhh*8,\vvv*1);
\draw[fill=black!10] (\hhh*2,-\vvv*1) rectangle (\hhh*3,\vvv*0);
\draw[-] (\hhh*3,-\vvv*1) rectangle (\hhh*4,\vvv*0);
\draw[-] (\hhh*4,-\vvv*1) rectangle (\hhh*5,\vvv*0);
\draw[-] (\hhh*5,-\vvv*1) rectangle (\hhh*6,\vvv*0);
\draw[-] (\hhh*6,-\vvv*1) rectangle (\hhh*7,\vvv*0);
\draw[fill=black!10] (\hhh*3,-\vvv*2) rectangle (\hhh*4,-\vvv*1);
\draw[-] (\hhh*4,-\vvv*2) rectangle (\hhh*5,-\vvv*1);
\draw[fill=black!10] (\hhh*4,-\vvv*3) rectangle (\hhh*5,-\vvv*2);
\end{tikzpicture}
\end{center}
are not.

For two boxes $(i,j)$ and $(k,l)$ in a skew shifted shape,
$(i,j)$ is said to be $north$ of $(k,l)$ if
$i \le k$,
and $northwest$ of $(k,l)$
if $i \le k$ and $j \le l$.
When $i=k-1$ and $j=l$, $(i,j)$ is said to be the {\it neighbor to the north} of $(k,l)$.
The other directions and neighbors can be defined in the same manner.

Let $X$ be the set of the ordered alphabets $1' < 1 < 2' < 2 < \cdots$.
When we wish to refer to a letter without specifying whether it is primed or not,
we will write $\bfk$ for letters $k$ or $k'$.
Let $|\bfk|$ denote the unprimed letter $k$.

\begin{df}\label{GSYT}{\rm
Let $\lambda ,\mu \in \Lambda^+$ with $\mu \subseteq \lambda$.

(a) A {\it shifted Young tableau} $T$ of shape $\lambda/\mu$
is a filling of $\lambda/\mu$ with letters in $X$ such that
\begin{itemize}
  \item[$\bullet$] the entries are weakly increasing along each row and down each column,
  \item[$\bullet$] each column contains at most one $k$ for each $k \geq 1$, and
  \item[$\bullet$] each row contains at most one $k'$ for each $k \geq 1$.
\end{itemize}

(b) A {\it semistandard shifted Young tableau} $T$ of shape $\lambda/\mu$ is
a shifted Young tableau satisfying the following extra condition:
\begin{itemize}
\item[$\bullet$] there is no primed entry on the main diagonal.
\end{itemize}
}\end{df}

Given a filling $T$ of skew shape, we write ${\rm sh}(T)$ for the shape of $T$
and $|T|$ for the size of the shape of $T$.
Let $T_{i,j}$ be the entry of $(i,j)$ in $T$.
We call $(i,j)$ an {\it $\bfa$-box} in case where $T_{i,j}=\bfa$.

For a shifted Young tableau $T$,
let the {\it weight} $\wt(T)$ of $T$ be $(\wt_1(T),$ $\wt_2(T),\ldots)$,
where $\wt_i(T)$ is the number of occurrences of $i$ and $i'$ in $T$.
In particular, $T$ is {\it standard} if its weight is $(1,1,\ldots , 1)$ and it has no primed letter.
The weight generating function $x^T$ of $T$ is defined to be $\prod_{i\ge 1}x_i^{\wt_i(T)}$.
The {\it maximum} (resp., {\it minimum}) of $T$, denoted by  $\max(T)$ (resp., $\min(T)$),
is defined to be the largest integer (resp., smallest integer) $i$ such that $\wt_i(T)$ is positive.
\vskip 2mm
Throughout this paper, we will use the following abbreviations and notation.
\begin{itemize}
\item
SYT: shifted Young tableau

\item
SSYT: semistandard shifted Young tableau

\item
$\widetilde{\mathcal{Y}}(\lambda/\mu)$: the set of all SYTs of shape $\lambda/\mu$

\item
$\mathcal{Y}(\lambda/\mu)$: the set of all SSYTs of shape $\lambda/\mu$
\end{itemize}
\vskip 1mm

If $\nu \subseteq \mu \subseteq \lambda$, we say that $\ld /\mu$ $extends$ $\mu/\nu$.
When $\ld /\mu$ extends a single box $B$, $B$ is called an {\it inner corner} of $\ld/\mu$.
It should be distinguished from a {\it removable corner} of $\ld /\mu$, which is a box in $\ld /\mu$
such that neither neighbor to the south nor to the east is in $\ld /\mu$.

When $T_1$ is an SYT of shape $\ld/\mu$ and $T_2$ an SYT of shape $\gamma/\delta$
such that $\sh(T_1)\cap \sh(T_2)=\emptyset$, we can naturally consider $T_1 \cup T_2$, which is the object obtained by drawing $T_1$ and $T_2$ in a plane simultaneously.
For an SYT $T$, let $T^{(i)}$ be the tableau of border strip shape consisting of all $\bfi$-boxes in $T$.
If ${\rm max}(T)=m$, then $T$ is expressed as
$T^{(1)} \cup T^{(2)} \cup \cdots \cup T^{(m)}.$
For instance, if
\begin{center}
\begin{tikzpicture}
\def \hhh{5mm}    
\def \vvv{5mm}    
\node[] at (-\hhh*0.1,-\vvv*0.4) {$T=$};
\draw[-] (\hhh*2,0) rectangle (\hhh*3,\vvv*1);
\draw[-] (\hhh*3,0) rectangle (\hhh*4,\vvv*1);
\draw[-] (\hhh*4,0) rectangle (\hhh*5,\vvv*1);
\draw[-] (\hhh*5,0) rectangle (\hhh*6,\vvv*1);
\draw[-] (\hhh*1,-\vvv*1) rectangle (\hhh*2,\vvv*0);
\draw[-] (\hhh*2,-\vvv*1) rectangle (\hhh*3,\vvv*0);
\draw[-] (\hhh*3,-\vvv*1) rectangle (\hhh*4,\vvv*0);
\draw[-] (\hhh*2,-\vvv*2) rectangle (\hhh*3,-\vvv*1);
\draw[-] (\hhh*3,-\vvv*2) rectangle (\hhh*4,-\vvv*1);
\node[] at (\hhh*2.5,\vvv*0.5) {$1'$};
\node[] at (\hhh*3.5,\vvv*0.5) {$1$};
\node[] at (\hhh*4.5,\vvv*0.5) {$2'$};
\node[] at (\hhh*5.5,\vvv*0.5) {$2$};
\node[] at (\hhh*1.5,-\vvv*0.5) {$1'$};
\node[] at (\hhh*2.5,-\vvv*0.5) {$2'$};
\node[] at (\hhh*3.5,-\vvv*0.5) {$2$};
\node[] at (\hhh*2.5,-\vvv*1.5) {$2'$};
\node[] at (\hhh*3.5,-\vvv*1.5) {$3'$};
\end{tikzpicture}
\end{center}
then $T=T^{(1)} \cup T^{(2)} \cup T^{(3)}$, where
\vskip 2mm
\begin{center}
\begin{tikzpicture}
\def \hhh{5mm}    
\def \vvv{5mm}    
\def \hhhh{0mm}   
\def \hhhhh{45mm}  
\node[] at (\hhh*0.6+\hhhhh+\hhhh,-\vvv*0.4) {$T^{(1)}=$};
\draw[-,black!20] (\hhh*5+\hhhhh+\hhhh,0) rectangle (\hhh*6+\hhhhh+\hhhh,\vvv*1);
\draw[-,black!20] (\hhh*6+\hhhhh+\hhhh,0) rectangle (\hhh*7+\hhhhh+\hhhh,\vvv*1);
\draw[-,black!20] (\hhh*3+\hhhhh+\hhhh,-\vvv*1) rectangle (\hhh*4+\hhhhh+\hhhh,\vvv*0);
\draw[-,black!20] (\hhh*4+\hhhhh+\hhhh,-\vvv*1) rectangle (\hhh*5+\hhhhh+\hhhh,\vvv*0);
\draw[-,black!20] (\hhh*3+\hhhhh+\hhhh,-\vvv*2) rectangle (\hhh*4+\hhhhh+\hhhh,-\vvv*1);
\draw[-,black!20] (\hhh*4+\hhhhh+\hhhh,-\vvv*2) rectangle (\hhh*5+\hhhhh+\hhhh,-\vvv*1);
\draw[-] (\hhh*3+\hhhhh+\hhhh,0) rectangle (\hhh*4+\hhhhh+\hhhh,\vvv*1);
\draw[-] (\hhh*4+\hhhhh+\hhhh,0) rectangle (\hhh*5+\hhhhh+\hhhh,\vvv*1);
\draw[-] (\hhh*2+\hhhhh+\hhhh,-\vvv*1) rectangle (\hhh*3+\hhhhh+\hhhh,\vvv*0);
\node[] at (\hhh*3.5+\hhhhh+\hhhh,\vvv*0.5) {$1'$};
\node[] at (\hhh*4.5+\hhhhh+\hhhh,\vvv*0.5) {$1$};
\node[] at (\hhh*2.5+\hhhhh+\hhhh,-\vvv*0.5) {$1'$};
\node[] at (\hhh*0.6+\hhhhh*2+\hhhh,-\vvv*0.4) {$T^{(2)}=$};
\draw[-,black!20] (\hhh*3+\hhhhh*2+\hhhh,0) rectangle (\hhh*4+\hhhhh*2+\hhhh,\vvv*1);
\draw[-,black!20] (\hhh*4+\hhhhh*2+\hhhh,0) rectangle (\hhh*5+\hhhhh*2+\hhhh,\vvv*1);
\draw[-,black!20] (\hhh*2+\hhhhh*2+\hhhh,-\vvv*1) rectangle (\hhh*3+\hhhhh*2+\hhhh,-\vvv*0);
\draw[-,black!20] (\hhh*4+\hhhhh*2+\hhhh,-\vvv*2) rectangle (\hhh*5+\hhhhh*2+\hhhh,-\vvv*1);

\draw[-] (\hhh*5+\hhhhh*2+\hhhh,0) rectangle (\hhh*6+\hhhhh*2+\hhhh,\vvv*1);
\draw[-] (\hhh*6+\hhhhh*2+\hhhh,0) rectangle (\hhh*7+\hhhhh*2+\hhhh,\vvv*1);
\draw[-] (\hhh*3+\hhhhh*2+\hhhh,-\vvv*1) rectangle (\hhh*4+\hhhhh*2+\hhhh,\vvv*0);
\draw[-] (\hhh*4+\hhhhh*2+\hhhh,-\vvv*1) rectangle (\hhh*5+\hhhhh*2+\hhhh,\vvv*0);
\draw[-] (\hhh*3+\hhhhh*2+\hhhh,-\vvv*2) rectangle (\hhh*4+\hhhhh*2+\hhhh,-\vvv*1);
\node[] at (\hhh*5.5+\hhhhh*2+\hhhh,\vvv*0.5) {$2'$};
\node[] at (\hhh*6.5+\hhhhh*2+\hhhh,\vvv*0.5) {$2$};
\node[] at (\hhh*3.5+\hhhhh*2+\hhhh,-\vvv*0.5) {$2'$};
\node[] at (\hhh*4.5+\hhhhh*2+\hhhh,-\vvv*0.5) {$2$};
\node[] at (\hhh*3.5+\hhhhh*2+\hhhh,-\vvv*1.5) {$2'$};
\node[] at (\hhh*0.6+\hhhhh*3+\hhhh,-\vvv*0.4) {$T^{(3)}=$};
\draw[-,black!20] (\hhh*3+\hhhhh*3+\hhhh,0) rectangle (\hhh*4+\hhhhh*3+\hhhh,\vvv*1);
\draw[-,black!20] (\hhh*4+\hhhhh*3+\hhhh,0) rectangle (\hhh*5+\hhhhh*3+\hhhh,\vvv*1);
\draw[-,black!20] (\hhh*2+\hhhhh*3+\hhhh,-\vvv*1) rectangle (\hhh*3+\hhhhh*3+\hhhh,-\vvv*0);
\draw[-,black!20] (\hhh*4+\hhhhh*3+\hhhh,-\vvv*2) rectangle (\hhh*5+\hhhhh*3+\hhhh,-\vvv*1);

\draw[-,black!20] (\hhh*5+\hhhhh*3+\hhhh,0) rectangle (\hhh*6+\hhhhh*3+\hhhh,\vvv*1);
\draw[-,black!20] (\hhh*6+\hhhhh*3+\hhhh,0) rectangle (\hhh*7+\hhhhh*3+\hhhh,\vvv*1);
\draw[-,black!20] (\hhh*3+\hhhhh*3+\hhhh,-\vvv*1) rectangle (\hhh*4+\hhhhh*3+\hhhh,\vvv*0);
\draw[-,black!20] (\hhh*4+\hhhhh*3+\hhhh,-\vvv*1) rectangle (\hhh*5+\hhhhh*3+\hhhh,\vvv*0);
\draw[-,black!20] (\hhh*3+\hhhhh*3+\hhhh,-\vvv*2) rectangle (\hhh*4+\hhhhh*3+\hhhh,-\vvv*1);
\draw[-] (\hhh*4+\hhhhh*3+\hhhh,-\vvv*2) rectangle (\hhh*5+\hhhhh*3+\hhhh,-\vvv*1);
\node[] at (\hhh*4.5+\hhhhh*3+\hhhh,-\vvv*1.5) {$3'$};
\end{tikzpicture}.
\end{center}
In the obvious way we can extend the definition of $T^{(i)}$ to an arbitrary filling $T$ of shape $\ld/\mu$.

\vskip 3mm
\begin{df}
Let $T$ be an SYT.
\begin{itemize}
\item[(a)]
 We define the {\it standardization} of $T$, denoted by $\stan(T)$,
to be the standard SYT obtained from $T$
by replacing all $1'$'s with $1,2,\ldots,k_1$ from top to bottom,
and next all $1$'s with $k_1+1,k_1+2,\ldots,k_1+k_2$ from left to right,
and so on.
Here $k_1$ and $k_2$ denote the number of occurrences of $1'$ and $1$, respectively.

\item[(b)]
We define the {\it reverse standardization} of $T$, denoted by $\rstan(T)$,
to be the filling of $\sh(T)$ obtained from $\stan(T)$ by replacing $i$ with $|T|-i+1$.
\end{itemize}
\end{df}

For example, if
\begin{center}
\begin{tikzpicture}
\def \hhh{5mm}    
\def \vvv{5mm}    
\node[] at (\hhh*1.7,-\vvv*0.4) {$T=$};
\draw[-] (\hhh*4,0) rectangle (\hhh*5,\vvv*1);
\draw[-] (\hhh*5,0) rectangle (\hhh*6,\vvv*1);
\draw[-] (\hhh*6,0) rectangle (\hhh*7,\vvv*1);
\draw[-] (\hhh*3,-\vvv*1) rectangle (\hhh*4,\vvv*0);
\draw[-] (\hhh*4,-\vvv*1) rectangle (\hhh*5,\vvv*0);
\draw[-] (\hhh*3,-\vvv*2) rectangle (\hhh*4,-\vvv*1);
\node[] at (\hhh*4.5,\vvv*0.5) {$1'$};
\node[] at (\hhh*5.5,\vvv*0.5) {$1$};
\node[] at (\hhh*6.5,\vvv*0.5) {$2'$};
\node[] at (\hhh*3.5,-\vvv*0.5) {$1'$};
\node[] at (\hhh*4.5,-\vvv*0.5) {$1$};
\node[] at (\hhh*3.5,-\vvv*1.5) {$2$};
\end{tikzpicture},
\end{center}
then
\begin{center}
\begin{tikzpicture}
\def \hhh{5mm}    
\def \vvv{5mm}    
\node[] at (\hhh*0.3,-\vvv*0.4) {$\stan(T)=$};
\draw[-] (\hhh*4,0) rectangle (\hhh*5,\vvv*1);
\draw[-] (\hhh*5,0) rectangle (\hhh*6,\vvv*1);
\draw[-] (\hhh*6,0) rectangle (\hhh*7,\vvv*1);
\draw[-] (\hhh*3,-\vvv*1) rectangle (\hhh*4,\vvv*0);
\draw[-] (\hhh*4,-\vvv*1) rectangle (\hhh*5,\vvv*0);
\draw[-] (\hhh*3,-\vvv*2) rectangle (\hhh*4,-\vvv*1);
\node[] at (\hhh*4.5,\vvv*0.5) {$1$};
\node[] at (\hhh*5.5,\vvv*0.5) {$4$};
\node[] at (\hhh*6.5,\vvv*0.5) {$5$};
\node[] at (\hhh*3.5,-\vvv*0.5) {$2$};
\node[] at (\hhh*4.5,-\vvv*0.5) {$3$};
\node[] at (\hhh*3.5,-\vvv*1.5) {$6$};
\end{tikzpicture}
\hskip 8mm
\begin{tikzpicture}
\def \hhh{5mm}    
\def \vvv{5mm}    
\node[] at (-\hhh*4.3,-\vvv*0.4) {{\rm and}};
\node[] at (\hhh*0.4,-\vvv*0.4) {$\rstan(A)=$};
\draw[-] (\hhh*4,0) rectangle (\hhh*5,\vvv*1);
\draw[-] (\hhh*5,0) rectangle (\hhh*6,\vvv*1);
\draw[-] (\hhh*6,0) rectangle (\hhh*7,\vvv*1);
\draw[-] (\hhh*3,-\vvv*1) rectangle (\hhh*4,\vvv*0);
\draw[-] (\hhh*4,-\vvv*1) rectangle (\hhh*5,\vvv*0);
\draw[-] (\hhh*3,-\vvv*2) rectangle (\hhh*4,-\vvv*1);
\node[] at (\hhh*4.5,\vvv*0.5) {$6$};
\node[] at (\hhh*5.5,\vvv*0.5) {$3$};
\node[] at (\hhh*6.5,\vvv*0.5) {$2$};
\node[] at (\hhh*3.5,-\vvv*0.5) {$5$};
\node[] at (\hhh*4.5,-\vvv*0.5) {$4$};
\node[] at (\hhh*3.5,-\vvv*1.5) {$1$};
\end{tikzpicture}.
\end{center}

\vskip 2mm
We close this section by introducing shifted jeu de taquin and shifted Knuth relations.
In 1963, Sch\"{u}tzenberger \cite{Sc1} introduced a remarkable combinatorial algorithm,
called {\it jeu de taquin},
which takes a skew semistandard Young tableau to another skew semistandard Young tableau
with the same weight, but different shape.
This can be accomplished by applying the jeu de taquin slides in succession:
\vskip 2mm
\begin{center}
\begin{tikzpicture}[baseline=0pt]
\def \hhh{5mm}      
\def \vvv{5mm}      
\def \vvvv{9mm}   
\def \hhhhh{0mm}
\draw[fill=black!40] (0+\hhhhh*0,0) rectangle (\hhh*1+\hhhhh*0,\vvv*1);
\draw[-] (\hhh*1+\hhhhh*0,0) rectangle (\hhh*2+\hhhhh*0,\vvv*1);
\draw[-] (\hhh*0+\hhhhh*0,-\vvv*1) rectangle (\hhh*2+\hhhhh*0,0);
\draw[-] (\hhh*1+\hhhhh*0,-\vvv*1) rectangle (\hhh*2+\hhhhh*0,0);
\draw[|->] (\hhh*2.7+\hhhhh*0,\vvv*1) to (\hhh*4.3+\hhhhh*0,0+\vvvv);
\draw[|->] (\hhh*2.7+\hhhhh*0,-\vvv*1) to (\hhh*4.3+\hhhhh*0,-\vvvv);
\draw[fill=black!40] (\hhh*6+\hhhhh*0,0+\vvvv) rectangle (\hhh*7+\hhhhh*0,\vvv*1+\vvvv);
\draw[-] (\hhh*0+\hhh*5+\hhhhh*0,0+\vvvv) rectangle (\hhh*1+\hhh*5+\hhhhh*0,\vvv*1+\vvvv);
\draw[-] (\hhh*0+\hhh*5+\hhhhh*0,-\vvv*1+\vvvv) rectangle (\hhh*1+\hhh*5+\hhhhh*0,0+\vvvv);
\draw[-] (\hhh*1+\hhh*5+\hhhhh*0,-\vvv*1+\vvvv) rectangle (\hhh*2+\hhh*5+\hhhhh*0,0+\vvvv);
\draw[-] (\hhh*0+\hhh*5+\hhhhh*0,0-\vvvv) rectangle (\hhh*6+\hhhhh*0,\vvv*1-\vvvv);
\draw[-] (\hhh*0+\hhh*6+\hhhhh*0,0-\vvvv) rectangle (\hhh*7+\hhhhh*0,\vvv*1-\vvvv);
\draw[fill=black!40] (\hhh*5+\hhhhh*0,-\vvv*1-\vvvv) rectangle (\hhh*6+\hhhhh*0,0-\vvvv);
\draw[-] (\hhh*6+\hhhhh*0,-\vvv*1-\vvvv) rectangle (\hhh*7+\hhhhh*0,-\vvvv);
\node (A11) at (\hhh*6+\hhhhh*0,0+\vvvv) {};
\node () [right=of A11] {if $a < b$};
\node at (\hhh*6,-\vvvv) (A21) {};
\node () [right=of A21] {if $a \geq b$};
\node[] at (\hhh*1.5+\hhhhh*0,\vvv*0.5) () {$a$};
\node[] at (\hhh*0.5+\hhhhh*0,-\vvv*0.5) () {$b$};
\node[] at (\hhh*1.5+\hhhhh*0,-\vvv*0.5) () {$c$};
\node[] at (\hhh*0.5+\hhh*5+\hhhhh*0,\vvv*0.5+\vvvv) () {$a$};
\node[] at (\hhh*0.5+\hhh*5+\hhhhh*0,-\vvv*0.5+\vvvv) () {$b$};
\node[] at (\hhh*1.5+\hhh*5+\hhhhh*0,-\vvv*0.5+\vvvv) () {$c$};
\node[] at (\hhh*0.5+\hhh*5+\hhhhh*0,\vvv*0.5-\vvvv) () {$b$};
\node[] at (\hhh*1.5+\hhh*5+\hhhhh*0,\vvv*0.5-\vvvv) () {$a$};
\node[] at (\hhh*1.5+\hhh*5+\hhhhh*0,-\vvv*0.5-\vvvv) () {$c$};
\end{tikzpicture}
\end{center}
\vskip 2mm
In the 1980's, Worley \cite{Wo} and Sagan \cite{Sa} independently introduced the shifted analogue of
Sch\"{u}tzenberger's jeu de taquin, which is a combinatorial algorithm taking
an SYT to another SYT with the same weight, but different shape.
This can be carried out by applying the shifted jeu de taquin slides
described in Figure~\ref{figure1} (for short, shifted slides) in succession.
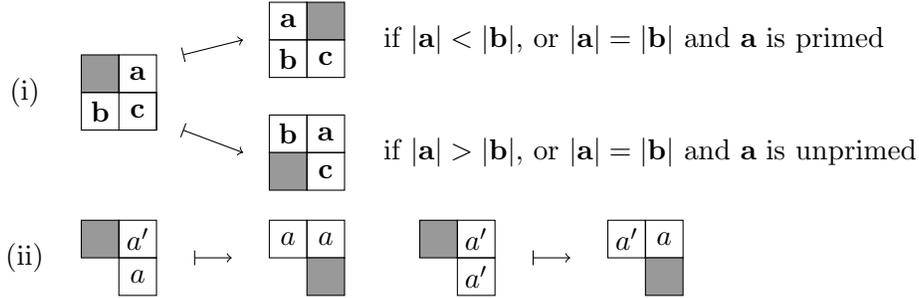
\begin{figure}[h!]
\centering
\begin{tikzpicture}[baseline=0pt]
\def \hhh{5mm}      
\def \vvv{5mm}      
\def \vvvv{7mm}   
\def \vvvvv{-4mm}    
\node[] at (-\hhh*1.5,-\vvv*0.0-\vvvv) () {(i)};
\draw[fill=black!40] (0,0-\vvvv) rectangle (\hhh*1,\vvv*1-\vvvv);
\draw[-] (\hhh*1,0-\vvvv) rectangle (\hhh*2,\vvv*1-\vvvv);
\draw[-] (\hhh*0,-\vvv*1-\vvvv) rectangle (\hhh*2,0-\vvvv);
\draw[-] (\hhh*1,-\vvv*1-\vvvv) rectangle (\hhh*2,0-\vvvv);
\draw[|->] (\hhh*2.7,\vvv*1-\vvvv) to (\hhh*4.3,0);
\draw[|->] (\hhh*2.7,-\vvv*1-\vvvv) to (\hhh*4.3,-\vvv*3);
\draw[fill=black!40] (\hhh*6,0) rectangle (\hhh*7,\vvv*1);
\draw[-] (\hhh*0+\hhh*5,0) rectangle (\hhh*1+\hhh*5,\vvv*1);
\draw[-] (\hhh*0+\hhh*5,-\vvv*1) rectangle (\hhh*1+\hhh*5,0);
\draw[-] (\hhh*1+\hhh*5,-\vvv*1) rectangle (\hhh*2+\hhh*5,0);
\draw[-] (\hhh*0+\hhh*5,0-\vvv*3) rectangle (\hhh*6,\vvv*1-\vvv*3);
\draw[-] (\hhh*0+\hhh*6,0-\vvv*3) rectangle (\hhh*7,\vvv*1-\vvv*3);
\draw[fill=black!40] (\hhh*5,-\vvv*1-\vvv*3) rectangle (\hhh*6,0-\vvv*3);
\draw[-] (\hhh*6,-\vvv*1-\vvv*3) rectangle (\hhh*7,-\vvv*3);
\node (A) at (\hhh*5.5,0) {};
\node () [right=of A]
{if $|\bfa| < |\bfb|$, or $|\bfa|=|\bfb|$ and $\bfa$ is primed};
\node at (\hhh*5.5,-\vvv*3) (A21) {};
\node () [right=of A21]
{if $|\bfa| > |\bfb|$, or $|\bfa|=|\bfb|$ and $\bfa$ is unprimed};
\node[] at (\hhh*1.5,\vvv*0.5-\vvvv) () {$\bfa$};
\node[] at (\hhh*0.5,-\vvv*0.5-\vvvv) () {$\bfb$};
\node[] at (\hhh*1.5,-\vvv*0.5-\vvvv) () {$\bfc$};
\node[] at (\hhh*0.5+\hhh*5,\vvv*0.5) () {$\bfa$};
\node[] at (\hhh*0.5+\hhh*5,-\vvv*0.5) () {$\bfb$};
\node[] at (\hhh*1.5+\hhh*5,-\vvv*0.5) () {$\bfc$};
\node[] at (\hhh*0.5+\hhh*5,-\vvv*2.5) () {$\bfb$};
\node[] at (\hhh*1.5+\hhh*5,-\vvv*2.5) () {$\bfa$};
\node[] at (\hhh*1.5+\hhh*5,-\vvv*3.5) () {$\bfc$};
\node[] at (-\hhh*1.5,-\vvv*5+\vvvvv) () {(ii)};
\draw[fill=black!40] (0,-\vvv*5+\vvvvv) rectangle (\hhh*1,-\vvv*4+\vvvvv);
\draw[-] (\hhh*1,-\vvv*5+\vvvvv) rectangle (\hhh*2,-\vvv*4+\vvvvv);
\draw[-] (\hhh*1,-\vvv*6+\vvvvv) rectangle (\hhh*2,-\vvv*5+\vvvvv);
\node[] at (\hhh*1.5,-\vvv*4.5+\vvvvv) () {$a'$};
\node[] at (\hhh*1.5,-\vvv*5.5+\vvvvv) () {$a$};
\draw[|->] (\hhh*3,-\vvv*5+\vvvvv) to (\hhh*4,-\vvv*5+\vvvvv);
\draw[-] (\hhh*5,-\vvv*5+\vvvvv) rectangle (\hhh*6,-\vvv*4+\vvvvv);
\draw[-] (\hhh*6,-\vvv*5+\vvvvv) rectangle (\hhh*7,-\vvv*4+\vvvvv);
\draw[fill=black!40] (\hhh*6,-\vvv*6+\vvvvv) rectangle (\hhh*7,-\vvv*5+\vvvvv);
\node[] at (\hhh*5.5,-\vvv*4.5+\vvvvv) () {$a$};
\node[] at (\hhh*6.5,-\vvv*4.5+\vvvvv) () {$a$};
\draw[fill=black!40] (\hhh*9,-\vvv*5+\vvvvv) rectangle (\hhh*10,-\vvv*4+\vvvvv);
\draw[-] (\hhh*10,-\vvv*5+\vvvvv) rectangle (\hhh*11,-\vvv*4+\vvvvv);
\draw[-] (\hhh*10,-\vvv*6+\vvvvv) rectangle (\hhh*11,-\vvv*5+\vvvvv);
\node[] at (\hhh*10.5,-\vvv*4.5+\vvvvv) () {$a'$};
\node[] at (\hhh*10.5,-\vvv*5.5+\vvvvv) () {$a'$};
\draw[|->] (\hhh*12,-\vvv*5+\vvvvv) to (\hhh*13,-\vvv*5+\vvvvv);
\draw[-] (\hhh*14,-\vvv*5+\vvvvv) rectangle (\hhh*15,-\vvv*4+\vvvvv);
\draw[-] (\hhh*15,-\vvv*5+\vvvvv) rectangle (\hhh*16,-\vvv*4+\vvvvv);
\draw[fill=black!40] (\hhh*15,-\vvv*6+\vvvvv) rectangle (\hhh*16,-\vvv*5+\vvvvv);
\node[] at (\hhh*14.5,-\vvv*4.5+\vvvvv) () {$a'$};
\node[] at (\hhh*15.5,-\vvv*4.5+\vvvvv) () {$a$};
\end{tikzpicture}
\caption{The shifted slides}
\label{figure1}
\end{figure}

Given an SYT $T$ of skew shape,
one can apply shifted slides to $T$ iteratively
to get an SYT of normal shape
(which means no more shifted slides are possible).
This can be done in many different ways, but
the resulting SYT of normal shape is known to be
same for all possible choices. This SYT is called
the {\it rectification} of $T$ and denoted by $\Rect(T)$.

\begin{example}\label{example-jdt}
The following shows how the shifted jeu de taquin
is being proceeded.
\vskip 2mm
\begin{center}
\begin{tikzpicture}
\def \hhh{4.6mm}    
\def \vvv{4.6mm}    
\def \hhhhh{30mm}  
\def \iiii{18mm}    
\node[] at (-\hhh*1+\hhhhh*0,-\vvv*0.0) () {$T=$};
\draw[-,black!30] (\hhh*0+\hhhhh*0,0) rectangle (\hhh*1+\hhhhh*0,\vvv*1);
\draw[-,black!30] (\hhh*1+\hhhhh*0,0) rectangle (\hhh*2+\hhhhh*0,\vvv*1);
\draw[-] (\hhh*2+\hhhhh*0,0) rectangle (\hhh*3+\hhhhh*0,\vvv*1);
\draw[-] (\hhh*3+\hhhhh*0,0) rectangle (\hhh*4+\hhhhh*0,\vvv*1);
\draw[fill=black!20] (\hhh*1,-\vvv*1) rectangle (\hhh*2,\vvv*0);
\draw[-] (\hhh*2+\hhhhh*0,-\vvv*1) rectangle (\hhh*3+\hhhhh*0,-\vvv*0);
\draw[-] (\hhh*3+\hhhhh*0,-\vvv*1) rectangle (\hhh*4+\hhhhh*0,-\vvv*0);
\draw[-] (\hhh*2+\hhhhh*0,-\vvv*2) rectangle (\hhh*3+\hhhhh*0,-\vvv*1);
\node[] at (\hhh*2.5+\hhhhh*0,\vvv*0.5) () {\small $1'$};
\node[] at (\hhh*3.5+\hhhhh*0,\vvv*0.5) () {$2'$};
\node[] at (\hhh*2.5+\hhhhh*0,-\vvv*0.5) () {$2'$};
\node[] at (\hhh*3.5+\hhhhh*0,-\vvv*0.5) () {$3$};
\node[] at (\hhh*2.5+\hhhhh*0,-\vvv*1.5) () {$2'$};
\draw[->,decorate,decoration={snake,amplitude=.4mm,segment length=2mm,post length=1mm}]
(\hhh*4.8+\hhhhh*0,-\vvv*0) to (-\hhh*0.8+\hhhhh*1,-\vvv*0);
\draw[-,black!30] (\hhh*2+\hhhhh*1,-\vvv*2) rectangle (\hhh*3+\hhhhh*1,-\vvv*1);
\draw[-,black!30] (\hhh*0+\hhhhh*1,0) rectangle (\hhh*1+\hhhhh*1,\vvv*1);
\draw[fill=black!20] (\hhh*1+\hhhhh*1,0) rectangle (\hhh*2+\hhhhh*1,\vvv*1);
\draw[-] (\hhh*2+\hhhhh*1,0) rectangle (\hhh*3+\hhhhh*1,\vvv*1);
\draw[-] (\hhh*3+\hhhhh*1,0) rectangle (\hhh*4+\hhhhh*1,\vvv*1);
\draw[-] (\hhh*1+\hhhhh*1,-\vvv*1) rectangle (\hhh*2+\hhhhh*1,-\vvv*0);
\draw[-] (\hhh*2+\hhhhh*1,-\vvv*1) rectangle (\hhh*3+\hhhhh*1,-\vvv*0);
\draw[-] (\hhh*3+\hhhhh*1,-\vvv*1) rectangle (\hhh*4+\hhhhh*1,-\vvv*0);
\node[] at (\hhh*2.5+\hhhhh*1,\vvv*0.5) () {$1'$};
\node[] at (\hhh*3.5+\hhhhh*1,\vvv*0.5) () {$2'$};
\node[] at (\hhh*1.5+\hhhhh*1,-\vvv*0.5) () {$2'$};
\node[] at (\hhh*2.5+\hhhhh*1,-\vvv*0.5) () {$2$};
\node[] at (\hhh*3.5+\hhhhh*1,-\vvv*0.5) () {$3$};
\draw[->,decorate,decoration={snake,amplitude=.4mm,segment length=2mm,post length=1mm}]
(\hhh*4.8+\hhhhh*1,-\vvv*0) to (-\hhh*0.8+\hhhhh*2,-\vvv*0);
\draw[-,black!30] (\hhh*3+\hhhhh*2,-\vvv*1) rectangle (\hhh*4+\hhhhh*2,\vvv*0);
\draw[-,black!30] (\hhh*2+\hhhhh*2,-\vvv*2) rectangle (\hhh*3+\hhhhh*2,-\vvv*1);
\draw[fill=black!20] (\hhh*0+\hhhhh*2,0) rectangle (\hhh*1+\hhhhh*2,\vvv*1);
\draw[-] (\hhh*1+\hhhhh*2,0) rectangle (\hhh*2+\hhhhh*2,\vvv*1);
\draw[-] (\hhh*2+\hhhhh*2,0) rectangle (\hhh*3+\hhhhh*2,\vvv*1);
\draw[-] (\hhh*3+\hhhhh*2,0) rectangle (\hhh*4+\hhhhh*2,\vvv*1);
\draw[-] (\hhh*1+\hhhhh*2,-\vvv*1) rectangle (\hhh*2+\hhhhh*2,-\vvv*0);
\draw[-] (\hhh*2+\hhhhh*2,-\vvv*1) rectangle (\hhh*3+\hhhhh*2,-\vvv*0);
\node[] at (\hhh*1.5+\hhhhh*2,\vvv*0.5) () {$1'$};
\node[] at (\hhh*2.5+\hhhhh*2,\vvv*0.5) () {$2'$};
\node[] at (\hhh*3.5+\hhhhh*2,\vvv*0.5) () {$3$};
\node[] at (\hhh*1.5+\hhhhh*2,-\vvv*0.5) () {$2'$};
\node[] at (\hhh*2.5+\hhhhh*2,-\vvv*0.5) () {$2$};
\draw[->,decorate,decoration={snake,amplitude=.4mm,segment length=2mm,post length=1mm}]
(\hhh*4.8+\hhhhh*2,\vvv*0) to (-\hhh*0.4+\hhhhh*3,\vvv*0);
\node[] at (-\hhh*2.1+\hhhhh*3+\iiii,-\vvv*0.0) () {$\Rect(T)=$};
\draw[-,black!30] (\hhh*3+\hhhhh*3+\iiii,-\vvv*0) rectangle (\hhh*4+\hhhhh*3+\iiii,\vvv*1);
\draw[-,black!30] (\hhh*3+\hhhhh*3+\iiii,-\vvv*1) rectangle (\hhh*4+\hhhhh*3+\iiii,\vvv*0);
\draw[-,black!30] (\hhh*2+\hhhhh*3+\iiii,-\vvv*2) rectangle (\hhh*3+\hhhhh*3+\iiii,-\vvv*1);
\draw[-,black!30] (\hhh*2+\hhhhh*3+\iiii,-\vvv*1) rectangle (\hhh*3+\hhhhh*3+\iiii,-\vvv*0);
\draw[-] (\hhh*0+\hhhhh*3+\iiii,0) rectangle (\hhh*1+\hhhhh*3+\iiii,\vvv*1);
\draw[-] (\hhh*1+\hhhhh*3+\iiii,0) rectangle (\hhh*2+\hhhhh*3+\iiii,\vvv*1);
\draw[-] (\hhh*2+\hhhhh*3+\iiii,0) rectangle (\hhh*3+\hhhhh*3+\iiii,\vvv*1);
\draw[-] (\hhh*3+\hhhhh*3+\iiii,0) rectangle (\hhh*4+\hhhhh*3+\iiii,\vvv*1);
\draw[-] (\hhh*1+\hhhhh*3+\iiii,-\vvv*1) rectangle (\hhh*2+\hhhhh*3+\iiii,-\vvv*0);
\node[] at (\hhh*0.5+\hhhhh*3+\iiii,\vvv*0.5) () {$1'$};
\node[] at (\hhh*1.5+\hhhhh*3+\iiii,\vvv*0.5) () {$2'$};
\node[] at (\hhh*2.5+\hhhhh*3+\iiii,\vvv*0.5) () {$2$};
\node[] at (\hhh*3.5+\hhhhh*3+\iiii,\vvv*0.5) () {$3$};
\node[] at (\hhh*1.5+\hhhhh*3+\iiii,-\vvv*0.5) () {$2'$};
\end{tikzpicture}
\end{center}
Here the shaded boxes denote inner corners.
\end{example}

We say that two SYTs are {\it shifted jeu de taquin equivalent}
if they can be changed into each other by a sequence of shifted slides,
with their inverses,
if and only if
they have the same rectification.
As in the ordinary case,
shifted jeu de taquin equivalence can be described in terms of shifted Knuth equivalence.
Given any word $u = u_1 u_2 \cdots u_n$ in $X$,
the possible shifted Knuth relations that can
be applied to $u$ in order to form another word are
\begin{itemize}
\item replace $xzy$ by $zxy$ or vice versa
if $x < y \leq z$,

\item replace $yxz$ by $yzx$ or vice versa
if $x \leq y < z$,

\item for the last two elements of $u$ we may

replace $xy$ by $yx$ or vice versa if $|x| \neq |y|$,

replace $xx$ by $x'x$ or vice versa.
\end{itemize}
Here, $(x')'$ denotes $x$ for an unprimed letter $x$.
Shifted Knuth equivalence of words, denoted by $\equiv_{\rm \tiny SK}$,
can induce the equivalence relation on the set of words in $X$ in such a way that
$u$ and $v$ are {\it shifted Knuth equivalent} if
one can be transformed into the other under the relations above.
For more information, refer to \cite{Sa}.

For any filling $T$ of skew shape,
{\it the word of $T$}, denoted by $\w(T)$, is defined to be the word obtained by
reading entries from right to left, starting with the top row in $T$.

\begin{prop}{\rm (\cite{Sa, Wo})}
Given two SYTs, they are shifted jeu de taquin equivalent if and only if their words are shifted Knuth equivalent.
\end{prop}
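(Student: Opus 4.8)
The plan is to reduce the equivalence to two claims that together identify the rectification as a complete invariant of the shifted Knuth class of the reading word. Recall from the definition recalled above that two SYTs $S$ and $T$ are shifted jeu de taquin equivalent precisely when $\Rect(S)=\Rect(T)$, and that $\Rect(\cdot)$ is well defined independently of the chosen sequence of shifted slides. I would isolate:

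\noindent\textbf{Claim A.} If $T'$ is obtained from an SYT $T$ by a single shifted slide, then $\w(T')\equiv_{\rm SK}\w(T)$.

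\noindent\textbf{Claim B.} If $P$ and $P'$ are SYTs of normal shape with $\w(P)\equiv_{\rm SK}\w(P')$, then $P=P'$.

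Granting A and B, the proposition follows at once. For the forward direction, shifted jeu de taquin equivalence means $S$ and $T$ are joined by a chain of shifted slides and their inverses; applying Claim A to each step (and noting that $\equiv_{\rm SK}$, being symmetric, is equally preserved under inverse slides) yields $\w(S)\equiv_{\rm SK}\w(T)$. For the backward direction, suppose $\w(S)\equiv_{\rm SK}\w(T)$. Rectifying $S$ and $T$ by successions of shifted slides and invoking Claim A along the way gives $\w(\Rect(S))\equiv_{\rm SK}\w(S)$ and $\w(\Rect(T))\equiv_{\rm SK}\w(T)$, whence $\w(\Rect(S))\equiv_{\rm SK}\w(\Rect(T))$. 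Since both $\Rect(S)$ and $\Rect(T)$ are of normal shape, Claim B forces $\Rect(S)=\Rect(T)$, that is, $S$ and $T$ are shifted jeu de taquin equivalent.

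Claim A is a finite local verification. Each shifted slide is one of the moves of Figure~\ref{figure1}, moving a single entry $\bfb$ into an adjacent inner empty box, and I would record how the reading word is altered in each case. Since the reading word is read right to left along rows from the top, moving $\bfb$ past at most two neighbouring entries changes $\w(T)$ only within a short factor; I would check, case by case on the slide type (the horizontal slide, the vertical slide, and the two moves of type (ii) that create or destroy a primed/unprimed pair in a column), that the altered factor is obtained from the original by the elementary shifted Knuth relations listed above. The special relations reserved for the last two letters are exactly what is needed when the moving entry sits at the diagonal or changes its priming, so the verification is routine provided one keeps careful track of the primes.

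The real obstacle is Claim B, the shifted analogue of the classical fact that the insertion tableau is a complete Knuth invariant. I would prove it through shifted (Worley--Sagan) insertion: to a word $w$ one associates a normal-shape SYT $P(w)$ by inserting its letters one at a time. The two facts to establish are (i) $P(w)$ is unchanged when an elementary shifted Knuth relation is applied to $w$, so that $P(\cdot)$ is constant on each $\equiv_{\rm SK}$-class, and (ii) $P(\w(P))=P$ for every normal-shape SYT $P$. Granting these, $\w(P)\equiv_{\rm SK}\w(P')$ gives $P=P(\w(P))=P(\w(P'))=P'$, which is Claim B. Establishing (i) is the hard part: one must track how the bumping paths of consecutive insertions interact under each of the three types of Knuth move, and the analysis is genuinely more delicate than in the unshifted case because of the primed letters and the special behaviour on the main diagonal (it is precisely here that the two ``last two letters'' relations intervene). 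This is the step I expect to absorb most of the work; fact (ii) is comparatively straightforward, following by induction on $|P|$ from the observation that inserting the reading word of a normal-shape tableau reconstructs it row by row.
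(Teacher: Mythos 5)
The paper gives no proof of this proposition at all: it is quoted as a known result of Sagan and Worley, so the only thing to measure your proposal against is the cited literature. Your top-level reduction is exactly the Worley--Sagan route: Claim A (slides preserve the shifted Knuth class of the reading word) plus Claim B (the rectification, equivalently the shifted insertion tableau, is a complete invariant of the class), with Claim B established through shifted insertion via the two facts you label (i) and (ii). Both claims are true, the logical assembly into the two directions of the proposition is correct, and this is essentially how the cited sources argue, so as a skeleton the proposal is faithful to the standard proof.

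The one genuine weak spot is your justification of Claim A. You assert that a shifted slide alters $\w(T)$ ``only within a short factor,'' so that the verification is a finite local check. That is false for the vertical slides: a horizontal slide leaves the word literally unchanged (empty boxes are not read), but when an entry $\bfb$ moves from row $i+1$ up to row $i$, it jumps in the reading word from the segment of row $i+1$ to the segment of row $i$, passing over every letter of row $i$ lying west of the vacated column and every letter of row $i+1$ lying east of it --- arbitrarily many letters. Consequently Claim A cannot be discharged by inspecting the slide pictures of Figure~\ref{figure1} alone; it requires a genuine inductive argument (in the classical unshifted setting this is already a several-step lemma, typically done by reducing to two-row configurations and inducting on the number of letters the moving entry passes), and in the shifted setting the diagonal cases and the two ``last two letters'' relations add further case analysis. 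The claim is correct and is proved in Sagan's paper, but the proof as you sketch it would not go through; if you intend a self-contained write-up, this step, not the insertion theory behind Claim B, is where the locality assumption must be replaced by an actual induction.
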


\section{The shifted tableau switching}
\label{sect-ss-proce}
\subsection{The shifted tableau switching}
\label{subsect-sh-tab-sw}
We here introduce the shifted version of the tableau switching given in \cite{BSS}.
For this purpose we first introduce the notion of shifted perforated tableaux.

\begin{df}\label{Def-Perforated}{\rm
For a double border strip $\theta$,
a {\it shifted perforated $\bfa$-tableau} $A$ in $\theta$
is a filling of some of the boxes in $\theta$ with $a', a \in X$ satisfying
the following conditions:
\begin{itemize}
  \item no $a'$-boxes are placed to southeast of any $a$-box,
  \item each column of $A$ contains at most one $a$,
  \item each row of $A$ contains at most one $a'$, and
  \item the main diagonal of $A$ contains at most one $\bfa$.
\end{itemize}
}\end{df}

Define the {\it shape} $\sh(A)$ of $A$ by the subset of $\theta$ consisting of all $\bfa$-boxes.
For a shifted perforated $\bfa$-tableau $A$ and $\bfb$-tableau $B$ in $\theta$,
$(A,B)$ is called a {\it shifted perforated $(\bfa,\bfb)$-pair of shape $\theta$} if
$\theta$ is the disjoint union of $\sh(A)$ and $\sh(B)$.
For a shifted perforated $(\bfa,\bfb)$-pair $(A,B)$ of shape $\theta$
we can consider $A \cup B$ in the obvious way.
For instance,
\vskip 2mm
\begin{center}
\begin{tikzpicture}
\def \hhh{5mm}    
\def \vvv{5mm}    
\def \hhhhh{55mm}  
\node[] at (\hhh*0.8,-\vvv*0.4) {$A=$};
\draw[fill=black!30] (\hhh*3+\hhhhh*0,0) rectangle (\hhh*4+\hhhhh*0,\vvv*1);
\draw[-] (\hhh*4+\hhhhh*0,0) rectangle (\hhh*5+\hhhhh*0,\vvv*1);
\draw[-] (\hhh*5+\hhhhh*0,0) rectangle (\hhh*6+\hhhhh*0,\vvv*1);
\draw[fill=black!30] (\hhh*6+\hhhhh*0,0) rectangle (\hhh*7+\hhhhh*0,\vvv*1);
\draw[fill=black!30] (\hhh*2+\hhhhh*0,-\vvv*1) rectangle (\hhh*3+\hhhhh*0,\vvv*0);
\draw[-] (\hhh*3+\hhhhh*0,-\vvv*1) rectangle (\hhh*4+\hhhhh*0,\vvv*0);
\draw[-] (\hhh*4+\hhhhh*0,-\vvv*1) rectangle (\hhh*5+\hhhhh*0,\vvv*0);
\draw[-] (\hhh*3+\hhhhh*0,-\vvv*2) rectangle (\hhh*4+\hhhhh*0,-\vvv*1);
\draw[fill=black!30] (\hhh*4+\hhhhh*0,-\vvv*2) rectangle (\hhh*5+\hhhhh*0,-\vvv*1);
\node[] at (\hhh*3.5+\hhhhh*0,\vvv*0.5) {$1'$};
\node[] at (\hhh*6.5+\hhhhh*0,\vvv*0.5) {$1$};
\node[] at (\hhh*2.5+\hhhhh*0,-\vvv*0.5) {$1$};
\node[] at (\hhh*4.5+\hhhhh*0,-\vvv*1.5) {$1$};
\node[] at (\hhh*3.8+\hhhhh*0.5,-\vvv*0.4) {and};
\node[] at (\hhh*0.8+\hhhhh*1,-\vvv*0.4) {$B=$};
\draw[fill=black!30] (\hhh*3+\hhhhh*1,0) rectangle (\hhh*4+\hhhhh*1,\vvv*1);
\draw[-] (\hhh*4+\hhhhh*1,0) rectangle (\hhh*5+\hhhhh*1,\vvv*1);
\draw[-] (\hhh*5+\hhhhh*1,0) rectangle (\hhh*6+\hhhhh*1,\vvv*1);
\draw[fill=black!30] (\hhh*6+\hhhhh*1,0) rectangle (\hhh*7+\hhhhh*1,\vvv*1);
\draw[fill=black!30] (\hhh*2+\hhhhh*1,-\vvv*1) rectangle (\hhh*3+\hhhhh*1,\vvv*0);
\draw[-] (\hhh*3+\hhhhh*1,-\vvv*1) rectangle (\hhh*4+\hhhhh*1,\vvv*0);
\draw[-] (\hhh*4+\hhhhh*1,-\vvv*1) rectangle (\hhh*5+\hhhhh*1,\vvv*0);
\draw[-] (\hhh*3+\hhhhh*1,-\vvv*2) rectangle (\hhh*4+\hhhhh*1,-\vvv*1);
\draw[fill=black!30] (\hhh*4+\hhhhh*1,-\vvv*2) rectangle (\hhh*5+\hhhhh*1,-\vvv*1);
\node[] at (\hhh*4.5+\hhhhh*1,\vvv*0.5) {$1'$};
\node[] at (\hhh*5.5+\hhhhh*1,\vvv*0.5) {$1$};
\node[] at (\hhh*3.5+\hhhhh*1,-\vvv*0.5) {$1'$};
\node[] at (\hhh*4.5+\hhhhh*1,-\vvv*0.5) {$1$};
\node[] at (\hhh*3.5+\hhhhh*1,-\vvv*1.5) {$1$};
\end{tikzpicture}
\end{center}
form a shifted perforated $(\bf{1},\bf{1})$-pair of $(6,3,2)/(2)$ and
\vskip 2mm
\begin{center}
\begin{tikzpicture}
\def \hhh{5mm}    
\def \vvv{5mm}    
\node[] at (-\hhh*0.2,-\vvv*0.4) {$A \cup B =$};
\draw[fill=black!30] (\hhh*3,0) rectangle (\hhh*4,\vvv*1);
\draw[-] (\hhh*4,0) rectangle (\hhh*5,\vvv*1);
\draw[-] (\hhh*5,0) rectangle (\hhh*6,\vvv*1);
\draw[fill=black!30] (\hhh*6,0) rectangle (\hhh*7,\vvv*1);
\draw[fill=black!30] (\hhh*2,-\vvv*1) rectangle (\hhh*3,\vvv*0);
\draw[-] (\hhh*3,-\vvv*1) rectangle (\hhh*4,\vvv*0);
\draw[-] (\hhh*4,-\vvv*1) rectangle (\hhh*5,\vvv*0);
\draw[-] (\hhh*3,-\vvv*2) rectangle (\hhh*4,-\vvv*1);
\draw[fill=black!30] (\hhh*4,-\vvv*2) rectangle (\hhh*5,-\vvv*1);
\node[] at (\hhh*3.5,\vvv*0.5) {$1'$};
\node[] at (\hhh*4.5,\vvv*0.5) {$1'$};
\node[] at (\hhh*5.5,\vvv*0.5) {$1$};
\node[] at (\hhh*6.5,\vvv*0.5) {$1$};
\node[] at (\hhh*2.5,-\vvv*0.5) {$1$};
\node[] at (\hhh*3.5,-\vvv*0.5) {$1'$};
\node[] at (\hhh*4.5,-\vvv*0.5) {$1$};
\node[] at (\hhh*3.5,-\vvv*1.5) {$1$};
\node[] at (\hhh*4.5,-\vvv*1.5) {$1$};
\end{tikzpicture}.
\end{center}
We say that
$B$ {\it extends} $A$ if ${\rm sh}(B)$ extends ${\rm sh}(A)$.

As the tableau switching in \cite{BSS}, our shifted tableau switching can be performed by applying
a sequence of elementary transformations
called switches.
Let $(A,B)$ be a shifted perforated $(\bfa,\bfb)$-pair.
Interchanging an $\bfa$-box and a $\bfb$-box in $A\cup B$
is called a {\it switch} if it is subject to the following rules.
\begin{itemize}
\item The case where an $\bfa$-box is adjacent to a unique $\bfb$-box:
\vskip 2mm
\begin{tikzpicture}
\def \hhh{5mm}    
\def \vvv{5mm}    
\def \hhhhh{60mm}   
\node[] at (-\hhh*1.3,0) () {(S1)};
\draw[fill=black!30] (0,-\vvv*0.5) rectangle (\hhh*1,\vvv*0.5);
\draw[-] (\hhh*1,-\vvv*0.5) rectangle (\hhh*2,\vvv*0.5);
\draw[|->] (\hhh*3,\vvv*0) to (\hhh*4,\vvv*0);
\draw[-] (\hhh*5,-\vvv*0.5) rectangle (\hhh*6,\vvv*0.5);
\draw[fill=black!30] (\hhh*6,-\vvv*0.5) rectangle (\hhh*7,\vvv*0.5);
\node[] at (\hhh*0.5,\vvv*0) () {$\bfa$};
\node[] at (\hhh*1.5,\vvv*0) () {$\bfb$};
\node[] at (\hhh*5.5,\vvv*0) () {$\bfb$};
\node[] at (\hhh*6.5,\vvv*0) () {$\bfa$};
\node[] at (-\hhh*1.3+\hhhhh*1,0) () {(S2)};
\draw[fill=black!30] (0+\hhhhh*1,0) rectangle (\hhh*1+\hhhhh*1,\vvv*1);
\draw[-] (\hhh*0+\hhhhh*1,-\vvv*1) rectangle (\hhh*1+\hhhhh*1,0);
\draw[|->] (\hhh*2+\hhhhh*1,\vvv*0) to (\hhh*3+\hhhhh*1,\vvv*0);
\draw[-] (\hhh*4+\hhhhh*1,0) rectangle (\hhh*5+\hhhhh*1,\vvv*1);
\draw[fill=black!30] (\hhh*4+\hhhhh*1,-\vvv*1) rectangle (\hhh*5+\hhhhh*1,\vvv*0);
\node[] at (\hhh*0.5+\hhhhh*1,\vvv*0.5) () {$\bfa$};
\node[] at (\hhh*0.5+\hhhhh*1,-\vvv*0.5) () {$\bfb$};
\node[] at (\hhh*4.5+\hhhhh*1,\vvv*0.5) () {$\bfb$};
\node[] at (\hhh*4.5+\hhhhh*1,-\vvv*0.5) () {$\bfa$};
\end{tikzpicture}
\vskip 3mm
\begin{tikzpicture}
\def \hhh{5mm}    
\def \vvv{5mm}    
\def \hhhh{60mm}   
\node[] at (-\hhh*1.3,0) () {(S3)};
\draw[fill=black!30] (0,0) rectangle (\hhh*1,\vvv*1);
\draw[-] (\hhh*1,0) rectangle (\hhh*2,\vvv*1);
\draw[-] (\hhh*1,-\vvv*1) rectangle (\hhh*2,0);
\draw[|->] (\hhh*3,0) to (\hhh*4,0);
\draw[fill=black!30] (\hhh*1+\hhh*5,0) rectangle (\hhh*2+\hhh*5,-\vvv*1);
\draw[-] (\hhh*0+\hhh*5,0) rectangle (\hhh*1+\hhh*5,\vvv*1);
\draw[-] (\hhh*1+\hhh*5,0) rectangle (\hhh*2+\hhh*5,\vvv*1);
\node[] at (-\hhh*1.3+\hhhh*1,0) () {(S4)};
\draw[fill=black!30] (0+\hhhh,0) rectangle (\hhh*1+\hhhh,\vvv*1);
\draw[fill=black!30] (\hhh*1+\hhhh,0) rectangle (\hhh*2+\hhhh,\vvv*1);
\draw[-] (\hhh*1+\hhhh,-\vvv*1) rectangle (\hhh*2+\hhhh,0);
\draw[|->] (\hhh*3+\hhhh,0) to (\hhh*4+\hhhh*1,0);
\draw[-] (\hhh*0+\hhh*5+\hhhh*1,0) rectangle (\hhh*1+\hhh*5+\hhhh*1,\vvv*1);
\draw[fill=black!30] (\hhh*1+\hhh*5+\hhhh*1,0) rectangle (\hhh*2+\hhh*5+\hhhh*1,-\vvv*1);
\draw[fill=black!30] (\hhh*1+\hhh*5+\hhhh*1,0) rectangle (\hhh*2+\hhh*5+\hhhh*1,\vvv*1);
\node[] at (\hhh*0.5,\vvv*0.5) () {$\bfa$};
\node[] at (\hhh*1.5,\vvv*0.5) () {$b'$};
\node[] at (\hhh*1.5,-\vvv*0.5) () {$\bfb$};
\node[] at (\hhh*0.5+\hhh*5,\vvv*0.5) () {$\bfb$};
\node[] at (\hhh*1.5+\hhh*5,\vvv*0.5) () {$b$};
\node[] at (\hhh*1.5+\hhh*5,-\vvv*0.5) () {$\bfa$};
\node[] at (\hhh*0.5+\hhhh,\vvv*0.5) () {$\bfa$};
\node[] at (\hhh*1.5+\hhhh,\vvv*0.5) () {$a$};
\node[] at (\hhh*1.5+\hhhh,-\vvv*0.5) () {$\bfb$};
\node[] at (\hhh*0.5+\hhh*5+\hhhh*1,\vvv*0.5) () {$\bfb$};
\node[] at (\hhh*1.5+\hhh*5+\hhhh*1,\vvv*0.5) () {$a'$};
\node[] at (\hhh*1.5+\hhh*5+\hhhh*1,-\vvv*0.5) () {$\bfa$};
\end{tikzpicture}

\item The case where an $\bfa$-box is adjacent to two $\bfb$-boxes:
\vskip 3mm
\begin{tikzpicture}
\def \hhh{5mm}   
\def \vvv{5mm}   
\def \hhhh{60mm}  
\def \mult{8}    
\node[] at (-\hhh*1.3,0) () {(S5)};
\draw[fill=black!30] (0,0) rectangle (\hhh*1,\vvv*1);
\draw[-] (\hhh*1,0) rectangle (\hhh*2,\vvv*1);
\draw[-] (\hhh*0,-\vvv*1) rectangle (\hhh*1,0);
\draw[|->] (\hhh*3,0) to (\hhh*4,0);
\draw[-] (\hhh*0+\hhh*5,0) rectangle (\hhh*1+\hhh*5,\vvv*1);
\draw[fill=black!30] (\hhh*1+\hhh*5,0) rectangle (\hhh*2+\hhh*5,\vvv*1);
\draw[-] (\hhh*0+\hhh*5,-\vvv*1) rectangle (\hhh*1+\hhh*5,0);
\node[] at (\hhh*0.5,\vvv*0.5) () {$\bfa$};
\node[] at (\hhh*1.5,\vvv*0.5) () {$b'$};
\node[] at (\hhh*0.5,-\vvv*0.5) () {$\bfb$};
\node[] at (\hhh*0.5+\hhh*5,\vvv*0.5) () {$b'$};
\node[] at (\hhh*1.5+\hhh*5,\vvv*0.5) () {$\bfa$};
\node[] at (\hhh*0.5+\hhh*5,-\vvv*0.5) () {$\bfb$};
\node[] at (-\hhh*1.3+\hhhh*1,0) () {(S6)};
\draw[fill=black!30] (0+\hhhh,0) rectangle (\hhh*1+\hhhh,\vvv*1);
\draw[-] (\hhh*1+\hhhh,0) rectangle (\hhh*2+\hhhh,\vvv*1);
\draw[-] (\hhh*0+\hhhh,-\vvv*1) rectangle (\hhh*1+\hhhh,0);
\draw[|->] (\hhh*3+\hhhh,0) to (\hhh*4+\hhhh,0);
\draw[-] (\hhh*1+\hhh*5+\hhhh*1,0) rectangle (\hhh*2+\hhh*5+\hhhh*1,\vvv*1);
\draw[-] (\hhh*0+\hhh*5+\hhhh*1,0) rectangle (\hhh*1+\hhh*5+\hhhh*1,\vvv*1);
\draw[fill=black!30] (\hhh*5+\hhhh*1,-\vvv*1) rectangle (\hhh*1+\hhh*5+\hhhh*1,0);
\node[] at (\hhh*0.5+\hhhh,\vvv*0.5) () {$\bfa$};
\node[] at (\hhh*1.5+\hhhh,\vvv*0.5) () {$b$};
\node[] at (\hhh*0.5+\hhhh,-\vvv*0.5) () {$\bfb$};
\node[] at (\hhh*0.5+\hhh*5+\hhhh*1,\vvv*0.5) () {$\bfb$};
\node[] at (\hhh*0.5+\hhh*5+\hhhh*1,-\vvv*0.5) () {$\bfa$};
\node[] at (\hhh*1.5+\hhh*5+\hhhh*1,\vvv*0.5) () {$b$};
\end{tikzpicture}
\vskip 3mm
\begin{tikzpicture}
\def \hhh{5mm}    
\def \vvv{5mm}    
\def \hhhhh{60mm}   
\node[] at (-\hhh*1.3,0) () {(S7)};
\draw[fill=black!30] (0,0) rectangle (\hhh*1,\vvv*1);
\draw[fill=black!30] (\hhh*1,0) rectangle (\hhh*2,\vvv*1);
\draw[-] (\hhh*1,-\vvv*1) rectangle (\hhh*2,0);
\draw[-] (\hhh*2,0) rectangle (\hhh*3,\vvv*1);
\draw[|->] (\hhh*4,0) to (\hhh*5,0);
\draw[-] (\hhh*6,0) rectangle (\hhh*7,\vvv*1);
\draw[fill=black!30] (\hhh*7,0) rectangle (\hhh*8,\vvv*1);
\draw[-] (\hhh*8,0) rectangle (\hhh*9,\vvv*1);
\draw[fill=black!30] (\hhh*7,0) rectangle (\hhh*8,-\vvv*1);
\node[] at (\hhh*0.5,\vvv*0.5) () {$\bfa$};
\node[] at (\hhh*1.5,\vvv*0.5) () {$a$};
\node[] at (\hhh*2.5,\vvv*0.5) () {$b$};
\node[] at (\hhh*1.5,-\vvv*0.5) () {$\bfb$};
\node[] at (\hhh*6.5,\vvv*0.5) () {$\bfb$};
\node[] at (\hhh*7.5,\vvv*0.5) () {$a'$};
\node[] at (\hhh*8.5,\vvv*0.5) () {$b$};
\node[] at (\hhh*7.5,-\vvv*0.5) () {$\bfa$};
\end{tikzpicture}
\end{itemize}
An $\bfa$-box in $A \cup B$ is said to be {\it fully switched}
if it can be switched with no $\bfb$-boxes.
If every $\bfa$-box in $A \cup B$ is fully switched,
then $A \cup B$ is said to be {\it fully switched}.

\begin{rem}\label{remark-switch=jdt}
(i) The shifted slides in FIGURE \ref{figure1} can be recovered from the above switches by viewing
$\bfa$-boxes as empty boxes.

(ii) The switches (S3), (S4), and (S7) denote the diagonal switches.

(iii) The switches (S3), (S4), and (S7) cannot be expressed as a composition of two other switches.
For instance, (S7) is different from the composition of (S6) and (S1).  

(iv)
In the switches (S3), (S4), and (S7),
one may think in such a way that the $(i,i)$ $\bfa$-box on the main diagonal is moved directly to the $(i+1,i+1)$ box.
However, to define the shifted switching path later, it would be more desirable to think in the fashion that
in case of (S3), the $(i,i)$ box is moved to the east and then to the south and in cases of (S4) and (S7),
the $(i,i+1)$ box is moved to the south and then the $(i,i)$ box to the east.
Refer to the following figure:
\begin{center}
\begin{tikzpicture}
\def \hhh{5mm}    
\def \vvv{5mm}    
\node[] at (-\hhh*1.3,0) () {(S3)};
\draw[fill=black!30] (0,0) rectangle (\hhh*1,\vvv*1);
\draw[-] (\hhh*1,0) rectangle (\hhh*2,\vvv*1);
\draw[-] (\hhh*1,-\vvv*1) rectangle (\hhh*2,0);
\draw[|->] (\hhh*3,0) to (\hhh*4,0);
\draw[-] (\hhh*0+\hhh*5,0) rectangle (\hhh*1+\hhh*5,\vvv*1);
\draw[-] (\hhh*1+\hhh*5,0) rectangle (\hhh*2+\hhh*5,\vvv*1);
\draw[fill=black!30] (\hhh*1+\hhh*5,0) rectangle (\hhh*2+\hhh*5,-\vvv*1);
\node[] at (\hhh*0.5,\vvv*0.5) () {$\bfa$};
\node[] at (\hhh*1.5,\vvv*0.5) () {$b'$};
\node[] at (\hhh*1.5,-\vvv*0.5) () {$\bfb$};
\node[] at (\hhh*0.5+\hhh*5,\vvv*0.5) () {$\bfb$};
\node[] at (\hhh*1.5+\hhh*5,\vvv*0.5) () {$b$};
\node[] at (\hhh*1.5+\hhh*5,-\vvv*0.5) () {$\bfa$};
\draw[->,dotted] (\hhh*0.5,\vvv*1.1) to [bend left=45] (\hhh*1.5,\vvv*1.1);
\draw[->,dotted] (\hhh*2.1,\vvv*0.5) to [bend left=45] (\hhh*2.1,-\vvv*0.5);
\end{tikzpicture}
\hskip 10mm
\begin{tikzpicture}
\def \hhh{5mm}    
\def \vvv{5mm}    
\node[] at (-\hhh*1.3,0) () {(S4)};
\draw[fill=black!30] (0,0) rectangle (\hhh*1,\vvv*1);
\draw[fill=black!30] (\hhh*1,0) rectangle (\hhh*2,\vvv*1);
\draw[-] (\hhh*1,-\vvv*1) rectangle (\hhh*2,0);
\draw[|->] (\hhh*3,0) to (\hhh*4,0);
\draw[-] (\hhh*0+\hhh*5,0) rectangle (\hhh*1+\hhh*5,\vvv*1);
\draw[fill=black!30] (\hhh*1+\hhh*5,0) rectangle (\hhh*2+\hhh*5,-\vvv*1);
\draw[fill=black!30] (\hhh*1+\hhh*5,0) rectangle (\hhh*2+\hhh*5,\vvv*1);
\node[] at (\hhh*0.5,\vvv*0.5) () {$\bfa$};
\node[] at (\hhh*1.5,\vvv*0.5) () {$a$};
\node[] at (\hhh*1.5,-\vvv*0.5) () {$\bfb$};
\node[] at (\hhh*0.5+\hhh*5,\vvv*0.5) () {$\bfb$};
\node[] at (\hhh*1.5+\hhh*5,\vvv*0.5) () {$a'$};
\node[] at (\hhh*1.5+\hhh*5,-\vvv*0.5) () {$\bfa$};
\draw[->,dotted] (\hhh*0.5,\vvv*1.1) to [bend left=45] (\hhh*1.5,\vvv*1.1);
\draw[->,dotted] (\hhh*2.1,\vvv*0.5) to [bend left=45] (\hhh*2.1,-\vvv*0.5);
\end{tikzpicture}
\vskip 2mm
\begin{tikzpicture}
\def \hhh{5mm}    
\def \vvv{5mm}    
\def \hhhhh{60mm}   
\node[] at (-\hhh*1.3,0) () {(S7)};
\draw[fill=black!30] (0,0) rectangle (\hhh*1,\vvv*1);
\draw[fill=black!30] (\hhh*1,0) rectangle (\hhh*2,\vvv*1);
\draw[-] (\hhh*1,-\vvv*1) rectangle (\hhh*2,0);
\draw[-] (\hhh*2,0) rectangle (\hhh*3,\vvv*1);
\draw[|->] (\hhh*4,0) to (\hhh*5,0);
\draw[-] (\hhh*6,0) rectangle (\hhh*7,\vvv*1);
\draw[fill=black!30] (\hhh*7,0) rectangle (\hhh*8,\vvv*1);
\draw[-] (\hhh*8,0) rectangle (\hhh*9,\vvv*1);
\draw[fill=black!30] (\hhh*7,0) rectangle (\hhh*8,-\vvv*1);
\node[] at (\hhh*0.5,\vvv*0.5) () {$\bfa$};
\node[] at (\hhh*1.5,\vvv*0.5) () {$a$};
\node[] at (\hhh*2.5,\vvv*0.5) () {$b$};
\node[] at (\hhh*1.5,-\vvv*0.5) () {$\bfb$};
\node[] at (\hhh*6.5,\vvv*0.5) () {$\bfb$};
\node[] at (\hhh*7.5,\vvv*0.5) () {$a'$};
\node[] at (\hhh*8.5,\vvv*0.5) () {$b$};
\node[] at (\hhh*7.5,-\vvv*0.5) () {$\bfa$};
\node[] at (\hhh*9.5,-\vvv*0) () {};
\node[] at (\hhh*18.5,-\vvv*0.5) () {\text{}};
\draw[->,dotted] (\hhh*0.5,\vvv*1.1) to [bend left=45] (\hhh*1.5,\vvv*1.1);
\draw[->,dotted] (\hhh*2.1,\vvv*0.5) to [bend left=45] (\hhh*2.1,-\vvv*0.5);
\end{tikzpicture}
\end{center}
\end{rem}

\vskip 2mm
Let $T$ be a filling of double border strip shape
with letters $\bfa$ and $\bfb$.
Suppose that $T$ is not fully switched.
Choose the easternmost $a$-box in $T^{(a)}$ which is the neighbor to the north or west of a $\bfb$-box
if such an $a$-box exists.
Otherwise, choose the southernmost $a'$-box
which is the neighbor to the north or west of a $\bfb$-box.
Denote this box by $\Box(T)$.
Then we apply the adequate switch among (S1) through (S7) to $\Box(T)$
and denote the resulting filling by $\sigma(T)$.
Let us apply this process in succession until $\sigma^n(T)$ gets fully switched, where $\sigma^i(T)$ is inductively defined by $\sigma(\sigma^{i-1}(T))$ for $i=2,3,\ldots, n$.
For simplicity, let us call the process from $T$ to $\sigma^n (T)$ the
{\it shifted (tableau) switching process}.
When $T = A \cup B$ for a shifted perforated $(\bfa,\bfb)$-pair $(A,B)$,
we write $A_B$ for $\sigma^n(A \cup B)^{(a)}$ and ${}^AB$ for $\sigma^n(A \cup B)^{(b)}$
to respect the notation in \cite{BSS}.

From now on, we will only consider a shifted perforated $(\bfa,\bfb)$-pair $(A, B)$ such that $A \cup B$ is not fully switched.
As the following example shows, $(\sigma(A \cup B)^{(a)}, \sigma(A \cup B)^{(b)})$
is not necessarily a shifted perforated $(\bfa,\bfb)$-pair.
\begin{center}
\begin{tikzpicture}
\def \hhh{5mm}    
\def \vvv{5mm}    
\node[] at (-\hhh*1.6,\vvv*0) {$A \cup B=$};
\draw[fill=black!30] (\hhh*0,0) rectangle (\hhh*1,\vvv*1);
\draw[-] (\hhh*1,0) rectangle (\hhh*2,\vvv*1);
\draw[-] (\hhh*0,-\vvv*1) rectangle (\hhh*1,\vvv*0);
\draw[fill=black!30] (\hhh*1,-\vvv*1) rectangle (\hhh*2,\vvv*0);
\node[] at (\hhh*0.5,\vvv*0.5) {$a$};
\node[] at (\hhh*1.5,\vvv*0.5) {$b'$};
\node[] at (\hhh*0.5,-\vvv*0.5) {$b$};
\node[] at (\hhh*1.5,-\vvv*0.5) {$a$};
\draw[-,thick,red] (\hhh*0,\vvv*1) -- (\hhh*2,\vvv*1) -- (\hhh*2,-\vvv*0)
-- (\hhh*1,-\vvv*0) -- (\hhh*1,-\vvv*1) -- (\hhh*0,-\vvv*1) -- (\hhh*0,\vvv*1);
\end{tikzpicture}
\hskip 3mm
\begin{tikzpicture}
\def \hhh{4mm}    
\def \vvv{5mm}    
\draw[|->] (\hhh*0,\vvv*0) -- (\hhh*3,\vvv*0)
node[midway,above] {\small (S5)};
\draw[black!0] (\hhh*1,-\vvv*1) rectangle (\hhh*2,\vvv*1);
\end{tikzpicture}
\begin{tikzpicture}
\def \hhh{5mm}    
\def \vvv{5mm}    
\draw[-] (\hhh*0,0) rectangle (\hhh*1,\vvv*1);
\draw[fill=black!30] (\hhh*1,0) rectangle (\hhh*2,\vvv*1);
\draw[-] (\hhh*0,-\vvv*1) rectangle (\hhh*1,\vvv*0);
\draw[fill=black!30] (\hhh*1,-\vvv*1) rectangle (\hhh*2,\vvv*0);
\node[] at (-\hhh*2,\vvv*0) {$\sigma(A \cup B)=$};
\node[] at (\hhh*0.5,\vvv*0.5) {$b'$};
\node[] at (\hhh*1.5,\vvv*0.5) {$a$};
\node[] at (\hhh*0.5,-\vvv*0.5) {$b$};
\node[] at (\hhh*1.5,-\vvv*0.5) {$a$};
\end{tikzpicture}
\end{center}
This problem, however, does not arise in practice in case where $B$ extends $A$.
Going further, we can state the following proposition.

\begin{prop}\label{prop-preserving-sp}
Let $(A, B)$ be a shifted perforated $(\bfa,\bfb)$-pair such that
$B$ extends $A$.
Suppose that
$\sigma^n(A\cup B)$ is fully switched for some positive integer $n$.
Then $\sigma^m(A\cup B)$ is still a shifted perforated $(\bfa,\bfb)$-pair
for all $m=1,2, \ldots, n$.
\end{prop}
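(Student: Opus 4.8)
The plan is to argue by induction on the number of switches already performed. The input $A\cup B$ is a shifted perforated $(\bfa,\bfb)$-pair by hypothesis, which is the case $m=0$, so it suffices to show that if $T':=\sigma^{m-1}(A\cup B)$ is a shifted perforated $(\bfa,\bfb)$-pair and is not yet fully switched, then the single application $\sigma(T')=\sigma^{m}(A\cup B)$ is again one. Two things must be proved together in the inductive step: that the local configuration at the selected box $\Box(T')$ is always \emph{exactly} one of the seven patterns (S1)--(S7), so that $\sigma$ is well defined and does not stall, and that after performing the corresponding switch all eight defining conditions of Definition~\ref{Def-Perforated} --- four for the $\bfa$-tableau and four for the $\bfb$-tableau --- still hold. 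Note that ``$B$ extends $A$'' is \emph{not} itself preserved step by step, since the two regions interlace as the process runs; so rather than carry it verbatim I would carry a frontier invariant recording how the still-unswitched $\bfa$-boxes sit relative to the cells already vacated, and derive the consequences of the extends hypothesis from it.

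First I would dispose of the four conditions governing the $\bfb$-tableau. By Remark~\ref{remark-switch=jdt}(i), each of (S1)--(S7), read with the $\bfa$-boxes regarded as empty cells, is exactly one of the shifted slides of Figure~\ref{figure1}. A perforated $\bfb$-tableau is simply a filling in the two-letter alphabet $b'<b$ constrained by the very order and multiplicity rules that a shifted slide is engineered to respect; since the shifted slides are known to carry an SYT to an SYT, each individual switch keeps the $\bfb$-entries weakly increasing along rows and down columns, keeps at most one $b$ in each column and at most one $b'$ in each row, and keeps at most one $\bfb$ on the main diagonal. Thus the $\bfb$-conditions are inherited from the local behaviour of the shifted jeu de taquin, adapted to the perforated setting exactly as in \cite{BSS}, and the extends hypothesis plays no role here.

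The substantive part is the $\bfa$-tableau, and this is where the extends hypothesis is indispensable. A switch transports a single $\bfa$-box strictly east or strictly south --- in the diagonal switches (S3), (S4), (S7) it effectively travels east and then south, cf. Remark~\ref{remark-switch=jdt}(iv) --- so the only threats to perforatedness are collisions at the destination cell: it could lie in a column already carrying an $a$ (violating condition (2)), in a row already carrying an $a'$ (condition (3)), on the main diagonal already bearing an $\bfa$ (condition (4)), or it could place an $a$ to the northwest of an existing $a'$, equivalently an $a'$ to the southeast of an $a$ (condition (1)). The example displayed before the statement shows that such a collision genuinely occurs when $B$ does not extend $A$. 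I would exclude every such collision by combining two ingredients: (i) $\Box(T')$ is always the \emph{easternmost} eligible $a$-box, or, failing that, the \emph{southernmost} eligible $a'$-box, so that any $\bfa$-box which could block the destination has necessarily been moved at an earlier step and now lies strictly beyond the current frontier; and (ii) the extends hypothesis, propagated through the frontier invariant, forces the unswitched $\bfa$-region to remain weakly northwest of the region it is sliding into, ruling out precisely the interlaced diagonal patterns that the counterexample exhibits. Reading conditions (1)--(4) off the explicit local shapes of (S1)--(S7) then becomes routine once the invariant is available.

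The main obstacle is exactly this collision analysis for conditions (2), (3), and (4): a purely local inspection of the seven patterns cannot settle it, because the obstructing $a$- or $a'$-box may be far from $\Box(T')$, and the diagonal switches make the bookkeeping near the main diagonal particularly delicate. Everything therefore hinges on formulating and maintaining the frontier invariant, whose proof must thread the easternmost/southernmost selection rule through the inductive step and invoke ``$B$ extends $A$'' to forbid the diagonal interlacing. Once the invariant is established, the verifications that each selected $\Box(T')$ exposes one of (S1)--(S7) and that each pattern respects the no-$a'$-southeast-of-$a$ rule reduce to a finite local check, and perforatedness of $\sigma(T')$ follows, completing the induction.
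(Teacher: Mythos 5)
Your overall architecture matches the paper's: the $\bfb$-side is handled by viewing each switch as a shifted slide (Remark \ref{remark-switch=jdt}(i)), so perforatedness of the $\bfb$-tableau is inherited from the jeu de taquin, and the real work is confined to the $\bfa$-side, where some global positional information must rule out collisions between the box currently being switched and $\bfa$-boxes that finished moving earlier. Up to this point your analysis is sound, including the observation that ``$B$ extends $A$'' is not literally preserved at intermediate stages.

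However, there is a genuine gap: the ``frontier invariant,'' which you yourself identify as the hinge of the whole argument, is never formulated, and formulating and proving it is precisely the substantive content of the proposition. In the paper this invariant is made concrete in two lemmas. Lemma \ref{lem-sw-path} shows that every shifted switching path is horizontal, vertical, or the single L-shaped turn $\{(i,j),(i,j+1),(i+1,j+1)\}$ (by excluding the three forbidden local subsets {\bf E1}--{\bf E3}, using that $B$ is an SYT of border strip shape). Lemma \ref{lem-path-intersect} then pins down how consecutive paths sit relative to one another: under a case analysis on whether the moving boxes are $a$ or $a'$ and whether a diagonal switch (S4)/(S7) occurs, the later path lies to the west or to the north of the endpoint of the earlier path; its proof is not self-contained but proceeds by identifying the switching paths with the sliding paths of Buch--Kresch--Tamvakis and importing \cite[Lemma 2]{BKT}. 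The induction in the paper is then organized not switch-by-switch, as you propose, but path-by-path over the $\bfa$-boxes in the order given by $\rstan(A)$, because the validity of a single switch depends on where entire earlier paths terminated --- exactly the information your per-switch induction would have to carry but cannot, absent the invariant. Saying that the invariant's proof ``must thread the easternmost/southernmost selection rule through the inductive step and invoke the extends hypothesis'' describes the task without performing it; in particular the delicate diagonal cases (where an $a$-box turns into an $a'$-box via (S4)/(S7), and where one must decide between the north and west conclusions) require the explicit case analysis of Lemma \ref{lem-path-intersect}, and nothing in your proposal substitutes for it.
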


The proof of Proposition \ref{prop-preserving-sp} will appear in the subsection \ref{subsection-proof-prop}.
By virtue of Proposition \ref{prop-preserving-sp}, we can provide the following algorithm.

\begin{algo}\label{algorithm1}
(The shifted switching process on shifted perforated pairs) \
Let $(A,B)$ be a shifted perforated $(\bfa,\bfb)$-pair such that
$B$ extends $A$.

\begin{description}
\item[Step 1] Start with $A \cup B$.

\item[Step 2]
If $A\cup B$ is fully switched, finish the algorithm.\\
If $A\cup B$ is not fully switched, apply $\sigma$ to $A \cup B$.
Put $A:=\sigma(A \cup B)^{(a)}$ and $B:=\sigma(A \cup B)^{(b)}$,
and then go to {\bf Step 1}.

\end{description}
\end{algo}

Given two SYTs $T_1$ and $T_2$, we say that
$T_2$ {\it extends} $T_1$ if $\sh(T_2)$ extends $\sh(T_1)$.
Then the following theorem can be stated.

\begin{thm}\label{Thm-Perforated}
Let $(A,B)$ be a shifted perforated $(\bfa,\bfb)$-pair such that $B$ extends $A$.
Then we have
\begin{itemize}
  \item[{\rm (a)}] ${}^AB \cup A_B$ has the same shape as $A \cup B$.

  \item[{\rm (b)}] $A_B$ extends ${}^AB$. In addition, $A_B$ and ${}^AB$ are SYTs of border strip shape.
\end{itemize}
\end{thm}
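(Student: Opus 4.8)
The plan is to treat the two assertions separately and, for part (b), to extract all the structure from the fully switched terminal configuration $\sigma^n(A\cup B)=({}^AB,A_B)$ together with Proposition~\ref{prop-preserving-sp}. For part (a) I would first observe that each of the switches (S1)--(S7) merely redistributes the labels $\bfa,\bfb$ among a fixed set of at most four boxes: in every one of the seven pictures the collection of occupied boxes before and after the arrow is identical, and the number of $\bfa$-boxes and of $\bfb$-boxes is unchanged (a primed letter may become unprimed, but a $\bfb$-box stays a $\bfb$-box, and so on). Hence, by induction on the number of applied switches, $\sigma^m(A\cup B)$ occupies exactly the region $\theta=\sh(A\cup B)$ for every $m$; in particular ${}^AB\cup A_B=\sigma^n(A\cup B)$ has shape $\theta$, and $|A_B|=|A|$, $|{}^AB|=|B|$. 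This proves (a).

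For the shape statement in (b) I would use the defining property of the terminal configuration. Since $\sigma^n(A\cup B)$ is fully switched, no $\bfa$-box is the neighbor to the north or to the west of a $\bfb$-box (otherwise the selection rule for $\Box(T)$ would still produce a box to which a switch applies). Writing $\theta=\lambda/\nu$ with $\sh(A)=\mu/\nu$ and $\sh(B)=\lambda/\mu$, I claim $R:=S(\nu)\cup\sh({}^AB)$ is closed under passing from a box to its north neighbor and to its west neighbor inside $S(\lambda)$. Indeed $S(\nu)$ is already such a lower set, while if $(i,j)\in\sh({}^AB)$ then its north and west neighbors, when they lie in $S(\lambda)$, lie either in $S(\nu)\subseteq R$ or in $\theta$; in the latter case they cannot be $\bfa$-boxes, for each would be the north, resp. west, neighbor of the $\bfb$-box $(i,j)$, so they are $\bfb$-boxes, hence in $R$. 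A north/west closed subset of $S(\lambda)$ containing $S(\nu)$ is exactly a shifted diagram $S(\mu')$ for a strict partition $\mu'$ with $\nu\subseteq\mu'\subseteq\lambda$: the row lengths are nonincreasing by north closedness, and strictly decreasing because the staircase offset forces $\mu'_{i-1}>\mu'_i$. Thus $\sh({}^AB)=\mu'/\nu$, and by part (a) $\sh(A_B)=\theta\setminus\sh({}^AB)=\lambda/\mu'$, which is precisely the statement that $A_B$ extends ${}^AB$.

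It remains to upgrade ``double border strip'' to ``border strip'' and to check that the tableaux are SYTs. Both $\sh({}^AB)$ and $\sh(A_B)$ are sub-skew-shapes of the double border strip $\theta$, so contain no three consecutive diagonal boxes; I must rule out a diagonal pair $\{(i,j),(i+1,j+1)\}$ inside a single piece. Suppose such a pair consists of two $\bfb$-boxes. If it lies on the main diagonal, i.e.\ $j=i$, this already contradicts the last bullet of Definition~\ref{Def-Perforated} (at most one $\bfb$ on the main diagonal), where Proposition~\ref{prop-preserving-sp} guarantees that ${}^AB$ is a shifted perforated $\bfb$-tableau. If $j\ge i+1$, a short computation in $\lambda/\nu$ shows that $(i,j+1)$ and $(i+1,j)$ again lie in $\theta$; by full switchedness neither can be an $\bfa$-box (each would be the north, resp. west, neighbor of the $\bfb$-box $(i+1,j+1)$), so both are $\bfb$-boxes and we obtain a $2\times2$ block of $\bfb$-boxes. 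Such a block is forbidden in a shifted perforated $\bfb$-tableau: tracing ``no $b'$ southeast of $b$'' together with at most one $b$ per column and at most one $b'$ per row through the four cells forces an impossible assignment of primes. Hence $\sh({}^AB)$ has no diagonal pair and is a border strip; the symmetric argument, using that no $\bfb$-box is the south or east neighbor of an $\bfa$-box, gives the same for $\sh(A_B)$. Finally, a shifted perforated $\bfb$-tableau (resp.\ $\bfa$-tableau) whose shape is a border strip is automatically an SYT, since on a border strip the ``no $b'$ southeast of $b$'' condition forces weak increase along rows and down columns, and the column/row conditions are exactly the remaining SYT axioms.

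The main obstacle is the border strip step. Shape preservation in (a) and the ``extends'' relation are essentially bookkeeping once the fully switched condition is rephrased as ``no $\bfa$-box is the north or west neighbor of a $\bfb$-box''. By contrast, excluding diagonal pairs genuinely mixes three ingredients---the global double-border-strip constraint on $\theta$, the local fully switched condition, and the shifted perforated conditions of Definition~\ref{Def-Perforated}---and must treat the main diagonal (where the switches (S3), (S4), (S7) operate and where the ``at most one $\bfb$ on the diagonal'' rule is decisive) separately from the off-diagonal cells. Throughout, Proposition~\ref{prop-preserving-sp} is the indispensable input guaranteeing that the terminal and intermediate configurations really are shifted perforated pairs.
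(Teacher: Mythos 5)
Your proposal is correct, and for part (b) it takes a genuinely different route from the paper's. The paper settles (b) in a few lines by invoking Remark~\ref{remark-switch=jdt}(i): viewing the $\bfa$-boxes as empty boxes, the switching process is exactly a sequence of shifted jeu de taquin slides performed on $B$, with inner corners chosen in the order prescribed by $\rstan(A)$; the standard properties of shifted slides then give that ${}^AB$ is an SYT of border strip shape and that $A_B$ extends ${}^AB$, after which Proposition~\ref{prop-preserving-sp} makes $A_B$ a shifted perforated $\bfa$-tableau supported on a full skew shape, hence an SYT of border strip shape. You never use this jdt dictionary: you work entirely with the terminal configuration, reformulating ``fully switched'' as the absence of any $\bfa$-box immediately north or west of a $\bfb$-box, showing that $S(\nu)\cup\sh({}^AB)$ is north/west-closed in $S(\lambda)$ and hence equals $S(\mu')$ for a strict partition $\mu'$ (your ``extends'' step), and excluding diagonal pairs by promoting them, via full switchedness, to $2\times 2$ blocks that the axioms of Definition~\ref{Def-Perforated} forbid, the main-diagonal case being killed by the ``at most one letter on the main diagonal'' axiom. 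Both arguments rest on Proposition~\ref{prop-preserving-sp} as the essential input. The paper's route buys brevity and sets up the switching-equals-sliding correspondence that is reused later (e.g., in Theorems~\ref{Thm-Sec3-SW} and \ref{thm-main-section4}); yours buys a self-contained, purely combinatorial proof independent of shifted jeu de taquin theory. The one soft spot is your parenthetical claim that any north/west adjacency of an $\bfa$-box to a $\bfb$-box triggers a switch; this equivalence needs the (routine) check that (S1)--(S7) cover every adjacency configuration that can arise in a shifted perforated pair, but the paper's selection rule for $\Box(T)$ presupposes the same fact.
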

\begin{proof}
(a) The assertion follows from the fact that every switch does not affect on the shape.

(b) Since $B$ extends $A$, $B$ is clearly an SYT of border strip shape.
On the other hand, by Remark \ref{remark-switch=jdt}(i), ${}^AB$ can be obtained by applying
a sequence of shifted slides to $B$.
Here the order in which we choose inner corners is given by $\rstan(A)$.
This tells us that ${}^AB$ is an SYT of border strip shape and $A_B$ extends ${}^AB$.
In addition, by Proposition \ref{prop-preserving-sp}, $({}^AB, A_B)$ is a shifted perforated $(\bfb,\bfa)$-pair
and particularly $A_B$ is a shifted perforated $\bfa$-tableau of border strip shape.
From this it follows that $A_B$ is an SYT of border strip shape.
\end{proof}

\begin{example}\label{example-sw-perforated-pair}
Let
\begin{center}
.
\end{center}
But, by applying our switches,
they can be recovered from $U''$ and $V''$, respectively.
In fact we will prove in Section \ref{sect-prop-ss} that our shifted tableau switching is an involution
(see Theorem \ref{thm-main-section4}).

\vskip 1mm
\noindent (iii)
Given a shifted perforated $(\bfa,\bfb)$-pair $(A,B)$ such that $B$ extends $A$,
let us identify $a'$-boxes and $a$-boxes with ${\rm o}'$-holes and ${\rm o}$-holes, respectively.
Let $(\overline{{}^AB},\overline{A_B})$ be the shifted perforated $(\bfb,\bfa)$-pair
obtained through the sliding process given in \cite{BKT}.
In case where neither $U$ nor $V$ presented in (ii) does not occur in the middle of the sliding process,
one can easily verify that $({}^AB,A_B)$ is exactly equal to $(\overline{{}^AB},\overline{A_B})$.
However, if
either $U$ or $V$ occurs, then
$A_B$ is different from $\overline{A_B}$ although ${}^AB$ is still equal to $\overline{{}^AB}$.
The difference between $A_B$ and $\overline{A_B}$
lies in that the former has a primed letter
in the southernmost box on the main diagonal,
whereas the latter has an unprimed letter therein.
It implies that $\stan(A_B) = \stan(\overline{A_B})$.

\vskip 1mm
\noindent (iv)
The definition of the sliding process presented in~\cite{BKT} tells us that
$\overline{{}^AB}=\overline{{}^{\stan(A)}B}$ and $\stan(A)_B=\overline{\stan(A)_B}$.
From this it follows that ${}^AB={}^{\stan(A)}B$ and $\stan(A_B)=\stan(A)_B$.
Similarly, one can observe that the standardization of $\bfb$ letters is compatible
with our switches (S1) through (S7). The below example shows that (S7) followed by
the standardization of $\bfb$ letters and the standardization of $\bfb$ letters followed by
(S4) produce the same result.
\vskip 2mm
\begin{center}
\begin{tikzpicture}[baseline=0mm]
\def \hhh{5mm}     
\def \vvv{5mm}       
\def \vvvv{-20mm}   
\def \hhhh{40mm}  
\draw[-,fill=black!20] (\hhh*0,\vvv*0) rectangle (\hhh*1,\vvv*1);
\draw[-,fill=black!20] (\hhh*1,\vvv*0) rectangle (\hhh*2,\vvv*1);
\draw[-] (\hhh*2,\vvv*0) rectangle (\hhh*3,\vvv*1);
\draw[-] (\hhh*1,-\vvv*1) rectangle (\hhh*2,\vvv*0);
\node at (\hhh*0.5,\vvv*0.5) {$\bfa$};
\node at (\hhh*1.5,\vvv*0.5) {$a$};
\node at (\hhh*2.5,\vvv*0.5) {$b$};
\node at (\hhh*1.5,-\vvv*0.5) {$\bfb$};
\draw[|->] (\hhh*4,\vvv*0) -- (-\hhh*1+\hhhh,\vvv*0) node[above,midway] {\tiny (S7)};
\draw[-] (\hhhh+\hhh*0,\vvv*0) rectangle (\hhhh+\hhh*1,\vvv*1);
\draw[-,fill=black!20] (\hhhh+\hhh*1,\vvv*0) rectangle (\hhhh+\hhh*2,\vvv*1);
\draw[-] (\hhhh+\hhh*2,\vvv*0) rectangle (\hhhh+\hhh*3,\vvv*1);
\draw[-,fill=black!20] (\hhhh+\hhh*1,-\vvv*1) rectangle (\hhhh+\hhh*2,\vvv*0);
\node at (\hhhh+\hhh*0.5,\vvv*0.5) {$\bfb$};
\node at (\hhhh+\hhh*1.5,\vvv*0.5) {$a'$};
\node at (\hhhh+\hhh*2.5,\vvv*0.5) {$b$};
\node at (\hhhh+\hhh*1.5,-\vvv*0.5) {$\bfa$};
\draw[-,fill=black!20] (\hhh*0,\vvvv+\vvv*0) rectangle (\hhh*1,\vvvv+\vvv*1);
\draw[-,fill=black!20] (\hhh*1,\vvvv+\vvv*0) rectangle (\hhh*2,\vvvv+\vvv*1);
\draw[-] (\hhh*2,\vvvv+\vvv*0) rectangle (\hhh*3,\vvvv+\vvv*1);
\draw[-] (\hhh*1,\vvvv-\vvv*1) rectangle (\hhh*2,\vvvv+\vvv*0);
\node at (\hhh*0.5,\vvvv+\vvv*0.5) {$\bfa$};
\node at (\hhh*1.5,\vvvv+\vvv*0.5) {$a$};
\node at (\hhh*2.5,\vvvv+\vvv*0.5) {$2$};
\node at (\hhh*1.5,\vvvv-\vvv*0.5) {$1$};
\draw[|->] (\hhh*4,\vvvv+\vvv*0) -- (-\hhh*1+\hhhh,\vvvv+\vvv*0) node[above,midway] {\tiny (S4)};
\draw[-] (\hhhh+\hhh*0,\vvvv+\vvv*0) rectangle (\hhhh+\hhh*1,\vvvv+\vvv*1);
\draw[-,fill=black!20] (\hhhh+\hhh*1,\vvvv+\vvv*0) rectangle (\hhhh+\hhh*2,\vvvv+\vvv*1);
\draw[-] (\hhhh+\hhh*2,\vvvv+\vvv*0) rectangle (\hhhh+\hhh*3,\vvvv+\vvv*1);
\draw[-,fill=black!20] (\hhhh+\hhh*1,\vvvv-\vvv*1) rectangle (\hhhh+\hhh*2,\vvvv+\vvv*0);
\node at (\hhhh+\hhh*0.5,\vvvv+\vvv*0.5) {$1$};
\node at (\hhhh+\hhh*1.5,\vvvv+\vvv*0.5) {$a'$};
\node at (\hhhh+\hhh*2.5,\vvvv+\vvv*0.5) {$2$};
\node at (\hhhh+\hhh*1.5,\vvvv-\vvv*0.5) {$\bfa$};
\draw[|->] (\hhh*1.5,-\vvv*1.5) -- (\hhh*1.5,\vvvv+\vvv*1.5) node[left,midway] {\tiny \stan(B)};
\draw[|->] (\hhh*1.5+\hhhh,-\vvv*1.5) -- (\hhh*1.5+\hhhh,\vvvv+\vvv*1.5) node[right,midway] {\tiny \stan(B)};
\end{tikzpicture}
\end{center}
As a consequence, we can derive that $A_B=A_{\stan(B)}$ and $\stan({}^AB)={}^A\stan(B)$.

\vskip 1mm
\noindent (v)
Suppose that $(A',B)$ is a shifted perforated $(\bfa,\bfb)$-pair
such that $B$ extends $A'$.
If $\stan(A) = \stan(A')$,
then $\stan(A)_B = \stan(A')_B$
and therefore $\stan({A}_B)=\stan({A'}_B)$ by (iv).
\end{rem}

Now we are ready to present the shifted switching process.

\begin{algo}\label{algorithm2}
(The shifted switching process on pairs of SYTs) \
Let $(S,T)$ be a pair of SYTs such that $T$ extends $S$.

\begin{description}
\item[Step 1] Let $m = {\rm max}(S)$ and $n ={\rm max}(T)$.

\item[Step 2] Set $i:=m$ and $j:=1$.

\item[Step 3] Apply Algorithm \ref{algorithm1} to $(S^{(i)}, T^{(j)})$.

\item[Step 4] If $j = n$, then go to {\bf Step 5}.
Otherwise, go to {\bf Step 3} after setting $j:=j+1$.

\item[Step 5] If $i > 1$, then go to {\bf Step 3} after setting $j:=1$ and $i:=i-1$.
Otherwise, finish the algorithm.
\end{description}
\end{algo}

Let $(S, T)$ be a pair of SYTs such that $T$ extends $S$.
Suppose that ${\rm max}(S) = m$ and ${\rm max}(T)=n.$
By abuse of notation, we write $S \cup T$ as
$$
S^{(1)} \cdots S^{(m)} T^{(1)} \cdots  T^{(n)}
$$
and the filling obtained by applying $\sigma^k$ to
$S^{(m)} \cup T^{(1)}$ in $S \cup T$
as
$$
S^{(1)} \cdots S^{(m-1)} {\sigma^k}\left(S^{(m)} T^{(1)}\right) T^{(2)} \cdots T^{(n)}.
$$
When $S^{(m)} \cup T^{(1)}$ gets fully switched after applying a sequence of switches,
we write the resulting filling as
$$
S^{(1)} \cdots S^{(m-1)} T^{(1)} S^{(m)} T^{(2)} \cdots T^{(n)}.
$$
In the inductive manner we can write
$$
S^{(1)} \cdots S^{(r-1)}
{\sigma^k}\left( S^{(r)} T^{(1)} \right)
T^{(2)} \cdots T^{(n)} S^{(r+1)} \cdots  S^{(m)}
$$
for the filling obtained from
$S^{(1)} \cdots S^{(r)} T^{(1)} \cdots T^{(n)}  S^{(r+1)}  \cdots  S^{(m)}$
by applying $\sigma^k$ to
$S^{(r)} \cup T^{(1)}$.
When $S^{(1)} \cup T^{(n)}$ gets fully switched after applying a sequence of switches,
the process is finished.
We write $S_T$ and ${}^ST$ for the fillings that
$S$ and $T$ respectively become when the above process is applied to $S \cup T$.
With this notation,
we define the {\it shifted (tableau) switching} $\Sigma$
by
the map sending $(S,T)$ to $({}^ST, S_T)$.
When $S\cup T$ has multiple components,
each of them is switched independently
according to Algorithm \ref{algorithm2}.
The next theorem is the main result of this section.

\begin{thm}\label{Thm-Sec3-SW}
Let $(S, T)$ be a pair of SYTs such that $T$ extends $S$.
The shifted tableau switching $(S ,T) \mapsto ({}^ST, S_T)$ satisfies the following.
\begin{itemize}
\item[{\rm (a)}] ${}^S T \cup S_T$ has the same shape as $S \cup T$.
In addition,
$S_T$ (resp., ${}^ST$) has the same weight as $S$ (resp., $T$).

\item[{\rm (b)}] $S_T$ extends ${}^S T$. In addition, $S_T$ and ${}^S T$ are SYTs.
\end{itemize}
\end{thm}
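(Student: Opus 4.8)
The plan is to deduce everything from the perforated-pair results of Proposition~\ref{prop-preserving-sp} and Theorem~\ref{Thm-Perforated}, since by Algorithm~\ref{algorithm2} the map $\Sigma$ is nothing but a composition of the perforated switching process (Algorithm~\ref{algorithm1}) applied to the pairs $(S^{(i)},T^{(j)})$ in a prescribed order.

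Part (a) I expect to be routine. The shape assertion is immediate: every individual switch (S1)--(S7) merely interchanges the occupied positions of an $\bfa$-box and a $\bfb$-box and hence does not alter $\sh(S\cup T)$, exactly as in Theorem~\ref{Thm-Perforated}(a); applying this to the whole composition gives $\sh({}^ST\cup S_T)=\sh(S\cup T)$. For the weight, the point is that each switch preserves two pieces of data attached to every box: the absolute value $|\bfk|$ of its entry (only the priming can toggle, by inspection of (S1)--(S7)) and whether the box originates from $S$ or from $T$ (a $\bfb$-box stays a $\bfb$-box). Consequently the sub-multiset of entries coming from $S$ and that coming from $T$ each keep their weights throughout, so $\wt(S_T)=\wt(S)$ and $\wt({}^ST)=\wt(T)$.

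For part (b) I would argue through the shifted jeu de taquin interpretation of Remark~\ref{remark-switch=jdt}(i). Regarding all $S$-boxes as empty, a single switch moves a $T$-box into an inner corner and is precisely one shifted slide of Figure~\ref{figure1}. The essential point is that the order built into Algorithm~\ref{algorithm2}---processing $i$ from $\max(S)$ down to $1$, and for each $i$ the strips $T^{(1)},T^{(2)},\dots$ in increasing order---guarantees that the $T$-box actually moved at each step is the one a genuine shifted slide of the \emph{whole} tableau $T$ would select: when $T^{(j)}$ is being switched past $S^{(i)}$, the smaller-valued strips $T^{(1)},\dots,T^{(j-1)}$ have already been moved to the northwest of the current $S^{(i)}$-cells while $T^{(j+1)},\dots$ still lie to the southeast, so the slide rule (which pulls in the jeu-de-taquin-smaller neighbor) selects a $T^{(j)}$-box. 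Thus the entire orbit of $T$ under $\Sigma$ is a sequence of legal shifted slides, and since shifted slides carry SYTs to SYTs (Worley--Sagan), ${}^ST$ is an SYT. The tableau $S_T$ is handled by the mirror-image interpretation: regarding the $T$-boxes as empty, each switch pushes an $S$-box to the southeast into a vacated cell, i.e.\ performs an inverse shifted slide on $S$; inverse slides again preserve the SYT property, so $S_T$ is an SYT. Finally, since ${}^ST$ is obtained by sliding $T$ inward (to the northwest) and $S_T$ by sliding $S$ outward (to the southeast), the region occupied by $S_T$ is the southeastern part of $\sh(S\cup T)$; combined with the shape identity of part (a) and the cell-by-cell extension already recorded in Theorem~\ref{Thm-Perforated}(b), this yields that $S_T$ extends ${}^ST$.

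To organize the verification rigorously I would induct on $m=\max(S)$, peeling off the strip $S^{(m)}$, which Algorithm~\ref{algorithm2} moves entirely to the southeast before any other strip of $S$ is touched. Switching $S^{(m)}$ past $T=T^{(1)}\cup\cdots\cup T^{(n)}$ is itself handled by an inner induction on $n$ whose base case $n=1$ is exactly Theorem~\ref{Thm-Perforated}; the invariant to carry is that after this sub-process the relocated tableau $T'$ is again an SYT extending $S'=S^{(1)}\cup\cdots\cup S^{(m-1)}$, so that the outer induction hypothesis applies to the pair $(S',T')$. The main obstacle, and where the real work lies, is precisely this bookkeeping across distinct value classes: one must show that the separate border strips ${}^ST^{(j)}$ (resp.\ $S_T^{(i)}$) reassemble into a single SYT, i.e.\ that the column- and row-strictness conditions hold \emph{between} different values and not merely within each strip. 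Because the shifted switching process genuinely depends on the order of inner corners (Remark~\ref{Remark-Diff}), this cannot be obtained from any naive commutativity of switches; the argument must keep track of the exact processing order, and the coincidence of the switching paths with the sliding paths of~\cite{BKT} (invoked later in Lemma~\ref{lem-path-intersect}) is what ultimately controls these inter-strip interactions.
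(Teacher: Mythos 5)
Your part (a), your treatment of ${}^ST$, and your closing induction all follow the paper's own route: (a) is exactly Theorem \ref{Thm-Perforated}(a) plus Algorithm \ref{algorithm2} (switches only permute occupied cells and at most toggle primes, and never change whether a box carries an $S$- or a $T$-letter); ${}^ST$ is an SYT because, viewing $\bfa$-boxes as empty, every switch is a forward shifted slide (Remark \ref{remark-switch=jdt}(i)); and the induction on $m=\max(S)$ that peels off $S^{(m)}$, pushes it past all of $T$ by iterating Theorem \ref{Thm-Perforated}, and then invokes the induction hypothesis on the remaining pair is precisely the paper's proof that $S_T$ is an SYT.

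The gap is your stated justification for the crucial fact that $S_T$ is an SYT, namely the ``mirror-image'' claim that, viewing $T$-boxes as empty, each switch is an inverse shifted slide on $S$. This is not a symmetric triviality, and it cannot be asserted without proof. First, the move performed on the $S$-side is not determined by the $S$-part alone: in (S5) versus (S6) the $\bfa$-box moves east or south according to whether the adjacent $T$-letter is primed --- data invisible to $S$ --- and in (S4), (S7) a diagonal $\bfa$-box travels two cells while a companion $a$ acquires a prime; so the $S$-side evolution is not inverse jeu de taquin of $S$ in any canonical sense. Second, a run of switches advances \emph{one} $S$-box through \emph{many} empty cells (each hole moving a single step), whereas an inverse slide advances \emph{one} hole through \emph{many} boxes; these are different decompositions, and no confluence principle is available to reconcile them in the shifted setting --- the paper stresses exactly this order-dependence in Remarks \ref{Remark-Diff}(i) and \ref{Remark-Diff2}(i). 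Third, if your mirror claim were true, the involution Theorem \ref{thm-main-section4}(a) would be nearly immediate, whereas the paper calls it ``far from obvious'' and proves it via Lemmas \ref{lem1-sw-involution}, \ref{lem2-sw-involution} and \ref{lem-sw-involution}; relatedly, Remark \ref{Remark-Diff}(ii)--(iii) shows that the $S$-side of the switching process differs in its primes from the hole-sliding process of \cite{BKT}, the natural candidate for a slide-type reading of the $S$-side. The paper instead obtains SYT-ness of $S_T$ from Theorem \ref{Thm-Perforated}(b) --- ultimately from Proposition \ref{prop-preserving-sp}, whose switching-path analysis is the real content --- applied to each strip $S^{(i)}$ as it is pushed past all of $T$, followed by an easy reassembly: the relocated strips carry distinct letters and nested shapes, so their union is automatically an SYT. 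This also corrects where you locate the difficulty: the inter-strip bookkeeping you defer to Lemma \ref{lem-path-intersect} is the easy part, while the hard part is the intra-strip statement already packaged in Proposition \ref{prop-preserving-sp}; your induction goes through verbatim once its base case is taken from Theorem \ref{Thm-Perforated}(b) rather than from the inverse-slide claim.
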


\begin{proof}
(a) The assertion follows from Theorem \ref{Thm-Perforated}(a) and Algorithm \ref{algorithm2}.

(b) In the same way as in the proof of Theorem \ref{Thm-Perforated}(b),
Remark \ref{remark-switch=jdt}(i) implies that
${}^ST$ is an SYT of skew shape and $S_T$ extends ${}^ST$.
Set $m := \max(S)$ and $n := \max(T)$.
Without loss of generality, we may assume that $S^{(i)} \neq \emptyset$ for all $1\leq i \leq m$.
Now we will prove by induction on $m$ that $S_T$ is an SYT.

When $m=1$, $S^{(1)} (= S)$ is an SYT of border strip shape.
Therefore by applying Theorem \ref{Thm-Perforated}(b) repeatedly,
we can see that $S^{(1)}(=S_T)$ in $T^{(1)}\cdots T^{(n)}S^{(1)}$ is an SYT of border strip shape.
Next we assume that the desired result holds for all SYTs $S$ with $\max(S) \leq k$
which is extended by $T$.
Let $(S',T)$ be a pair of SYTs such that $T$ extends $S'$ and $\max(S')=k+1$.
Applying the same argument as in case where $m=1$, 
one can see that $S'^{(k+1)}$ in $S'^{(1)} \cdots S'^{(k)} T^{(1)} \cdots T^{(n)} S'^{(k+1)}$ is an SYT of border strip shape.
Then our induction hypothesis implies that
$S'^{(1)} \cdots S'^{(k)}$ in $T^{(1)} \cdots T^{(n)} S'^{(1)} \cdots S'^{(k)} S'^{(k+1)}$
is an SYT which is extended by $S'^{(k+1)}$.
Consequently $S'^{(1)} \cdots S'^{(k)} S'^{(k+1)} (= S'_T)$ is an SYT.
\end{proof}

\begin{example}\label{example-section3}
Let
\vskip 1mm
\begin{center}

\end{center}

\noindent
(ii)
When we are given a filling of the form
\begin{displaymath}
\cdots S^{(i)} T^{(j)}  \cdots S^{(k)} T^{(l)} \cdots,
\end{displaymath}
the shifted switching process on $(S^{(i)}, T^{(j)})$ and that on $(S^{(k)}, T^{(l)})$
do not affect on each other, thus the order in which we apply Algorithm \ref{algorithm1} to them
does not matter.
As a consequence, the following algorithm produces the same result as Algorithm \ref{algorithm2}.
\begin{algo}\label{algorithm2-1}
\vskip 1mm
Let $(S,T)$ be a pair of SYTs such that $T$ extends $S$.
\begin{description}
\item[Step 1] Let $m = {\rm max}(S)$ and $n ={\rm max}(T)$.

\item[Step 2] Set $i:=m$ and $j:=1$.

\item[Step 3] Apply Algorithm \ref{algorithm1} to $(S^{(i)}, T^{(j)})$.

\item[Step 4] If $i = 1$, then go to {\bf Step 5}.
Otherwise, go to {\bf Step 3} after setting $i:=i-1$.

\item[Step 5] If $j < n$, then go to {\bf Step 3} after setting $j:=j+1$ and $i:=m$.
Otherwise, finish the algorithm.
\end{description}
\end{algo}
\end{rem}

\subsection{Proof of Proposition \ref{prop-preserving-sp}}
\label{subsection-proof-prop}
We provide two auxiliary lemmas for the proof of Proposition \ref{prop-preserving-sp}.
To do this we need the notion of the shifted switching path.
Suppose that $(A, B)$ is a shifted perforated $(\bfa,\bfb)$-pair such that
$B$ extends $A$.
For each $\bfa$-box in $A \cup B$,
define its {\it shifted switching path} by
the set of boxes it passes through during the shifted switching process.
Particularly when the switch (S3) is applied to a box on the main diagonal, say the $(i,i)$ box, in the middle of the shifted switching process,
the corresponding shifted switching path is understood as $\{ \ldots, (i,i), (i,i+1), (i+1,i+1), \ldots \}$
(for the preciseness, refer to Lemma \ref{lem-sw-path}).
Denote by $\overrightarrow{\rm P}_{(i,j)}$ the shifted switching path starting from the $\bfa$-box positioned at $(i,j)$
and ${\rm End}(\overrightarrow{\rm P}_{(i,j)})$ the endpoint of $\overrightarrow{\rm P}_{(i,j)}$.
For instance, when
\begin{tikzpicture}[baseline=0pt]
\def \hhh{4mm}    
\def \vvv{4mm}    
\node[] at (-\hhh*2,\vvv*0.2) {$A \cup B =$};
\draw[fill=black!30] (\hhh*0,0) rectangle (\hhh*1,\vvv*1);
\draw[fill=black!30] (\hhh*1,0) rectangle (\hhh*2,\vvv*1);
\draw[-] (\hhh*2,0) rectangle (\hhh*3,\vvv*1);
\draw[-] (\hhh*3,0) rectangle (\hhh*4,\vvv*1);
\draw[-] (\hhh*4,0) rectangle (\hhh*5,\vvv*1);
\draw[-] (\hhh*5,0) rectangle (\hhh*6,\vvv*1);
\draw[-] (\hhh*1,-\vvv*1) rectangle (\hhh*2,\vvv*0);
\node[] at (\hhh*0.5,\vvv*0.5) {$1$};
\node[] at (\hhh*1.5,\vvv*0.5) {$1$};
\node[] at (\hhh*2.5,\vvv*0.5) {$1'$};
\node[] at (\hhh*3.5,\vvv*0.5) {$1$};
\node[] at (\hhh*4.5,\vvv*0.5) {$1$};
\node[] at (\hhh*5.5,\vvv*0.5) {$1$};
\node[] at (\hhh*1.5,-\vvv*0.5) {$1'$};
\end{tikzpicture},
we can obtain the fully switched pair
\begin{tikzpicture}[baseline=0mm]
\def \hhh{4mm}    
\def \vvv{4mm}    
\node[] at (-\hhh*3,\vvv*0.2) {$\sigma^5(A \cup B) =$};
\draw[-] (\hhh*0,0) rectangle (\hhh*1,\vvv*1);
\draw[-] (\hhh*1,0) rectangle (\hhh*2,\vvv*1);
\draw[-] (\hhh*2,0) rectangle (\hhh*3,\vvv*1);
\draw[-] (\hhh*3,0) rectangle (\hhh*4,\vvv*1);
\draw[-] (\hhh*4,0) rectangle (\hhh*5,\vvv*1);
\draw[fill=black!30] (\hhh*5,0) rectangle (\hhh*6,\vvv*1);
\draw[fill=black!30] (\hhh*1,-\vvv*1) rectangle (\hhh*2,\vvv*0);
\node[] at (\hhh*0.5,\vvv*0.5) {$1'$};
\node[] at (\hhh*1.5,\vvv*0.5) {$1$};
\node[] at (\hhh*2.5,\vvv*0.5) {$1$};
\node[] at (\hhh*3.5,\vvv*0.5) {$1$};
\node[] at (\hhh*4.5,\vvv*0.5) {$1$};
\node[] at (\hhh*5.5,\vvv*0.5) {$1$};
\node[] at (\hhh*1.5,-\vvv*0.5) {$1$};
\end{tikzpicture}.
Thus
$\overrightarrow{{\rm P}}_{(1,2)}=\left\{(1,2),(1,3),\right.$
$\left.(1,4),(1,5),(1,6)\right\}$ and
$\overrightarrow{{\rm P}}_{(1,1)}=\left\{(1,1),(1,2),(2,2)\right\}.$
Moreover, ${\rm End}(\overrightarrow{{\rm P}}_{(1,2)})=(1,6)$ and ${\rm End}(\overrightarrow{{\rm P}}_{(1,1)})=(2,2)$.
\vskip 1mm

\begin{lem}\label{lem-sw-path}
Let $(A,B)$ be a shifted perforated $(\bfa,\bfb)$-pair such that $B$ extends $A$.
Then the shifted switching path $\overrightarrow{{\rm P}}_{(i,j)}$
corresponding to the $(i,j)$ box in $A$
has one of the following forms$:$
\begin{description}
\item[T1] $\left\{(i,j),\ldots,(i,j+y)\right\}$ for some $y \geq 0$

\item[T2] $\left\{(i,j),\ldots,(i+x,j)\right\}$ for some $x \geq 1$

\item[T3] $\left\{(i,j), (i,j+1), (i+1,j+1)\right\}$
\end{description}
\end{lem}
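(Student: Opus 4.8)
The plan is to argue by induction on the number of switches, following the position of a single $\bfa$-box. By Remark \ref{remark-switch=jdt}(i), regarding the $\bfa$-boxes as empty cells turns each switch into one shifted jeu de taquin slide, so the path $\overrightarrow{\rm P}_{(i,j)}$ is the trajectory of a single hole that the process advances one cell at a time; by Remark \ref{remark-switch=jdt}(ii),(iv) the only moves are one step east (from (S1),(S5)), one step south (from (S2),(S6)), and the diagonal corner step on the main diagonal (from (S3),(S4),(S7)), which I read as east-then-south through the intermediate cell $(i,i+1)$. The hypothesis that $B$ extends $A$ places all holes in the inner region $\mu/\nu$ and all $\bfb$-boxes in the outer region $\lambda/\mu$, so that every hole moves monotonically toward the southeast. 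What must be shown is that, across the whole process, each hole makes moves of only one kind — all east (T1), all south (T2), or a single diagonal corner (T3) — and never reverses direction.

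I would maintain, as the induction hypothesis after each switch, that every partial path is purely horizontal, purely vertical, or a completed T3 corner, together with a local invariant describing the arrangement of $\bfa$- and $\bfb$-boxes along the current active frontier — the crucial data being that, in each row and column where a hole has been moving, the perforated conditions inherited from $(A,B)$ (no $a'$-box southeast of an $a$-box, at most one $a$ per column, at most one $a'$ per row) persist along that hole's track. The decisive use of the selection rule is that $\Box(T)$ is always the easternmost unprimed $a$-box adjacent to a $\bfb$-box, and only afterward the southernmost $a'$-box: when a hole that has already stepped east is selected again, this ordering together with the local invariant forces the cell it switches with to lie again to its east, producing another (S1) or (S5) rather than a southward switch; the dual statement handles a hole that has stepped south. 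Verifying these two implications is the bulk of the work and amounts to a finite case analysis over which of (S1)--(S7) is applicable at the re-selected hole.

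The step I expect to be hardest, and the only place the trichotomy can break, is the behavior at the main diagonal, since that is where a genuine turn (the T3 corner) is permitted. I would rule out the two dangerous scenarios by a ``first offending switch'' argument: if some hole reached a diagonal cell $(i,i)$ by a prior southward step and then underwent (S3), or if the southward hole created by (S4)/(S7) at $(i+1,i+1)$ subsequently re-entered a diagonal switch, then examining the configuration at that switch — the forced positions of the neighbouring $b$- and $b'$-boxes and the eligibility order for $\Box(T)$ — would contradict either the selection rule or the multiplicity conditions carried along the hole's track from $(A,B)$. This shows a diagonal switch is always terminal and occurs only for a hole still at its original diagonal position, so it contributes exactly the three-cell corner of T3. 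Combining the inductive step of the previous paragraph with this diagonal analysis yields the stated classification of every $\overrightarrow{\rm P}_{(i,j)}$. The main difficulty throughout is that, since this lemma feeds into the proof of Proposition \ref{prop-preserving-sp} and so cannot invoke any preservation result, I must propagate just enough of the perforated structure by hand along each hole's trajectory to keep the local invariant alive between successive moves of the same hole.
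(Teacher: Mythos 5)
Your proposal rests on two claims that are both false, so the plan cannot be completed as written. First, you read \textbf{T3} as arising only from the diagonal switches (S3), (S4), (S7); the lemma makes no such restriction, and corner paths genuinely occur away from the main diagonal. Second, and this is the fatal gap, your ``decisive use of the selection rule'' --- that a hole which has already stepped east must, when selected again, step east again --- is not true. The direction of a switch is decided by the local configuration of $\bfb$-boxes (in particular whether the eastern neighbor is $b'$, is $b$, or is absent), not by the rule that picks $\Box(T)$. Concrete counterexample: take $\lambda=(3,2)$, $\mu=(2)$, $\nu=(1)$, let $A$ be the single box $A_{1,2}=a$ and let $B$ have $B_{1,3}=b'$, $B_{2,2}=b$, $B_{2,3}=b$; then $(A,B)$ is a shifted perforated $(\bfa,\bfb)$-pair and $B$ extends $A$. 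The unique hole first undergoes (S5) (east step to $(1,3)$, because its eastern neighbor is $b'$) and then (S2) (south step to $(2,3)$, because its only remaining $\bfb$-neighbor lies below). Its path $\{(1,2),(1,3),(2,3)\}$ is a \textbf{T3} corner with $(i,j)=(1,2)$ off the main diagonal, contradicting the ``directional persistence'' you intend to establish by induction. So the invariant you propose to maintain (``purely horizontal, purely vertical, or a completed diagonal corner'') is strictly stronger than the lemma and is false.

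The correct mechanism, which is what the paper uses, is not directional persistence but the exclusion of three local patterns: a south step followed by an east step (\textbf{E1}), two east steps followed by a south step (\textbf{E2}), and an east step followed by two south steps (\textbf{E3}); any monotone path avoiding all three is of type \textbf{T1}, \textbf{T2} or \textbf{T3}, with \textbf{T3} allowed anywhere. These exclusions come from (i) the priority built into the switches themselves --- if the box east of the hole is $b'$, then (S1)/(S5) fire and the hole moves east, so a hole that moved south can never afterwards have had a $b'$ to its east without contradiction --- and (ii) the structure of $B$: column-strictness (at most one $b$ per column) kills the remaining \textbf{E1} configuration, and the border-strip shape of $B$ (no subset $\{(x,y),(x+1,y+1)\}$) makes the box configurations forced by \textbf{E2} and \textbf{E3} impossible. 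The selection rule enters only marginally (e.g.\ to see that the box east of a turning hole must lie in $B$ rather than in $A$), not as the engine of the argument. If you want to salvage your induction, replace your invariant by ``the partial path contains no \textbf{E1}, \textbf{E2}, \textbf{E3} pattern'' and prove the three exclusions by the case analysis just described, treating separately the box moved from $(i,i)$ to $(i,i+1)$ by (S4)/(S7), whose path is necessarily horizontal.
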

\begin{proof}
For our purpose it suffices to show that
no paths contain a subset of the following types:
\begin{description}
\item[E1] $\{(s,t), (s+1,t), (s+1,t+1)\}$

\item[E2] $\{(s,t), (s,t+1), (s,t+2), (s+1,t+2)\}$

\item[E3] $\{(s,t), (s,t+1), (s+1,t+1), (s+2,t+1)\}$
\end{description}

We first consider the case where the $(i,j)$ box in $A$ is $\Box(A \cup B)$.
When $\overrightarrow{{\rm P}}_{(i,j)}$ contains {\bf E1}, the $(s,t+1)$ box is contained in $B$.
If it is filled with $b'$, then $(s,t+1)$ should be in $\overrightarrow{{\rm P}}_{(i,j)}$
regardless of the letter in the $(s+1,t)$ box.
But this cannot occur since $\overrightarrow{{\rm P}}_{(i,j)}$ contains {\bf E1}.
Therefore the $(s,t+1)$ box is filled with $b$, which is also a contradiction since
the $(s+1,t+1)$ box in $A \cup B$ is filled with $\bfb$.
When $\overrightarrow{{\rm P}}_{(i,j)}$ contains {\bf E2},
the $(s,t+1),(s,t+2),(s+1,t),(s+1,t+1)$ and $(s+1,t+2)$ boxes should be contained in $B$.
This cannot occur since $B$ is an SYT of border strip shape.
Finally the case where $\overrightarrow{{\rm P}}_{(i,j)}$ contains {\bf E3}
can be treated in the same way as above.

Next we consider the case where the $(i,j)$ box in $A$ is not $\Box(A \cup B)$.
We can treat the case where the $(i,j)$ box in $A$ is $\Box(\sigma^k(A\cup B))$ by
applying the above argument to $\sigma^k(A\cup B)$ instead of $A\cup B$.
On the other hand, when the $(i,i)$ box in $A$ is moved to the $(i,i+1)$ position by either (S4) or (S7),
it cannot be $\Box(\sigma^k(A\cup B))$.
But in this case the $(i,i+2)$ box is empty or filled with $b$, which implies that
the $(i+1)$st row consists of only one $\bfb$-box.
It follows that $\overrightarrow{{\rm P}}_{(i,i)}$ should be horizontal.
\end{proof}
\vskip 2mm

Let $P$ be a set of boxes in the shifted diagram,
and let $A$ be a box in this diagram.
We say that $P$ lies to the {\it north} of $A$
if each box in $P$ is strictly to the north of $A$.
The other directions can be defined in the same manner.

\begin{lem}\label{lem-path-intersect}
Let $(A,B)$ be a shifted perforated $(\bfa,\bfb)$-pair such that $B$ extends $A$.
Suppose that the shifted switching path $\overrightarrow{{\rm P}}_{(s,t)}$
occurs directly after the shifted switching path $\overrightarrow{{\rm P}}_{(i,j)}$.
Then the following hold.
\vskip 1mm
{\rm (a)} Suppose that $A_{i,j}=A_{s,t} = a$.
When both ${\rm End}(\overrightarrow{{\rm P}}_{(i,j)})$ and  ${\rm End}(\overrightarrow{{\rm P}}_{(s,t)})$ are $a$-boxes,
$\overrightarrow{{\rm P}}_{(s,t)}$ is to the west of ${\rm End}(\overrightarrow{{\rm P}}_{(i,j)})$.
When ${\rm End}(\overrightarrow{{\rm P}}_{(i,j)})$ and ${\rm End}(\overrightarrow{{\rm P}}_{(s,t)})$ are an $a$-box and an $a'$-box, respectively, we have the following two cases:
\vskip 1mm
$\bullet$ If either $(S4)$ or $(S7)$ occurs in the shifted switching processes
corresponding to $\overrightarrow{{\rm P}}_{(i,j)}$ and  $\overrightarrow{{\rm P}}_{(s,t)}$ simultaneously,
then $\overrightarrow{{\rm P}}_{(s,t)}$ is to the north of ${\rm End}(\overrightarrow{{\rm P}}_{(i,j)})$.
\vskip 1mm
$\bullet$ If either $(S4)$ or $(S7)$ occurs only in the shifted switching process
corresponding to $\overrightarrow{{\rm P}}_{(s,t)}$,
then $\overrightarrow{{\rm P}}_{(s,t)}$ is to the west of
${\rm End}(\overrightarrow{{\rm P}}_{(i,j)})$.
\vskip 1mm
{\rm (b)} Suppose that $A_{i,j} =a$ and $A_{s,t}= a'$.
When ${\rm End}(\overrightarrow{{\rm P}}_{(i,j)})$ is an $a$-box
and ${\rm End}(\overrightarrow{{\rm P}}_{(s,t)})$
is an $a'$-box,
$\overrightarrow{{\rm P}}_{(s,t)}$
is to the west or north of
${\rm End}(\overrightarrow{{\rm P}}_{(i,j)})$.
On the other hand,
when both ${\rm End}(\overrightarrow{{\rm P}}_{(i,j)})$ and ${\rm End}(\overrightarrow{{\rm P}}_{(s,t)})$ are $a'$-boxes,
$\overrightarrow{{\rm P}}_{(s,t)}$
is to the north of
${\rm End}(\overrightarrow{{\rm P}}_{(i,j)})$.
\vskip 1mm
{\rm (c)} Suppose that $A_{i,j} = A_{s,t} = a'$.
Then
$\overrightarrow{{\rm P}}_{(s,t)}$
is to the north of
${\rm End}(\overrightarrow{{\rm P}}_{(i,j)})$.
\end{lem}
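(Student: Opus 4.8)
The plan is to exploit the identification, announced in the introduction, between our shifted switching paths and the sliding paths of Buch--Kresch--Tamvakis \cite{BKT}, combined with the classification of path shapes in Lemma \ref{lem-sw-path}. First I would fix the order in which the $\mathbf{a}$-boxes are processed: as recorded in the proof of Theorem \ref{Thm-Perforated}(b), the inner corners are chosen according to $\rstan(A)$, so that the $a$-boxes are swept from east to west and then the $a'$-boxes from south to north. Under this order the three hypotheses of the lemma correspond exactly to the three segments of the processing sequence: case (a) is a pair of consecutive $a$-boxes, with $(s,t)$ starting strictly west of $(i,j)$; case (b) is the single transition from the westernmost $a$-box to the southernmost $a'$-box; and case (c) is a pair of consecutive $a'$-boxes, with $(s,t)$ starting strictly north of $(i,j)$. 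The crucial input, announced in the introduction as indispensable here, is that under the identification ${\rm o}\leftrightarrow a$, ${\rm o}'\leftrightarrow a'$ the paths coincide with the sliding paths of \cite{BKT}; transporting the non-crossing behaviour of consecutive sliding paths across this identification gives that $\overrightarrow{{\rm P}}_{(s,t)}$ never crosses $\overrightarrow{{\rm P}}_{(i,j)}$, hence lies entirely to one side of it.

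Next I would reduce the geometric claims to bookkeeping at the endpoints. By Lemma \ref{lem-sw-path} each path is horizontal (T1), vertical (T2), or the hook (T3). The primed/unprimed status of an endpoint is controlled solely by the behaviour of the path at the main diagonal: among the switches it is exactly the diagonal ones (S3), (S4), (S7) that engage the diagonal, and (S4), (S7) are the ones that introduce a primed letter $a'$ as the path turns the corner, whereas (S1), (S2) preserve the moving box's priming. Recording, for each of the two paths, which switch is executed at the diagonal therefore determines whether its endpoint is an $a$-box or an $a'$-box, which is precisely the data on which the lemma conditions.

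The core of the argument is then a case analysis that combines the non-crossing property with the known starting configuration. In case (a) with both endpoints unprimed, neither path is forced to turn at the diagonal, so $\overrightarrow{{\rm P}}_{(s,t)}$ inherits its westward start and stays west of ${\rm End}(\overrightarrow{{\rm P}}_{(i,j)})$. When the later path terminates in an $a'$-box one must split according to whether (S4)/(S7) fire in both processes or only in the later one: if both paths turn at the diagonal, their endpoints occupy the diagonal in adjacent rows, so the westward relation is promoted to a northward one, whereas if only the later path turns, the earlier path has already cleared the diagonal horizontally and the westward relation survives. Case (c) is the cleanest: both paths consist of $a'$-boxes sliding upward, and non-crossing together with the northward start immediately forces $\overrightarrow{{\rm P}}_{(s,t)}$ to lie north of ${\rm End}(\overrightarrow{{\rm P}}_{(i,j)})$. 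Case (b) is the hybrid of these two mechanisms, in which the earlier ($a$-)path and the later ($a'$-)path may or may not both engage the diagonal, accounting for the ``west or north'' alternative.

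The main obstacle, and the only place where the shifted theory genuinely departs from the classical switching of \cite{BSS}, is the behaviour at the main diagonal: there the diagonal switches (S3), (S4), (S7) can convert a purely westward displacement into a northward one and simultaneously toggle the priming of an endpoint, so the bare non-crossing statement does not suffice. The substantive step will be a local, switch-by-switch verification that whenever the later path is forced to turn at the diagonal its turning row is weakly above ${\rm End}(\overrightarrow{{\rm P}}_{(i,j)})$, and that the parities recorded at the two endpoints are consistent with the claimed direction. I expect this diagonal analysis, rather than the global non-crossing input, to be where the real work lies.
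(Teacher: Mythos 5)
Your skeleton does coincide with the paper's strategy: both rest on the identification of shifted switching paths with the sliding paths of \cite{BKT}, on the path classification of Lemma \ref{lem-sw-path}, and on a case split governed by where (S4)/(S7) fire, and your matching of cases (a), (b), (c) with consecutive $a$--$a$, $a$--$a'$, $a'$--$a'$ pairs in the processing order is correct. However, the proposal has genuine gaps. First, you import from \cite{BKT} only ``non-crossing of consecutive sliding paths,'' whereas what the paper actually uses is the full relative-position statement of \cite[Lemma 2]{BKT} (the later path lies to the west, respectively north, of the endpoint of the earlier one, according to the types of the two holes); that stronger statement is what settles the first assertion of (a), the first case of (b), and all of case (c) in one stroke. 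Non-crossing alone does not ``immediately force'' these conclusions: in case (c), for instance, a disjoint later path could a priori still descend to the row of ${\rm End}(\overrightarrow{\rm P}_{(i,j)})$ by passing east of it, and nothing in your sketch excludes this. Relatedly, your premise in case (a) that unprimed endpoints mean neither path turns at the diagonal is false: (S3) turns a path at the diagonal while leaving the box unprimed, so type {\bf T3} paths are perfectly compatible with two unprimed endpoints; the paper avoids this trap by citing \cite[Lemma 2]{BKT} directly rather than arguing from ``no turn.''

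Second, the diagonal analysis you defer is exactly where the paper's real work lies, and your description of what must be verified misses the hardest configuration. You propose to check that ``whenever the later path is forced to turn at the diagonal its turning row is weakly above ${\rm End}(\overrightarrow{\rm P}_{(i,j)})$,'' but in case (b) with both endpoints primed the critical subcase is the one where (S4)/(S7) occurs only in the \emph{earlier} path $\overrightarrow{\rm P}_{(i,j)}$. There the paper shows that $(i,j)$ must lie on the main diagonal, that $\overrightarrow{\rm P}_{(i,j)}$ is horizontal, say $\{(i,i),(i,i+1),\ldots,(i,i+r)\}$, and then pins down the position of the later $a'$-box by proving $t \ge i+r-1$ (otherwise two $b'$-boxes would occupy one row of $B$, contradicting perforatedness), treating $t=i+r-1$ and $t>i+r-1$ separately to reach the ``north'' conclusion. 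None of this is anticipated by your sketch, so the second half of (b) remains unsupported; the proof is not complete as proposed.
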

\begin{proof}
When
$\overrightarrow{{\rm P}}_{(i,j)}$ or $\overrightarrow{{\rm P}}_{(s,t)}$
consists of a single box,
the desired results are immediately from Lemma \ref{lem-sw-path}.
Hence we will only deal with the case where
both of them consist of at least two boxes.

{\rm (a)}
Identify $(s,t)$ with ${\rm o}_2$ and $(i,j)$ with ${\rm o}_1$.
Then the first assertion immediately follows from Lemma \ref{lem-sw-path} together with \cite[Lemma 2 (a)]{BKT}.
For the second assertion, we have to consider the following two cases.

If either $(S4)$ or $(S7)$ occurs in the shifted switching process
corresponding to $\overrightarrow{{\rm P}}_{(i,j)}$ and in $\overrightarrow{{\rm P}}_{(s,t)}$ simultaneously,
$(A \cup B)_{{s,t}}$ and $(A\cup B)_{{i,j}}$
are on the main diagonal and on the next column of $(A \cup B)_{{s,t}}$, respectively.
It says that $i=t=s$ and $j=s+1$.
By the given assumption,
either $(S4)$ or $(S7)$ is applied to $\sigma^k(A \cup B)$ for some $k \geq 1$,
thus $\{(s,s+1),(s+1,s+1)\}$ and $\{(s,s),(s,s+1)\}$
should be contained in $\overrightarrow{{\rm P}}_{(i,j)}$ and $\overrightarrow{{\rm P}}_{(s,t)}$,
respectively.
Therefore, by Lemma \ref{lem-sw-path},
${\rm End}(\overrightarrow{{\rm P}}_{(s,t)})$ is to
the north of ${\rm End}(\overrightarrow{{\rm P}}_{(i,j)})$.
See the following figure:
\begin{center}
\begin{tikzpicture}
\def \hhh{5mm}    
\def \vvv{5mm}    
\def \hhhhh{25mm}  
\draw[-] (\hhh*1,-\vvv*1) rectangle (\hhh*2,-\vvv*0);
\draw[-] (\hhh*1,-\vvv*2) rectangle (\hhh*2,-\vvv*1);
\draw[-] (\hhh*1,-\vvv*3) rectangle (\hhh*2,-\vvv*2);
\draw[-] (\hhh*1,-\vvv*4) rectangle (\hhh*2,-\vvv*3);
\draw[fill=black!30] (\hhh*1,\vvv*0) rectangle (\hhh*2,-\vvv*1);
\draw[fill=black!30] (\hhh*0,-\vvv*3) rectangle (\hhh*1,-\vvv*2);
\node[] at (\hhh*1.5,-\vvv*0.5) {$a$};
\node[] at (\hhh*1.5,-\vvv*1.5) {$b'$};
\node[] at (\hhh*0.5,-\vvv*2.5) {$a$};
\node[] at (\hhh*1.5,-\vvv*2.5) {$b'$};
\node[] at (\hhh*1.5,-\vvv*3.5) {$\bfb$};
\draw[->,decorate,decoration={snake,amplitude=.4mm,segment length=2mm,post length=1mm}] (\hhh*3,-\vvv*2) -- (-\hhh*1+\hhhhh*1,-\vvv*2);
\draw[-] (\hhh*1+\hhhhh,-\vvv*1) rectangle (\hhh*2+\hhhhh,-\vvv*0);
\draw[-] (\hhh*1+\hhhhh,-\vvv*2) rectangle (\hhh*2+\hhhhh,-\vvv*1);
\draw[-] (\hhh*1+\hhhhh,-\vvv*3) rectangle (\hhh*2+\hhhhh,-\vvv*2);
\draw[-] (\hhh*0+\hhhhh,-\vvv*3) rectangle (\hhh*1+\hhhhh,-\vvv*2);
\draw[fill=black!30] (\hhh*1+\hhhhh,-\vvv*3) rectangle (\hhh*2+\hhhhh,-\vvv*2);
\draw[fill=black!30] (\hhh*1+\hhhhh,-\vvv*4) rectangle (\hhh*2+\hhhhh,-\vvv*3);
\node[] at (\hhh*1.5+\hhhhh,-\vvv*0.5) {$b'$};
\node[] at (\hhh*1.5+\hhhhh,-\vvv*1.5) {$b'$};
\node[] at (\hhh*0.5+\hhhhh,-\vvv*2.5) {$\bfb$};
\node[] at (\hhh*1.5+\hhhhh,-\vvv*2.5) {$a'$};
\node[] at (\hhh*1.5+\hhhhh,-\vvv*3.5) {$a$};
\end{tikzpicture}
\hskip 8mm
\begin{tikzpicture}[baseline=-19mm]
\def \hhh{5mm}    
\def \vvv{5mm}    
\def \hhhhh{36mm}  
\node[] at (-\hhh*2.5,-\vvv*2) {or};
\draw[-] (\hhh*1,-\vvv*3) rectangle (\hhh*2,-\vvv*2);
\draw[-] (\hhh*2,-\vvv*2) rectangle (\hhh*3,-\vvv*1);
\draw[-] (\hhh*3,-\vvv*2) rectangle (\hhh*4,-\vvv*1);
\draw[fill=black!30] (\hhh*1,-\vvv*2) rectangle (\hhh*2,-\vvv*1);
\draw[fill=black!30] (\hhh*0,-\vvv*2) rectangle (\hhh*1,-\vvv*1);
\node[] at (\hhh*0.5,-\vvv*1.5) {$a$};
\node[] at (\hhh*1.5,-\vvv*1.5) {$a$};
\node[] at (\hhh*2.5,-\vvv*1.5) {$b$};
\node[] at (\hhh*3.5,-\vvv*1.5) {$b$};
\node[] at (\hhh*1.5,-\vvv*2.5) {$\bfb$};
\draw[->,decorate,decoration={snake,amplitude=.4mm,segment length=2mm,post length=1mm}] (\hhh*5,-\vvv*1.5) -- (-\hhh*1+\hhhhh*1,-\vvv*1.5);
\draw[-] (\hhh*0+\hhhhh,-\vvv*2) rectangle (\hhh*1+\hhhhh,-\vvv*1);
\draw[-] (\hhh*1+\hhhhh,-\vvv*2) rectangle (\hhh*2+\hhhhh,-\vvv*1);
\draw[-] (\hhh*2+\hhhhh,-\vvv*2) rectangle (\hhh*3+\hhhhh,-\vvv*1);
\draw[fill=black!30] (\hhh*1+\hhhhh,-\vvv*3) rectangle (\hhh*2+\hhhhh,-\vvv*2);
\draw[fill=black!30] (\hhh*3+\hhhhh,-\vvv*2) rectangle (\hhh*4+\hhhhh,-\vvv*1);
\node[] at (\hhh*0.5+\hhhhh,-\vvv*1.5) {$\bfb$};
\node[] at (\hhh*1.5+\hhhhh,-\vvv*1.5) {$b$};
\node[] at (\hhh*2.5+\hhhhh,-\vvv*1.5) {$b$};
\node[] at (\hhh*3.5+\hhhhh,-\vvv*1.5) {$a'$};
\node[] at (\hhh*1.5+\hhhhh,-\vvv*2.5) {$a$};
\end{tikzpicture}
\end{center}

If either $(S4)$ or $(S7)$ occurs only in the shifted switching process
corresponding to $\overrightarrow{{\rm P}}_{(s,t)}$,
$\overrightarrow{{\rm P}}_{(s,t)}$ and $\overrightarrow{\rm P}_{(i,j)}$ do not intersect
since neither $(S4)$ nor $(S7)$ occurs in $\overrightarrow{\rm P}_{(i,j)}$.
Therefore, by Lemma \ref{lem-sw-path},
$\overrightarrow{\rm P}_{(s,t)}$ is given by $\{(s,t),(s+1,t),\ldots,(t,t) \}$.
It follows that
$\overrightarrow{{\rm P}}_{(s,t)}$ is to
the west of ${\rm End}(\overrightarrow{{\rm P}}_{(i,j)})$.
\vskip 2mm

{\rm (b)}
In the first case,
neither (S4) nor (S7) does not occur
in the shifted switching processes
corresponding to
$\overrightarrow{\rm P}_{(s,t)}$ and $\overrightarrow{\rm P}_{(i,j)}$.
It can be easily verified that
the assertion can be deduced from Lemma \ref{lem-sw-path} together with \cite[Lemma 2]{BKT}
by identifying $(s,t)$ with ${\rm o}_2'$ and $(i,j)$ with ${\rm o}_1$.

For the second case, we need to consider the following two subcases.
If either $(S4)$ or $(S7)$ occurs in the shifted switching process
corresponding to $\overrightarrow{{\rm P}}_{(i,j)}$ and  $\overrightarrow{{\rm P}}_{(s,t)}$ simultaneously,
we obtain the desired result in the same way as in {\rm (a)}.
If either $(S4)$ or $(S7)$ occurs only in the shifted switching processes
corresponding to $\overrightarrow{{\rm P}}_{(i,j)}$,
then
$(A\cup B)_{i,j}$ is on the main diagonal and $(A \cup B)_{s,t}$
is to the northeast of $(A\cup B)_{i,j}$.
So $j=i$ and $s < i$.
Therefore, by Lemma \ref{lem-sw-path},
$\overrightarrow{{\rm P}}_{(i,j)}$ is given by $\{(i,i),(i,i+1),\ldots,(i,i+r)\}$ for some $r\geq 1$.
It follows that $(A \cup B)_{i,i+r}$ is filled with $b$.
Furthermore, $t$ should be greater than or equal to $i+r-1$.
Otherwise $(A \cup B)_{s,i+r-1}$ and $(A \cup B)_{s,i+r}$
are filled with $b'$, which is a contradiction.
If $t = i+r-1$, then $(A \cup B)_{s,i+r}$ is filled with $b'$.
So $\overrightarrow{{\rm P}}_{(s,t)}$ contains $\{(s,t),(s,t+1) \}$.
It means that $\overrightarrow{{\rm P}}_{(s,t)}$ is to
the north of ${\rm End}(\overrightarrow{{\rm P}}_{(i,j)})$.
Refer to the following figure:
\begin{center}
\begin{tikzpicture}
\def \hhh{5mm}    
\def \vvv{5mm}    
\def \hhhhh{52mm}  
\draw[-] (\hhh*5,-\vvv*1) rectangle (\hhh*6,-\vvv*0);
\draw[-] (\hhh*6,-\vvv*1) rectangle (\hhh*7,-\vvv*0);
\draw[-] (\hhh*2,-\vvv*2) rectangle (\hhh*3,-\vvv*1);
\draw[-] (\hhh*3,-\vvv*2) rectangle (\hhh*4,-\vvv*1);
\draw[-] (\hhh*4,-\vvv*2) rectangle (\hhh*5,-\vvv*1);
\draw[-] (\hhh*5,-\vvv*2) rectangle (\hhh*6,-\vvv*1);
\draw[-] (\hhh*1,-\vvv*3) rectangle (\hhh*2,-\vvv*2);
\draw[fill=black!30] (\hhh*4,-\vvv*1) rectangle (\hhh*5,-\vvv*0);
\draw[fill=black!30] (\hhh*0,-\vvv*2) rectangle (\hhh*1,-\vvv*1);
\draw[fill=black!30] (\hhh*1,-\vvv*2) rectangle (\hhh*2,-\vvv*1);
\node[] at (\hhh*4.5,-\vvv*0.5) {$a'$};
\node[] at (\hhh*5.5,-\vvv*0.5) {$b'$};
\node[] at (\hhh*6.5,-\vvv*0.5) {$b$};
\node[] at (\hhh*0.5,-\vvv*1.5) {$a$};
\node[] at (\hhh*1.5,-\vvv*1.5) {$a$};
\node[] at (\hhh*2.5,-\vvv*1.5) {$b$};
\node[] at (\hhh*3.5,-\vvv*1.5) {$b$};
\node[] at (\hhh*4.5,-\vvv*1.5) {$b$};
\node[] at (\hhh*5.5,-\vvv*1.5) {$b$};
\node[] at (\hhh*1.5,-\vvv*2.5) {$\bfb$};
\draw[->,decorate,decoration={snake,amplitude=.4mm,segment length=2mm,post length=1mm}] (\hhh*8,-\vvv*1.5) -- (-\hhh*1+\hhhhh*1,-\vvv*1.5);
\draw[-] (\hhh*4+\hhhhh,-\vvv*1) rectangle (\hhh*5+\hhhhh,-\vvv*0);
\draw[-] (\hhh*5+\hhhhh,-\vvv*1) rectangle (\hhh*6+\hhhhh,-\vvv*0);
\draw[-] (\hhh*0+\hhhhh,-\vvv*2) rectangle (\hhh*1+\hhhhh,-\vvv*1);
\draw[-] (\hhh*1+\hhhhh,-\vvv*2) rectangle (\hhh*2+\hhhhh,-\vvv*1);
\draw[-] (\hhh*2+\hhhhh,-\vvv*2) rectangle (\hhh*3+\hhhhh,-\vvv*1);
\draw[-] (\hhh*3+\hhhhh,-\vvv*2) rectangle (\hhh*4+\hhhhh,-\vvv*1);
\draw[-] (\hhh*4+\hhhhh,-\vvv*2) rectangle (\hhh*5+\hhhhh,-\vvv*1);
\draw[-] (\hhh*5+\hhhhh,-\vvv*2) rectangle (\hhh*6+\hhhhh,-\vvv*1);
\draw[-] (\hhh*1+\hhhhh,-\vvv*3) rectangle (\hhh*2+\hhhhh,-\vvv*2);
\draw[fill=black!30] (\hhh*1+\hhhhh,-\vvv*3) rectangle (\hhh*2+\hhhhh,-\vvv*2);
\draw[fill=black!30] (\hhh*5+\hhhhh,-\vvv*2) rectangle (\hhh*6+\hhhhh,-\vvv*1);
\draw[fill=black!30] (\hhh*6+\hhhhh,-\vvv*1) rectangle (\hhh*7+\hhhhh,-\vvv*0);
\node[] at (\hhh*4.5+\hhhhh,-\vvv*0.5) {$b'$};
\node[] at (\hhh*5.5+\hhhhh,-\vvv*0.5) {$b$};
\node[] at (\hhh*6.5+\hhhhh,-\vvv*0.5) {$a'$};
\node[] at (\hhh*0.5+\hhhhh,-\vvv*1.5) {$\bfb$};
\node[] at (\hhh*1.5+\hhhhh,-\vvv*1.5) {$b$};
\node[] at (\hhh*2.5+\hhhhh,-\vvv*1.5) {$b$};
\node[] at (\hhh*3.5+\hhhhh,-\vvv*1.5) {$b$};
\node[] at (\hhh*4.5+\hhhhh,-\vvv*1.5) {$b$};
\node[] at (\hhh*5.5+\hhhhh,-\vvv*1.5) {$a'$};
\node[] at (\hhh*1.5+\hhhhh,-\vvv*2.5) {$a$};
\node[] at (\hhh*7.5+\hhhhh,-\vvv*2.5) {};
\end{tikzpicture}
\end{center}
If $t> i+r-1$, then $\overrightarrow{{\rm P}}_{(s,t)}$ is also to
the north of ${\rm End}(\overrightarrow{{\rm P}}_{(i,j)})$
since the $i$th row contains $\bfb$-boxes from
$(i,i+2)$ to $(i,i+r)$.
Refer to the following figure:
\begin{center}
\begin{tikzpicture}
\def \hhh{5mm}    
\def \vvv{5mm}    
\def \hhhhh{58mm}  
\draw[-] (\hhh*5,-\vvv*1) rectangle (\hhh*6,-\vvv*0);
\draw[-] (\hhh*6,-\vvv*1) rectangle (\hhh*7,-\vvv*0);
\draw[-] (\hhh*7,-\vvv*1) rectangle (\hhh*8,-\vvv*0);
\draw[-] (\hhh*2,-\vvv*2) rectangle (\hhh*3,-\vvv*1);
\draw[-] (\hhh*3,-\vvv*2) rectangle (\hhh*4,-\vvv*1);
\draw[-] (\hhh*4,-\vvv*2) rectangle (\hhh*5,-\vvv*1);
\draw[-] (\hhh*5,-\vvv*2) rectangle (\hhh*6,-\vvv*1);
\draw[-] (\hhh*1,-\vvv*3) rectangle (\hhh*2,-\vvv*2);
\draw[fill=black!30] (\hhh*5,-\vvv*1) rectangle (\hhh*6,-\vvv*0);
\draw[fill=black!30] (\hhh*0,-\vvv*2) rectangle (\hhh*1,-\vvv*1);
\draw[fill=black!30] (\hhh*1,-\vvv*2) rectangle (\hhh*2,-\vvv*1);
\node[] at (\hhh*1.5,-\vvv*1.5) {$a$};
\node[] at (\hhh*0.5,-\vvv*1.5) {$a$};
\node[] at (\hhh*5.5,-\vvv*0.5) {$a'$};
\node[] at (\hhh*1.5,-\vvv*2.5) {$\bfb$};
\node[] at (\hhh*2.5,-\vvv*1.5) {$b$};
\node[] at (\hhh*3.5,-\vvv*1.5) {$b$};
\node[] at (\hhh*4.5,-\vvv*1.5) {$b$};
\node[] at (\hhh*5.5,-\vvv*1.5) {$b$};
\node[] at (\hhh*6.5,-\vvv*0.5) {$\bfb$};
\node[] at (\hhh*7.5,-\vvv*0.5) {$b$};
\draw[->,decorate,decoration={snake,amplitude=.4mm,segment length=2mm,post length=1mm}] (\hhh*9,-\vvv*1.5) -- (-\hhh*1+\hhhhh*1,-\vvv*1.5);
\draw[-] (\hhh*5+\hhhhh,-\vvv*1) rectangle (\hhh*6+\hhhhh,-\vvv*0);
\draw[-] (\hhh*6+\hhhhh,-\vvv*1) rectangle (\hhh*7+\hhhhh,-\vvv*0);
\draw[-] (\hhh*0+\hhhhh,-\vvv*2) rectangle (\hhh*1+\hhhhh,-\vvv*1);
\draw[-] (\hhh*1+\hhhhh,-\vvv*2) rectangle (\hhh*2+\hhhhh,-\vvv*1);
\draw[-] (\hhh*2+\hhhhh,-\vvv*2) rectangle (\hhh*3+\hhhhh,-\vvv*1);
\draw[-] (\hhh*3+\hhhhh,-\vvv*2) rectangle (\hhh*4+\hhhhh,-\vvv*1);
\draw[-] (\hhh*4+\hhhhh,-\vvv*2) rectangle (\hhh*5+\hhhhh,-\vvv*1);
\draw[-] (\hhh*5+\hhhhh,-\vvv*2) rectangle (\hhh*6+\hhhhh,-\vvv*1);
\draw[-] (\hhh*1+\hhhhh,-\vvv*3) rectangle (\hhh*2+\hhhhh,-\vvv*2);
\draw[fill=black!30] (\hhh*1+\hhhhh,-\vvv*3) rectangle (\hhh*2+\hhhhh,-\vvv*2);
\draw[fill=black!30] (\hhh*5+\hhhhh,-\vvv*2) rectangle (\hhh*6+\hhhhh,-\vvv*1);
\draw[fill=black!30] (\hhh*7+\hhhhh,-\vvv*1) rectangle (\hhh*8+\hhhhh,-\vvv*0);
\node[] at (\hhh*1.5+\hhhhh,-\vvv*2.5) {$a$};
\node[] at (\hhh*5.5+\hhhhh,-\vvv*1.5) {$a'$};
\node[] at (\hhh*7.5+\hhhhh,-\vvv*0.5) {$a'$};
\node[] at (\hhh*0.5+\hhhhh,-\vvv*1.5) {$\bfb$};
\node[] at (\hhh*1.5+\hhhhh,-\vvv*1.5) {$b$};
\node[] at (\hhh*2.5+\hhhhh,-\vvv*1.5) {$b$};
\node[] at (\hhh*3.5+\hhhhh,-\vvv*1.5) {$b$};
\node[] at (\hhh*4.5+\hhhhh,-\vvv*1.5) {$b$};
\node[] at (\hhh*5.5+\hhhhh,-\vvv*0.5) {$\bfb$};
\node[] at (\hhh*6.5+\hhhhh,-\vvv*0.5) {$b$};
\end{tikzpicture}
\end{center}

(c) Identify $(s,t)$ with ${\rm o}_2$ and $(i,j)$ with ${\rm o}_1$.
Then this case immediately follows from
Lemma \ref{lem-sw-path} together with \cite[Lemma 2 (a)]{BKT}.
\end{proof}

\vskip 2mm
Now we are ready to present the proof of Proposition \ref{prop-preserving-sp}.
\vskip 2mm \noindent
{\bf Proof of Proposition \ref{prop-preserving-sp}.}
The only nontrivial part of the proof is to see that
$\sigma^{m}(A \cup B)^{(a)}$ and $\sigma^{m}(A \cup B)^{(b)}$\
are shifted perforated tableaux for all $m = 1, \ldots, n$.
For a fixed inner corner of $B$ let us move this box until it becomes a removable corner
by applying a sequence of shifted slides.
In each step one can see that the corresponding filling satisfies the four conditions in Definition \ref{Def-Perforated}.
Therefore the final filling obtained by discarding the new removable corner in the above
is an SYT of border strip shape.
By repeating this process, one can see that
$\sigma^{m}(A \cup B)^{(b)}$ is a shifted perforated $\bfb$-tableau.

Next we will show that $\sigma^{m}(A \cup B)^{(a)}$ is a shifted perforated $\bfa$-tableau.
Set $\Box(A\cup B) = (s,t)$
and let $k_1$ be the smallest positive integer such that
the $(s,t)$ box is moved to a removable corner, which is nothing but ${\rm End}(\overrightarrow{{\rm P}}_{(s,t)})$, after applying $\sigma^{k_1}$.
As the first step, we will show that $\sigma^m(A \cup B)^{(a)}$ is a shifted perforated tableau for all $m=1,\ldots,k_1$.
When $\overrightarrow{{\rm P}}_{(s,t)}$ is horizontal or vertical, the perforateness is obviously being preserved
in the corresponding switching process.
When $\overrightarrow{{\rm P}}_{(s,t)} = \{(s,t),(s,t+1),(s+1,t+1)\}$,
if $\Box(A\cup B)$ is filled with $a$ (resp., $a'$), then the perforateness is being preserved
since $\Box(A\cup B)$ is the easternmost $a$-box (resp., the southernmost $a'$-box) in $A\cup B$.

For $1 \leq l \leq |A|$,
let $A_l$
denote
the boxes in $A$ occupied by $l$ in $\stan^*(A)$.
Let
$k_{l}$ denote the nonnegative integer such that
$A_{l}$ is moved to a removable corner
after applying $\sigma^{k_1+ \cdots + k_{l}}$ to $A \cup B$.
Hereafter we abbreviate $k_1+k_2+\cdots+k_{l}$ to $\epsilon(l)$.
If $|A|=1$,
$\sigma^{m}(A \cup B)^{(a)}$ is a shifted perforated $\bfa$-tableau
for all $m = 1,\ldots, \epsilon(1)$
by the above paragraph.
Now assume that $|A| > 1$.
For $1 \leq l \leq |A|-1$,
we will show that
$\sigma^{m}(A \cup B)^{(a)}$ is a shifted perforated $\bfa$-tableau
for all $m = \epsilon(l)+1, \ldots, \epsilon(l)+k_{l+1}$
under the assumption that $\sigma^{m}(A \cup B)^{(a)}$ is a shifted perforated $\bfa$-tableau
for all $m = 1, \ldots, \epsilon(l)$.
Let $A_l$ be positioned at $(i,j)$
and $A_{l+1}$ at $(s,t)$.

First, assume that both $A_{s,t}$
and
${\rm End}(\overrightarrow{\rm P}_{(s,t)})$ are $a$-boxes.
Then $A_{i,j}=a$ and therefore, by Lemma \ref{lem-path-intersect},
$\overrightarrow{\rm P}_{(s,t)}$ is to the west of
${\rm End}(\overrightarrow{\rm P}_{(i,j)})$.
Second, assume that
$A_{s,t}$ is an $a$-box
and
${\rm End}(\overrightarrow{\rm P}_{(s,t)})$ is an $a'$-box.
If either (S4) or (S7) occurs only in
the shifted switching process corresponding to $\overrightarrow{\rm P}_{(s,t)}$,
then $\overrightarrow{\rm P}_{(s,t)}$ is vertical and
so the perforateness is being preserved.
If either (S4) or (S7) occurs in the shifted switching processes
corresponding to $\overrightarrow{\rm P}_{(i,j)}$ and $\overrightarrow{\rm P}_{(s,t)}$ simultaneously,
then $\overrightarrow{{\rm P}}_{(s,t)}$ is horizontal
and thus $\overrightarrow{{\rm P}}_{(s,t)}$ is to the strictly north
of ${\rm End}(\overrightarrow{{\rm P}}_{(i,j)})$.
Finally,
assume that both $A_{s,t}$
and
${\rm End}(\overrightarrow{\rm P}_{(s,t)})$ are $a'$-boxes.
If either (S4) or (S7) occurs in the shifted switching processes corresponding to $\overrightarrow{\rm P}_{(i,j)}$ and $\overrightarrow{\rm P}_{(s,t)}$ simultaneously,
then $\overrightarrow{{\rm P}}_{(s,t)}$ is horizontal
and thus
$\overrightarrow{{\rm P}}_{(s,t)}$ is to the strictly north of
${\rm End}(\overrightarrow{{\rm P}}_{(i,j)})$.
If neither (S4) nor (S7) occurs in the shifted switching process corresponding to $\overrightarrow{\rm P}_{(s,t)}$,
then $\overrightarrow{\rm P}_{(s,t)}$ is to the strictly north of
the $a'$-box ${\rm End}(\overrightarrow{\rm P}_{(i,j)})$,
or $\overrightarrow{\rm P}_{(s,t)}$ is to the strictly north or west of
the $a$-box ${\rm End}(\overrightarrow{\rm P}_{(i,j)})$.
This completes the proof.
\qed

\section{Properties and applications of the shifted tableau switching}
\label{sect-prop-ss}
The primary purpose of this section is show that
our shifted tableau switching is involutive and behave very nicely with respect to
tableaux whose reading words satisfy the lattice property.
To do this let us review (skew) Schur $P$- and $Q$-functions very briefly.

For $\ld, \mu \in \Ld^+$ with $\mu \subset \ld$, let us
define (skew) Schur $P$- and $Q$-functions by
$$
P_{\lambda/\mu}(x) = \sum_{T \in \mathcal{Y}(\lambda/\mu)} x^{T} 
\qquad\text{and}\qquad 
Q_{\lambda/\mu}(x) = \sum_{T \in \widetilde{\mathcal{Y}}(\lambda/\mu)} x^{T}.
$$
When $\mu=\emptyset$, we simply write them as $P_\lambda(x)$ and $Q_\lambda(x)$.
It is well known that $P_\lambda(x)$ and $Q_\lambda(x)$ indexed by $\lambda \in \Lambda^+$ 
form a basis of the subring they generate in the ring of symmetric functions, respectively.
See \cite{Mac} for more details.
For $\ld, \mu, \nu \in \Ld^+$, {\it shifted Littlewood-Richardson (LR) coefficients}
$f_{\ld \mu}^{\nu}$ are the structure constants appearing in the expansion of the product of two $P_{\ld}$ and $P_{\mu}$, that is,
\begin{equation*}
P_{\ld}(x)\cdot P_{\mu}(x) = \underset{\nu}{\sum}f_{\lambda  \mu}^{\nu}P_{\nu}(x).
\end{equation*}
To prove the positivity of $f_{\lambda  \mu}^{\nu}$ in a combinatorial way
has been an important and interesting problem in the study of combinatorics of symmetric functions.
For instance, \cite{Sa, Ste1, Wo}
approach to this problem with SYTs
and very recently
\cite{Cho, CNO, GJKKK, LLS, Se} 
with another combinatorial objects called `semistandard decomposition tableaux'.
We here focus on the former approach.

It is due to Sagan and Worley that $f_{\ld \mu}^{\nu}$ counts the standard shifted Young tableaux $S$ of shape  $\nu / \ld$ with
$\Rect(S) = T$ for an arbitrarily chosen standard shifted Young tableau $T$ of shape $\mu$.
Moreover, they proved that $\Rect(S)$
depends not on the shape of $S$ but on its reading word $w(S)$.
Later, Stembridge \cite{Ste1} gave a characterization for the SYTs
that rectify to the special SYT, denoted by $R_{\mu}$, whose $i$th row consists of only $i$'s for all $i=1, \ldots, \ell(\mu)$.
By virtue of their seminal works one can interpret the identity
\begin{equation}\label{eqn-Schur-P-and-Q-ftns}
Q_{\nu/\lambda}(x) = \sum_{\mu} f^{\nu}_{\lambda \mu} Q_\mu(x)
\end{equation}
in a combinatorial way by using the shifted jeu de taquin.
We here give another combinatorial interpretation of \eqref{eqn-Schur-P-and-Q-ftns}
by using our shifted tableau switching.
To do this, we need to review Stembridge's combinatorial interpretation of $f^{\nu}_{\lambda \mu}$.
Let us first introduce necessary definitions and notation.

Given any word $w=w_1w_2 \cdots w_n$ in $X$,
let $\hat{w}$ be the word obtained from $w$ by reading $w$
from right to left, and then
replacing each $k$ by $(k+1)'$ and each $k'$ by $k$ for all $k \geq 1$.
Let $|w|$ denote the word obtained by erasing the primes of $w$.
For example, if $w=1 1' 2' 1 2' 2$, then
$\hat{w} = 3' 2 2' 2 1 2'$
and $|w|=1 1 2 1 2 2$.

Let $w \hat{w} =a_1 \cdots a_{2n}$ be the concatenation of $w$ and $\hat{w}$,
and then define a sequence of statistics $m_i(j)$ ($0 \leq j \leq 2n$, $i \geq 1$) depending on $w$
by the multiplicity of $i$ among $a_1 a_2 \cdots a_j$ when $1 \leq j \leq 2n$,
or $0$ when $j=0$.
The word $w$ is said to satisfy the {\it lattice property} if, whenever $m_i(j)=m_{i-1}(j)$, we have that $a_{j+1} $ is neither $i$ nor $i'$.
Let us call a word satisfying the lattice property
a {\it lattice word}.

\begin{thm}\label{Stembridge's result}{\rm (\cite[Theorem 8.3]{Ste1})}
The shifted Littlewood-Richardson coefficient $f_{\ld \mu}^{\nu}$ is
the number of SYTs $T$ of shape $\sh(T) = \nu / \ld$ and $\wt(T) = \mu$
such that
\begin{itemize}
\item[{\rm (a)}] $w=\w(T)$ satisfies the lattice property, and

\item[{\rm (b)}] the rightmost $i$ of $|w|$ is unprimed in $w$
 $(1\leq i \leq \ell(\mu))$.
\end{itemize}
\end{thm}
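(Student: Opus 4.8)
The plan is to deduce the statement from the rectification theory of shifted jeu de taquin together with a characterization of the shifted Knuth class of the canonical word. First I would invoke the shifted fundamental theorem of Sagan and Worley: $\Rect$ is weight-preserving, and the number of standard SYTs of a fixed skew shape $\nu/\ld$ that rectify to a fixed standard SYT of shape $\mu$ is independent of the chosen target and equals $f^{\nu}_{\ld\mu}$. Taking the target to be $\stan(R_\mu)$, where $R_\mu$ is the row-superstandard tableau whose $i$-th row is filled with $i$'s (so $\wt(R_\mu)=\mu$ and $\w(R_\mu)=1^{\mu_1}2^{\mu_2}\cdots \ell^{\mu_\ell}$ with $\ell=\ell(\mu)$), and transferring between the standard and semistandard pictures through the standardization map $\stan$, which commutes with shifted slides, one reduces the theorem to the single assertion
$$\Rect(T)=R_\mu \iff \w(T)\ \text{satisfies (a) and (b)}.$$
Once $\Rect(T)=R_\mu$, weight preservation forces $\wt(T)=\mu$, so the weight hypothesis is automatic and merely delimits the candidate tableaux; correspondingly condition (b) is imposed only for $1\le i\le\ell(\mu)$.

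Both implications rest on two lemmas. The first is a \emph{rigidity} lemma: the only SYT of normal shape whose reading word satisfies (a) and (b) is $R_\mu$ itself. I would prove this by induction on the row index, reading $\w(V)$ from the top; the lattice property forces the first available copy of each new value to appear unprimed and only after sufficiently many copies of the previous value, while condition (b) removes the residual freedom of priming, and together with the column/row repetition constraints of Definition \ref{GSYT} this pins every entry of row $i$ to an unprimed $i$. The second is a \emph{preservation} lemma: conditions (a) and (b) are invariant along the reading words of the SYTs produced by a shifted slide. Granting both, the forward implication follows because $T$ is obtained from $R_\mu=\Rect(T)$ by a sequence of inverse slides and $\w(R_\mu)$ visibly satisfies (a) and (b); the reverse implication follows because rectifying $T$ to a normal-shape $V$ preserves (a) and (b), whence $V=R_\mu$ by rigidity.

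The preservation lemma is where the real difficulty lies, and I expect it to be the main obstacle. Since a shifted slide alters the reading word by a composition of the shifted Knuth relations recorded in Section \ref{Preliminaries}, it would suffice to check that each elementary shifted Knuth transformation preserves (a) and (b); but the lattice property is \emph{not} a blanket invariant of shifted Knuth moves on arbitrary words, so the argument must exploit that the words in question are genuine reading words of SYTs and must proceed by a careful case analysis. The analysis is organized around Stembridge's doubling device $w\hat w$: one tracks the statistics $m_i(j)$ across the relation and verifies, case by case on whether the transposed letters are primed and on their relative values, that the defining inequalities of the lattice property are not violated and that the rightmost occurrence of each $i$ in $|w|$ remains unprimed. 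The prime bookkeeping encoded in $\hat w$ is precisely what couples conditions (a) and (b), and the delicate point is the boundary rule for the last two letters of $w$ (which permits $xx\mapsto x'x$), which condition (b) is designed to control. I would first settle the fully unprimed sub-case, mirroring the classical ballot-word argument, and then bootstrap to the primed cases using the symmetry between $w$ and $\hat w$.
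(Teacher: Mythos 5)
The first thing to say is that the paper contains no proof of this statement: it is imported verbatim from Stembridge as \cite[Theorem 8.3]{Ste1} and used as a black box in Section \ref{sect-prop-ss}, so there is no internal argument to compare yours against. Judged on its own, your outline has the correct skeleton, and it is essentially the one the literature uses: reduce, via the Sagan--Worley rectification theorem and a standard/semistandard transfer, to the equivalence that for $\wt(T)=\mu$ one has $\Rect(T)=R_\mu$ if and only if $\w(T)$ satisfies (a) and (b), and prove that equivalence by a preservation lemma plus a rigidity lemma. Moreover, the two lemmas you isolate are available off the shelf and are in fact quoted in this very paper for other ends: the unlabeled Sagan--Worley proposition at the end of Section \ref{Preliminaries} says that slides change reading words by shifted Knuth moves, and Lemma \ref{lem-BKT} (that is, \cite[Lemma 1]{BKT}) says that shifted Knuth moves preserve the LRS conditions; combined they yield your preservation lemma with no case analysis, which is exactly how the paper argues in the proof of Theorem \ref{thm-main-section4}(b).

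Two points need repair before your plan is a proof. First, the standard-to-semistandard transfer is asserted where it needs an argument: $\stan$ is \emph{not} injective on weight-$\mu$ SYTs (it forgets which entries are primed --- a single box filled with $1$ and one filled with $1'$ have the same standardization), so the needed equality between the number of weight-$\mu$ SYTs of shape $\nu/\ld$ rectifying to $R_\mu$ and the number of standard SYTs of that shape rectifying to $\stan(R_\mu)$ does not follow from ``$\stan$ commutes with slides'' alone. It can be fixed: rectify along a fixed sequence of inner corners, use the strong form of commutation (the hole traverses the same path in $T$ as in $\stan(T)$) together with invertibility of individual slides and confluence of rectification to recover $T$ uniquely from $\stan(T)$ and $R_\mu$; but this is precisely where the prime bookkeeping enters the count, and it must be written out. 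Second, the worry that organizes your ``hard'' lemma is factually wrong: the LRS property \emph{is} a blanket invariant of shifted Knuth equivalence on arbitrary words --- that is exactly \cite[Lemma 1]{BKT}, and the paper even deduces (in the proof of Theorem \ref{thm-main-section7}) that the lattice property (a) alone is invariant. The error is not harmless for your fallback strategy: restricting the case analysis to reading words of SYTs is incoherent, because the intermediate words in a chain of elementary shifted Knuth moves joining two reading words need not be reading words of any SYT. So either cite \cite{BKT} or run your $w\hat{w}$ case analysis on arbitrary words; with that adjustment and the fiber argument above, your outline goes through.
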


Notice that each SYT satisfying the condition (b) in Theorem \ref{Stembridge's result}
belongs to $\mathcal{Y}(\nu/\lambda)$.
Let $\mathcal{Y}(\nu/\lambda,\mu):=\{T \in \mathcal{Y}(\nu/\lambda) : \wt(T)=\mu \} $
and let
$$\mathcal{F}^\nu_{\lambda \mu}
:=\{ T \in \mathcal{Y}(\nu/\lambda,\mu) :
\w(T) \text{ satisfies (a) and (b) in Theorem \ref{Stembridge's result}}\}.$$
We call an SYT $T$ in $\mathcal{F}^\nu_{\lambda \mu}$
a {\it Littlewood-Richardson-Stembridge (LRS) tableau}
and
a word satisfying both the lattice property and
Theorem \ref{Stembridge's result}(b)
an {\it LRS word}.

\begin{example}\label{LRS exm}
(a) Let $\ld = (4,2), \mu =(4,3,1)$ and $\nu =(6,5,2,1)$.
Then $f_{\ld \mu}^{\nu}=4$
since there are four LRS tableaux of shape $\nu / \ld$ and weight $\mu$
as follows:
\vskip 2mm
\begin{center}
\begin{tikzpicture}
\def\hhh{5mm}
\def\vvv{5mm}
\draw[-] (\hhh*5,\vvv*0) rectangle (\hhh*6,\vvv*1);
\draw[-] (\hhh*6,\vvv*0) rectangle (\hhh*7,\vvv*1);
\draw[-] (\hhh*4,-\vvv*1) rectangle (\hhh*5,\vvv*0);
\draw[-] (\hhh*5,-\vvv*1) rectangle (\hhh*6,\vvv*0);
\draw[-] (\hhh*6,-\vvv*1) rectangle (\hhh*7,\vvv*0);
\draw[-] (\hhh*3,-\vvv*2) rectangle (\hhh*4,-\vvv*1);
\draw[-] (\hhh*4,-\vvv*2) rectangle (\hhh*5,-\vvv*1);
\draw[-] (\hhh*4,-\vvv*3) rectangle (\hhh*5,-\vvv*2);
\node at (\hhh*5.5,\vvv*0.5) () {$1'$};
\node at (\hhh*6.5,\vvv*0.5) () {$1$};
\node at (\hhh*4.5,-\vvv*0.5) () {$1$};
\node at (\hhh*5.5,-\vvv*0.5) () {$1$};
\node at (\hhh*6.5,-\vvv*0.5) () {$2'$};
\node at (\hhh*3.5,-\vvv*1.5) () {$2$};
\node at (\hhh*4.5,-\vvv*1.5) () {$2$};
\node at (\hhh*4.5,-\vvv*2.5) () {$3$};
\end{tikzpicture}
\hskip 10mm
\begin{tikzpicture}
\def\hhh{5mm}
\def\vvv{5mm}
\draw[-] (\hhh*5,\vvv*0) rectangle (\hhh*6,\vvv*1);
\draw[-] (\hhh*6,\vvv*0) rectangle (\hhh*7,\vvv*1);
\draw[-] (\hhh*4,-\vvv*1) rectangle (\hhh*5,\vvv*0);
\draw[-] (\hhh*5,-\vvv*1) rectangle (\hhh*6,\vvv*0);
\draw[-] (\hhh*6,-\vvv*1) rectangle (\hhh*7,\vvv*0);
\draw[-] (\hhh*3,-\vvv*2) rectangle (\hhh*4,-\vvv*1);
\draw[-] (\hhh*4,-\vvv*2) rectangle (\hhh*5,-\vvv*1);
\draw[-] (\hhh*4,-\vvv*3) rectangle (\hhh*5,-\vvv*2);
\node at (\hhh*5.5,\vvv*0.5) () {$1$};
\node at (\hhh*6.5,\vvv*0.5) () {$1$};
\node at (\hhh*4.5,-\vvv*0.5) () {$1$};
\node at (\hhh*5.5,-\vvv*0.5) () {$2$};
\node at (\hhh*6.5,-\vvv*0.5) () {$2$};
\node at (\hhh*3.5,-\vvv*1.5) () {$1$};
\node at (\hhh*4.5,-\vvv*1.5) () {$2$};
\node at (\hhh*4.5,-\vvv*2.5) () {$3$};
\end{tikzpicture}
\hskip 10mm
\begin{tikzpicture}
\def\hhh{5mm}
\def\vvv{5mm}
\draw[-] (\hhh*5,\vvv*0) rectangle (\hhh*6,\vvv*1);
\draw[-] (\hhh*6,\vvv*0) rectangle (\hhh*7,\vvv*1);
\draw[-] (\hhh*4,-\vvv*1) rectangle (\hhh*5,\vvv*0);
\draw[-] (\hhh*5,-\vvv*1) rectangle (\hhh*6,\vvv*0);
\draw[-] (\hhh*6,-\vvv*1) rectangle (\hhh*7,\vvv*0);
\draw[-] (\hhh*3,-\vvv*2) rectangle (\hhh*4,-\vvv*1);
\draw[-] (\hhh*4,-\vvv*2) rectangle (\hhh*5,-\vvv*1);
\draw[-] (\hhh*4,-\vvv*3) rectangle (\hhh*5,-\vvv*2);
\node at (\hhh*5.5,\vvv*0.5) () {$1$};
\node at (\hhh*6.5,\vvv*0.5) () {$1$};
\node at (\hhh*4.5,-\vvv*0.5) () {$1$};
\node at (\hhh*5.5,-\vvv*0.5) () {$2'$};
\node at (\hhh*6.5,-\vvv*0.5) () {$2$};
\node at (\hhh*3.5,-\vvv*1.5) () {$1$};
\node at (\hhh*4.5,-\vvv*1.5) () {$2$};
\node at (\hhh*4.5,-\vvv*2.5) () {$3$};
\end{tikzpicture}
\hskip 10mm
\begin{tikzpicture}
\def\hhh{5mm}
\def\vvv{5mm}
\draw[-] (\hhh*5,\vvv*0) rectangle (\hhh*6,\vvv*1);
\draw[-] (\hhh*6,\vvv*0) rectangle (\hhh*7,\vvv*1);
\draw[-] (\hhh*4,-\vvv*1) rectangle (\hhh*5,\vvv*0);
\draw[-] (\hhh*5,-\vvv*1) rectangle (\hhh*6,\vvv*0);
\draw[-] (\hhh*6,-\vvv*1) rectangle (\hhh*7,\vvv*0);
\draw[-] (\hhh*3,-\vvv*2) rectangle (\hhh*4,-\vvv*1);
\draw[-] (\hhh*4,-\vvv*2) rectangle (\hhh*5,-\vvv*1);
\draw[-] (\hhh*4,-\vvv*3) rectangle (\hhh*5,-\vvv*2);
\node at (\hhh*5.5,\vvv*0.5) () {$1$};
\node at (\hhh*6.5,\vvv*0.5) () {$1$};
\node at (\hhh*4.5,-\vvv*0.5) () {$1'$};
\node at (\hhh*5.5,-\vvv*0.5) () {$2'$};
\node at (\hhh*6.5,-\vvv*0.5) () {$2$};
\node at (\hhh*3.5,-\vvv*1.5) () {$1$};
\node at (\hhh*4.5,-\vvv*1.5) () {$2$};
\node at (\hhh*4.5,-\vvv*2.5) () {$3$};
\end{tikzpicture}
\end{center}
\vskip 2mm \noindent
(b) For $\nu \in \Lambda^+$,
$f^{\nu}_{\emptyset \, \nu}=1$
and the corresponding LRS tableau is $R_\nu$.
\end{example}
\vskip 2mm

The following theorem is the main result of this section.
\begin{thm}\label{thm-main-section4}
Suppose that $S$ and $T$ are SYTs such that $T$ extends $S$.
If the shifted tableau switching transforms $(S,T)$ into $({}^ST, S_T)$,
then we have the following.
\begin{itemize}
  \item[{\rm (a)}] The shifted tableau switching transforms
 $({}^ST, S_T)$ into $(S,T)$.
  \item[{\rm (b)}] When $T$ is an LRS tableau, so is ${}^ST$.
  \item[{\rm (c)}] When $S$ is an LRS tableau, so is $S_T$.
\end{itemize}
\end{thm}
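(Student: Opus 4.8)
The plan is to prove \textrm{(a)} first and then to deduce \textrm{(b)} and \textrm{(c)} from the characterisation of LRS tableaux by rectification. For \textrm{(a)} I would first reduce the involutivity of $\Sigma$ on pairs of SYTs to a reversibility statement for a single shifted perforated pair. Recall that $\Sigma$ is defined through Algorithm \ref{algorithm2}, which performs the block operations on the pairs $(S^{(i)},T^{(j)})$ in a fixed order; by the independence of non-overlapping switching regions (Remark \ref{Remark-Diff2}(ii), equivalently the reordering furnished by Algorithm \ref{algorithm2-1}), these block operations may be commuted so as to match the computation of $\Sigma({}^ST,S_T)$, block for block, with the \emph{reversed} computation of $\Sigma(S,T)$: the operation moving the highest block of ${}^ST$ through the lowest block of $S_T$ must undo the last operation moving the lowest block of $S$ through the highest block of $T$, and so on. Granting this matching, it suffices to prove the following perforated reversal: for a shifted perforated $(\bfa,\bfb)$-pair $(A,B)$ with $B$ extending $A$, applying Algorithm \ref{algorithm1} to $({}^AB,A_B)$, now regarded as a $(\bfb,\bfa)$-pair with $A_B$ extending ${}^AB$, returns $(A,B)$.

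To establish the perforated reversal I would combine two ingredients. The first is a finite local check that each switch is undone by a switch after the roles of $\bfa$ and $\bfb$ are exchanged: the non-diagonal switches \textrm{(S1)}, \textrm{(S2)}, \textrm{(S5)}, \textrm{(S6)} are self-inverse under this exchange, while the diagonal switches are inverted by diagonal switches, the role-exchanged \textrm{(S4)} reversing \textrm{(S3)} and the role-exchanged \textrm{(S3)} reversing both \textrm{(S4)} and \textrm{(S7)} (the extra $b$-box of \textrm{(S7)} being carried along as a spectator). The delicate bookkeeping here is the behaviour of primes on and off the main diagonal, which is exactly the data distinguishing our switches from the Buch--Kresch--Tamvakis slides (Remark \ref{Remark-Diff}(iii)) and which is what makes our process, unlike the slides, reversible. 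The second ingredient controls the \emph{order} in which boxes are chosen: I would show that when Algorithm \ref{algorithm1} is run on $({}^AB,A_B)$ the box-selection rule picks the $\bfb$-boxes in precisely the reverse of the order in which the corresponding $\bfa$-boxes were moved, so that each shifted switching path is retraced backwards without disturbing a path not yet retraced. This is where Lemmas \ref{lem-sw-path} and \ref{lem-path-intersect} are indispensable, since they pin down the possible shapes of the switching paths and the way successive paths nest to the north and west of one another. I expect this order-reversal to be the main obstacle, as it is the precise point at which the shifted process departs from the unshifted one of \cite{BSS}; throughout, the standardisation compatibility of Remark \ref{rem-stan-switch}(iv),(v) can be used to reduce the prime bookkeeping to the already understood behaviour of $\stan$, and the coincidence of our paths with those of \cite{BKT} supplies the geometric input.

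For \textrm{(b)}, I would use that ${}^ST$ is obtained from $T$ by a sequence of shifted slides, as in the proof of Theorem \ref{Thm-Perforated}(b); hence ${}^ST$ is shifted jeu de taquin equivalent to $T$, so $\Rect({}^ST)=\Rect(T)$, while $\wt({}^ST)=\wt(T)=\mu$ by Theorem \ref{Thm-Sec3-SW}(a). Recalling from \cite{Sa,Wo,Ste1} that an SYT of weight $\mu$ is an LRS tableau if and only if its rectification equals $R_\mu$, the hypothesis $\Rect(T)=R_\mu$ forces $\Rect({}^ST)=R_\mu$, so ${}^ST$ is again LRS. For \textrm{(c)}, I would invoke \textrm{(a)}: since $\Sigma({}^ST,S_T)=(S,T)$, the tableau $S={}^{({}^ST)}(S_T)$ is obtained from $S_T$ by shifted slides, so $S_T$ is shifted jeu de taquin equivalent to $S$; as $\wt(S_T)=\wt(S)$ by Theorem \ref{Thm-Sec3-SW}(a), the same rectification argument shows that $S$ being LRS implies $S_T$ LRS. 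Thus \textrm{(b)} is essentially immediate, \textrm{(c)} is immediate once \textrm{(a)} is available, and the whole weight of the theorem rests on the order-reversal analysis in \textrm{(a)}.
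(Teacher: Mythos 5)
Your parts (b) and (c), and your reduction of (a) to the single perforated-pair statement $\Sigma({}^AB,A_B)=(A,B)$ via Remark \ref{Remark-Diff2}(ii), are correct and essentially the paper's route: the paper gets (b) from shifted Knuth invariance of the LRS property (Lemma \ref{lem-BKT}) applied to the slide sequence of Remark \ref{remark-switch=jdt}(i), which is equivalent to your rectification argument, and obtains (c) from (a) and (b) exactly as you do. The perforated-pair statement is the paper's Lemma \ref{lem-sw-involution}, so everything hinges on how that lemma is proved.

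Here is the gap: your second ingredient, the order-reversal claim, is false. Running Algorithm \ref{algorithm1} on $({}^AB,A_B)$ does not select boxes in the reverse of the forward order, and the switching paths are not retraced ``without disturbing a path not yet retraced.'' Concretely, take $A$ with $a$'s at $(1,1),(1,2)$ and $B$ with $b'$ at $(1,3)$, $b'$ at $(2,2)$, $b$ at $(2,3)$ (so $A\cup B$ fills the shifted shape $(3,2)$). The forward run is: (S5) swapping $(1,2)$ with $(1,3)$; then (S2) swapping $(1,3)$ with $(2,3)$; then the diagonal switch (S3) at $(1,1)$. In $({}^AB,A_B)$ the selection rule (easternmost unprimed mover that is a north or west neighbor of a stationary box) first picks the $b$ at $(1,3)$, whose switch undoes the \emph{second} forward switch; the role-exchanged (S7) then undoes the third; and the first forward switch is undone last. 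So the reverse run is undo-2, undo-3, undo-1 rather than undo-3, undo-2, undo-1: the retracing of the first forward path is interrupted by the retracing of the diagonal path. Note also that the mover paths of the reverse run are paths of $\bfb$-boxes, of which there are $|B|$, not $|A|$, so they cannot literally be the reversed $\bfa$-paths. An argument built on exact retracing therefore cannot be completed as stated; your local inverse table (role-exchanged (S4) and (S7) undoing (S3), role-exchanged (S3) undoing (S4) and (S7), and so on) is correct but does not by itself close the proof.

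What is true, and what the paper proves, is an interleaving-tolerant statement. Writing $A=A^{[k+1]}\cup A^{[\leq k]}$, where $A^{[k+1]}$ is the box moved \emph{last} in the forward run, Lemmas \ref{lem1-sw-involution} and \ref{lem2-sw-involution} show that $\Sigma({}^AB,A_B)\approx\Sigma\bigl({}^{A^{[\leq k]}}B,{A^{[\leq k]}}_B\bigr)$ up to the cell of $A^{[k+1]}$: the reverse run interleaves the undoing of $A^{[k+1]}$'s path with the reverse run of the smaller pair, and the lemmas track this by locating the times $v_s$ at which the selected box lies on that path, with a case analysis over the path types {\bf T1}--{\bf T3} of Lemma \ref{lem-sw-path} and the occurrence of (S4)/(S7). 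Induction on $|A|$ then yields Lemma \ref{lem-sw-involution}. This inductive ``agreement up to one box'' bookkeeping is the missing idea in your proposal; it replaces, rather than implements, the order reversal you describe.
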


In order to prove this theorem,
we first introduce necessary lemmas.
Let $A$ be an SYT consisting of $(k+1)$ $\bfa$-boxes.
Define $A^{[k]}$ to be the subtableau of $A$ occupied by $k$ in $\rstan(A)$
and
\begin{displaymath}
A^{[\leq k]}:=A^{[k]} \cup A^{[k-1]} \cup \cdots \cup A^{[1]}.
\end{displaymath}
Then, by definition, $A = A^{[k+1]} \cup A^{[\leq k]}$
and $A^{[\leq k]}$ extends $A^{[k+1]}$.
If two shifted perforated $(\bfa,\bfb)$-pairs $(A,B)$ and $(A',B')$
are same when we ignore the $(i,j)$ box,
then we use the following expression
\begin{displaymath}
(A,B) \approx (A',B') \quad \text{up to $(i,j)$.}
\end{displaymath}

\begin{lem}\label{lem1-sw-involution}
Let $(A,B)$ be a shifted perforated $(\bfa,\bfb)$-pair
such that $|A|=k+1$ $(k \geq 1)$ and $B$ extends $A$.
Let $A^{[k+1]} = A_{i,j}$.
If neither {\rm (S4)} nor {\rm (S7)} occurs in the shifted switching process corresponding to
$\overrightarrow{\rm P}_{(i,j)}$,
then
\begin{displaymath}
\Sigma({}^AB,A_B)\approx \Sigma({}^{A^{[\leq k]}}B, {A^{[\leq k]}}_B)
\quad \text{up to $(i,j)$}.
\end{displaymath}
\end{lem}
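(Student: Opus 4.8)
The plan is to prove the lemma by isolating the motion of the single box $A^{[k+1]}=A_{i,j}$, which is the \emph{last} $\bfa$-box to be switched in the shifted switching process on $A\cup B$. Recall from the proof of Theorem~\ref{Thm-Perforated} that the $\bfa$-boxes are switched in the order prescribed by $\rstan(A)$, and that $A^{[k+1]}$ carries the largest $\rstan$-label; concretely it is the topmost $a'$-box of $A$, or the leftmost $a$-box if $A$ has no primed entry. First I would record, via Lemma~\ref{lem-sw-path} together with the hypothesis that neither {\rm (S4)} nor {\rm (S7)} occurs along $\overrightarrow{{\rm P}}_{(i,j)}$, that this path is of type \textbf{T1}, \textbf{T2}, or \textbf{T3}; in particular $A^{[k+1]}$ travels only by the ``non-diagonal'' switches {\rm (S1)}, {\rm (S2)}, {\rm (S3)}, {\rm (S5)}, {\rm (S6)}, which by Remark~\ref{remark-switch=jdt}(i) are exactly ordinary shifted slides. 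Writing $(p,q):={\rm End}(\overrightarrow{{\rm P}}_{(i,j)})$, the goal is then to compare the two reverse switchings $\Sigma({}^AB,A_B)$ and $\Sigma({}^{A^{[\leq k]}}B,{A^{[\leq k]}}_B)$ and to show they can disagree only in the box $(i,j)$.

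The first substantive step is a forward decomposition: I would show that the process on $A\cup B$ factors as the process on $A^{[\leq k]}\cup B$, run while an inert $\bfa$-box sits at $(i,j)$, followed by the single slide carrying $A^{[k+1]}$ from $(i,j)$ to $(p,q)$. Since $A^{[k+1]}$ is the topmost $a'$-box (or the leftmost $a$-box), it is never selected as $\Box$ before all of $A^{[\leq k]}$ has been switched, and it can never play the rôle of the eastern $\bfa$-box demanded by {\rm (S4)} or {\rm (S7)} (that box must be an unprimed $a$ lying east of $\Box$, which is impossible for a topmost $a'$-box and impossible for the leftmost $a$-box). Hence its mere presence does not alter which switch is applied at any earlier step. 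This identifies $A_B$ with ${A^{[\leq k]}}_B$ off the box $(p,q)$, together with an $\bfa$-box at $(p,q)$, and exhibits ${}^AB$ as the result of performing the remaining ordinary slide of $A^{[k+1]}$ on ${}^{A^{[\leq k]}}B$ along $\overrightarrow{{\rm P}}_{(i,j)}$.

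Next I would run the reverse switching on $({}^AB,A_B)$. Because the slides used to move $A^{[k+1]}$ are ordinary shifted slides, hence invertible, and because the shifted switching paths coincide with the sliding paths of \cite{BKT}, I would argue that the reverse algorithm's treatment of the $\bfb$-boxes stays in lockstep with its treatment in $\Sigma({}^{A^{[\leq k]}}B,{A^{[\leq k]}}_B)$: removing the terminal box $(p,q)$ from $A_B$ (equivalently, peeling off $A^{[k+1]}$) should commute with the reverse process everywhere except at the box where the slide of $A^{[k+1]}$ originated, namely $(i,j)$. Here Lemma~\ref{lem-path-intersect} is the tool of choice, since it controls how $\overrightarrow{{\rm P}}_{(i,j)}$ meets the neighbouring paths and thus prevents the extra $\bfb$-box from reshuffling the selection order. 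Finally, the compatibility of the switching with standardization recorded in Remark~\ref{Remark-Diff}(iii)--(iv) shows that the only residual discrepancy at $(i,j)$ is whether the recovered diagonal entry is primed, which is precisely the ``up to $(i,j)$'' slack permitted in the statement.

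The main obstacle will be the non-interference claim in the reverse direction: one must rule out that the extra $\bfb$-box (the one neighbouring $(p,q)$ inside $A_B$) changes the order in which the reverse algorithm selects boxes and thereby perturbs the configuration somewhere other than $(i,j)$. This is exactly the point at which Lemma~\ref{lem-path-intersect} must be applied, box by box, to the consecutively processed $\bfb$-boxes, forcing the two reverse processes to agree off the single diagonal cell; the primed-versus-unprimed bookkeeping at $(i,j)$, while delicate, is then absorbed cleanly by the standardization compatibility.
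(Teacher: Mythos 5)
Your overall architecture matches the paper's: since $A^{[k+1]}$ is last in the switching order, the forward process on $A\cup B$ runs the process on $A^{[\leq k]}\cup B$ verbatim and then slides $A^{[k+1]}$ along $\overrightarrow{\rm P}_{(i,j)}$, and your argument that the inert box at $(i,j)$ cannot serve as the unprimed eastern $a$-box required by (S4)/(S7) is sound. The genuine gap is in the reverse direction, which is where essentially all of the paper's work lies. The two pairs $({}^AB,A_B)$ and $({}^{A^{[\leq k]}}B,{A^{[\leq k]}}_B)$ do not differ merely by ``the terminal box $(p,q)$'': they differ along the \emph{entire} path $\overrightarrow{\rm P}_{(i,j)}$, because the slide of $A^{[k+1]}$ shifted, and possibly re-primed, every $\bfb$-letter on that path. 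Consequently ``peeling off $(p,q)$'' does not reduce one configuration to the other, and the claim that the two reverse processes agree off $(i,j)$ is exactly the statement to be proved; it cannot be delegated to Lemma \ref{lem-path-intersect}. That lemma compares consecutive paths \emph{within a single} switching process; it says nothing about how two switching processes evolve on configurations that differ along a path. The paper instead proves a time-shifted lockstep statement: for each path box it introduces the time $v_s$ at which the reverse process on $({}^AB,A_B)$ selects the box $(i,j+y-s)$ (resp.\ $(i+x-s,j)$), and shows that $\sigma^{r+1}({}^AB,A_B)\approx\sigma^{r+1-s}({}^{A^{[\leq k]}}B,{A^{[\leq k]}}_B)$ up to the not-yet-recovered portion of the path, for $v_s\le r\le v_{s+1}-1$, carried out separately for the types {\bf T1}, {\bf T2}, {\bf T3}. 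This explicit bookkeeping is what your proposal replaces by an assertion that the processes ``should commute.''

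Two further points. First, your hypothesis excludes (S4)/(S7) only from the \emph{forward} path; in case {\bf T3} the reverse process genuinely applies (S4) (the configurational inverse of (S3) is realized by (S4), with different prime behavior), and the paper's {\bf T3} case is an explicit enumeration of the four possible local subfillings around the diagonal together with an analysis of how the reverse process passes through them. Your appeal to invertibility of ``ordinary shifted slides'' and to Remark \ref{Remark-Diff}(iii)--(iv) does not substitute for this: those remarks control standardizations of the letters, whereas here one needs equality of the actual configurations off $(i,j)$. Second, you misread ``$\approx$ up to $(i,j)$'': it does not mean a prime discrepancy at $(i,j)$ is tolerated; it means the two pairs agree outside $(i,j)$ (one has a box there and the other does not, since $(i,j)$ is not in the shape of $A^{[\leq k]}\cup B$). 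In fact the paper's proof additionally pins down $\left(\Sigma({}^AB,A_B)\right)_{i,j}=A^{[k+1]}$ exactly, prime included, which is what the induction in Lemma \ref{lem-sw-involution} subsequently requires.
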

\begin{proof}
Suppose that $u$ is the smallest integer such that
$\sigma^u(A^{[\leq k]},B)$ gets fully switched.
Since $A^{[k+1]}$ remains unchanged in applying $\sigma^{u}$ to $(A, B)$,
it follows that
$\Box(\sigma^u(A, B))= A^{[k+1]}=A_{i,j}$.
When $A^{[k+1]}$ is fully switched in $\sigma^u(A, B)$,
there is nothing to prove.
Hence we assume that $A^{[k+1]}$ is not fully switched.
Then, by Lemma \ref{lem-sw-path}, $\overrightarrow{\rm P}_{(i,j)}$ has one of the following types:
\begin{description}
\item [T1] $\{(i,j),\ldots,(i,j+y)\}$ for some $y \geq 1$
\item [T2] $ \{(i,j),\ldots,(i+x,j)\}$ for some $x \geq 1$
\item [T3] $\{(i,j),(i,j+1),(i+1,j+1)\}$
\end{description}
In case of ${\bf T1}$,
$\sigma^{u+y}(A,B)$ is fully switched and $\left(\sigma^{u+y}(A,B)\right)_{i-1,j+y}$ is a $b'$-box or an empty box
since $(\sigma^u(A,B))_{i,j+y}$ is a $\bfb$-box.
For $1 \leq s \leq y$,
let $v_s$ be the integer uniquely determined by the the condition
$$
\Box(\sigma^{v_s}({}^AB, A_B))=(i,j+y-s).
$$
Then we have
$$\sigma^{v_1+1}({}^AB,A_B) \approx \sigma^{v_1}({}^{A^{[\leq k]}}B, {A^{[\leq k]}}_B)
\quad \text{up to $(i,j), \ldots, (i,j+y-1)$.} $$
More generally, for $1 \leq s \leq y$ and $v_s \leq r \leq v_{s+1}-1$,
it is not difficult to see that
$$
\sigma^{r+1}({}^AB,A_B) \approx \sigma^{r+1-s}({}^{A^{[\leq k]}}B, {A^{[\leq k]}}_B)
\quad \text{up to $(i,j), \ldots, (i,j+y-s)$}.
$$
It should be noticed that we are regarding $v_{y+1}$ as a sufficiently large integer.
This implies that $\left(\Sigma({}^AB,A_B)\right)_{i,j} = A^{[k+1]}$
and
$$
\Sigma\left({}^AB,A_B\right)\approx \Sigma\left({}^{A^{[\leq k]}}B, {A^{[\leq k]}}_B\right)
\quad \text{up to $(i,j)$}
$$
because
the $(i,j)$ box is not in
$\Sigma({}^{A^{[\leq k]}}B, {A^{[\leq k]}}_B)$.

In case of ${\bf T2}$,
$\sigma^{u+x}(A,B)$ is fully switched.
For $1 \leq s \leq x$,
let $v_s$ be the integer uniquely determined by the condition
$$
\Box(\sigma^{v_s}({}^AB, A_B))=(i+x-s,j).
$$
For $1 \leq s \leq x$ and $v_s \leq r \leq v_{s+1}-1$,
$$
\sigma^{r+1}({}^AB,A_B) \approx \sigma^{r+1-s}({}^{A^{[\leq k]}}B, {A^{[\leq k]}}_B)
\quad
\text{up to $(i,j), \ldots, (i+x-s,j)$}.
$$
Here we are regarding $v_{x+1}$ as a sufficiently large integer.
This equivalence holds since
$\Box(\sigma^{v_1}({}^AB,A_B)) = (i+x-1,j)$
is switched with an $\bfa$-box placed at $(i+x,j)$
regardless of the $(i+x,j-1)$ box.
As in case of ${\bf T1}$, it holds that
$\left(\Sigma({}^AB,A_B)\right)_{i,j} = A^{[k+1]}$
and
$$
\Sigma\left({}^AB,A_B\right) \approx \Sigma\left({}^{A^{[\leq k]}}B, {A^{[\leq k]}}_B\right)
\quad
\text{up to $(i,j)$.}
$$

Finally, in case of ${\bf T3}$,
$\sigma^u(A, B)$ must contain one of the following subfillings:
\begin{center}
\begin{tikzpicture}
\def \hhh{5mm}    
\def \vvv{5mm}    
\def \hhhhh{30mm}  
\draw[fill=black!30] (\hhh*0,0) rectangle (\hhh*1,\vvv*1);
\draw[-] (\hhh*1,0) rectangle (\hhh*2,\vvv*1);
\draw[-] (\hhh*1,-\vvv*1) rectangle (\hhh*2,\vvv*0);
\node[] at (\hhh*0.5,\vvv*0.5) {$\bfa$};
\node[] at (\hhh*1.5,\vvv*0.5) {$b'$};
\node[] at (\hhh*1.5,-\vvv*0.5) {$b'$};
\node[] at (\hhh*1+\hhhhh*0.5,-\vvv*0) {or};
\draw[fill=black!30] (\hhh*0+\hhhhh,0) rectangle (\hhh*1+\hhhhh,\vvv*1);
\draw[-] (\hhh*1+\hhhhh,0) rectangle (\hhh*2+\hhhhh,\vvv*1);
\draw[-] (\hhh*1+\hhhhh,-\vvv*1) rectangle (\hhh*2+\hhhhh,\vvv*0);
\node[] at (\hhh*0.5+\hhhhh,\vvv*0.5) {$\bfa$};
\node[] at (\hhh*1.5+\hhhhh,\vvv*0.5) {$b'$};
\node[] at (\hhh*1.5+\hhhhh,-\vvv*0.5) {$b$};
\node[] at (\hhh*1+\hhhhh*1.5,-\vvv*0) {or};
\draw[fill=black!30] (\hhh*0+\hhhhh*2,0) rectangle (\hhh*1+\hhhhh*2,\vvv*1);
\draw[-] (\hhh*1+\hhhhh*2,0) rectangle (\hhh*2+\hhhhh*2,\vvv*1);
\draw[-] (\hhh*0+\hhhhh*2,-\vvv*1) rectangle (\hhh*2+\hhhhh*2,\vvv*0);
\draw[-] (\hhh*1+\hhhhh*2,-\vvv*1) rectangle (\hhh*2+\hhhhh*2,\vvv*0);
\node[] at (\hhh*0.5+\hhhhh*2,\vvv*0.5) {$\bfa$};
\node[] at (\hhh*1.5+\hhhhh*2,\vvv*0.5) {$b'$};
\node[] at (\hhh*0.5+\hhhhh*2,-\vvv*0.5) {$b'$};
\node[] at (\hhh*1.5+\hhhhh*2,-\vvv*0.5) {$b$};
\node[] at (\hhh*1+\hhhhh*2.5,-\vvv*0) {or};
\draw[fill=black!30] (\hhh*0+\hhhhh*3,0) rectangle (\hhh*1+\hhhhh*3,\vvv*1);
\draw[-] (\hhh*1+\hhhhh*3,0) rectangle (\hhh*2+\hhhhh*3,\vvv*1);
\draw[-] (\hhh*0+\hhhhh*3,-\vvv*1) rectangle (\hhh*1+\hhhhh*3,\vvv*0);
\draw[-] (\hhh*1+\hhhhh*3,-\vvv*1) rectangle (\hhh*2+\hhhhh*3,\vvv*0);
\node[] at (\hhh*0.5+\hhhhh*3,\vvv*0.5) {$\bfa$};
\node[] at (\hhh*1.5+\hhhhh*3,\vvv*0.5) {$b'$};
\node[] at (\hhh*0.5+\hhhhh*3,-\vvv*0.5) {$b$};
\node[] at (\hhh*1.5+\hhhhh*3,-\vvv*0.5) {$b$};
\end{tikzpicture}
\end{center}
It implies that $({}^AB,A_B)$
must contain one of the following subfillings:
\begin{center}
\begin{tikzpicture}
\def \hhh{5mm}    
\def \vvv{5mm}    
\def \hhhhh{30mm}  
\draw[-] (\hhh*0,0) rectangle (\hhh*1,\vvv*1);
\draw[-] (\hhh*1,0) rectangle (\hhh*2,\vvv*1);
\draw[fill=black!30] (\hhh*1,-\vvv*1) rectangle (\hhh*2,\vvv*0);
\node[] at (\hhh*0.5,\vvv*0.5) {$b'$};
\node[] at (\hhh*1.5,\vvv*0.5) {$b$};
\node[] at (\hhh*1.5,-\vvv*0.5) {$\bfa$};
\node[] at (\hhh*1+\hhhhh*0.5,-\vvv*0) {or};
\draw[-] (\hhh*0+\hhhhh,0) rectangle (\hhh*1+\hhhhh,\vvv*1);
\draw[-] (\hhh*1+\hhhhh,0) rectangle (\hhh*2+\hhhhh,\vvv*1);
\draw[fill=black!30] (\hhh*1+\hhhhh,-\vvv*1) rectangle (\hhh*2+\hhhhh,\vvv*0);
\node[] at (\hhh*0.5+\hhhhh,\vvv*0.5) {$b$};
\node[] at (\hhh*1.5+\hhhhh,\vvv*0.5) {$b$};
\node[] at (\hhh*1.5+\hhhhh,-\vvv*0.5) {$\bfa$};
\node[] at (\hhh*1+\hhhhh*1.5,-\vvv*0) {or};
\draw[-] (\hhh*0+\hhhhh*2,0) rectangle (\hhh*1+\hhhhh*2,\vvv*1);
\draw[-] (\hhh*1+\hhhhh*2,0) rectangle (\hhh*2+\hhhhh*2,\vvv*1);
\draw[-] (\hhh*0+\hhhhh*2,-\vvv*1) rectangle (\hhh*2+\hhhhh*2,\vvv*0);
\draw[fill=black!30] (\hhh*1+\hhhhh*2,-\vvv*1) rectangle (\hhh*2+\hhhhh*2,\vvv*0);
\node[] at (\hhh*0.5+\hhhhh*2,\vvv*0.5) {$b'$};
\node[] at (\hhh*1.5+\hhhhh*2,\vvv*0.5) {$b$};
\node[] at (\hhh*0.5+\hhhhh*2,-\vvv*0.5) {$b'$};
\node[] at (\hhh*1.5+\hhhhh*2,-\vvv*0.5) {$\bfa$};
\node[] at (\hhh*1+\hhhhh*2.5,-\vvv*0) {or};
\draw[-] (\hhh*0+\hhhhh*3,0) rectangle (\hhh*1+\hhhhh*3,\vvv*1);
\draw[-] (\hhh*1+\hhhhh*3,0) rectangle (\hhh*2+\hhhhh*3,\vvv*1);
\draw[-] (\hhh*0+\hhhhh*3,-\vvv*1) rectangle (\hhh*1+\hhhhh*3,\vvv*0);
\draw[fill=black!30] (\hhh*1+\hhhhh*3,-\vvv*1) rectangle (\hhh*2+\hhhhh*3,\vvv*0);
\node[] at (\hhh*0.5+\hhhhh*3,\vvv*0.5) {$b'$};
\node[] at (\hhh*1.5+\hhhhh*3,\vvv*0.5) {$b$};
\node[] at (\hhh*0.5+\hhhhh*3,-\vvv*0.5) {$b$};
\node[] at (\hhh*1.5+\hhhhh*3,-\vvv*0.5) {$\bfa$};
\end{tikzpicture}
\end{center}
Note that in the first two subfillings,
the upper leftmost box denotes the $(i,i)$ box
in $\sigma^u(A, B)$ and $\left({}^AB,A_B\right)$, respectively.
In particular, $A^{[k+1]} = A_{i,i}$.
Let $v$ be the integer uniquely determined by the condition
$$
\Box\left(\sigma^{v}\left({}^AB,A_B\right)\right)= (i,i+1).
$$
Applying (S4) to $\Box(\sigma^{v}({}^AB,A_B))$, we can deduce that
$$\sigma^{v+1}\left({}^AB,A_B\right) \approx
\sigma^{v}\left({}^{A^{[\leq k]}}B, {A^{[\leq k]}}_B\right)
\quad \text{up to $(i,i)$.}$$
Furthermore,
since the $(i,i)$ $\bfa$-box in $\sigma^{v+1}\left({}^AB,A_B\right)$
cannot be switched with
any $\bfb$-box in $\sigma^{v+1}\left({}^AB,A_B\right)$,
it follows that for $r \geq 1$,
$$
\sigma^{v+1+r}\left({}^AB,A_B\right) \approx
\sigma^{v+r}\left({}^{A^{[k]}}B, {A^{[k]}}_B\right)
\quad \text{up to $(i,i)$.}$$
For the remaining two subfillings,
we let $v_1$ and $v_2$ be the integer uniquely determined by
$$
\Box\left(\sigma^{v_1}\left({}^AB,A_B\right)\right)= (i,j+1)
$$
and
$$
\Box\left(\sigma^{v_2}\left({}^AB,A_B\right)\right)= (i,j),
$$
respectively.
Here the $(i,j)$ box is the upper leftmost box in the each subfilling.
Then
$$
\sigma^{v_1+1}\left({}^AB,A_B\right) \approx
\sigma^{v_1}({}^{A^{[\leq k]}}B, {A^{[\leq k]}}_B)
\quad \text{up to $(i,j)$ and $(i,j+1)$.}
$$
Now consider any $\bfb$-box on the southwest of $(i,j)$.
The shifted switching path of the $(i,j)$ box does not contain $(i,j+1)$.
Moreover, the $(i,j)$ $\bfa$-box in $\sigma^{v_2+1}({}^AB,A_B)$
cannot be switched with any $\bfb$-box in $\sigma^{v_2+1}({}^AB,A_B)$.
Therefore, for $s=1,2$ and $v_s \leq r \leq v_{s+1}-1$,
$$
\sigma^{r+1}\left({}^AB,A_B\right) \approx
\sigma^{r+1-s}({}^{A^{[\leq k]}}B, {A^{[\leq k]}}_B)
\quad
\text{up to $(i,j)$ and $(i,j+2-s)$.}
$$
Here we are regarding $v_3$ as a sufficiently large integer.
Hence we showed for all subfillings above that
$$
\Sigma\left({}^AB,A_B\right) \approx \Sigma\left({}^{A^{[\leq k]}}B, {A^{[\leq k]}}_B\right)
\quad
\text{up to $(i,j)$,}$$
where $A^{[k+1]} = A_{i,j}$.
\end{proof}
\vskip 2mm

\begin{lem}\label{lem2-sw-involution}
Let $(A,B)$ be a shifted perforated $(\bfa,\bfb)$-pair
such that $|A|=k+1$ $(k \geq 1)$ and $B$ extends $A$.
Let $A^{[k+1]} = A_{i,i}$.
If either {\rm (S4)} or {\rm (S7)} occurs in the shifted switching process corresponding to
$\overrightarrow{\rm P}_{(i,i)}$,
then
\begin{displaymath}
\Sigma\left({}^AB,A_B\right)\approx \Sigma\left({}^{A^{[\leq k]}}B, {A^{[\leq k]}}_B\right)
\quad \text{up to $(i,i)$}.
\end{displaymath}
\end{lem}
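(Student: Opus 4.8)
The plan is to follow the horizontal (type \textbf{T1}) case in the proof of Lemma \ref{lem1-sw-involution} almost line for line, the only new features being the very first switch along $\overrightarrow{\rm P}_{(i,i)}$ and the single box $(i,i)$ at which the two double-switchings are permitted to disagree. First I would pin down the local geometry forced by the hypothesis. Since either (S4) or (S7) is applied to $A^{[k+1]}=A_{i,i}$ during the process, Lemma \ref{lem-sw-path} guarantees that $\overrightarrow{\rm P}_{(i,i)}$ is \emph{horizontal}, say $\overrightarrow{\rm P}_{(i,i)}=\{(i,i),(i,i+1),\ldots,(i,i+y)\}$ with $y\ge 1$, and that its first switch, (S4) or (S7), carries the neighbouring unprimed $a$-box at $(i,i+1)$ down to $(i+1,i+1)$ while moving $A^{[k+1]}$ east to $(i,i+1)$, where it is primed. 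Writing $u$ for the least integer with $\sigma^u(A^{[\le k]}\cup B)$ fully switched, the box $A^{[k+1]}$ stays fixed under $\sigma^1,\dots,\sigma^u$ applied to $A\cup B$, so $\Box(\sigma^u(A\cup B))=A^{[k+1]}=A_{i,i}$ and the first switch applied to it is (S4) or (S7).

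The core of the argument compares the reverse process applied to $({}^AB,A_B)$ with the one applied to $({}^{A^{[\le k]}}B,{A^{[\le k]}}_B)$ step by step. For $1\le s\le y$ let $v_s$ be the integer determined by $\Box(\sigma^{v_s}({}^AB,A_B))=(i,i+y-s)$; for $s<y$ the reverse switch applied there is an ordinary horizontal one, and exactly as in Lemma \ref{lem1-sw-involution} one obtains, for $v_s\le r\le v_{s+1}-1$,
\[
\sigma^{r+1}({}^AB, A_B)\approx \sigma^{r+1-s}({}^{A^{[\le k]}}B,{A^{[\le k]}}_B)
\quad\text{up to }(i,i),\ldots,(i,i+y-s),
\]
where $v_{y+1}$ is regarded as sufficiently large. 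The decisive step is the final one ($s=y$): there the reverse of (S4)/(S7) settles $A^{[k+1]}$ back at $(i,i)$ and pushes the $a$-box from $(i+1,i+1)$ up to $(i,i+1)$. In the reduced pair the box $(i,i)$ is simply absent, so the matching reverse step is the vertical un-switch (reverse of (S2)) carried out at $(i+1,i+1)$, which sends the same $a$-box back to $(i,i+1)$. Hence after these two corresponding steps the full and reduced fillings coincide everywhere except at $(i,i)$, where the full filling now carries $A^{[k+1]}$. Running both processes to completion then yields $(\Sigma({}^AB,A_B))_{i,i}=A^{[k+1]}$ together with the asserted equivalence up to $(i,i)$, the box $(i,i)$ being absent from $\Sigma({}^{A^{[\le k]}}B,{A^{[\le k]}}_B)$.

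The main obstacle is precisely this last matching: I must verify that a single (S4)/(S7) un-switch in the full pair and a single (S2) un-switch in the reduced pair keep the two processes synchronized off the diagonal, i.e.\ that the $a'$ deposited at $(i,i+1)$ and the $\mathbf a$ deposited at $(i+1,i+1)$ during the forward pass behave, under the reverse pass, exactly like the two independently moving boxes of the reduced pair (compare Remark \ref{Remark-Diff}(iii), where $A_B$ and $\overline{A_B}$ differ only by a prime on the southernmost diagonal box). Here Lemma \ref{lem-path-intersect} supplies the positional control needed to rule out any interaction created or destroyed by the presence of $A^{[k+1]}$ outside $(i,i)$, after which the remaining steps reduce to the horizontal analysis of Lemma \ref{lem1-sw-involution} without change.
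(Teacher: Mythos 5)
Your overall strategy is the paper's: run the full and reduced forward processes in parallel, then match the reverse processes step by step via the $v_s$'s, with one exceptional step at the diagonal. However, your setup contains a genuine error. You claim that $A^{[k+1]}$ stays fixed under $\sigma^1,\dots,\sigma^u$ and that $\Box(\sigma^u(A\cup B))=A_{i,i}$, with (S4)/(S7) then applied to that box. This is impossible: a diagonal box $(i,i)$ has only one neighbour inside the shape, namely $(i,i+1)$, and (S4)/(S7) require that neighbour to be an $\bfa$-box, whereas $\Box$ must by definition be an $\bfa$-box adjacent to a $\bfb$-box. In fact (S4)/(S7) fires with $\Box=(i,i+1)$: it is simultaneously the \emph{last} switch of $A^{[k]}$'s path and the \emph{first} switch of $\overrightarrow{\rm P}_{(i,i)}$, and it occurs at step $u$ — exactly the step at which the reduced process applies (S2) (resp.\ (S6)) instead. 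Hence $A^{[k+1]}$ moves already at step $u$, the two forward processes diverge \emph{at} step $u$ rather than after it, and the fully switched pairs differ at $(i,i)$, $(i,i+1)$ \emph{and} $(i+1,i+1)$ (for (S7), along the rest of row $i$ as well), not only at the boxes you list. Your later sentences describe (S4)/(S7) correctly (the $(i,i+1)$ box goes south, the $(i,i)$ box goes east and is primed), so the write-up is internally inconsistent on this point; the borrowed sentence from Lemma \ref{lem1-sw-involution} is valid there precisely because its hypothesis excludes (S4)/(S7).

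The second problem is that the step you yourself flag as ``the main obstacle'' is exactly what has to be proved, and Lemma \ref{lem-path-intersect} does not supply it. What makes the re-synchronization work are concrete structural facts about the configuration when (S4)/(S7) fires, which must be established separately: in the (S4) case, that $(i,i+2)$ is an $a$-box or empty and that column $i$ of $A\cup B$ consists of the single box $(i,i)$ — whence the full process terminates immediately after (S4), and in the reverse process the full pair's $\Box$ reaches $(i,i)$ at the same step $v$ at which the reduced pair's $\Box$ reaches $(i,i+1)$, where (S3) and (S2) (with the roles of $\bfa$ and $\bfb$ exchanged) restore agreement up to $(i,i)$; in the (S7) case, additionally the nontrivial claim that the box at $(i,i+1)$ is $A^{[k]}$ itself, which the paper proves by a contradiction argument on the positions of $(i-1,i+1)$ and $(i-1,i+2)$. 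Lemma \ref{lem-path-intersect} is a statement about where consecutive \emph{forward} switching paths lie relative to each other's endpoints; it says nothing about the reverse process or about these local diagonal configurations, so citing it leaves the central matching — and with it the lemma — unproved.
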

\begin{proof}
Let $u$ be the smallest integer such that
$\sigma^u(A^{[\leq k]},B)$ is fully switched.
Then the $u$th switch
should be (S4) or (S7) and by which
$A^{[k+1]}$ is moved from $(i,i)$ to $(i,i+1)$.

When (S4) is applied to $\sigma^{u-1}(A,B)$,
the $(i,i+1)$ box in $\sigma^{u-1}(A,B)$ is filled with $a$
and this box is in the shifted switching path of $A^{[k]}$.
In particular, the $(i,i+2)$ box in $\sigma^{u-1}(A,B)$ is
either an $a$-box or an empty box
and
the $i$th column in $A\cup B$ has only a single box at $(i,i)$.
Refer to the following figure:
\begin{center}
\begin{tikzpicture}[baseline=-16mm]
\def \hhh{4mm}    
\def \vvv{4mm}    
\def \hhhhh{24mm}  
\draw[fill=black!30] (\hhh*0,-\vvv*2) rectangle (\hhh*1,-\vvv*1);
\draw[fill=black!30] (\hhh*1,-\vvv*1) rectangle (\hhh*2,\vvv*0);
\draw[-] (\hhh*1,-\vvv*2) rectangle (\hhh*2,-\vvv*1);
\draw[-] (\hhh*1,-\vvv*3) rectangle (\hhh*2,-\vvv*2);
\draw[-] (\hhh*2,-\vvv*1) rectangle (\hhh*3,\vvv*0);
\draw[fill=black!30] (\hhh*2,-\vvv*2) rectangle (\hhh*3,-\vvv*1);
\node[] at (\hhh*0.5,-\vvv*1.5) {$\bfa$};
\node[] at (\hhh*1.5,-\vvv*0.5) {$a$};
\node[] at (\hhh*1.5,-\vvv*1.5) {$b'$};
\node[] at (\hhh*1.5,-\vvv*2.5) {$\bfb$};
\node[] at (\hhh*2.5,-\vvv*0.5) {$b$};
\node[] at (\hhh*2.5,-\vvv*1.5) {$a$};
\draw[|->] (\hhh*3.8,-\vvv*1.5) -- (-\hhh*0.8+\hhhhh*1,-\vvv*1.5);
\draw[fill=black!30] (\hhh*0+\hhhhh,-\vvv*2) rectangle (\hhh*1+\hhhhh,-\vvv*1);
\draw[-] (\hhh*1+\hhhhh,-\vvv*1) rectangle (\hhh*2+\hhhhh,-\vvv*0);
\draw[fill=black!30] (\hhh*1+\hhhhh,-\vvv*2) rectangle (\hhh*2+\hhhhh,-\vvv*1);
\draw[-] (\hhh*1+\hhhhh,-\vvv*3) rectangle (\hhh*2+\hhhhh,-\vvv*2);
\draw[-] (\hhh*2+\hhhhh,-\vvv*1) rectangle (\hhh*3+\hhhhh,-\vvv*0);
\draw[fill=black!30] (\hhh*2+\hhhhh,-\vvv*2) rectangle (\hhh*3+\hhhhh,-\vvv*1);
\node[] at (\hhh*0.5+\hhhhh,-\vvv*1.5) {$\bfa$};
\node[] at (\hhh*1.5+\hhhhh,-\vvv*0.5) {$b'$};
\node[] at (\hhh*1.5+\hhhhh,-\vvv*1.5) {$a$};
\node[] at (\hhh*1.5+\hhhhh,-\vvv*2.5) {$\bfb$};
\node[] at (\hhh*2.5+\hhhhh,-\vvv*0.5) {$b$};
\node[] at (\hhh*2.5+\hhhhh,-\vvv*1.5) {$a$};
\draw[|->] (\hhh*3.8+\hhhhh*1,-\vvv*1.5) -- (-\hhh*0.8+\hhhhh*2,-\vvv*1.5)
node[above,midway] {\tiny (S4)};
\draw[-] (\hhh*0+\hhhhh*2,-\vvv*2) rectangle (\hhh*1+\hhhhh*2,-\vvv*1);
\draw[-] (\hhh*1+\hhhhh*2,-\vvv*1) rectangle (\hhh*2+\hhhhh*2,-\vvv*0);
\draw[fill=black!30] (\hhh*1+\hhhhh*2,-\vvv*2) rectangle (\hhh*2+\hhhhh*2,-\vvv*1);
\draw[fill=black!30] (\hhh*1+\hhhhh*2,-\vvv*3) rectangle (\hhh*2+\hhhhh*2,-\vvv*2);
\draw[-] (\hhh*2+\hhhhh*2,-\vvv*1) rectangle (\hhh*3+\hhhhh*2,-\vvv*0);
\draw[fill=black!30] (\hhh*2+\hhhhh*2,-\vvv*2) rectangle (\hhh*3+\hhhhh*2,-\vvv*1);
\node[] at (\hhh*0.5+\hhhhh*2,-\vvv*1.5) {$\bfb$};
\node[] at (\hhh*1.5+\hhhhh*2,-\vvv*0.5) {$b'$};
\node[] at (\hhh*1.5+\hhhhh*2,-\vvv*1.4) {$a'$};
\node[] at (\hhh*1.5+\hhhhh*2,-\vvv*2.5) {$\bfa$};
\node[] at (\hhh*2.5+\hhhhh*2,-\vvv*0.5) {$b$};
\node[] at (\hhh*2.5+\hhhhh*2,-\vvv*1.5) {$a$};
\end{tikzpicture}
\hskip 4mm
\begin{tikzpicture}[baseline=-10mm]
\node[] at (0,0) {or};
\end{tikzpicture}
\hskip 4mm
\begin{tikzpicture}
\def \hhh{4mm}    
\def \vvv{4mm}    
\def \hhhhh{21mm}  
\draw[fill=black!30] (\hhh*0,-\vvv*4) rectangle (\hhh*1,-\vvv*3);
\draw[-] (\hhh*1,-\vvv*1) rectangle (\hhh*2,\vvv*0);
\draw[-] (\hhh*1,-\vvv*2) rectangle (\hhh*2,-\vvv*1);
\draw[fill=black!30] (\hhh*1,-\vvv*3) rectangle (\hhh*2,-\vvv*2);
\draw[-] (\hhh*1,-\vvv*4) rectangle (\hhh*2,-\vvv*3);
\draw[-] (\hhh*1,-\vvv*5) rectangle (\hhh*2,-\vvv*4);
\draw[-] (\hhh*2,-\vvv*1) rectangle (\hhh*3,\vvv*0);
\draw[fill=black!30] (\hhh*2,-\vvv*2) rectangle (\hhh*3,-\vvv*1);
\node[] at (\hhh*0.5,-\vvv*3.5) {$\bfa$};
\node[] at (\hhh*1.5,-\vvv*0.5) {$b'$};
\node[] at (\hhh*1.5,-\vvv*1.5) {$b'$};
\node[] at (\hhh*1.5,-\vvv*2.5) {$a$};
\node[] at (\hhh*1.5,-\vvv*3.5) {$b'$};
\node[] at (\hhh*1.5,-\vvv*4.5) {$\bfb$};
\node[] at (\hhh*2.5,-\vvv*0.5) {$b$};
\node[] at (\hhh*2.5,-\vvv*1.5) {$a$};
\draw[|->] (\hhh*3.5,-\vvv*2.5) -- (-\hhh*0.5+\hhhhh*1,-\vvv*2.5);
\draw[fill=black!30] (\hhh*0+\hhhhh,-\vvv*4) rectangle (\hhh*1+\hhhhh,-\vvv*3);
\draw[-] (\hhh*1+\hhhhh,-\vvv*1) rectangle (\hhh*2+\hhhhh,-\vvv*0);
\draw[-] (\hhh*1+\hhhhh,-\vvv*2) rectangle (\hhh*2+\hhhhh,-\vvv*1);
\draw[-] (\hhh*1+\hhhhh,-\vvv*3) rectangle (\hhh*2+\hhhhh,-\vvv*2);
\draw[fill=black!30] (\hhh*1+\hhhhh,-\vvv*4) rectangle (\hhh*2+\hhhhh,-\vvv*3);
\draw[-] (\hhh*1+\hhhhh,-\vvv*5) rectangle (\hhh*2+\hhhhh,-\vvv*4);
\draw[-] (\hhh*2+\hhhhh,-\vvv*1) rectangle (\hhh*3+\hhhhh,-\vvv*0);
\draw[fill=black!30] (\hhh*2+\hhhhh,-\vvv*2) rectangle (\hhh*3+\hhhhh,-\vvv*1);
\node[] at (\hhh*0.5+\hhhhh,-\vvv*3.5) {$\bfa$};
\node[] at (\hhh*1.5+\hhhhh,-\vvv*0.5) {$b'$};
\node[] at (\hhh*1.5+\hhhhh,-\vvv*1.5) {$b'$};
\node[] at (\hhh*1.5+\hhhhh,-\vvv*2.5) {$b'$};
\node[] at (\hhh*1.5+\hhhhh,-\vvv*3.5) {$a$};
\node[] at (\hhh*1.5+\hhhhh,-\vvv*4.5) {$\bfb$};
\node[] at (\hhh*2.5+\hhhhh,-\vvv*0.5) {$b$};
\node[] at (\hhh*2.5+\hhhhh,-\vvv*1.5) {$a$};
\draw[|->] (\hhh*3.5+\hhhhh*1,-\vvv*2.5) -- (-\hhh*0.5+\hhhhh*2,-\vvv*2.5)
node[above,midway] {\tiny (S4)};
\draw[-] (\hhh*0+\hhhhh*2,-\vvv*4) rectangle (\hhh*1+\hhhhh*2,-\vvv*3);
\draw[-] (\hhh*1+\hhhhh*2,-\vvv*1) rectangle (\hhh*2+\hhhhh*2,-\vvv*0);
\draw[-] (\hhh*1+\hhhhh*2,-\vvv*2) rectangle (\hhh*2+\hhhhh*2,-\vvv*1);
\draw[-] (\hhh*1+\hhhhh*2,-\vvv*3) rectangle (\hhh*2+\hhhhh*2,-\vvv*2);
\draw[fill=black!30] (\hhh*1+\hhhhh*2,-\vvv*4) rectangle (\hhh*2+\hhhhh*2,-\vvv*3);
\draw[fill=black!30] (\hhh*1+\hhhhh*2,-\vvv*5) rectangle (\hhh*2+\hhhhh*2,-\vvv*4);
\draw[-] (\hhh*2+\hhhhh*2,-\vvv*1) rectangle (\hhh*3+\hhhhh*2,-\vvv*0);
\draw[fill=black!30] (\hhh*2+\hhhhh*2,-\vvv*2) rectangle (\hhh*3+\hhhhh*2,-\vvv*1);
\node[] at (\hhh*0.5+\hhhhh*2,-\vvv*3.5) {$\bfb$};
\node[] at (\hhh*1.5+\hhhhh*2,-\vvv*0.5) {$b'$};
\node[] at (\hhh*1.5+\hhhhh*2,-\vvv*1.5) {$b'$};
\node[] at (\hhh*1.5+\hhhhh*2,-\vvv*2.5) {$b'$};
\node[] at (\hhh*1.5+\hhhhh*2,-\vvv*3.5) {$a'$};
\node[] at (\hhh*1.5+\hhhhh*2,-\vvv*4.5) {$\bfa$};
\node[] at (\hhh*2.5+\hhhhh*2,-\vvv*0.5) {$b$};
\node[] at (\hhh*2.5+\hhhhh*2,-\vvv*1.5) {$a$};
\end{tikzpicture}
\end{center}
Hence $\sigma^u(A ,B)$ is fully switched
and
\begin{displaymath}
\left({}^AB,A_B\right) \approx \left({}^{A^{[\leq k]}}B, {A^{[\leq k]}}_B\right)
\quad \text{up to $(i,i)$, $(i,i+1)$ and $(i+1,i+1)$.}
\end{displaymath}
Let $v$ be the integer such that
$\Box(\sigma^v({}^{A}B, {A}_B)) = (i,i).$
We see that $\Box(\sigma^v({}^{A^{[\leq k]}}B, {A^{[\leq k]}}_B))$ $ = (i,i+1)$
and for $0 \leq r \leq v$,
$$
\sigma^r({}^AB,A_B) \approx \sigma^r({}^{A^{[\leq k]}}B, {A^{[\leq k]}}_B)
\quad \text{up to $(i,i)$, $(i,i+1)$ and $(i+1,i+1)$.}$$
Since (S2) and (S3) are applied to
$\Box(\sigma^{v}({}^{A^{[\leq k]}}B, {A^{[\leq k]}}_B))$
and
$\Box(\sigma^{v}({}^AB,A_B))$, respectively,
it follows that
$$
\sigma^{v+1}({}^AB,A_B) \approx \sigma^{v+1}({}^{A^{[\leq k]}}B, {A^{[\leq k]}}_B)
\quad \text{up to $(i,i)$.}$$
This implies that
whenever $r \geq v+1$, the switch to be applied to
$\sigma^{r}({}^AB,A_B)$
should be same to
that to be applied to
$\sigma^{r}({}^{A^{[\leq k]}}B, {A^{[\leq k]}}_B)$.
Thus we see that
$$
\Sigma\left({}^AB,A_B\right)\approx \Sigma\left({}^{A^{[\leq k]}}B, {A^{[\leq k]}}_B\right)
\quad \text{up to $(i,i)$}.
$$

When (S7) is applied to $\sigma^{u-1}(A,B)$,
then $\Box(\sigma^{u-1}(A,B)) = (i,i+1)$.
Moreover,
$(i+1,i+1)$ and $(i,i+2)$ in $\sigma^{u-1}(A,B)$ respectively
are filled with $\bfb$ and $b$.
We claim that $A^{[k]} = A_{i,i+1}$.
Suppose that $A^{[k]} \neq A_{i,i+1}$.
Then
$(i-1,i+1)$ is in the shifted switching path of $A^{[k]}$
and filled with $a$ in $\sigma^{u-2}(A, B)$.
Now let us investigate the $(i-1,i+2)$ box
in $\sigma^{u-2}(A, B)$.
If it is filled with $a$,
then $\Box(\sigma^{u-2}(A,B)) = (i-1,i+2)$
and thus $\Box(\sigma^{u-1}(A,B)) = (i-1,i+1)$ or $(i-1,i+3)$.
If it is filled with $b'$,
then the $(u-1)$st switch is (S1) and
$\Box(\sigma^{u-1}(A,B)) = (i-1,i+2)$.
This shows that
the $(i-1,i+2)$ box should be filled with $b$ in $\sigma^{u-2}(A, B)$,
thus in $\sigma^{u-1}(A, B)$.
It is obviously a contradiction to the fact that
the $(i,i+2)$ box in $\sigma^{u-1}(A, B)$ is filled with $b$.
Hence we verified the claim.
In addition, by using this property,
we see that
the $i$th column in $A\cup B$ has only a single box at $(i,i)$.
Refer to the following figure:
\vskip 2mm
\begin{center}
\begin{tikzpicture}
\def \hhh{5mm}    
\def \vvv{5mm}    
\def \hhhhh{45mm}  
\draw[-] (\hhh*4,-\vvv*1) rectangle (\hhh*5,-\vvv*0);
\draw[fill=black!30] (\hhh*5,-\vvv*1) rectangle (\hhh*6,-\vvv*0);
\draw[fill=black!30] (\hhh*0,-\vvv*2) rectangle (\hhh*1,-\vvv*1);
\draw[fill=black!30] (\hhh*1,-\vvv*2) rectangle (\hhh*2,-\vvv*1);
\draw[-] (\hhh*2,-\vvv*2) rectangle (\hhh*3,-\vvv*1);
\draw[-] (\hhh*3,-\vvv*2) rectangle (\hhh*4,-\vvv*1);
\draw[-] (\hhh*4,-\vvv*2) rectangle (\hhh*5,-\vvv*1);
\draw[-] (\hhh*5,-\vvv*2) rectangle (\hhh*6,-\vvv*1);
\draw[-] (\hhh*1,-\vvv*3) rectangle (\hhh*2,-\vvv*2);
\node[] at (\hhh*4.5,-\vvv*0.5) {$b'$};
\node[] at (\hhh*5.5,-\vvv*0.5) {$a$};
\node[] at (\hhh*0.5,-\vvv*1.5) {$\bfa$};
\node[] at (\hhh*1.5,-\vvv*1.5) {$a$};
\node[] at (\hhh*2.5,-\vvv*1.5) {$b$};
\node[] at (\hhh*3.5,-\vvv*1.5) {$b$};
\node[] at (\hhh*4.5,-\vvv*1.5) {$b$};
\node[] at (\hhh*5.5,-\vvv*1.5) {$b$};
\node[] at (\hhh*1.5,-\vvv*2.5) {$\bfb$};
\draw[|->] (\hhh*7,-\vvv*1.5) -- (-\hhh*1+\hhhhh*1,-\vvv*1.5);
\draw[-] (\hhh*4+\hhhhh,-\vvv*1) rectangle (\hhh*5+\hhhhh,-\vvv*0);
\draw[-] (\hhh*5+\hhhhh,-\vvv*1) rectangle (\hhh*6+\hhhhh,-\vvv*0);
\draw[fill=black!30] (\hhh*0+\hhhhh,-\vvv*2) rectangle (\hhh*1+\hhhhh,-\vvv*1);
\draw[fill=black!30] (\hhh*1+\hhhhh,-\vvv*2) rectangle (\hhh*2+\hhhhh,-\vvv*1);
\draw[-] (\hhh*2+\hhhhh,-\vvv*2) rectangle (\hhh*3+\hhhhh,-\vvv*1);
\draw[-] (\hhh*3+\hhhhh,-\vvv*2) rectangle (\hhh*4+\hhhhh,-\vvv*1);
\draw[-] (\hhh*4+\hhhhh,-\vvv*2) rectangle (\hhh*5+\hhhhh,-\vvv*1);
\draw[fill=black!30] (\hhh*5+\hhhhh,-\vvv*2) rectangle (\hhh*6+\hhhhh,-\vvv*1);
\draw[-] (\hhh*1+\hhhhh,-\vvv*3) rectangle (\hhh*2+\hhhhh,-\vvv*2);
\node[] at (\hhh*4.5+\hhhhh,-\vvv*0.5) {$b'$};
\node[] at (\hhh*5.5+\hhhhh,-\vvv*0.5) {$b$};
\node[] at (\hhh*0.5+\hhhhh,-\vvv*1.5) {$\bfa$};
\node[] at (\hhh*1.5+\hhhhh,-\vvv*1.5) {$a$};
\node[] at (\hhh*2.5+\hhhhh,-\vvv*1.5) {$b$};
\node[] at (\hhh*3.5+\hhhhh,-\vvv*1.5) {$b$};
\node[] at (\hhh*4.5+\hhhhh,-\vvv*1.5) {$b$};
\node[] at (\hhh*5.5+\hhhhh,-\vvv*1.5) {$a$};
\node[] at (\hhh*1.5+\hhhhh,-\vvv*2.5) {$\bfb$};
\draw[|->] (\hhh*7+\hhhhh,-\vvv*1.5) -- (-\hhh*1+\hhhhh*2,-\vvv*1.5)
node[above,midway] {\tiny (S7)};
\draw[-] (\hhh*4+\hhhhh*2,-\vvv*1) rectangle (\hhh*5+\hhhhh*2,-\vvv*0);
\draw[-] (\hhh*5+\hhhhh*2,-\vvv*1) rectangle (\hhh*6+\hhhhh*2,-\vvv*0);
\draw[-] (\hhh*0+\hhhhh*2,-\vvv*2) rectangle (\hhh*1+\hhhhh*2,-\vvv*1);
\draw[fill=black!30] (\hhh*1+\hhhhh*2,-\vvv*2) rectangle (\hhh*2+\hhhhh*2,-\vvv*1);
\draw[-] (\hhh*2+\hhhhh*2,-\vvv*2) rectangle (\hhh*3+\hhhhh*2,-\vvv*1);
\draw[-] (\hhh*3+\hhhhh*2,-\vvv*2) rectangle (\hhh*4+\hhhhh*2,-\vvv*1);
\draw[-] (\hhh*4+\hhhhh*2,-\vvv*2) rectangle (\hhh*5+\hhhhh*2,-\vvv*1);
\draw[fill=black!30] (\hhh*5+\hhhhh*2,-\vvv*2) rectangle (\hhh*6+\hhhhh*2,-\vvv*1);
\draw[fill=black!30] (\hhh*1+\hhhhh*2,-\vvv*3) rectangle (\hhh*2+\hhhhh*2,-\vvv*2);
\node[] at (\hhh*4.5+\hhhhh*2,-\vvv*0.5) {$b'$};
\node[] at (\hhh*5.5+\hhhhh*2,-\vvv*0.5) {$b$};
\node[] at (\hhh*0.5+\hhhhh*2,-\vvv*1.5) {$\bfb$};
\node[] at (\hhh*1.5+\hhhhh*2,-\vvv*1.5) {$a'$};
\node[] at (\hhh*2.5+\hhhhh*2,-\vvv*1.5) {$b$};
\node[] at (\hhh*3.5+\hhhhh*2,-\vvv*1.5) {$b$};
\node[] at (\hhh*4.5+\hhhhh*2,-\vvv*1.5) {$b$};
\node[] at (\hhh*5.5+\hhhhh*2,-\vvv*1.5) {$a$};
\node[] at (\hhh*1.5+\hhhhh*2,-\vvv*2.5) {$\bfa$};
\end{tikzpicture}
\vskip 4mm
\begin{tikzpicture}
\def \hhh{5mm}    
\def \vvv{5mm}    
\def \hhhhh{40mm}  
\draw[fill=black!30] (\hhh*0,-\vvv*2) rectangle (\hhh*1,-\vvv*1);
\draw[fill=black!30] (\hhh*1,-\vvv*2) rectangle (\hhh*2,-\vvv*1);
\draw[fill=black!30] (\hhh*2,-\vvv*2) rectangle (\hhh*3,-\vvv*1);
\draw[-] (\hhh*3,-\vvv*2) rectangle (\hhh*4,-\vvv*1);
\draw[-] (\hhh*4,-\vvv*2) rectangle (\hhh*5,-\vvv*1);
\draw[-] (\hhh*1,-\vvv*3) rectangle (\hhh*2,-\vvv*2);
\draw[-] (\hhh*2,-\vvv*3) rectangle (\hhh*3,-\vvv*2);
\draw[fill=black!30] (\hhh*3,-\vvv*3) rectangle (\hhh*4,-\vvv*2);
\draw[fill=black!30] (\hhh*4,-\vvv*3) rectangle (\hhh*5,-\vvv*2);
\node[] at (\hhh*0.5,-\vvv*1.5) {$\bfa$};
\node[] at (\hhh*1.5,-\vvv*1.5) {$a$};
\node[] at (\hhh*2.5,-\vvv*1.5) {$a$};
\node[] at (\hhh*3.5,-\vvv*1.5) {$b$};
\node[] at (\hhh*4.5,-\vvv*1.5) {$b$};
\node[] at (\hhh*1.5,-\vvv*2.5) {$\bfb$};
\node[] at (\hhh*2.5,-\vvv*2.5) {$b$};
\node[] at (\hhh*3.5,-\vvv*2.5) {$a$};
\node[] at (\hhh*4.5,-\vvv*2.5) {$a$};
\draw[|->] (\hhh*6,-\vvv*2) -- (-\hhh*1+\hhhhh*1,-\vvv*2);
\draw[fill=black!30] (\hhh*0+\hhhhh,-\vvv*2) rectangle (\hhh*1+\hhhhh,-\vvv*1);
\draw[fill=black!30] (\hhh*1+\hhhhh,-\vvv*2) rectangle (\hhh*2+\hhhhh,-\vvv*1);
\draw[-] (\hhh*2+\hhhhh,-\vvv*2) rectangle (\hhh*3+\hhhhh,-\vvv*1);
\draw[-] (\hhh*3+\hhhhh,-\vvv*2) rectangle (\hhh*4+\hhhhh,-\vvv*1);
\draw[-] (\hhh*4+\hhhhh,-\vvv*2) rectangle (\hhh*5+\hhhhh,-\vvv*1);
\draw[-] (\hhh*1+\hhhhh,-\vvv*3) rectangle (\hhh*2+\hhhhh,-\vvv*2);
\draw[fill=black!30] (\hhh*2+\hhhhh,-\vvv*3) rectangle (\hhh*3+\hhhhh,-\vvv*2);
\draw[fill=black!30] (\hhh*3+\hhhhh,-\vvv*3) rectangle (\hhh*4+\hhhhh,-\vvv*2);
\draw[fill=black!30] (\hhh*4+\hhhhh,-\vvv*3) rectangle (\hhh*5+\hhhhh,-\vvv*2);
\node[] at (\hhh*0.5+\hhhhh,-\vvv*1.5) {$\bfa$};
\node[] at (\hhh*1.5+\hhhhh,-\vvv*1.5) {$a$};
\node[] at (\hhh*2.5+\hhhhh,-\vvv*1.5) {$b$};
\node[] at (\hhh*3.5+\hhhhh,-\vvv*1.5) {$b$};
\node[] at (\hhh*4.5+\hhhhh,-\vvv*1.5) {$b$};
\node[] at (\hhh*1.5+\hhhhh,-\vvv*2.5) {$\bfb$};
\node[] at (\hhh*2.5+\hhhhh,-\vvv*2.5) {$a$};
\node[] at (\hhh*3.5+\hhhhh,-\vvv*2.5) {$a$};
\node[] at (\hhh*4.5+\hhhhh,-\vvv*2.5) {$a$};
\draw[|->] (\hhh*6+\hhhhh*1,-\vvv*2) -- (-\hhh*1+\hhhhh*2,-\vvv*2)
node[above,midway] {\tiny (S7)};
\draw[-] (\hhh*0+\hhhhh*2,-\vvv*2) rectangle (\hhh*1+\hhhhh*2,-\vvv*1);
\draw[fill=black!30] (\hhh*1+\hhhhh*2,-\vvv*2) rectangle (\hhh*2+\hhhhh*2,-\vvv*1);
\draw[-] (\hhh*2+\hhhhh*2,-\vvv*2) rectangle (\hhh*3+\hhhhh*2,-\vvv*1);
\draw[-] (\hhh*3+\hhhhh*2,-\vvv*2) rectangle (\hhh*4+\hhhhh*2,-\vvv*1);
\draw[-] (\hhh*4+\hhhhh*2,-\vvv*2) rectangle (\hhh*5+\hhhhh*2,-\vvv*1);
\draw[fill=black!30] (\hhh*1+\hhhhh*2,-\vvv*3) rectangle (\hhh*2+\hhhhh*2,-\vvv*2);
\draw[fill=black!30] (\hhh*2+\hhhhh*2,-\vvv*3) rectangle (\hhh*3+\hhhhh*2,-\vvv*2);
\draw[fill=black!30] (\hhh*3+\hhhhh*2,-\vvv*3) rectangle (\hhh*4+\hhhhh*2,-\vvv*2);
\draw[fill=black!30] (\hhh*4+\hhhhh*2,-\vvv*3) rectangle (\hhh*5+\hhhhh*2,-\vvv*2);
\draw[black!0] (\hhh*7+\hhhhh*2,-\vvv*2) rectangle (\hhh*8+\hhhhh*2,-\vvv*1);
\node[] at (\hhh*0.5+\hhhhh*2,-\vvv*1.5) {$\bfb$};
\node[] at (\hhh*1.5+\hhhhh*2,-\vvv*1.5) {$a'$};
\node[] at (\hhh*2.5+\hhhhh*2,-\vvv*1.5) {$b$};
\node[] at (\hhh*3.5+\hhhhh*2,-\vvv*1.5) {$b$};
\node[] at (\hhh*4.5+\hhhhh*2,-\vvv*1.5) {$b$};
\node[] at (\hhh*1.5+\hhhhh*2,-\vvv*2.5) {$\bfa$};
\node[] at (\hhh*2.5+\hhhhh*2,-\vvv*2.5) {$a$};
\node[] at (\hhh*3.5+\hhhhh*2,-\vvv*2.5) {$a$};
\node[] at (\hhh*4.5+\hhhhh*2,-\vvv*2.5) {$a$};
\end{tikzpicture}
\end{center}
\vskip 2mm
Suppose that $\overrightarrow{\rm P}_{(i,i)} = \{(i,i),\ldots,(i,i+y)\}$
for some $y \geq 2$.
Then $\sigma^{u+y-1}(A,B)$ is fully switched and
$$
({}^AB,A_B) \approx ({}^{A^{[\leq k]}}B,{A^{[\leq k]}}_B)
\quad \text{up to $(i,i),\ldots,(i,i+y)$ and $(i+1,i+1)$.}$$
For $1 \leq s \leq y$,
let $v_s$ be the integer determined by
the condition
$$
\Box(\sigma^{v_s}({}^{A}B,{A}_B)) = (i,i+y-s).
$$
Then, for $1\leq s \leq y-1$ and $v_s \leq r \leq v_{s+1}-1$,
$$
\sigma^{r+1}({}^AB,A_B) \approx \sigma^{r+1-s}({}^{A^{[\leq k]}}B,{A^{[\leq k]}}_B)
\text{ up to $(i,i),\ldots,(i,i+y-s)$ and $(i+1,i+1)$.}$$
Applying $\sigma$ to the case where $s=y-1$ and $r=v_y-1$,
one can see that
\begin{displaymath}
\sigma^{v_{y}+1}({}^AB,A_B) \approx \sigma^{v_y-y+2}({}^{A^{[\leq k]}}B,{A^{[\leq k]}}_B)
\quad \text{up to $(i,i)$.}
\end{displaymath}
This equivalence implies that for each $r \geq 1$,
the switch to be applied to $\sigma^{v_y+r}({}^AB,A_B)$
is equal to that to be applied to $\sigma^{v_y-y+r+1}({}^{A^{[\leq k]}}B, {A^{[\leq k]}}_B)$.
Thus we see that
\begin{displaymath}
\Sigma\left({}^AB,A_B\right)\approx \Sigma\left({}^{A^{[\leq k]}}B, {A^{[\leq k]}}_B\right)
\quad \text{up to $(i,i)$}.
\end{displaymath}
\end{proof}

\vskip 2mm
\begin{lem}\label{lem-sw-involution}
Let $(A,B)$ be a shifted perforated $(\bfa,\bfb)$-pair
such that $B$ extends $A$. Then $\Sigma({}^AB, A_B) = (A ,B).$
\end{lem}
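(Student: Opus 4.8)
The plan is to prove the statement by induction on $|A|$, peeling off the $\bfa$-box of largest label in $\rstan(A)$ at each step and invoking the two preparatory lemmas. Write $\sh(A)=\mu/\nu$ and $\sh(B)=\lambda/\mu$, so that $\sh(A\cup B)=\lambda/\nu$. If $|A|=0$ the claim is trivial, since then ${}^AB=B$, $A_B=\emptyset$ and $\Sigma(B,\emptyset)=(\emptyset,B)=(A,B)$. For the base case $|A|=1$, the single $\bfa$-box moves along one of the paths \textbf{T1}, \textbf{T2}, \textbf{T3} of Lemma \ref{lem-sw-path}, and by Remark \ref{remark-switch=jdt}(i) the whole process is a single shifted slide performed on $B$ (here (S4) and (S7), needing two $\bfa$-boxes, cannot occur). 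Since each switch used is undone by a switch in the opposite pass (the horizontal and vertical moves by (S1), (S2), (S5), (S6) and the diagonal move by (S3)), running $\Sigma$ on $({}^AB,A_B)$ simply reverses this slide and returns $(A,B)$; this is the translation into switches of the invertibility of shifted jeu de taquin slides.

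For the inductive step, assume the assertion for every shifted perforated pair whose $\bfa$-tableau has at most $k$ boxes, and let $|A|=k+1$ with $k\geq 1$. Decompose $A=A^{[k+1]}\cup A^{[\leq k]}$, where $A^{[\leq k]}$ extends $A^{[k+1]}$; consequently $(A^{[\leq k]},B)$ is again a shifted perforated $(\bfa,\bfb)$-pair with $B$ extending $A^{[\leq k]}$. Put $A^{[k+1]}=A_{i,j}$ and consider the forward path $\overrightarrow{\rm P}_{(i,j)}$. By Lemma \ref{lem-sw-path} this path is of type \textbf{T1}, \textbf{T2} or \textbf{T3}, and the diagonal switches (S4) and (S7) can be triggered only while the moving box sits on the main diagonal. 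Hence exactly one of two situations occurs: either neither (S4) nor (S7) is used, in which case Lemma \ref{lem1-sw-involution} applies, or one of them is used and $(i,j)=(i,i)$, in which case Lemma \ref{lem2-sw-involution} applies. In both cases we obtain
\[
\Sigma({}^AB,A_B)\approx \Sigma({}^{A^{[\leq k]}}B,{A^{[\leq k]}}_B)\quad\text{up to }(i,j).
\]

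It remains to identify both sides with $(A,B)$. By the induction hypothesis, $\Sigma({}^{A^{[\leq k]}}B,{A^{[\leq k]}}_B)=(A^{[\leq k]},B)$, which agrees with $(A,B)$ everywhere except at the box $(i,j)$. Thus $\Sigma({}^AB,A_B)$ coincides with $(A,B)$ off $(i,j)$, and only the entry at $(i,j)$ must be checked. By Theorem \ref{Thm-Perforated}(a), $\Sigma({}^AB,A_B)$ has the same shape $\lambda/\nu$ as $A\cup B$ and its $\bfa$-tableau carries $k+1$ boxes; since $k$ of these already match the boxes of $A^{[\leq k]}$ and every $\bfb$-box matches $B$, the remaining $\bfa$-box is forced to occupy $(i,j)$. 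That its primed or unprimed status is precisely that of $A^{[k+1]}=A_{i,j}$ is exactly the additional conclusion established inside the proofs of Lemmas \ref{lem1-sw-involution} and \ref{lem2-sw-involution}, where $(\Sigma({}^AB,A_B))_{i,j}=A^{[k+1]}$ is shown in each path type. Combining these facts gives $\Sigma({}^AB,A_B)=(A,B)$, completing the induction.

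The substantive content has already been absorbed into Lemmas \ref{lem1-sw-involution} and \ref{lem2-sw-involution}; once they are in hand the induction is routine. The one point demanding care is the determination of the entry at $(i,j)$: the relation ``$\approx$ up to $(i,j)$'' by itself does not pin down whether this box is $a$ or $a'$, so I would lean on the sharper identity $(\Sigma({}^AB,A_B))_{i,j}=A^{[k+1]}$ coming from those proofs, together with the shape and perforatedness constraints, rather than try to recover the prime-status from the perforated conditions in isolation. A secondary point worth spelling out is the case analysis guaranteeing that (S4) and (S7) never occur on a forward path starting strictly above the diagonal, so that Lemmas \ref{lem1-sw-involution} and \ref{lem2-sw-involution} genuinely exhaust all possibilities.
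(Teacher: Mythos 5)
Your inductive step is sound and coincides with the paper's: granting Lemmas \ref{lem1-sw-involution} and \ref{lem2-sw-involution}, the reduction to $(A^{[\le k]},B)$, the induction hypothesis, and the sharper identity $(\Sigma({}^AB,A_B))_{i,j}=A^{[k+1]}$ are exactly how the paper concludes. The genuine gap is in your base case $|A|=1$, which is where the real content of involutivity lives once the two preparatory lemmas are in hand. You justify it by saying that running $\Sigma$ on $({}^AB,A_B)$ ``simply reverses this slide,'' appealing to the invertibility of shifted jeu de taquin slides. But $\Sigma({}^AB,A_B)$ is not defined as an inverse slide: it is a \emph{forward} switching pass in which the $\bfb$-letters now play the moving role, with its own box-selection rule and its own prime conventions, and the claim that this forward pass coincides with the inverse of the original pass is precisely what must be proved. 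That this is not a formality is shown by the paper's own Remark \ref{Remark-Diff}(ii): the Buch--Kresch--Tamvakis slides also specialize to the shifted slides when holes are viewed as empty boxes, yet the pairs $U,V$ there cannot be recovered from $U',V'$; so ``each slide is invertible'' does not imply that the switching map is an involution.

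Concretely, your parenthetical ``(S4) and (S7), needing two $\bfa$-boxes, cannot occur'' is true of the forward pass but exposes the problem: in the reverse pass the moving letter is $\bfb$, of which there may be many boxes, and the role-exchanged (S4)/(S7) \emph{must} occur there in order to undo a forward (S3). For example, forward (S3) turns $\bfa$ at $(i,i)$, $b'$ at $(i,i+1)$, $b$ at $(i+1,i+1)$ into $b$ at $(i,i)$, $b$ at $(i,i+1)$, $\bfa$ at $(i+1,i+1)$; a reverse pass using only the simple switches (S2) followed by (S1) would return $\bfa$, $b$, $b$, losing the prime on $b'$. The lemma holds only because the selection rule forces the reverse pass to recognize the (S4) configuration and apply the role-exchanged (S4), which restores $b'$. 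Your proof never verifies this, nor the equally necessary fact that the reverse pass chooses its boxes in exactly the order that retraces the forward pass backwards (the paper checks this, e.g.\ via the observation that the box northeast of the terminal position is a $b'$-box or empty in type \textbf{T1}). This is what the paper's case analysis of \textbf{T1}, \textbf{T2}, \textbf{T3}, including the four explicit \textbf{T3} subfillings, establishes; the appeal to jeu de taquin invertibility cannot replace it.
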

\begin{proof}
We prove the assertion by induction on the number of boxes in $A$.
To begin with, consider the case where $A$ consists of a single box.
Let $(i,j)$ be its position.
As in the proof of Lemma \ref{lem1-sw-involution}, the type of $\overrightarrow{\rm P}_{(i,j)}$ is one of {\bf T1}, {\bf T2} and {\bf T3}.
Let $u$ be the smallest integer such that $\sigma^u(A,B)$ is fully switched.

In case of ${\bf T1}$,
when the $(i+1,j)$ box is filled with $\bfb$ in $B$,
the $(i,j+1)$ box is filled with $b'$ and
all boxes $(i,j+2)$ through $(i,j+y)$
should be filled with $b$.
On the other hand, when the $(i+1,j)$ box is not in $B$,
the $(i,j+1)$ box is filled with $\bfb$ and
all boxes $(i,j+2)$ through $(i,j+y)$
should be filled with $b$.
In either case, since the $(i-1,j+y)$ box is a $b'$-box or an empty box, it follows that
$\sigma^i({}^AB,A_B) = \sigma^{u-i}(A,B)$ for all $1 \leq i \leq u$.
Thus $\sigma^u({}^AB, A_B) = (A, B)$.

In case of ${\bf T2}$,
all boxes $(i+1,j)$ through $(i+x-1,j)$ are filled with $b'$,
whereas the $(i+x,j)$ box is filled with $\bfb$.
When the $(i+x,j-1)$ box is filled with $\bfb$ in $B$, the $(i+x,j)$ box is filled with $b$.
On the other hand, when the $(i+x,j-1)$ box is not in $B$, it is filled with $\bfb$.
In either case, since $\Box({}^AB,A_B) = (i+x-1,j)$,
it follows that
$\sigma^i({}^AB,A_B) = \sigma^{u-i}(A,B)$ for all $1 \leq i \leq u$.
Thus $\sigma^u({}^AB, A_B) = (A, B)$.

Finally, one can easily see that the ${\bf T3}$ case
can occur only in the following subfillings:
\vskip 2mm
\begin{center}
\begin{tikzpicture}
\def \hhh{5mm}    
\def \vvv{5mm}    
\def \hhhhh{18mm}  
\draw[fill=black!30] (\hhh*0,0) rectangle (\hhh*1,\vvv*1);
\draw[-] (\hhh*1,0) rectangle (\hhh*2,\vvv*1);
\draw[-] (\hhh*1,-\vvv*1) rectangle (\hhh*2,\vvv*0);
\node[] at (\hhh*0.5,\vvv*0.5) {$\bfa$};
\node[] at (\hhh*1.5,\vvv*0.5) {$b'$};
\node[] at (\hhh*1.5,-\vvv*0.5) {$b$};
\draw[|->] (\hhh*2.5,-\vvv*0) -- (-\hhh*0.5+\hhhhh,-\vvv*0);
\draw[-] (\hhh*0+\hhhhh,0) rectangle (\hhh*1+\hhhhh,\vvv*1);
\draw[-] (\hhh*1+\hhhhh,0) rectangle (\hhh*2+\hhhhh,\vvv*1);
\draw[fill=black!30] (\hhh*1+\hhhhh,-\vvv*1) rectangle (\hhh*2+\hhhhh,\vvv*0);
\node[] at (\hhh*0.5+\hhhhh,\vvv*0.5) {$b$};
\node[] at (\hhh*1.5+\hhhhh,\vvv*0.5) {$b$};
\node[] at (\hhh*1.5+\hhhhh,-\vvv*0.5) {$\bfa$};
\end{tikzpicture}
\hskip 7mm
\begin{tikzpicture}
\def \hhh{5mm}    
\def \vvv{5mm}    
\def \hhhhh{18mm}  
\draw[fill=black!30] (\hhh*0,0) rectangle (\hhh*1,\vvv*1);
\draw[-] (\hhh*1,0) rectangle (\hhh*2,\vvv*1);
\draw[-] (\hhh*1,-\vvv*1) rectangle (\hhh*2,\vvv*0);
\node[] at (\hhh*0.5,\vvv*0.5) {$\bfa$};
\node[] at (\hhh*1.5,\vvv*0.5) {$b'$};
\node[] at (\hhh*1.5,-\vvv*0.5) {$b'$};
\draw[|->] (\hhh*2.5,-\vvv*0) -- (-\hhh*0.5+\hhhhh,-\vvv*0);
\draw[-] (\hhh*0+\hhhhh,0) rectangle (\hhh*1+\hhhhh,\vvv*1);
\draw[-] (\hhh*1+\hhhhh,0) rectangle (\hhh*2+\hhhhh,\vvv*1);
\draw[fill=black!30] (\hhh*1+\hhhhh,-\vvv*1) rectangle (\hhh*2+\hhhhh,\vvv*0);
\node[] at (\hhh*0.5+\hhhhh,\vvv*0.5) {$b'$};
\node[] at (\hhh*1.5+\hhhhh,\vvv*0.5) {$b$};
\node[] at (\hhh*1.5+\hhhhh,-\vvv*0.5) {$\bfa$};
\end{tikzpicture}
\hskip 7mm
\begin{tikzpicture}
\def \hhh{5mm}    
\def \vvv{5mm}    
\def \hhhhh{18mm}  
\draw[fill=black!30] (\hhh*0,0) rectangle (\hhh*1,\vvv*1);
\draw[-] (\hhh*1,0) rectangle (\hhh*2,\vvv*1);
\draw[-] (\hhh*0,-\vvv*1) rectangle (\hhh*2,\vvv*0);
\draw[-] (\hhh*1,-\vvv*1) rectangle (\hhh*2,\vvv*0);
\node[] at (\hhh*0.5,\vvv*0.5) {$\bfa$};
\node[] at (\hhh*1.5,\vvv*0.5) {$b'$};
\node[] at (\hhh*0.5,-\vvv*0.5) {$b$};
\node[] at (\hhh*1.5,-\vvv*0.5) {$b$};
\draw[->,decorate,decoration={snake,amplitude=.4mm,segment length=2mm,post length=1mm}] (\hhh*2.5,-\vvv*0) -- (-\hhh*0.5+\hhhhh,-\vvv*0);
\draw[-] (\hhh*0+\hhhhh,0) rectangle (\hhh*1+\hhhhh,\vvv*1);
\draw[-] (\hhh*1+\hhhhh,0) rectangle (\hhh*2+\hhhhh,\vvv*1);
\draw[-] (\hhh*0+\hhhhh,-\vvv*1) rectangle (\hhh*1+\hhhhh,\vvv*0);
\draw[fill=black!30] (\hhh*1+\hhhhh,-\vvv*1) rectangle (\hhh*2+\hhhhh,\vvv*0);
\node[] at (\hhh*0.5+\hhhhh,\vvv*0.5) {$b'$};
\node[] at (\hhh*1.5+\hhhhh,\vvv*0.5) {$b$};
\node[] at (\hhh*0.5+\hhhhh,-\vvv*0.5) {$b$};
\node[] at (\hhh*1.5+\hhhhh,-\vvv*0.5) {$\bfa$};
\end{tikzpicture}
\hskip 7mm
\begin{tikzpicture}
\def \hhh{5mm}    
\def \vvv{5mm}    
\def \hhhhh{18mm}  
\draw[fill=black!30] (\hhh*0,0) rectangle (\hhh*1,\vvv*1);
\draw[-] (\hhh*1,0) rectangle (\hhh*2,\vvv*1);
\draw[-] (\hhh*0,-\vvv*1) rectangle (\hhh*2,\vvv*0);
\draw[-] (\hhh*1,-\vvv*1) rectangle (\hhh*2,\vvv*0);
\node[] at (\hhh*0.5,\vvv*0.5) {$\bfa$};
\node[] at (\hhh*1.5,\vvv*0.5) {$b'$};
\node[] at (\hhh*0.5,-\vvv*0.5) {$b'$};
\node[] at (\hhh*1.5,-\vvv*0.5) {$b$};
\draw[->,decorate,decoration={snake,amplitude=.4mm,segment length=2mm,post length=1mm}] (\hhh*2.5,-\vvv*0) -- (-\hhh*0.5+\hhhhh,-\vvv*0);
\draw[-] (\hhh*0+\hhhhh,0) rectangle (\hhh*1+\hhhhh,\vvv*1);
\draw[-] (\hhh*1+\hhhhh,0) rectangle (\hhh*2+\hhhhh,\vvv*1);
\draw[-] (\hhh*0+\hhhhh,-\vvv*1) rectangle (\hhh*1+\hhhhh,\vvv*0);
\draw[fill=black!30] (\hhh*1+\hhhhh,-\vvv*1) rectangle (\hhh*2+\hhhhh,\vvv*0);
\node[] at (\hhh*0.5+\hhhhh,\vvv*0.5) {$b'$};
\node[] at (\hhh*1.5+\hhhhh,\vvv*0.5) {$b$};
\node[] at (\hhh*0.5+\hhhhh,-\vvv*0.5) {$b'$};
\node[] at (\hhh*1.5+\hhhhh,-\vvv*0.5) {$\bfa$};
\end{tikzpicture}
\end{center}
For all subfillings above, it is straightforward to see that
$\Sigma({}^AB, A_B) = (A, B)$.

We now assume that the desired result holds for all SYTs of border strip shape
with $k$ boxes.
Suppose that $A$ has $k+1$ boxes.
Letting $A^{[k+1]} = A_{i,j}$,
by Lemma \ref{lem1-sw-involution} and Lemma \ref{lem2-sw-involution}
we have
$$
\Sigma\left({}^AB,A_B\right) \approx \Sigma\left({}^{A^{[\leq k]}}B, {A^{[\leq k]}}_B\right)
\quad
\text{up to $(i,j)$.}$$
Moreover, from induction hypothesis it follows that
$$
\Sigma \left({}^{A^{[\leq k]}}B, {A^{[\leq k]}}_B\right)
=\left(A^{[\leq k]}, B\right).
$$
It follows that
$$
\begin{aligned}
\Sigma\left({}^AB,A_B\right) &= A^{[k+1]} \cup \Sigma\left({}^{A^{[\leq k]}}B, {A^{[\leq k]}}_B\right) \\
&= A^{[k+1]} \cup \left(A^{[\leq k]}, B\right) \\ &= (A,B) \, ,
\end{aligned}
$$
as required.
\end{proof}

\vskip 2mm
\begin{lem}\label{lem-BKT}{\rm (\cite[Lemma 1]{BKT})}
Let two words $u$ and $v$ be shifted Knuth equivalent.
Then $u$ is an LRS word if and only if $v$ is an LRS word.
\end{lem}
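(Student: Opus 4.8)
The plan is to exploit that shifted Knuth equivalence is generated by the four elementary relations listed just before the statement, so it suffices to prove the claim when $v$ is obtained from $u$ by a single such relation; the general case then follows by transitivity of $\equiv_{\rm \tiny SK}$. For a word $w$ I must preserve both defining conditions of an LRS word: the lattice property (a), phrased in terms of the statistics $m_i(j)$ of the concatenation $w\hat{w}$, and condition (b), that the rightmost letter of absolute value $i$ in $w$ is unprimed for each $1\le i\le \ell(\mu)$.

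The central tool would be the observation that the hat operation intertwines the shifted Knuth relations. Writing $\phi$ for the map on the alphabet $X$ sending $k\mapsto (k+1)'$ and $k'\mapsto k$, one checks directly that $\phi$ is order-preserving on $X$, while $w\mapsto\hat{w}$ is $\phi$ applied letterwise after reversing the word. Reversal interchanges the two interior relations—$xzy\leftrightarrow zxy$ for $x<y\le z$ becomes $yxz\leftrightarrow yzx$ for $x\le y<z$, the strict and non-strict inequalities swapping exactly as required—and an order-preserving relabelling carries a shifted Knuth relation to a shifted Knuth relation. Hence an elementary move taking $u$ to $v$ induces a matching elementary move taking $\hat{u}$ to $\hat{v}$, so that $u\hat{u}$ and $v\hat{v}$ differ by at most one adjacent transposition in each half, each transposition satisfying the order condition of its relation.

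With this in hand, preservation of the lattice property (a) reduces to the classical fact that an adjacent transposition satisfying an order condition of the form $x<y\le z$ or $x\le y<z$ cannot create a violation of the ballot-type inequalities governing the $m_i(j)$: such a swap changes a single partial count $m_i(j)$ at one index $j$ only, and the inequality hypothesis guarantees that no forbidden equality $m_i(j)=m_{i-1}(j)$ is produced immediately before a letter of absolute value $i$. I would carry this out by inspecting, for each relation, the at most two indices at which some $m_i(j)$ is altered and verifying the defining implication there. For condition (b) I would argue by direct inspection: the interior relations permute letters of distinct absolute values only in ways that never move the rightmost representative of a value past a primed copy, while the two boundary relations on the final pair are tailored exactly to (b)—the relation $xx\leftrightarrow x'x$ alters only the first of the two equal letters, leaving the rightmost copy unprimed, and $xy\leftrightarrow yx$ with $|x|\ne|y|$ never changes which letter is rightmost of a given absolute value.

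The hard part will be the boundary relations, because the hat operation carries the last two letters of $w$ to the first two letters of $\hat{w}$; a change at the tail of $w$ therefore becomes a change straddling the junction in the middle of $w\hat{w}$, where the two halves interact and where (a) cannot be read off from a generic interior swap. I expect the decisive check to be a short but careful case analysis at this junction, distinguishing whether the affected letters are primed or unprimed and tracking the single tie in the $m_i$-statistics that $xy\leftrightarrow yx$ and $xx\leftrightarrow x'x$ can toggle, confirming that the condition $|x|\ne|y|$ (respectively the equality of absolute values in $xx$) is precisely what rules out a lattice violation there.
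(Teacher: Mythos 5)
Your reduction to single elementary relations is fine, and your treatment of condition (b) is essentially workable, but the core of your plan rests on a ``classical fact'' that is simply false in the shifted setting: a single interior relation \emph{can} destroy the lattice property (a). Consider $u = 1\,2'\,2'\,1\,1$. The first interior relation applies to $u_1u_2u_3 = xzy$ with $x=1$ and $y=z=2'$ (the condition $x<y\le z$ reads $1<2'\le 2'$), producing $v = 2'\,1\,2'\,1\,1$. Here $\hat u = 2'\,2'\,2\,2\,2'$, and in $u\hat u = 1\,2'\,2'\,1\,1\,2'\,2'\,2\,2\,2'$ the count $m_1$ reaches $3$ before $m_2$ ever leaves $0$, so every tie $m_i(j)=m_{i-1}(j)$ is followed by a letter of absolute value less than $i$; hence $u$ satisfies (a). But $v\hat v$ begins with the letter $2'$ at the initial tie $m_2(0)=m_1(0)=0$, so $v$ violates (a) at $j=0$. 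Thus the lattice property alone is \emph{not} an invariant of shifted Knuth classes; only the conjunction of (a) and (b) is (note that $u$ fails (b), since its rightmost letter of absolute value $2$ is primed). This kills the proposed decomposition into ``(a) is preserved'' and ``(b) is preserved,'' and it also shows that preservation is not a local matter: the data at the swap window and the affected indices is consistent with (a) before the move and contradicts it after, so the case analysis you describe---inspecting only the indices where some $m_i(j)$ changes---can never close. What saves the lemma is a global fact, invisible to any local check, that a word satisfying \emph{both} (a) and (b) can never contain such a configuration.

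A second, independent error is the intertwining claim. Reversal keeps each inequality attached to the same pair of letters, so the reverse of the first relation is a move with the \emph{shape} of the second relation but still carrying the condition $x<y\le z$ rather than $x\le y<z$; the relabelling $k\mapsto (k+1)'$, $k'\mapsto k$ is order-preserving, so it preserves ties and cannot repair this. In the example above, the induced move on the hatted halves is $2\,2\,2' \leftrightarrow 2\,2'\,2$, which is not an instance of either interior relation; so $u\hat u$ and $v\hat v$ need not differ by legitimate elementary moves, and proving $\hat u \equiv_{\rm SK} \hat v$ in general essentially presupposes the theory you are trying to build. Finally, note that the paper itself gives no proof of this statement: it quotes \cite[Lemma 1]{BKT}, where the lemma is obtained from the tableau theory---by Stembridge's theorem \cite{Ste1} together with the Sagan--Worley equivalence between shifted jeu de taquin and shifted Knuth equivalence, a word is an LRS word of weight $\mu$ precisely when it is shifted Knuth equivalent to the word of the tableau $R_\mu$, and invariance under shifted Knuth equivalence is then immediate. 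That, rather than a move-by-move verification on words, is the route to a complete argument.
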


\vskip 2mm
Now we are ready to provide the proof of Theorem \ref{thm-main-section4}.
\vskip 2mm \noindent
\textbf {Proof of Theorem \ref{thm-main-section4}.}
(a)
Lemma \ref{lem-sw-involution} says that
$\Sigma \circ \Sigma\left(A \cup B\right) = A \cup B$
for any perforated $(\bfa,\bfb)$-pair $(A,B)$ such that $B$ extends $A$.
So the assertion follows from Remark \ref{Remark-Diff2}(ii).

(b) Let $T$ be an LRS tableau and $T'$ any filling obtained by
a sequence of shifted slides to $T$.
By applying Lemma \ref{lem-BKT} in iteration, we can deduce that
$T$ is an LRS tableau if and only if $\w(T')$ is an LRS word.
In particular, by Remark \ref{remark-switch=jdt}(i),
$\w({}^ST)$ is an LRS word.
Consequently ${}^ST$ is an LRS tableau.

(c) Let $S$ be an LRS tableau.
By (b), $S_T$ is an LRS tableau if and only if ${}^{{}^{{}^ST}}S_T$
is an LRS tableau.
Now our assertion is straightforward since ${}^{{}^{{}^ST}}S_T=S$
by (a).
\qed

\begin{example}
\label{example-section4}
Let $\lambda, \mu, \nu \in \Lambda^+$ with $\lambda,\mu \subseteq \nu$.

(a) (The shifted Littlewood-Richardson rule)
By Theorem \ref{thm-main-section4} we obtain a bijection
\begin{displaymath}
\Sigma: \mathcal{F}^\ld_{\emptyset \ld} \times \widetilde{\mathcal{Y}}(\nu/\lambda)
\rightarrow
\bigcup_{\mu \subseteq \nu} \widetilde{\mathcal{Y}}(\mu) \times
\mathcal{F}^\nu_{\mu \lambda},
\quad
(R_\lambda, T) \mapsto \left({}^{R_\lambda}T, ({R_\lambda})_T\right).
\end{displaymath}
This immediately gives a combinatorial interpretation of
$Q_{\nu/\lambda}(x) = \sum_{\mu} f^{\nu}_{\mu \lambda} Q_\mu(x)$.

(b) (Symmetry of the shifted Littlewood-Richardson coefficients)
By Theorem \ref{thm-main-section4} we obtain a bijection
\begin{displaymath}
\Sigma :
\mathcal{F}^\ld_{\emptyset \ld} \times \mathcal{F}^\nu_{\lambda \mu}
\rightarrow
\mathcal{F}^\mu_{\emptyset \mu} \times \mathcal{F}^\nu_{\mu \lambda},
\quad
(R_\lambda,T) \mapsto \left(R_\mu,({R_\lambda})_T\right).
\end{displaymath}
Hence, not only have we proven $f^{\nu}_{\ld \mu} = f^{\nu}_{\mu \ld},$
we have constructed an explicit involution that interchanges the inner shape with
the weight of an LRS tableau.
\end{example}

\section{Worley's shifted $J$-operation and the shifted Generalized Evacuation}
\label{sect-Worley-oper-J}
Worley \cite{Wo} introduced a shifted analogue of
Sch\"{u}tzenberger's $J$-operation \cite{Sc2}
in order to explain the identities related with shifted Littlewood-Richardson coefficients
in a combinatorial way.
The purpose of this section is to demonstrate the relationship between
Worley's shifted $J$-operation and our shifted tableau switching.

Let $-X$ be the set $\{\cdots < -2' < -2 < -1' < -1\}$  of ordered alphabets
and
$\overline{X}$ be the set
$\{\cdots < -1' < -1 < 0' < 0 < 1' < 1 < \cdots\}$.
Throughout this section,
we use the following notation:
\begin{itemize}
\item $\widetilde{\mathcal{Y}}_{-}(\ld/\mu)$: the set of all SYTs of shape $\lambda/\mu$ with letters in $-X$

\item $\widetilde{\mathcal{Y}}_{\pm}(\ld/\mu)$: the set of all SYTs of shape $\lambda/\mu$ with letters in $\overline{X}$

\end{itemize}
It should be particularly noticed that, in the present section,
an SYT always denotes an element in $\widetilde{\mathcal{Y}}_{\pm}(\ld/\mu)$.
Recall that $T \in \widetilde{\mathcal{Y}}_{\pm}(\lambda/\mu)$ is of normal shape
if $\mu = \emptyset.$
For an unprimed letter $a\in \overline{X}$,
set $a^*:= -a'$ and $({a'})^* := -a$.
Conventionally, $-(-\bfa)$ is set to be $\bfa$.

\begin{df}\label{def-*}(\cite{Wo})
Given $\lambda,\mu \in \Lambda^+$ with $\mu \subset \lambda$,
let $T$ be an SYT of skew shape $\lambda/\mu$.
Then $T^*$ is defined by the SYT obtained by
transporting each box $(i,j)$ in $T$ to $(\lambda_1-j+1,\lambda_1-i+1)$
and then by filling it with $(T_{i,j})^*$.
Equivalently,
${\rm sh}(T^*)=\{(\lambda_1-j+1,\lambda_1-i+1):(i,j) \in {\rm sh}(T)\}$
and $(T^*)_{\lambda_1-j+1,\lambda_1-i+1}=(T_{i,j})^*$.
\end{df}

Given $\lambda,\mu \in \Lambda^+$ with $\mu \subset \lambda$,
let us consider the map
\begin{displaymath}
J: \widetilde{\mathcal{Y}}_{\pm}(\lambda/\mu) \longrightarrow
\bigcup_{\nu \in \Lambda^+}\widetilde{\mathcal{Y}}_{\pm}(\nu),
\quad
T \mapsto \Rect(T^*).
\end{displaymath}
For simplicity, we write $T^J$ for $J(T)$.
Then the following result is well known.
\begin{lem}{\rm (\cite[Lemma 7.1.4]{Wo})}\label{lem-property-J}
Let $T$ be an SYT. Then we have the following.
\begin{itemize}
\item[(a)] If $T$ is of normal shape, then $\sh(T^J) = \sh(T)$
and $(T^J)^J = T$.

\item[(b)]
If $T$ is of skew shape, then
$\w(\Rect(T^*)) \equiv_{\rm \tiny SK} \w(\Rect(T)^*)$,
that is, $T^J   =  (\Rect(T))^J$.
\end{itemize}
\end{lem}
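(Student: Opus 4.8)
The statement to prove is Lemma~\ref{lem-property-J}, asserting two properties of the shifted $J$-operation. Since this is cited from Worley's thesis, my plan is to give a self-contained proof using the machinery already developed in the excerpt, particularly the relationship between rectification and shifted Knuth equivalence (the Proposition attributed to Sagan and Worley) and Definition~\ref{def-*} of the starred tableau $T^*$.

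\textbf{Part (a).} The plan is to first observe that the map $T \mapsto T^*$ is an involution on tableaux, since applying the coordinate transposition $(i,j) \mapsto (\lambda_1 - j + 1, \lambda_1 - i + 1)$ twice returns each box to its original position, and the entry operation $\bfa \mapsto \bfa^*$ satisfies $(\bfa^*)^* = \bfa$ by the conventions $a^* = -a'$, $(a')^* = -a$, and $-(-\bfa) = \bfa$. When $T$ is of normal shape (so $\mu = \emptyset$), I would check that $T^*$ is again of normal shape: the transposition sends the staircase boundary of $S(\lambda)$ to itself, and I must verify that $T^*$ genuinely lies in $\widetilde{\mathcal{Y}}_\pm$, i.e., that the entry and column/row conditions of Definition~\ref{GSYT} are preserved under the order-reversing, sign-flipping involution on $\overline{X}$. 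Because $T$ is already of normal shape, no shifted slides are needed to rectify $T^*$, so $\sh(T^J) = \sh(\Rect(T^*)) = \sh(T^*) = \sh(T)$. For the involutivity $(T^J)^J = T$, I would use that $\Rect$ is the identity on normal-shape tableaux together with the compatibility of $*$ with rectification established in part (b); concretely, $(T^J)^J = \Rect((\Rect(T^*))^*) = \Rect((T^*)^*) = \Rect(T) = T$, where the middle equality will follow once part (b) is in hand.

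\textbf{Part (b).} Here the goal is to show $\w(\Rect(T^*)) \equiv_{\mathrm{SK}} \w(\Rect(T)^*)$. The strategy is to track how a single shifted slide on $T$ transforms under the $*$-operation. Given a shifted slide carrying $T$ to $T'$ (filling an inner corner and vacating a removable corner), I would show that the starred tableaux $T^*$ and $(T')^*$ differ by a shifted slide as well, possibly an inverse slide, because the coordinate transposition interchanges the roles of inner and removable corners while the order-reversal on $\overline{X}$ interchanges the two branches of the shifted slide rule in Figure~\ref{figure1} (the $|\bfa| < |\bfb|$ case versus $|\bfa| > |\bfb|$, with the primed/unprimed tie-breaking cases swapping accordingly). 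Thus $T^*$ and $(T')^*$ are shifted jeu de taquin equivalent, and by the Proposition relating shifted jeu de taquin equivalence to shifted Knuth equivalence, their reading words are shifted Knuth equivalent. Iterating along a full sequence of slides rectifying $T$, I obtain $T^* \equiv_{\mathrm{SK}} (\Rect(T))^*$ in the jeu de taquin sense, whence $\Rect(T^*)$ and $\Rect((\Rect(T))^*) = (\Rect(T))^J$ have shifted Knuth equivalent reading words, giving the claim.

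\textbf{Main obstacle.} The crux is the compatibility of an individual shifted slide with the $*$-operation: I expect the bookkeeping of the order-reversing involution on $\overline{X}$ against the case distinctions in the shifted slide rule to be delicate, especially at the diagonal moves described in Figure~\ref{figure1}(ii), where the primed/unprimed swapping on the main diagonal interacts with the coordinate reflection. I would handle this by a careful case analysis on the local configuration at each slide, verifying in each case that the reflected, starred picture is exactly a (possibly inverse) shifted slide; once this local claim is secured, the global statement follows formally from transitivity of shifted Knuth equivalence and the characterization of $\Rect$ via reading words.
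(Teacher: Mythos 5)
The paper offers no proof of this lemma at all — it is quoted verbatim from Worley's thesis (\cite[Lemma 7.1.4]{Wo}) — so your proposal must stand on its own, and part (a) contains a genuine error. You claim that when $T$ is of normal shape, $T^*$ is again of normal shape, so that no shifted slides are needed and $\sh(T^J)=\sh(T^*)=\sh(T)$. This is false. The map $(i,j)\mapsto(\lambda_1-j+1,\lambda_1-i+1)$ is the reflection across the anti-diagonal $i+j=\lambda_1+1$; it sends a normal shape to an \emph{anti-normal} skew shape pushed into the lower-right corner of the staircase. Already for $\lambda=(2)$ the shape $\{(1,1),(1,2)\}$ is sent to $\{(1,2),(2,2)\}=S\bigl((2,1)/(1)\bigr)$, which is skew, and the paper's own example immediately following the lemma exhibits this: there $T$ has normal shape $(4,2,1)$ while $T^*$ is a skew tableau of shape $(4,3,2,1)/(3)$ that genuinely requires rectification to produce $T^J$. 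Consequently the identity $\sh(T^J)=\sh(T)$ — which is the substantive content of part (a), the shifted analogue of the fact that Sch\"utzenberger evacuation preserves shape — is left entirely unproven in your proposal; it cannot be had by inspection and requires a real argument (this is precisely what Worley proves).

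The remainder is sound in outline. Your derivation of $(T^J)^J=T$ from part (b) together with $(T^*)^*=T$ is correct and not circular, since your proof of (b) does not invoke (a); and your strategy for (b) — showing that a single shifted slide on $T$ corresponds under $*$ to an inverse shifted slide on $T^*$, then invoking the Sagan–Worley proposition identifying shifted jeu de taquin equivalence with shifted Knuth equivalence — is the right one. Indeed the delicate diagonal cases do work out: for example the configuration with an empty box at $(i,i)$, $a'$ at $(i,i+1)$ and $a$ at $(i+1,i+1)$ maps under $*$ exactly to the pre-image of the second special slide of Figure~\ref{figure1}(ii), with primes toggled as your order-reversal analysis predicts. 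But that case analysis is where the real work of (b) lies and it is only announced, not executed; and until the shape-preservation claim in (a) is given an actual proof rather than resting on the false assertion that $*$ preserves normality of shape, the proposal has a substantive gap.
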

For instance, if
\begin{center}
\begin{tikzpicture}
\def \hhh{5mm}    
\def \vvv{5mm}    
\node[] at (-\hhh*1,-\vvv*0.4) {$T=$};
\draw[-] (\hhh*0,0) rectangle (\hhh*1,\vvv*1);
\draw[-] (\hhh*1,0) rectangle (\hhh*2,\vvv*1);
\draw[-] (\hhh*2,0) rectangle (\hhh*3,\vvv*1);
\draw[-] (\hhh*3,0) rectangle (\hhh*4,\vvv*1);
\draw[-] (\hhh*1,-\vvv*1) rectangle (\hhh*2,\vvv*0);
\draw[-] (\hhh*2,-\vvv*1) rectangle (\hhh*3,\vvv*0);
\draw[-] (\hhh*2,-\vvv*2) rectangle (\hhh*3,-\vvv*1);
\node[] at (\hhh*0.5,\vvv*0.5) {$1'$};
\node[] at (\hhh*1.5,\vvv*0.5) {$1$};
\node[] at (\hhh*2.5,\vvv*0.5) {$1$};
\node[] at (\hhh*3.5,\vvv*0.5) {$2$};
\node[] at (\hhh*1.5,-\vvv*0.5) {$2$};
\node[] at (\hhh*2.5,-\vvv*0.5) {$3'$};
\node[] at (\hhh*2.5,-\vvv*1.5) {$3$};
\end{tikzpicture},
\end{center}
then one can easily see that
\vskip 2mm
\begin{center}
\begin{tikzpicture}
\def \hhh{5.6mm}    
\def \vvv{5.3mm}    
\def \hhhhh{60mm}  
\node[] at (-\hhh*1,-\vvv*0.5) {$T^*=$};
\draw[black!20] (\hhh*0,0) rectangle (\hhh*1,\vvv*1);
\draw[black!20] (\hhh*1,0) rectangle (\hhh*2,\vvv*1);
\draw[black!20] (\hhh*2,0) rectangle (\hhh*3,\vvv*1);
\draw[-] (\hhh*3,0) rectangle (\hhh*4,\vvv*1);
\draw[-] (\hhh*1,-\vvv*1) rectangle (\hhh*2,\vvv*0);
\draw[-] (\hhh*2,-\vvv*1) rectangle (\hhh*3,\vvv*0);
\draw[-] (\hhh*3,-\vvv*1) rectangle (\hhh*4,\vvv*0);
\draw[-] (\hhh*2,-\vvv*2) rectangle (\hhh*3,-\vvv*1);
\draw[-] (\hhh*3,-\vvv*2) rectangle (\hhh*4,-\vvv*1);
\draw[-] (\hhh*3,-\vvv*3) rectangle (\hhh*4,-\vvv*2);
\node[] at (\hhh*3.5,\vvv*0.5) {\footnotesize $-2'$};
\node[] at (\hhh*1.5,-\vvv*0.5) {\footnotesize $-3'$};
\node[] at (\hhh*2.5,-\vvv*0.5) {\footnotesize $-3$};
\node[] at (\hhh*3.5,-\vvv*0.5) {\footnotesize $-1'$};
\node[] at (\hhh*2.5,-\vvv*1.5) {\footnotesize $-2'$};
\node[] at (\hhh*3.5,-\vvv*1.5) {\footnotesize $-1'$};
\node[] at (\hhh*3.5,-\vvv*2.5) {\footnotesize $-1$};
\node[] at (\hhh*1.3+\hhhhh*0.5,-\vvv*0.5) {and};
\node[] at (-\hhh*1.1+\hhhhh,-\vvv*0.5) {$T^J=$};
\draw[-] (\hhh*0+\hhhhh,-\vvv*0) rectangle (\hhh*1+\hhhhh,\vvv*1);
\draw[-] (\hhh*1+\hhhhh,-\vvv*0) rectangle (\hhh*2+\hhhhh,\vvv*1);
\draw[-] (\hhh*2+\hhhhh,-\vvv*0) rectangle (\hhh*3+\hhhhh,\vvv*1);
\draw[-] (\hhh*3+\hhhhh,-\vvv*0) rectangle (\hhh*4+\hhhhh,\vvv*1);
\draw[-] (\hhh*1+\hhhhh,-\vvv*1) rectangle (\hhh*2+\hhhhh,-\vvv*0);
\draw[-] (\hhh*2+\hhhhh,-\vvv*1) rectangle (\hhh*3+\hhhhh,-\vvv*0);
%
\draw[-] (\hhh*2+\hhhhh,-\vvv*2) rectangle (\hhh*3+\hhhhh,-\vvv*1);
\node[] at (\hhh*0.5+\hhhhh,\vvv*0.5) {\footnotesize $-3'$};
\node[] at (\hhh*1.5+\hhhhh,\vvv*0.5) {\footnotesize $-3$};
\node[] at (\hhh*2.5+\hhhhh,\vvv*0.5) {\footnotesize $-2'$};
\node[] at (\hhh*3.5+\hhhhh,\vvv*0.5) {\footnotesize $-1$};
\node[] at (\hhh*1.5+\hhhhh,-\vvv*0.5) {\footnotesize $-2'$};
\node[] at (\hhh*2.5+\hhhhh,-\vvv*0.5) {\footnotesize $-1'$};
\node[] at (\hhh*2.5+\hhhhh,-\vvv*1.5) {\footnotesize $-1$};
\end{tikzpicture}.
\end{center}

In Example \ref{example-section4}(b)
we showed
$f^\nu_{\lambda \mu} = f^\nu_{\mu \lambda}$
by using the shifted tableau switching.
Worley had, however, obtained this result much earlier
by using the shifted $J$-operation.
His argument can be restated in the following manner.
\begin{thm}{\rm (\cite[Theorem 7.2.2]{Wo})}
\label{thm-wo-symmtries}
For each $T \in \mathcal{F}_{\lambda \mu}^\nu$
define $\rho(T)$ to be the SYT uniquely determined by the condition that
$((R_\lambda)^J \cup T)^J = (R_\mu)^J \cup \rho(T).$
Then the function
$$
\rho:
\mathcal{F}^\nu_{\lambda \mu} \rightarrow \mathcal{F}^\nu_{\mu \lambda},
\
T \mapsto \rho(T)
$$
is a bijection.
\end{thm}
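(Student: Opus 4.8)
The plan is to establish well-definedness of $\rho$ first and then deduce bijectivity from the involutivity of $J$ on normal-shape tableaux (Lemma \ref{lem-property-J}(a)). Write $U := (R_\ld)^J \cup T$, a filling of the normal shape $\nu$; by Lemma \ref{lem-property-J}(a) its image $U^J$ is again of normal shape $\nu$. The content of the theorem is that $U^J$ splits along the inner shape $\mu$ as $(R_\mu)^J \cup \rho(T)$ with $\rho(T)\in\mathcal{F}^\nu_{\mu\ld}$, together with the fact that this assignment is invertible. The tools I would use are the behaviour of the $*$-operation under $\cup$, the standard property of shifted jeu de taquin that rectification commutes with restriction to an initial segment of the alphabet (\cite{Sa, Wo}), Lemma \ref{lem-property-J}, and Stembridge's characterization of LRS tableaux as precisely those SYTs of the appropriate weight that rectify to $R_\bullet$ (Theorem \ref{Stembridge's result}).

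First I would analyze $U^*$. Since $*$ rotates the $\nu_1$-staircase by sending $(i,j)$ to $(\nu_1-j+1,\nu_1-i+1)$ while negating entries, and since $(R_\ld)^J$ occupies $S(\ld)$ while $T$ occupies $S(\nu/\ld)$, we get $U^* = T^* \cup ((R_\ld)^J)^*$, where $T^*$ carries only negative letters (as $T$ has letters in $\{1,\dots,\ell(\mu)\}$) while $((R_\ld)^J)^*$ carries only positive ones. Thus the negative and positive entries of $U^*$ are cleanly separated, with nothing at $0'$ or $0$. Applying the restriction property to $U^J=\Rect(U^*)$, the subtableau of $U^J$ formed by the negative letters equals $\Rect(T^*) = T^J$. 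By Lemma \ref{lem-property-J}(b), $T^J = (\Rect T)^J$, and since $T$ is an LRS tableau of weight $\mu$, Stembridge's characterization gives $\Rect T = R_\mu$, so the negative part of $U^J$ is $(R_\mu)^J$. As $(R_\mu)^J$ has normal shape $\mu$, this shows $U^J$ decomposes exactly as $(R_\mu)^J \cup \rho(T)$ with $\rho(T)$ of shape $\nu/\mu$; comparing weights (since $\wt(U^J)=\wt(U^*)$ has positive part $\ld$) gives $\wt(\rho(T))=\ld$. To see $\rho(T)\in\mathcal{F}^\nu_{\mu\ld}$ I would run the symmetric argument on $(U^J)^*$: its negative part is $\rho(T)^*$, so $\Rect(\rho(T)^*)=\rho(T)^J$ equals the negative part of $\Rect((U^J)^*)=(U^J)^J=U$, namely $(R_\ld)^J$; applying $J$ and Lemma \ref{lem-property-J}(a) then yields $\Rect \rho(T)=R_\ld$, whence $\rho(T)$ is LRS by Stembridge's characterization.

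With well-definedness in hand, bijectivity is formal. I would define $\rho'\colon \mathcal{F}^\nu_{\mu\ld}\to\mathcal{F}^\nu_{\ld\mu}$ by the symmetric condition $((R_\mu)^J\cup S)^J = (R_\ld)^J\cup\rho'(S)$; the previous paragraph, with the roles of $\ld$ and $\mu$ interchanged, shows $\rho'$ is well defined. For $T\in\mathcal{F}^\nu_{\ld\mu}$ I apply $J$ to $U^J=(R_\mu)^J\cup\rho(T)$ and use $(U^J)^J=U$ (Lemma \ref{lem-property-J}(a)) to obtain $((R_\mu)^J\cup\rho(T))^J = (R_\ld)^J\cup T$, which by the defining property of $\rho'$ says $\rho'(\rho(T))=T$; symmetrically $\rho(\rho'(S))=S$. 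Hence $\rho$ and $\rho'$ are mutually inverse, so $\rho$ is a bijection.

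The main obstacle is the well-definedness step, and within it the separation-and-restriction argument identifying the inner $\mu$-shaped part of $U^J$ with $(R_\mu)^J$; everything else is bookkeeping with $J$ and Stembridge's theorem. The one external ingredient I rely on is that shifted rectification commutes with restriction to an initial segment of the alphabet, which I would cite from \cite{Sa, Wo}; its applicability here is especially clean because the negative and positive blocks of $U^*$ are separated by the empty gap at $0'$ and $0$, so each block is an honest shifted subtableau.
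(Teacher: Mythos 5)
The paper itself contains no proof of this statement: Theorem \ref{thm-wo-symmtries} is quoted from Worley's thesis \cite[Theorem 7.2.2]{Wo} and serves only as motivation for Theorem \ref{thm-main-Section5}, so there is no in-paper argument to compare yours against. Judged on its own merits, your proof is correct, and it is the direct $J$-theoretic route (essentially Worley's own): the negative and positive letters of $U^*$ separate into the blocks $T^*$ and $\left((R_\lambda)^J\right)^*$; restricting rectification to the initial segment of negative letters gives that the negative part of $U^J$ is $\Rect(T^*)=T^J=(\Rect(T))^J=(R_\mu)^J$, using Lemma \ref{lem-property-J} and the characterization of LRS tableaux as exactly those SYTs rectifying to $R_\mu$ (Theorem \ref{Stembridge's result} and the discussion preceding it); membership $\rho(T)\in\mathcal{F}^\nu_{\mu\lambda}$ and invertibility then follow formally from $(U^J)^J=U$. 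Your one load-bearing imported fact --- that shifted rectification commutes with restriction to an initial segment of the alphabet --- is true but not stated verbatim in \cite{Sa,Wo}, so it deserves its short verification: each shifted Knuth relation either restricts to the same relation or to an equality of words (the two end-of-word relations involve letters of equal absolute value, or letters one of which is deleted, and surviving final letters remain final), hence restriction preserves shifted Knuth classes, and one concludes from the Proposition in Section 2 together with uniqueness of rectification.

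It is worth contrasting your argument with the route available inside the paper's own framework, which is genuinely different. Applying Theorem \ref{thm-main-Section5} with $S=R_\lambda$ gives $\left((R_\lambda)^J\cup T\right)^J = T^J\cup (R_\lambda)_T = (R_\mu)^J\cup (R_\lambda)_T$ for $T\in\mathcal{F}^\nu_{\lambda\mu}$ (again using $T^J=(\Rect(T))^J=(R_\mu)^J$), so that $\rho(T)=(R_\lambda)_T$; bijectivity and $(R_\lambda)_T\in\mathcal{F}^\nu_{\mu\lambda}$ are then exactly Theorem \ref{thm-main-section4} and Example \ref{example-section4}(b). That route identifies Worley's map $\rho$ with the shifted tableau switching map, which is the point of Section 5 of the paper; your route instead proves the theorem independently of the switching machinery, as Worley originally did, at the cost of invoking the restriction property of shifted jeu de taquin.
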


Hence it would be very natural to search for
the relationship between Worley's shifted $J$-operation and
our shifted tableau switching.
For this purpose, we first provide a shifted analogue of
\cite[Algorithm 5.2]{BSS},
which is called the {\it generalized evacuation}
there.

\begin{algo}
\label{algorithm3}
(The shifted generalized evacuation) \
Let $S$ be an SYT of normal shape.
\begin{description}
\item[Step 1]
Start with $S$ whose boxes are all unshaded.
Set $S^E:= \emptyset$ and $m:=\max(S)$.

\item[Step 2]
Set $a:=\min(S).$

\item[Step 3] Do one of the following on $S^{(a)}$,
where $S = S^{(a)} \cup S^{(a+1)} \cup \cdots \cup S^{(m)}$.
\vskip 1mm
\begin{itemize}
\setlength{\itemindent}{-5mm}
\item If $S_{1,1} = a$,
then replace it with $-a'$
and the entries in $S^{(a)}$ except $S_{1,1}$ with $-a$'s.

\item
If $S_{1,1} = a'$,
then replace it with $-a$
and the entries in $S^{(a)}$ except $S_{1,1}$ with $-a$'s.
\end{itemize}
\vskip 1mm
Set $S^{(a)}$ newly to be the resulting SYT of border strip shape.
Then shade all the boxes in $S^{(a)}$.

\item[Step 4] If $a = m$, then put $S^E := S^{(a)} \cup S^E$ and finish the algorithm.
If $a \neq m$, then apply Algorithm \ref{algorithm2} to
$(S^{(a)}, S^{(a+1)} \cup \cdots \cup S^{(m)})$
and then go to {\bf Step 5}.

\item[Step 5] Set $S:= {}^{S^{(a)}}(S^{(a+1)} \cup \cdots \cup S^{(m)})$
and
$S^E:= (S^{(a)})_{S^{(a+1)} \cup \cdots \cup S^{(m)}} \cup S^E$.
Then go to {\bf Step 2}.
\end{description}
\end{algo}

From Theorem \ref{Thm-Sec3-SW} it follows that $S^E$ is an SYT.
As an illustration, for the SYT $T$ following Lemma \ref{lem-property-J},
$T^E$ can be obtained as follows:
\vskip 2mm
\begin{tikzpicture}
\def \hhh{5.3mm}    
\def \vvv{5.3mm}    
\node[] at (-\hhh*1.1,-\vvv*0) {$T=$};
\draw[-] (\hhh*0,0) rectangle (\hhh*1,\vvv*1);
\draw[-] (\hhh*1,0) rectangle (\hhh*2,\vvv*1);
\draw[-] (\hhh*2,0) rectangle (\hhh*3,\vvv*1);
\draw[-] (\hhh*3,0) rectangle (\hhh*4,\vvv*1);
\draw[-] (\hhh*1,-\vvv*1) rectangle (\hhh*2,\vvv*0);
\draw[-] (\hhh*2,-\vvv*1) rectangle (\hhh*3,\vvv*0);
\draw[-] (\hhh*2,-\vvv*2) rectangle (\hhh*3,-\vvv*1);
\node[] at (\hhh*0.5,\vvv*0.5) {\scriptsize $1'$};
\node[] at (\hhh*1.5,\vvv*0.5) {\scriptsize $1$};
\node[] at (\hhh*2.5,\vvv*0.5) {\scriptsize $1$};
\node[] at (\hhh*3.5,\vvv*0.5) {\scriptsize $2$};
\node[] at (\hhh*1.5,-\vvv*0.5) {\scriptsize $2$};
\node[] at (\hhh*2.5,-\vvv*0.5) {\scriptsize $3'$};
\node[] at (\hhh*2.5,-\vvv*1.5) {\scriptsize $3$};
\end{tikzpicture}
\hskip 2mm
\begin{tikzpicture}
\def \hhh{5.3mm}    
\def \vvv{5.3mm}    
\draw[->,decorate,decoration={snake,amplitude=.4mm,segment length=2mm,post length=1mm}] (-\hhh*1.5,-\vvv*0) -- (-\hhh*0.5,-\vvv*0);
\draw[-,fill=black!30] (\hhh*0,0) rectangle (\hhh*1,\vvv*1);
\draw[-,fill=black!30] (\hhh*1,0) rectangle (\hhh*2,\vvv*1);
\draw[-,fill=black!30] (\hhh*2,0) rectangle (\hhh*3,\vvv*1);
\draw[-] (\hhh*3,0) rectangle (\hhh*4,\vvv*1);
\draw[-] (\hhh*1,-\vvv*1) rectangle (\hhh*2,\vvv*0);
\draw[-] (\hhh*2,-\vvv*1) rectangle (\hhh*3,\vvv*0);
\draw[-] (\hhh*2,-\vvv*2) rectangle (\hhh*3,-\vvv*1);
\node[] at (\hhh*0.5,\vvv*0.5) {\scriptsize $-1$};
\node[] at (\hhh*1.5,\vvv*0.5) {\scriptsize $-1$};
\node[] at (\hhh*2.5,\vvv*0.5) {\scriptsize $-1$};
\node[] at (\hhh*3.5,\vvv*0.5) {\scriptsize $2$};
\node[] at (\hhh*1.5,-\vvv*0.5) {\scriptsize $2$};
\node[] at (\hhh*2.5,-\vvv*0.5) {\scriptsize $3'$};
\node[] at (\hhh*2.5,-\vvv*1.5) {\scriptsize $3$};
\end{tikzpicture}
\hskip 2mm
\begin{tikzpicture}
\def \hhh{5.3mm}    
\def \vvv{5.3mm}    
\draw[->,decorate,decoration={snake,amplitude=.4mm,segment length=2mm,post length=1mm}] (-\hhh*1.5,-\vvv*0) -- (-\hhh*0.5,-\vvv*0);
\draw[-] (\hhh*0,0) rectangle (\hhh*1,\vvv*1);
\draw[-] (\hhh*1,0) rectangle (\hhh*2,\vvv*1);
\draw[-] (\hhh*2,0) rectangle (\hhh*3,\vvv*1);
\draw[-,fill=black!30] (\hhh*3,0) rectangle (\hhh*4,\vvv*1);
\draw[-] (\hhh*1,-\vvv*1) rectangle (\hhh*2,\vvv*0);
\draw[-,fill=black!30] (\hhh*2,-\vvv*1) rectangle (\hhh*3,\vvv*0);
\draw[-,fill=black!30] (\hhh*2,-\vvv*2) rectangle (\hhh*3,-\vvv*1);
\node[] at (\hhh*0.5,\vvv*0.5) {\scriptsize $2$};
\node[] at (\hhh*1.5,\vvv*0.5) {\scriptsize $2$};
\node[] at (\hhh*2.5,\vvv*0.5) {\scriptsize $3$};
\node[] at (\hhh*3.5,\vvv*0.5) {\scriptsize $-1$};
\node[] at (\hhh*1.5,-\vvv*0.5) {\scriptsize $3$};
\node[] at (\hhh*2.5,-\vvv*0.5) {\scriptsize $-1'$};
\node[] at (\hhh*2.5,-\vvv*1.5) {\scriptsize $-1$};
\end{tikzpicture}
\hskip 2mm
\begin{tikzpicture}
\def \hhh{5.3mm}    
\def \vvv{5.3mm}    
\draw[->,decorate,decoration={snake,amplitude=.4mm,segment length=2mm,post length=1mm}] (-\hhh*1.5,-\vvv*0) -- (-\hhh*0.5,-\vvv*0);
\draw[-,fill=black!30] (\hhh*0,0) rectangle (\hhh*1,\vvv*1);
\draw[-,fill=black!30] (\hhh*1,0) rectangle (\hhh*2,\vvv*1);
\draw[-] (\hhh*2,0) rectangle (\hhh*3,\vvv*1);
\draw[-,fill=black!30] (\hhh*3,0) rectangle (\hhh*4,\vvv*1);
\draw[-] (\hhh*1,-\vvv*1) rectangle (\hhh*2,\vvv*0);
\draw[-,fill=black!30] (\hhh*2,-\vvv*1) rectangle (\hhh*3,\vvv*0);
\draw[-,fill=black!30] (\hhh*2,-\vvv*2) rectangle (\hhh*3,-\vvv*1);
\node[] at (\hhh*0.5,\vvv*0.5) {\scriptsize $-2'$};
\node[] at (\hhh*1.5,\vvv*0.5) {\scriptsize $-2$};
\node[] at (\hhh*2.5,\vvv*0.5) {\scriptsize $3$};
\node[] at (\hhh*3.5,\vvv*0.5) {\scriptsize $-1$};
\node[] at (\hhh*1.5,-\vvv*0.5) {\scriptsize $3$};
\node[] at (\hhh*2.5,-\vvv*0.5) {\scriptsize $-1'$};
\node[] at (\hhh*2.5,-\vvv*1.5) {\scriptsize $-1$};
\end{tikzpicture}
\vskip 2mm
\begin{tikzpicture}
\def \hhh{5.3mm}    
\def \vvv{5.3mm}    
\draw[->,decorate,decoration={snake,amplitude=.4mm,segment length=2mm,post length=1mm}] (-\hhh*1.5,-\vvv*0) -- (-\hhh*0.5,-\vvv*0);
\draw[-] (\hhh*0,0) rectangle (\hhh*1,\vvv*1);
\draw[-] (\hhh*1,0) rectangle (\hhh*2,\vvv*1);
\draw[-,fill=black!30] (\hhh*2,0) rectangle (\hhh*3,\vvv*1);
\draw[-,fill=black!30] (\hhh*3,0) rectangle (\hhh*4,\vvv*1);
\draw[-,fill=black!30] (\hhh*1,-\vvv*1) rectangle (\hhh*2,\vvv*0);
\draw[-,fill=black!30] (\hhh*2,-\vvv*1) rectangle (\hhh*3,\vvv*0);
\draw[-,fill=black!30] (\hhh*2,-\vvv*2) rectangle (\hhh*3,-\vvv*1);
\node[] at (\hhh*0.5,\vvv*0.5) {\scriptsize $3$};
\node[] at (\hhh*1.5,\vvv*0.5) {\scriptsize $3$};
\node[] at (\hhh*2.5,\vvv*0.5) {\scriptsize $-2'$};
\node[] at (\hhh*3.5,\vvv*0.5) {\scriptsize $-1$};
\node[] at (\hhh*1.5,-\vvv*0.5) {\scriptsize $-2'$};
\node[] at (\hhh*2.5,-\vvv*0.5) {\scriptsize $-1'$};
\node[] at (\hhh*2.5,-\vvv*1.5) {\scriptsize $-1$};
\end{tikzpicture}
\hskip 2mm
\begin{tikzpicture}
\def \hhh{5.3mm}    
\def \vvv{5.3mm}    
\draw[->,decorate,decoration={snake,amplitude=.4mm,segment length=2mm,post length=1mm}] (-\hhh*3.6,-\vvv*0) -- (-\hhh*2.6,-\vvv*0);
\node[] at (-\hhh*1.2,-\vvv*0) {$T^E=$};
\draw[-,fill=black!30] (\hhh*0,0) rectangle (\hhh*1,\vvv*1);
\draw[-,fill=black!30] (\hhh*1,0) rectangle (\hhh*2,\vvv*1);
\draw[-,fill=black!30] (\hhh*2,0) rectangle (\hhh*3,\vvv*1);
\draw[-,fill=black!30] (\hhh*3,0) rectangle (\hhh*4,\vvv*1);
\draw[-,fill=black!30] (\hhh*1,-\vvv*1) rectangle (\hhh*2,\vvv*0);
\draw[-,fill=black!30] (\hhh*2,-\vvv*1) rectangle (\hhh*3,\vvv*0);
%
\draw[-,fill=black!30] (\hhh*2,-\vvv*2) rectangle (\hhh*3,-\vvv*1);
\node[] at (\hhh*0.5,\vvv*0.5) {\scriptsize $-3'$};
\node[] at (\hhh*1.5,\vvv*0.5) {\scriptsize $-3$};
\node[] at (\hhh*2.5,\vvv*0.5) {\scriptsize $-2'$};
\node[] at (\hhh*3.5,\vvv*0.5) {\scriptsize $-1$};
\node[] at (\hhh*1.5,-\vvv*0.5) {\scriptsize $-2'$};
\node[] at (\hhh*2.5,-\vvv*0.5) {\scriptsize $-1'$};
\node[] at (\hhh*2.5,-\vvv*1.5) {\scriptsize $-1$};
\end{tikzpicture}

\vskip 2mm
Suppose that $A$ is an SYT consisting of $m$ $\bfa$-boxes.
Define $A^{\st}$ by the SYT obtained from $A$
by replacing all $a'$'s by $1',2',\ldots,k'$ from top to bottom,
and next all $a$'s by $k+1,k+2,\ldots,m$ from left to right,
where $k$ is the number of $a'$'s.
And define $A^{\widehat{\st}}$ as follows:

$\bullet$ If the southwesternmost box of $A$ is filled with $a'$,
then replace all $a$'s by $-1',-2',\ldots,-i'$ from right to left,
and next all $a'$'s by $-(i+1)',\ldots,-m'$ from bottom to top.

$\bullet$ If the southwesternmost box of $A$ is filled with $a$,
then replace all $a$'s by $-1,-2',\ldots,-i'$ from right to left,
and next all $a'$'s by $-(i+1)',\ldots,-m'$ from bottom to top.
\vskip 1mm \noindent
Note that if $A^{\widehat{\st}} = B^{\widehat{\st}}$,
then $A =B$ since the process obtaining $A^{\widehat{\st}}$ from $A$
is reversible.
The following theorem shows the relationship
between the shifted $J$-operation and the shifted generalized evacuation
for SYTs of normal shape.
\begin{thm}\label{thm-J=E}
Let $S$ be an SYT of normal shape.
Then $S^E = S^J$.
\end{thm}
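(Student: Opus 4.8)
The plan is to induct on the number of distinct colours $m-\min(S)+1$ of $S$, matching the recursive ``peel the smallest colour'' structure of Algorithm \ref{algorithm3} against a colour filtration of the shifted $J$-operation. Write $a=\min(S)$ and $S=S^{(a)}\cup S'$ with $S'=S^{(a+1)}\cup\cdots\cup S^{(m)}$. First I would record two bookkeeping facts. Since every switch preserves the shape (Theorem \ref{Thm-Sec3-SW}(a)) and nothing is discarded, $S^E$ is of normal shape with $\sh(S^E)=\sh(S)$; and by Lemma \ref{lem-property-J}(a), $\sh(S^J)=\sh(S)$ as well, so the two outputs at least share a shape, and it will suffice to identify them box by box.

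Next I would unwind one pass of Algorithm \ref{algorithm3}. Let $N_a$ denote the border strip of negative entries produced from $S^{(a)}$ in Step 3. Because $S^{(a)}$ contains the corner $(1,1)$ and $S^{(a)}\cup S'$ is normal, the switch of Steps 4--5 yields ${}^{N_a}S'=\Rect(S')$ (the union being normal) and pushes $N_a$ to the southeast as $(N_a)_{S'}$; the algorithm then evacuates $\Rect(S')$, prepending its strips to $(N_a)_{S'}$. Reading off the accumulation gives
\begin{equation*}
S^E=(\Rect(S'))^E\cup (N_a)_{S'},
\end{equation*}
with $(\Rect(S'))^E$ occupying the northwest and $(N_a)_{S'}$ the southeast. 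Since $\Rect(S')$ is normal with one fewer colour, the induction hypothesis gives $(\Rect(S'))^E=(\Rect(S'))^J$, and Lemma \ref{lem-property-J}(b) gives $(\Rect(S'))^J=(S')^J=\Rect((S')^*)$. Hence the theorem reduces to the key identity
\begin{equation*}
\Rect(S^*)=(N_a)_{S'}\cup \Rect((S')^*).
\end{equation*}

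To prove this key identity I would analyse $\Rect(S^*)$ through its colour filtration. In $S^*=(S^{(a)})^*\cup(S')^*$ the entries of $(S^{(a)})^*$ carry the colour $-a$, which is the \emph{largest} colour present, and they sit in the southeast; by the standard fact that shifted jeu de taquin respects the filtration by largest letter, the subtableau of smaller colours of $\Rect(S^*)$ equals $\Rect((S')^*)$, while the colour-$(-a)$ entries form the complementary top border strip $R^{(-a)}$. So $\Rect(S^*)=\Rect((S')^*)\cup R^{(-a)}$, and it remains to check $R^{(-a)}=(N_a)_{S'}$. The two strips occupy the \emph{same} region: $R^{(-a)}$ fills $\sh(S)\setminus\sh(\Rect((S')^*))$ while $(N_a)_{S'}$ fills $\sh(S)\setminus\sh(\Rect(S'))$, and these agree because $\sh(\Rect((S')^*))=\sh((S')^J)=\sh((\Rect(S'))^J)=\sh(\Rect(S'))$ by Lemma \ref{lem-property-J}. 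Finally, a single-colour border strip on a prescribed region is determined by its shape together with the parity of its southwesternmost box, and I would pin this last bit down with the $\widehat{\st}$-labelling, whose reversibility (noted before the theorem) shows that the primed/unprimed pattern inherited by $(N_a)_{S'}$ from the Step 3 rule on $S_{1,1}$ is exactly the one produced by starring $S^{(a)}$; here the compatibility of switching with standardization (Remark \ref{rem-stan-switch}(iv)) transports the comparison through the slides.

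The base case $m=\min(S)$ is a direct check: a normal single-colour SYT is a single row, and one verifies $S^E=S^J$ there, which also isolates the corner-parity rule that the induction propagates. The main obstacle is precisely the last step of the key identity --- showing the two single-colour strips carry identical primings, not merely identical shapes. The subtlety is that $R^{(-a)}$ is produced by sliding $(S^{(a)})^*$ against the rectification of $(S')^*$, whereas $(N_a)_{S'}$ is produced by switching $N_a$ against $S'$ (equivalently, against $\Rect(S')$); these two ``obstacle'' tableaux differ, so matching the resulting primes requires the $\widehat{\st}$ normalization together with the standardization-compatibility of the switches, rather than a naive slide-by-slide comparison.
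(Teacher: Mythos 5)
Your reduction is the same as the paper's: peeling off $S^{(a)}$, writing $S^E=(\Rect(S'))^E\cup (N_a)_{S'}$, invoking Lemma \ref{lem-property-J}, and reducing the theorem to the identity between the displaced negative strip $(N_a)_{S'}$ (the paper's $({S^{(a)}}^E)_T$) and the $(-a)$-coloured part $Y$ of $\Rect(S^*)$; up to that point your outline is sound. The first genuine gap is your determinacy claim: a single-colour border strip is \emph{not} determined by its shape together with the priming of its southwesternmost box once the strip is disconnected, and disconnected strips do occur here, since $\sh(S)\setminus\sh(\Rect(S'))$ is a difference of two normal shapes. For instance, if $S$ has first row $1'\,1\,1\,2'\,2$ and the single box $(2,2)$ filled with $2$, then $\sh(\Rect(S'))=(3)$ and the region is $\{(1,4),(1,5)\}\sqcup\{(2,2)\}$; distinct components of a skew shape share no row and no column and no box of one lies southeast of a box of the other, so neither the SYT conditions nor the perforated condition couples them, and each component carries an independent prime bit. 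What is true (and what the paper actually uses, packaged as the reversibility of $\widehat{\st}$) is that the strip is determined by its \emph{standardization} together with the southwesternmost parity; but then your argument also owes a proof that $\stan\bigl((N_a)_{S'}\bigr)=\stan(Y)$, which your plan never addresses.

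Second, and more structurally, the tools you cite cannot connect the switching side to the sliding side. Compatibility of switching with standardization (Remark \ref{rem-stan-switch}(iv),(v)) and the analogous fact for slides only relate $(N_a)_{S'}$ to $\bigl(({S^{(a)}}^{\st})^E\bigr)_T$, and $Y$ to the strip $\widehat{Y}$ cut out of $({S^{(a)}}^{\st}\cup T)^J$; to conclude you still need $\bigl(({S^{(a)}}^{\st})^E\bigr)_T=\widehat{Y}$, which is an instance of the theorem itself, applied to the tableau ${S^{(a)}}^{\st}\cup T$ whose minimal letters are pairwise distinct. The paper obtains exactly this statement (its equation \eqref{eq-case2-1}) because its induction is on the number of \emph{boxes}: the distinct-letter tableau has the same number of boxes and singleton minimal classes, so it is covered by the paper's Case 1, which strips one box at a time. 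Your induction is on the number of \emph{colours}, and ${S^{(a)}}^{\st}\cup T$ has strictly more colours than $S$, so your inductive hypothesis says nothing about it, and nothing in your outline replaces this ingredient. To repair the plan you would either have to adopt the paper's box induction (settling the singleton/distinct-letter case first) or prove the distinct-letter case of the theorem as a separate preliminary lemma before your colour induction starts.
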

\begin{proof}
We prove the assertion by induction on the number of boxes in $S$.
In case where $S$ consists of a single $\bfa$-box, it is clear that
$S^E = S^J =$
\begin{tikzpicture}[baseline=1mm]
\def\hhh{4.5mm}
\def\vvv{4.5mm}
\draw[-] (0,0) rectangle (\hhh,\vvv);
\node at (\hhh*0.5,\vvv*0.5) () {$\bfa^*$};
\end{tikzpicture}.
Now assume that the assertion holds for all SYTs of normal shape
with $\leq$ $n$ boxes.
Suppose that $S$ is an SYT of normal shape with $n+1$ boxes
and set $a:= \min(S).$
Let $T$ be the SYT obtained by deleting $S^{(a)}$ from $S$.
\vskip 1mm
{\bf Case 1:} $|(S^{(a)})| = 1$
\vskip 1mm
Let us obtain $(\Rect(T), ({S^{(a)}}^E)_T)$
by applying the shifted switching process to $({S^{(a)}}^E, T)$.
Then, applying Algorithm \ref{algorithm3} to $\Rect(T)$,
we see that $S^E =\Rect(T)^E \cup ({S^{(a)}}^E)_T$.
Notice that $({S^{(a)}}^E)_T$ is an $\bfa^*$-box
since ${S^{(a)}}^E$ is a single $\bfa^*$-box.
On the other hand,
$S^J = \Rect(T^*) \cup Y$,
where $Y$ consists of a single $\bfa^*$-box.
Since $\Rect(T)$ is an SYT of normal shape
and
$\Rect(T^*) = \Rect(T)^J$ by Lemma \ref{lem-property-J},
it follows that
$\Rect(T^*) = \Rect(T)^E$ by induction hypothesis.
Also,
$({S^{(a)}}^E)_T$ and $Y$ have the same letter and
have the same position since
$\sh(S^E) = \sh(S^J) = \sh(S)$,
hence $({S^{(a)}}^E)_T = Y$.

\vskip 2mm
{\bf Case 2:} $m:=|(S^{(a)})| \geq 2$
\vskip 1mm
\noindent

By shifting all the entries of $S$ if necessary,
we may assume that
$\wt_{\min(S)}(S) \leq \min(S)$.
As in {\bf Case 1},
we see that $S^E = \Rect(T)^E \cup ({S^{(a)}}^E)_T$
and $S^J = \Rect(T)^J \cup Y$, where $Y$ consists of all $\bfa^*$-boxes in $S^J$.
From induction hypothesis it follows that $\Rect(T)^E = \Rect(T)^J$.
Hence for the proof it suffices to show that $({S^{(a)}}^E)_T = Y$.

First, we claim that $(({S^{(a)}}^{\st})^E)_T = \widehat{Y}$,
where $\widehat{Y}$ is the SYT obtained by removing
$\Rect(T)^J$ from $({S^{(a)}}^{\st} \cup T)^J$.
When we replace $S^{(a)}$ by ${S^{(a)}}^{\st}$ in $S$,
${S^{(a)}}^{\st} \cup T$ is still an SYT by the assumption that $\wt_{\min(S)}(S) \leq \min(S)$ and therefore
we can deduce from {\bf Case 1} that
\begin{equation}\label{eq-case2-1}
\left({S^{(a)}}^{\st} \cup T \right)^E =  \left({S^{(a)}}^{\st} \cup T \right)^J.
\end{equation}
On the other hand, by the definition of $E$,
\begin{displaymath}
\left({S^{(a)}}^{\st} \cup T \right)^E = \Rect(T)^E \cup \left(\left({S^{(a)}}^{\st}\right)^E\right)_T.
\end{displaymath}
Now the claim follows from \eqref{eq-case2-1} along with the definition of $\widehat{Y}$.

Second, we claim that
\begin{equation}\label{eq-case2-2}
\left({({S^{(a)}}^E)_{T}}\right)^{\widehat\st} = \left({({{S^{(a)}}^{{\st}}})}^E\right)_{T}.
\end{equation}
Let us consider $\Sigma({S^{(a)}}^E, T^{(a+1)})$ and $\Sigma({({{S^{(a)}}^{{\st}}})}^E, T^{(a+1)})$.
Since
the (reverse) standardization of ${S^{(a)}}^E$
is equal to
that of ${({{S^{(a)}}^{{\st}}})}^E$,
it follows from Remark \ref{Remark-Diff}(v) that
\begin{displaymath}
\stan\left(({S^{(a)}}^E)_{T^{(a+1)}}\right) = \stan\left(\left({({{S^{(a)}}^{{\st}}})}^E\right)_{T^{(a+1)}}\right).
\end{displaymath}
In case where neither (S4) nor (S7) occurs in the shifted switching process on $({S^{(a)}}^E,T^{(a+1)})$
and in that of $({({{S^{(a)}}^{{\st}}})}^E, T^{(a+1)})$,
the $(\bf{-1})$-box in $({({{S^{(a)}}^{{\st}}})}^E)_{T^{(a+1)}}$
and
the southwesternmost $(-\bfa)$-box in $({S^{(a)}}^E)_{T^{(a+1)}}$
are both primed or both unprimed
since the southwesternmost $(-\bfa)$-box in ${S^{(a)}}^E$
is moved to
the southwesternmost $(-\bfa)$-box in $({S^{(a)}}^E)_{T^{(a+1)}}$
by Lemma \ref{lem-path-intersect}.
Even when (S4) or (S7)
occurs in the shifted switching process on $({S^{(a)}}^E, T^{(a+1)})$,
it is easy to see that they are both primed or both unprimed.
See following figures:
\vskip 2mm \noindent
\begin{tikzpicture}[baseline=-2pt]
\def \hhh{5mm}    
\def \vvv{5mm}    
\node[] at (\hhh*0,-\vvv*0.5) {$\bullet$};
\end{tikzpicture}
\begin{tikzpicture}[baseline=-2pt]
\def \hhh{6mm}    
\def \vvv{5mm}    
\draw[-,fill=black!30] (\hhh*0,-\vvv*0.5) rectangle (\hhh*1,\vvv*0.5);
\draw[-,fill=black!30] (\hhh*1,-\vvv*0.5) rectangle (\hhh*2,\vvv*0.5);
\draw[-] (\hhh*1,-\vvv*1.5) rectangle (\hhh*2,-\vvv*0.5);
\node[] at (\hhh*0.5,\vvv*0) {\scalebox{.7}{$-\bfa$}};
\node[] at (\hhh*1.5,\vvv*0) {\scalebox{.7}{$-a$}};
\node[] at (\hhh*1.5,-\vvv*1) {\scalebox{.6}{$a+1$}};
\end{tikzpicture}
\begin{tikzpicture}[baseline=-2pt]
\def \hhh{5mm}    
\def \vvv{5mm}    
\draw[|->] (\hhh*0,-\vvv*0.5) -- (\hhh*1,-\vvv*0.5)
node[midway,above,sloped] {\tiny (S4)};
\end{tikzpicture}
\begin{tikzpicture}[baseline=-2pt]
\def \hhh{6mm}    
\def \vvv{5mm}    
\draw[-] (\hhh*0,-\vvv*0.5) rectangle (\hhh*1,\vvv*0.5);
\draw[-,fill=black!30] (\hhh*1,-\vvv*0.5) rectangle (\hhh*2,\vvv*0.5);
\draw[-,fill=black!30] (\hhh*1,-\vvv*1.5) rectangle (\hhh*2,-\vvv*0.5);
\node[] at (\hhh*0.5,\vvv*0) {\scalebox{.6}{$a+1$}};
\node[] at (\hhh*1.5,\vvv*0) {\scalebox{.7}{$-a'$}};
\node[] at (\hhh*1.5,-\vvv*1) {\scalebox{.7}{$-\bfa$}};
\end{tikzpicture}
\hskip 17mm
\begin{tikzpicture}[baseline=-2pt]
\def \hhh{6mm}    
\def \vvv{5mm}    
\draw[-,fill=black!30] (\hhh*0,-\vvv*0.5) rectangle (\hhh*1,\vvv*0.5);
\draw[-,fill=black!30] (\hhh*1,-\vvv*0.5) rectangle (\hhh*2,\vvv*0.5);
\draw[-] (\hhh*1,-\vvv*1.5) rectangle (\hhh*2,-\vvv*0.5);
\node[] at (\hhh*0.5,\vvv*0) {\scalebox{.7}{$-2'$}};
\node[] at (\hhh*1.5,-\vvv*0) {\scalebox{.7}{$-\bf{1}$}};
\node[] at (\hhh*1.5,-\vvv*1) {\scalebox{.6}{$a+1$}};
\end{tikzpicture}
\begin{tikzpicture}[baseline=-2pt]
\def \hhh{5mm}    
\def \vvv{5mm}    
\draw[|->] (\hhh*0,-\vvv*0.5) -- (\hhh*1,-\vvv*0.5) node[midway,above,sloped] {\tiny (S2)};
\end{tikzpicture}
\begin{tikzpicture}[baseline=-2pt]
\def \hhh{6mm}    
\def \vvv{5mm}    
\draw[-,fill=black!30] (\hhh*0,-\vvv*0.5) rectangle (\hhh*1,\vvv*0.5);
\draw[-] (\hhh*1,-\vvv*0.5) rectangle (\hhh*2,\vvv*0.5);
\draw[-,fill=black!30] (\hhh*1,-\vvv*1.5) rectangle (\hhh*2,-\vvv*0.5);
\node[] at (\hhh*0.5,\vvv*0) {\scalebox{.7}{$-2'$}};
\node[] at (\hhh*1.5,\vvv*0) {\scalebox{.6}{$a+1$}};
\node[] at (\hhh*1.5,-\vvv*1) {\scalebox{.7}{$-\bf{1}$}};
\end{tikzpicture}
\begin{tikzpicture}[baseline=-2pt]
\def \hhh{5mm}    
\def \vvv{5mm}    
\draw[|->] (\hhh*0,-\vvv*0.5) -- (\hhh*1,-\vvv*0.5) node[midway,above,sloped] {\tiny (S1)};
\end{tikzpicture}
\begin{tikzpicture}[baseline=-2pt]
\def \hhh{6mm}    
\def \vvv{5mm}    
\draw[-] (\hhh*0,-\vvv*0.5) rectangle (\hhh*1,\vvv*0.5);
\draw[-,fill=black!30] (\hhh*1,-\vvv*0.5) rectangle (\hhh*2,\vvv*0.5);
\draw[-,fill=black!30] (\hhh*1,-\vvv*1.5) rectangle (\hhh*2,-\vvv*0.5);
\node[] at (\hhh*0.5,\vvv*0) {\scalebox{.6}{$a+1$}};
\node[] at (\hhh*1.5,\vvv*0) {\scalebox{.7}{$-2'$}};
\node[] at (\hhh*1.5,-\vvv*1) {\scalebox{.7}{$-\bf{1}$}};
\end{tikzpicture}
\vskip 3mm \noindent
\begin{tikzpicture}[baseline=-2pt]
\def \hhh{5mm}    
\def \vvv{5mm}    
\node[] at (\hhh*0,-\vvv*0.5) {$\bullet$};
\end{tikzpicture}
\begin{tikzpicture}[baseline=-2pt]
\def \hhh{6mm}    
\def \vvv{5mm}    
\draw[-,fill=black!30] (\hhh*0,-\vvv*0.5) rectangle (\hhh*1,\vvv*0.5);
\draw[-,fill=black!30] (\hhh*1,-\vvv*0.5) rectangle (\hhh*2,\vvv*0.5);
\draw[-] (\hhh*2,-\vvv*0.5) rectangle (\hhh*3,\vvv*0.5);
\draw[-] (\hhh*1,-\vvv*1.5) rectangle (\hhh*2,-\vvv*0.5);
\node[] at (\hhh*0.5,\vvv*0) {\scalebox{.7}{$-\bfa$}};
\node[] at (\hhh*1.5,\vvv*0) {\scalebox{.7}{$-a$}};
\node[] at (\hhh*2.5,\vvv*0) {\scalebox{.6}{$a+1$}};
\node[] at (\hhh*1.5,-\vvv*1) {\scalebox{.6}{$a+1$}};
\end{tikzpicture}
\begin{tikzpicture}[baseline=-2pt]
\def \hhh{5mm}    
\def \vvv{5mm}    
\draw[|->] (\hhh*0,-\vvv*0.5) -- (\hhh*1,-\vvv*0.5)
node[midway,above,sloped] {\tiny (S7)};
\end{tikzpicture}
\begin{tikzpicture}[baseline=-2pt]
\def \hhh{6mm}    
\def \vvv{5mm}    
\draw[-] (\hhh*0,-\vvv*0.5) rectangle (\hhh*1,\vvv*0.5);
\draw[-,fill=black!30] (\hhh*1,-\vvv*0.5) rectangle (\hhh*2,\vvv*0.5);
\draw[-] (\hhh*2,-\vvv*0.5) rectangle (\hhh*3,\vvv*0.5);
\draw[-,fill=black!30] (\hhh*1,-\vvv*1.5) rectangle (\hhh*2,-\vvv*0.5);
\node[] at (\hhh*0.5,\vvv*0) {\scalebox{.6}{$a+1$}};
\node[] at (\hhh*1.5,\vvv*0) {\scalebox{.7}{$-a'$}};
\node[] at (\hhh*2.5,\vvv*0) {\scalebox{.6}{$a+1$}};
\node[] at (\hhh*1.5,-\vvv*1) {\scalebox{.7}{$\bf{-a}$}};
\end{tikzpicture}
\hskip 5.5mm
\begin{tikzpicture}[baseline=-2pt]
\def \hhh{6mm}    
\def \vvv{5mm}    
\draw[-,fill=black!30] (\hhh*0,-\vvv*0.5) rectangle (\hhh*1,\vvv*0.5);
\draw[-,fill=black!30] (\hhh*1,-\vvv*0.5) rectangle (\hhh*2,\vvv*0.5);
\draw[-] (\hhh*2,-\vvv*0.5) rectangle (\hhh*3,\vvv*0.5);
\draw[-] (\hhh*1,-\vvv*1.5) rectangle (\hhh*2,-\vvv*0.5);
\node[] at (\hhh*0.5,\vvv*0) {\scalebox{.7}{$-2'$}};
\node[] at (\hhh*1.5,\vvv*0) {\scalebox{.7}{$-\bf{1}$}};
\node[] at (\hhh*2.5,\vvv*0) {\scalebox{.6}{$a+1$}};
\node[] at (\hhh*1.5,-\vvv*1) {\scalebox{.6}{$a+1$}};
\end{tikzpicture}
\begin{tikzpicture}[baseline=-2pt]
\def \hhh{5mm}    
\def \vvv{5mm}    
\draw[|->] (\hhh*0,-\vvv*0.5) -- (\hhh*1,-\vvv*0.5) node[midway,above,sloped] {\tiny (S2)};
\end{tikzpicture}
\begin{tikzpicture}[baseline=-2pt]
\def \hhh{6mm}    
\def \vvv{5mm}    
\draw[-,fill=black!30] (\hhh*0,-\vvv*0.5) rectangle (\hhh*1,\vvv*0.5);
\draw[-] (\hhh*1,-\vvv*0.5) rectangle (\hhh*2,\vvv*0.5);
\draw[-] (\hhh*2,-\vvv*0.5) rectangle (\hhh*3,\vvv*0.5);
\draw[-,fill=black!30] (\hhh*1,-\vvv*1.5) rectangle (\hhh*2,-\vvv*0.5);
\node[] at (\hhh*0.5,\vvv*0) {\scalebox{.7}{$-2'$}};
\node[] at (\hhh*1.5,\vvv*0) {\scalebox{.6}{$a+1$}};
\node[] at (\hhh*2.5,\vvv*0) {\scalebox{.6}{$a+1$}};
\node[] at (\hhh*1.5,-\vvv*1) {\scalebox{.7}{$-\bf{1}$}};
\end{tikzpicture}
\begin{tikzpicture}[baseline=-2pt]
\def \hhh{5mm}    
\def \vvv{5mm}    
\draw[|->] (\hhh*0,-\vvv*0.5) -- (\hhh*1,-\vvv*0.5) node[midway,above,sloped] {\tiny (S1)};
\end{tikzpicture}
\begin{tikzpicture}[baseline=-2pt]
\def \hhh{6mm}    
\def \vvv{5mm}    
\draw[-] (\hhh*0,-\vvv*0.5) rectangle (\hhh*1,\vvv*0.5);
\draw[-,fill=black!30] (\hhh*1,-\vvv*0.5) rectangle (\hhh*2,\vvv*0.5);
\draw[-] (\hhh*2,-\vvv*0.5) rectangle (\hhh*3,\vvv*0.5);
\draw[-,fill=black!30] (\hhh*1,-\vvv*1.5) rectangle (\hhh*2,-\vvv*0.5);
\node[] at (\hhh*0.5,\vvv*0) {\scalebox{.6}{$a+1$}};
\node[] at (\hhh*1.5,\vvv*0) {\scalebox{.7}{$-2'$}};
\node[] at (\hhh*2.5,\vvv*0) {\scalebox{.6}{$a+1$}};
\node[] at (\hhh*1.5,-\vvv*1) {\scalebox{.7}{$-\bf{1}$}};
\end{tikzpicture}
\vskip 2mm\noindent
It follows that
\begin{displaymath}
\left({({S^{(a)}}^E)_{T^{(a+1)}}}\right)^{\widehat\st} = \left({({{S^{(a)}}^{{\st}}})}^E\right)_{T^{(a+1)}}.
\end{displaymath}
Since the shifted switching process on $({S^{(a)}}^E, T)$ is performed inductively in the order $T^{(a+1)}, T^{(a+2)}, \ldots, T^{(\max(S))}$,
\eqref{eq-case2-2} can be obtained by
applying the above method to $a+1$,$a+2$,$\ldots,$ $\max(S)$.

Finally, we claim that $Y^{\widehat\st} = \widehat{Y}$.
Notice that
$T^J \cup Y = {\rm Rect}(T^\ast \cup {S^{(a)}}^\ast)$ and $T^J \cup \widehat{Y} = {\rm Rect}(T^\ast \cup {({S^{(a)}}^{\st})}^\ast)$.
Both ${S^{(a)}}^\ast$ and ${({S^{(a)}}^{\st})}^\ast$ extend $T^*$
and
they have the same standardization as a vertical strip.
Thus, when no special slides occur in the shifted sliding processes on
${S^{(a)}}^\ast$ in $T^J \cup Y$
and ${({S^{(a)}}^{\st})}^\ast$ in $T^J \cup \widehat{Y}$,
then we can apply the same shifted sliding process
to ${S^{(a)}}^\ast$ and to ${({S^{(a)}}^{\st})}^\ast$.
Here `the same shifted sliding process' means that
in each step we apply
the same shifted slide to the same position.
It implies that $\stan(Y) = \stan(\widehat{Y})$.
Furthermore,
the $(\bf{-1})$-box in $\widehat{Y}$
and
the southwesternmost $(-\bfa)$-box in $Y$
are both primed or both unprimed.
Even when a special slide occurs in these processes,
the same results are obtained.
See the following figures:
\vskip 2mm
\begin{tikzpicture}
\def \hhh{5.3mm}       
\def \vvv{5mm}     
\def \hhhhh{50mm}    
\node[] at (-\hhh*0.7,-\vvv*0) {$\bullet$};
\draw[fill=black!30] (\hhh*0,\vvv*0) rectangle (\hhh*1,\vvv*1);
\draw[-] (\hhh*1,\vvv*0) rectangle (\hhh*2,\vvv*1);
\draw[-] (\hhh*1,-\vvv*1) rectangle (\hhh*2,\vvv*0);
\node[] at (\hhh*1.5,\vvv*0.5) {\scalebox{.7}{$-a'$}};
\node[] at (\hhh*1.5,-\vvv*0.5) {\scalebox{.7}{$-\bfa$}};
\draw[|->] (\hhh*3,-\vvv*0) to (\hhh*4,-\vvv*0);
\draw[-] (\hhh*5,\vvv*0) rectangle (\hhh*6,\vvv*1);
\draw[-] (\hhh*6,\vvv*0) rectangle (\hhh*7,\vvv*1);
\draw[fill=black!30] (\hhh*6,-\vvv*1) rectangle (\hhh*7,\vvv*0);
\node[] at (\hhh*5.5,\vvv*0.5) {\scalebox{.7}{$-\bfa$}};
\node[] at (\hhh*6.5,\vvv*0.5) {\scalebox{.7}{$-a$}};
\draw[fill=black!30] (\hhh*0+\hhhhh,\vvv*0) rectangle (\hhh*1+\hhhhh,\vvv*1);
\draw[-] (\hhh*1+\hhhhh,\vvv*0) rectangle (\hhh*2+\hhhhh,\vvv*1);
\draw[-] (\hhh*1+\hhhhh,-\vvv*1) rectangle (\hhh*2+\hhhhh,\vvv*0);
\node[] at (\hhh*1.5+\hhhhh,\vvv*0.5) {\scalebox{.7}{$-2'$}};
\node[] at (\hhh*1.5+\hhhhh,-\vvv*0.5) {\scalebox{.7}{$-\bf{1}$}};
\draw[|->] (\hhh*3+\hhhhh,-\vvv*0) to (\hhh*4+\hhhhh,-\vvv*0);
\draw[-] (\hhh*5+\hhhhh,\vvv*0) rectangle (\hhh*6+\hhhhh,\vvv*1);
\draw[fill=black!30] (\hhh*6+\hhhhh,\vvv*0) rectangle (\hhh*7+\hhhhh,\vvv*1);
\draw[-] (\hhh*6+\hhhhh,-\vvv*1) rectangle (\hhh*7+\hhhhh,\vvv*0);
\node[] at (\hhh*5.5+\hhhhh,\vvv*0.5) {\scalebox{.7}{$-2'$}};
\node[] at (\hhh*6.5+\hhhhh,-\vvv*0.5) {\scalebox{.7}{$-\bf{1}$}};
\draw[|->] (\hhh*8+\hhhhh,-\vvv*0) to (\hhh*9+\hhhhh,-\vvv*0);
\draw[-] (\hhh*10+\hhhhh,\vvv*0) rectangle (\hhh*11+\hhhhh,\vvv*1);
\draw[-] (\hhh*11+\hhhhh,\vvv*0) rectangle (\hhh*12+\hhhhh,\vvv*1);
\draw[fill=black!30] (\hhh*11+\hhhhh,-\vvv*1) rectangle (\hhh*12+\hhhhh,\vvv*0);
\node[] at (\hhh*10.5+\hhhhh,\vvv*0.5) {\scalebox{.7}{$-2'$}};
\node[] at (\hhh*11.5+\hhhhh,\vvv*0.5) {\scalebox{.7}{$-\bf{1}$}};
\end{tikzpicture}
\vskip 2mm \noindent
Thus our claim is verified.

Combining the above three claims yields that
\begin{displaymath}
\left({({S^{(a)}}^E)_{T}}\right)^{\widehat\st} = \left({({{S^{(a)}}^{{\st}}})}^E\right)_{T} =
\widehat{Y} = Y^{\widehat\st}.
\end{displaymath}
Now the equality
${({S^{(a)}}^E)_{T}}= Y$ follows from the reversibility of $\widehat{\st}$.
\end{proof}

\vskip 2mm
For any $\lambda \subset \nu$, let us consider the map
$$\varphi_\lambda: \widetilde{\mathcal{Y}}(\lambda)\times \widetilde{\mathcal{Y}}(\nu/\lambda) \rightarrow
\widetilde{\mathcal{Y}}_{-}(\lambda) \times \widetilde{\mathcal{Y}}(\nu/\lambda),
\qquad
(S,T) \mapsto (S^J,T).
$$
Set $\varphi:=\cup_{\lambda \subset \nu} \varphi_\lambda$.
We are ready to state the main result in this section.
\begin{thm}\label{thm-main-Section5}
For $\lambda,\nu \in \Lambda^+$ with $\lambda \subset \nu$,
let $S \in \widetilde{\mathcal{Y}}(\ld)$ and
$T \in \widetilde{\mathcal{Y}}(\nu/\ld)$.
Then $(S^J \cup T)^{J} = T^{J} \cup S_T,$
and the following diagram
\begin{equation*}
\begin{tikzpicture}
\def \hhhh{55mm}    
\def \vvvv{18mm}    
\node[] at (\hhhh*0,\vvvv*0) (A11) {$\widetilde{\mathcal{Y}}(\lambda)\times \widetilde{\mathcal{Y}}(\nu/\lambda)$};
\node[] at (\hhhh*1,-\vvvv*0) (A12) {$\bigsqcup_{\mu \subset \nu} 
\widetilde{\mathcal{Y}}(\displaystyle \mu)\times \widetilde{\mathcal{Y}}(\nu/\mu)$};
\node[] at (\hhhh*0,-\vvvv*1.2) (A21) {$\widetilde{\mathcal{Y}}_{-}(\lambda)\times \widetilde{\mathcal{Y}}(\nu/\lambda)$};
\node[] at (\hhhh*1,-\vvvv*1.2) (A22) {$\bigsqcup_{\mu\subset \nu} 
\widetilde{\mathcal{Y}}_{-}(\mu)\times \widetilde{\mathcal{Y}}(\nu/\mu)$};
\draw[->] (A11.east) -- (A12.west) node[above,midway] {$\Sigma$};
\draw[->] (A11.south) -- (A21.north) node[left,midway] {$\varphi_\lambda$};
\draw[->] (A12.south) -- (A22.north) node[right,midway] {$\varphi$};
\draw[->] (A21.east) -- (A22.west) node[below,midway] {$J$};
\end{tikzpicture}
\end{equation*}
commutes, that is, $J \circ \varphi_\lambda = \varphi \circ \Sigma$.
\end{thm}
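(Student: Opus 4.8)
The plan is to deduce the commutativity of the diagram from the single tableau identity $(S^J \cup T)^J = T^J \cup S_T$, and then to prove that identity by reducing its left-hand side to a generalized evacuation via Theorem \ref{thm-J=E}. First I would unwind the two composites. By construction $\varphi \circ \Sigma$ sends $(S,T)$ to $\varphi({}^ST, S_T) = (({}^ST)^J, S_T)$, while the bottom arrow $J$ sends $\varphi_\lambda(S,T) = (S^J, T)$ to the pair obtained by forming the normal-shape tableau $S^J \cup T$, applying the shifted $J$-operation, and splitting $(S^J \cup T)^J$ into its negatively labelled part and its positively labelled part. Granting the identity $(S^J \cup T)^J = T^J \cup S_T$, this split is exactly $(T^J, S_T)$, since $T^J$ carries negative letters while $S_T$ carries positive letters (of the same weight as $S$, by Theorem \ref{Thm-Sec3-SW}(a)). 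Thus commutativity reduces to the equality of first coordinates $({}^ST)^J = T^J$. This last equality I would obtain from Lemma \ref{lem-property-J}(b) together with the fact, recorded in Remark \ref{remark-switch=jdt}(i) and used in the proof of Theorem \ref{Thm-Sec3-SW}(b), that ${}^ST$ is produced from $T$ by a sequence of shifted slides, so that $\Rect({}^ST) = \Rect(T)$ and hence $({}^ST)^J = (\Rect({}^ST))^J = (\Rect(T))^J = T^J$. The shapes match as well, since $\sh(T^J) = \sh(({}^ST)^J) = \sh({}^ST)$ by Lemma \ref{lem-property-J}(a).

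It then remains to prove $(S^J \cup T)^J = T^J \cup S_T$. Here I would first observe that $S^J \cup T$ is a genuine SYT of normal shape $\nu$: the inner region of shape $\lambda$ carries the negative letters of $S^J$ and the outer region of shape $\nu/\lambda$ carries the positive letters of $T$, and since the two letter sets are disjoint with every negative letter below every positive one, all row- and column-conditions hold across the boundary. Consequently Theorem \ref{thm-J=E} applies and yields $(S^J \cup T)^J = (S^J \cup T)^E$; after harmlessly shifting the alphabet so that all entries become positive (exactly as in the proof of Theorem \ref{thm-J=E}), I may compute the right-hand side through the shifted generalized evacuation of Algorithm \ref{algorithm3}.

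The heart of the argument is to trace this generalized evacuation. Because the entries coming from $S^J$ are all smaller than those coming from $T$, Algorithm \ref{algorithm3} proceeds in two phases: it first negates and switches out, one border strip at a time, the content of $S^J$, thereby sliding the content of $T$ inward until it becomes $\Rect(T)$; it then negates and accumulates the content of $T$. Using the recursive structure of Steps 2--5 together with the involutivity of the shifted tableau switching (Theorem \ref{thm-main-section4}), I would identify the inner normal-shape part of $(S^J \cup T)^E$ with $(\Rect(T))^E = (\Rect(T))^J = T^J$ (invoking Theorem \ref{thm-J=E} for the smaller tableau $\Rect(T)$ and Lemma \ref{lem-property-J}(b)), and the outer skew part with the image of $S$ switched past $T$, namely $S_T$. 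The main obstacle will be to match this outer part with $S_T$ \emph{exactly}, rather than merely up to standardization: one must control the priming of the southwesternmost box on each diagonal, which is governed by the diagonal switches (S4) and (S7). I expect to handle this precisely as in Case 2 of the proof of Theorem \ref{thm-J=E}, by comparing the $\widehat{\st}$-standardizations and invoking the compatibility of standardization with the switches recorded in Remark \ref{Remark-Diff}(iv),(v). The reversibility of $\widehat{\st}$ then upgrades the standardized equality to the desired equality $(S^J \cup T)^E = T^J \cup S_T$, which together with the first paragraph completes the proof.
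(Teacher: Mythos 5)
Your first two paragraphs track the paper's own proof closely and are correct: the reduction of commutativity to the identity $(S^J\cup T)^J=T^J\cup S_T$ together with $({}^ST)^J=T^J$ (the latter via ${}^ST=\Rect(T)$, or equivalently $\Rect({}^ST)=\Rect(T)$, and Lemma \ref{lem-property-J}(b)), the observation that $S^J\cup T\in\widetilde{\mathcal{Y}}_{\pm}(\nu)$ is of normal shape, and the replacement of $J$ by the generalized evacuation via Theorem \ref{thm-J=E} are exactly the paper's steps. Likewise, the two-phase reading of Algorithm \ref{algorithm3} on $S^J\cup T$, giving $(S^J\cup T)^E=\Rect(T)^E\cup\bigl((S^J)^E\bigr)_T$, and the identification of the inner part with $\Rect(T)^E=\Rect(T)^J=T^J$, agree with the paper.

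The gap is in your last step, the identification of the outer part with $S_T$. What is actually expelled in the first phase is $\bigl((S^J)^E\bigr)_T$, so what you need is precisely $(S^J)^E=S$; neither of the tools you cite delivers this. The involutivity of the shifted tableau switching (Theorem \ref{thm-main-section4}) says $\Sigma\circ\Sigma=\mathrm{id}$, which can only convert the desired equality $\bigl((S^J)^E\bigr)_T=S_T$ back into $(S^J)^E=S$, not prove it; and the $\widehat{\st}$-comparison in Case 2 of the proof of Theorem \ref{thm-J=E} compares a strip with its standardization, not a twice-evacuated tableau with the original, so it does not adapt in the way you suggest. The missing fact is, however, a one-liner, and it is how the paper closes the argument: since $S^J$ is of normal shape, Theorem \ref{thm-J=E} gives $(S^J)^E=(S^J)^J$, and Lemma \ref{lem-property-J}(a) (the shifted $J$-operation is an involution on normal shapes) gives $(S^J)^J=S$. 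Once this is inserted, your anticipated difficulty about matching the outer part ``exactly, rather than merely up to standardization'' evaporates: the outer part equals $\bigl((S^J)^E\bigr)_T=S_T$ on the nose, with no control of primes or of the diagonal switches (S4), (S7) required, since all such delicate bookkeeping was already absorbed into the proof of Theorem \ref{thm-J=E}, which you may quote as a black box.
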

\begin{proof}
Since $S^J \cup T \in \widetilde{\mathcal{Y}}_{\pm}(\nu)$,
it follows that $(S^J \cup T)^J \in \widetilde{\mathcal{Y}}_{\pm}(\nu)$.
Suppose that $(S^J \cup T)^J = \Rect(T)^J \cup Y$.
Since $S^J \cup T$ is an SYT of normal shape,
$(S^J \cup T)^{J} = (S^J \cup T)^{E}$ by Theorem \ref{thm-J=E}.
On the other hand,
$(S^J \cup T)^{E} = \Rect(T)^E \cup ((S^J)^E)_T$
by Algorithm \ref{algorithm3}.
In addition,
$(S^J)^E = (S^J)^J = S$ since $J$ is an involution and $S^J$ is of normal shape.
This proves the first assertion.
The second one is immediate from the first.
\end{proof}

\section{The modified shifted tableau switching}
\label{sect-mss-proce}
Throughout this section,
we will consider only shifted perforated $(\bfa,\bfb)$-pairs
with no primed letters on the main diagonal.
Unfortunately the set of these pairs is not
closed under the shifted tableau switching.
In more detail, if a switch defined in Section \ref{sect-ss-proce}
is applied to the following $\bfa$-boxes
\vskip 2mm
\begin{center}
\begin{tikzpicture}
\def \hhh{5mm}    
\def \vvv{5mm}    
\draw[-] (-\hhh*2,\vvv*2) -- (-\hhh*1,\vvv*2) -- (-\hhh*1,\vvv*1) -- (\hhh*0,\vvv*1);
\draw[fill=black!30] (0,0) rectangle (\hhh*1,\vvv*1);
\draw[-] (\hhh*1,0) rectangle (\hhh*2,\vvv*1);
\draw[-] (\hhh*1,\vvv*0) -- (\hhh*1,-\vvv*1) -- (\hhh*2,-\vvv*1);
\node[] at (\hhh*0.5,\vvv*0.5) () {$a$};
\node[] at (\hhh*1.5,\vvv*0.5) () {$b'$};
\end{tikzpicture}
\hskip 12mm
\begin{tikzpicture}
\def \hhh{5mm}    
\def \vvv{5mm}    
\draw[-] (-\hhh*2,\vvv*1) -- (-\hhh*1,\vvv*1) -- (-\hhh*1,\vvv*0) -- (\hhh*0,\vvv*0);
\draw[fill=black!30] (0,0) rectangle (\hhh*1,\vvv*1);
\draw[-] (\hhh*0,-\vvv*1) rectangle (\hhh*1,0);
\draw[-] (\hhh*1,-\vvv*1) -- (\hhh*1,-\vvv*2) -- (\hhh*2,-\vvv*2);
\node[] at (\hhh*0.5,\vvv*0.5) () {$a'$};
\node[] at (\hhh*0.5,-\vvv*0.5) () {$b$};
\end{tikzpicture}
\hskip 12mm
\begin{tikzpicture}
\def \hhh{5mm}   
\def \vvv{5mm}   
\draw[-] (-\hhh*1,\vvv*1) -- (-\hhh*0,\vvv*1) -- (-\hhh*0,\vvv*0) -- (\hhh*1,\vvv*0);
\draw[fill=black!30] (\hhh*1,0) rectangle (\hhh*2,\vvv*1);
\draw[-] (\hhh*2,0) rectangle (\hhh*3,\vvv*1);
\draw[-] (\hhh*1,-\vvv*1) rectangle (\hhh*2,0);
\draw[-] (\hhh*2,-\vvv*1) -- (\hhh*2,-\vvv*2) -- (\hhh*3,-\vvv*2);
\node[] at (\hhh*1.5,\vvv*0.5) () {$a'$};
\node[] at (\hhh*2.5,\vvv*0.5) () {$b$};
\node[] at (\hhh*1.5,-\vvv*0.5) () {$b$};
\end{tikzpicture},
\end{center}
then the resulting pairs have a primed letter on the main diagonal.
To avoid this phenomenon,
we introduce three new switches labelled by (S$1'$), (S$2'$) and (S$6'$).
\begin{itemize}
\item (S$1'$) is applied in case where an $a$-box on the main diagonal is horizontally adjacent to a unique $b'$-box as follows:
\begin{center}
\begin{tikzpicture}
\def \hhh{5mm}    
\def \vvv{5mm}    
\node[] at (-\hhh*1.5,\vvv*0.5) () {(S1$'$)};
\draw[fill=black!30] (0,0) rectangle (\hhh*1,\vvv*1);
\draw[-] (\hhh*1,0) rectangle (\hhh*2,\vvv*1);
\draw[|->] (\hhh*3,\vvv*0.5) to (\hhh*4,\vvv*0.5);
\draw[-] (\hhh*5,\vvv*0) rectangle (\hhh*6,\vvv*1);
\draw[fill=black!30] (\hhh*6,\vvv*0) rectangle (\hhh*7,\vvv*1);
\node[] at (\hhh*0.5,\vvv*0.5) () {$a$};
\node[] at (\hhh*1.5,\vvv*0.5) () {$b'$};
\node[] at (\hhh*5.5,\vvv*0.5) () {$b$};
\node[] at (\hhh*6.5,\vvv*0.5) () {$a'$};
\end{tikzpicture}
\end{center}
\item (S$2'$) is applied in case where an $a'$-box is vertically adjacent to a unique $b$-box
on the main diagonal as follows:
\begin{center}
\begin{tikzpicture}
\def \hhh{5mm}    
\def \vvv{5mm}    
\node[] at (-\hhh*1.5,0) () {(S2$'$)};
\draw[fill=black!30] (0,0) rectangle (\hhh*1,\vvv*1);
\draw[-] (\hhh*0,-\vvv*1) rectangle (\hhh*1,0);
\draw[|->] (\hhh*2.5,\vvv*0) to (\hhh*3.5,\vvv*0);
\draw[-] (\hhh*5,\vvv*0) rectangle (\hhh*6,\vvv*1);
\draw[fill=black!30] (\hhh*5,-\vvv*1) rectangle (\hhh*6,\vvv*0);
\node[] at (\hhh*0.5,\vvv*0.5) () {$a'$};
\node[] at (\hhh*0.5,-\vvv*0.5) () {$b$};
\node[] at (\hhh*5.5,\vvv*0.5) () {$b'$};
\node[] at (\hhh*5.5,-\vvv*0.5) () {$a$};
\node[] at (\hhh*6.5,-\vvv*0.5) () {\text{ }};
\end{tikzpicture}
\end{center}
\item (S$6'$) is applied in case where an $a'$-box is horizontally adjacent to a $b$-box
and vertically adjacent to a $b$-box on the main diagonal as follows:
\begin{center}
\begin{tikzpicture}
\def \hhh{5mm}   
\def \vvv{5mm}   
\def \hhhh{55mm}  
\def \mult{8}    
\node[] at (-\hhh*0.5,0) () {(S6$'$)};
\draw[|->] (\hhh*4,0) to (\hhh*5,0);
\draw[fill=black!30] (\hhh*1,0) rectangle (\hhh*2,\vvv*1);
\draw[-] (\hhh*2,0) rectangle (\hhh*3,\vvv*1);
\draw[-] (\hhh*1,-\vvv*1) rectangle (\hhh*2,0);
\draw[-] (\hhh*6,\vvv*0) rectangle (\hhh*7,\vvv*1);
\draw[-] (\hhh*7,\vvv*0) rectangle (\hhh*8,\vvv*1);
\draw[fill=black!30] (\hhh*6,-\vvv*1) rectangle (\hhh*7,\vvv*0);
\node[] at (\hhh*1.5,\vvv*0.5) () {$a'$};
\node[] at (\hhh*2.5,\vvv*0.5) () {$b$};
\node[] at (\hhh*1.5,-\vvv*0.5) () {$b$};
\node[] at (\hhh*6.5,\vvv*0.5) () {$b'$};
\node[] at (\hhh*7.5,\vvv*0.5) () {$b$};
\node[] at (\hhh*6.5,-\vvv*0.5) () {$a$};
\end{tikzpicture}
\end{center}
\end{itemize}
For the clarity
we call these the {\it modified switches}.
The original (S1), (S2) and (S6)
will be used only in the cases other than the above three ones.
Using the switches
(S1) through (S7) together with (S1$'$), (S2$'$) and (S6$'$),
let us define the {\it modified shifted (tableau) switching process}
on shifted perforated $(\bfa,\bfb)$-pairs
in the same way as the shifted switching process has been defined.
It should be remarked that
the modified shifted switching process in which no modified switches are used
is identical to the shifted switching process.

Suppose that $T$ is a filling of a double border strip shape
with letters $\bfa$ and $\bfb$ and is not fully switched.
Let $\widetilde{\sigma}(T)$ denote the resulting filling obtained from $T$
by applying a modified switch.

\begin{prop}\label{prop-main-sect6}
Let $(A,B)$ be a shifted perforated $(\bfa,\bfb)$-pair such that
$B$ extends $A$.
\begin{itemize}
\item[(a)] Modified switches in total can appear at most once in the modified shifted switching process on $(A,B)$.
\item[(b)] Each $(\bfa,\bfb)$-pair appearing in the modified shifted switching process on $(A,B)$ is still a shifted perforated $(\bfa,\bfb)$-pair.
\end{itemize}
\end{prop}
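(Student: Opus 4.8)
The plan is to prove both statements by a careful analysis of when and where the three modified switches can be triggered. The key observation is that a modified switch is invoked only under very rigid local configurations: (S1$'$) requires an $a$-box on the main diagonal horizontally adjacent to a $b'$-box; (S2$'$) requires an $a'$-box vertically adjacent to a $b$-box lying on the main diagonal; and (S6$'$) requires an $a'$-box horizontally adjacent to a $b$-box and vertically adjacent to a $b$-box on the main diagonal. Since $B$ extends $A$ and neither has a primed entry on the main diagonal, the $\bfb$-boxes on the main diagonal are precisely where these triggers can be seeded.

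For part (a), the first step is to show that a modified switch can occur \emph{only} when $\Box$ sits on or immediately next to the main diagonal, interacting with a diagonal $\bfb$-box. I would argue that the main-diagonal constraint (at most one $\bfa$ and no primed entry on the diagonal, for both $A$ and $B$) forces there to be essentially one such diagonal cell available as a trigger site. Once a modified switch fires, I would track what it produces: inspecting the three pictures, after (S1$'$), (S2$'$) or (S6$'$) the diagonal box becomes an $a'$ (or the $\bfa$-letter has been moved off the diagonal entirely in a way that leaves an unprimed entry there), and crucially the local configuration is no longer of a type that can retrigger a modified switch. The heart of part (a) is therefore a finiteness/monotonicity argument: once the (unique) diagonal interaction has been resolved by the single modified switch, all subsequent steps of $\widetilde\sigma$ reduce to ordinary switches (S1)--(S7), because the reason a modified switch was needed — an $\bfa$-box poised to deposit a primed letter on the diagonal — has been permanently removed.

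For part (b), I would proceed inductively along the process, exactly in the spirit of Proposition~\ref{prop-preserving-sp}. If no modified switch is ever used, then by the remark in the text the modified process coincides with the ordinary shifted switching process, and Proposition~\ref{prop-preserving-sp} already guarantees that shifted perforateness is preserved; the hypothesis that no primed letters lie on the main diagonal of $A\cup B$ is exactly what rules out the three offending pictures displayed in the introduction. The only genuinely new case is the single step at which a modified switch fires. There I would verify directly, from each of the three diagrams, that the four defining conditions of Definition~\ref{Def-Perforated} still hold for both $\widetilde\sigma(\cdot)^{(a)}$ and $\widetilde\sigma(\cdot)^{(b)}$: that no $a'$ lands southeast of an $a$, that each column has at most one $a$, each row at most one $a'$, and the diagonal at most one $\bfa$ — and, additionally, that no primed letter appears on the diagonal of the result. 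The modified switches were precisely designed so that the diagonal entry remains unprimed, so this last point is where their design pays off.

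The main obstacle I anticipate is part (a): establishing rigorously that the three trigger configurations are mutually exclusive and can collectively arise at most once. This requires ruling out the possibility that resolving one diagonal interaction creates a fresh one further along the diagonal. I would handle this by combining the shape analysis of shifted switching paths from Lemma~\ref{lem-sw-path} with the observation that a single component of $A\cup B$ meets the main diagonal in a controlled way; since $A$ contains at most one $\bfa$ on the diagonal and $B$ at most one $\bfb$ on the diagonal per column, there is a single candidate diagonal box through which the switching path can pass, and once the path has crossed it (via (S1$'$), (S2$'$) or (S6$'$)) it proceeds strictly off the diagonal thereafter, so no second modified switch can be armed. Making this path-tracking argument airtight, rather than merely plausible from the pictures, will be the delicate part.
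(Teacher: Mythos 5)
There is a genuine gap in your plan for part (b). You propose to verify locally that the single modified-switch step preserves the four conditions of Definition~\ref{Def-Perforated}, and to handle all other steps ``inductively, in the spirit of Proposition~\ref{prop-preserving-sp}.'' But Proposition~\ref{prop-preserving-sp} is a statement about the \emph{ordinary} switching process started from a pair $(A,B)$ with $B$ extending $A$; once the modified switch has fired, the current state is no longer equal to any state $\sigma^m(A\cup B)$ of that ordinary process, so the proposition simply does not apply to the remaining steps. Nor is preservation of perforatedness under a single ordinary switch a purely local fact that you could check diagram by diagram: the proof of Proposition~\ref{prop-preserving-sp} is a global argument via shifted switching paths (Lemmas~\ref{lem-sw-path} and~\ref{lem-path-intersect}), and it needs the whole process to start from a genuine ``$B$ extends $A$'' configuration. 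The paper closes exactly this hole with a re-encoding trick that is absent from your proposal: for each of (S1$'$), (S2$'$), (S6$'$) it constructs a bona fide pair $(\widehat{A},\widehat{B})$, with $\widehat{B}$ extending $\widehat{A}$, obtained from $(A,B)$ by toggling primes at extremal boxes (e.g.\ for (S1$'$): prime the westernmost $a$-box of $A$ and unprime the southernmost $b'$-box of $B$), and proves $\widetilde{\sigma}\circ\sigma^k(A,B)=\sigma^{k+1}(\widehat{A},\widehat{B})$. Since by part (a) no further modified switches occur, the entire tail of the modified process coincides with the ordinary process on $(\widehat{A},\widehat{B})$, and Proposition~\ref{prop-preserving-sp} applied to $(\widehat{A},\widehat{B})$ finishes the proof. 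Without this (or an equivalent reworking of the path analysis from the post-switch state), your induction does not go through.

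Your part (a) is closer in spirit to the paper, but as you yourself concede it is not yet an argument. The paper does not need path-tracking or Lemma~\ref{lem-sw-path} here; it is a short concrete case analysis of the state right after $\widetilde{\sigma}$: after (S1$'$) at $(i,i)$, the box $(i+1,i+1)$ cannot be in the pair (else (S3) would have applied instead), and any later candidate for a modified switch would sit at $(i-1,i)$ with a $b'$ at $(i-1,i+1)$, forcing (S5) rather than a modified switch; after (S2$'$) or (S6$'$), the new $(i,i)$ $a$-box is already fully switched and is the unique diagonal box, so no trigger configuration can recur. Note also a factual slip in your sketch: after a modified switch the diagonal box does \emph{not} become $a'$ --- the whole point of the modified switches is that the diagonal entry stays unprimed ($b$ after (S1$'$), $a$ after (S2$'$)/(S6$'$)).
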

\begin{proof}
(a) Suppose that $\widetilde{\sigma}$ is applied to $\sigma^k (A,B)$ for some $k\geq 0$.
Here $\sigma^k (A,B)$ is a shifted perforated $(\bfa,\bfb)$-pair by Proposition \ref{prop-preserving-sp}.

When $\widetilde{\sigma} =$(S1$'$) is applied to
the $(i,i)$ $a$-box in $\sigma^k (A,B)$,
then
$(\widetilde{\sigma} \circ \sigma^k (A,B))_{i,i} =b$ and
$(\widetilde{\sigma} \circ \sigma^k (A,B))_{i,i+1}$ $=a'$.
Moreover,
the $(i+1,i+1)$ box is not contained in $\sigma^k (A,B)$
since if it is filled with $b$,
then
the switch to be applied to $\sigma^k(A, B)$ is not (S$1'$) but (S3).
Now suppose that another modified switch is applied to
$\sigma^j \circ \widetilde{\sigma} \circ \sigma^k (A,B)$ for some $j \geq 0$.
Then $\Box(\sigma^j \circ \widetilde{\sigma} \circ \sigma^k (A,B))$ is
$(i-1,i)$ and has the neighbor to the south.
Since $(\sigma^k (A \cup B))_{i,i+1}=b'$, it follows that
$(\sigma^j \circ \widetilde{\sigma} \circ \sigma^k (A,B))_{i-1,i+1}=b'$.
Therefore the switch to be applied to
$\sigma^j \circ \widetilde{\sigma} \circ \sigma^k (A,B)$ is
not a modified switch but (S5), which is absurd.

When $\widetilde{\sigma} =$(S2$'$) or (S$6'$) is applied to
the $(i-1,i)$ $a'$-box in $\sigma^k(A, B)$,
then
$(\widetilde{\sigma} \circ \sigma^k (A, B))_{i,i} =a$,
$(\widetilde{\sigma} \circ \sigma^k (A, B))_{i-1,i} =b'$,
and
the $(i-1,i-1)$ box is not contained in $\sigma^k(A, B)$
by Definition \ref{Def-Perforated}.
By the occurrence of (S2$'$) or (S$6'$),
the $(i,i+1)$ box in $\sigma^k(A,B)$ cannot be filled with $\bfb$,
so the $(i,i+1)$ box in $\sigma^k (A, B)$
is an $a$-box or an empty box.
This implies that the $(i,i)$ $a$-box in $(\widetilde{\sigma} \circ \sigma^k (A, B))$
is fully switched and is a unique box on the main diagonal in
$(\widetilde{\sigma} \circ \sigma^k (A, B))$.
As a result, no modified switches can be applied to
$(\sigma^j \circ \widetilde{\sigma} \circ \sigma^k)(A,B)$ for all $j \geq 0$.

(b) If a modified switch is not used in the modified shifted switching process,
there is nothing to prove.
Otherwise, by (a), each $(\bfa,\bfb)$-pair appearing in the modified shifted switching process can be written as
$\sigma^j \circ \widetilde{\sigma} \circ \sigma^k(A, B)$ for some nonnegative integers $j,k$.
Due to Proposition \ref{prop-preserving-sp},
we have only to see that
there exists a pair of SYTs $(\widehat{A},\widehat{B})$
such that $\widehat{B}$ extends $\widehat{A}$ and
\begin{equation*}\label{eq-prop(b)}
\widetilde{\sigma}\circ \sigma^k(A,B) = \sigma^{k+1}(\widehat{A},\widehat{B}).
\end{equation*}
Set $(A', B'):=\sigma^k(A, B).$

First, suppose that $\widetilde{\sigma} = $ (S$1'$) is applied to the $(i,i)$ $a$-box in $A'$.
Then
\begin{displaymath}
\widetilde{\sigma}(A', B') \approx \sigma(A',B') \quad
\text{up to }(i,i) \text{ and } (i,i+1),
\end{displaymath}
where $\sigma = $ (S1).
Let $B''$ be the filling obtained from $B'$ by changing
$b'$ into $b$ at $(i,i+1)$.
It is a shifted perforated $\bfb$-tableau
since the $(i+1,i+1)$ box is not contained in $B'$.
Similarly, let $A''$ be the filling obtained from $A'$ by changing
$a$ into $a'$ at $(i,i)$.
It is also a shifted perforated $\bfa$-tableau
with a primed letter on the main diagonal
since
the $(i,i)$ $a$-box is the westernmost box in $A'$.
Moreover, since the $(i,i)$ $a'$-box is the southwesternmost box in $A''$,
it follows that $\Box(A'' \cup B'') = (i,i)$ and thus
\begin{equation*}\label{eq-widehat-sigma}
\widetilde{\sigma}(A', B') = \sigma(A'',B'').
\end{equation*}
Now define $(\widehat{A}, \widehat{B})$ to be the pair of SYTs obtained from $(A, B)$
by changing $a$ into $a'$ at the westernmost $a$-box in $A$ and
by changing $b'$ into $b$ at the southernmost $b'$-box in $B$.
We claim that
$A'' \cup B'' = \sigma^k(\widehat{A},\widehat{B}).$
Since the westernmost $(i,i)$ $a$-box in $A$ is not affected by $\sigma^k$
and
the southernmost $b'$-box of $B$ has been moved to
the $(i,i+1)$ $b'$-box in $B'$ through $\sigma^k$,
$$
\sigma^k(\widehat{A},\widehat{B}) \approx A'\cup B' (\approx A'' \cup B'')
\quad \text{up to }(i,i) \text{ and }(i,i+1).
$$
The claim follows from the fact that the $(i,i)$ box and $(i,i+1)$
in $\sigma^k(\widehat{A},\widehat{B})$ are $a'$ and $b$, respectively.

Second, suppose that $\widetilde{\sigma}=$(S$2'$) is applied to the $(i-1,i)$ $a'$-box in $A'$.
Then
$$
\widetilde{\sigma}(A',B') \approx \sigma(A',B')
\quad \text{up to $(i-1,i)$ and $(i,i)$,}$$
where $\sigma = $ (S2).
Let $A''$ be the filling obtained from $A'$ by
changing $a'$ into $a$ at $(i-1,i)$.
It is a shifted perforated $\bfa$-tableau since
no $a'$-boxes are in the $i$th row in $A'$.
Similarly, let
$B''$ be the filling obtained from $B'$ by changing
$b$ into $b'$ at $(i,i)$.
It is also a shifted perforated $\bfb$-tableau
with a primed letter on the main diagonal
since the $(i,i)$ $b$-box is the westernmost box in $B'$.
In addition, since
the $(i-1,i)$ box in $A'$ is the southernmost $a'$-box
and
the $(i-1,i)$ box in $A''$ is the westernmost $a$-box,
it follows that $\Box(A' \cup B') = \Box(A'' \cup B'') = (i-1,i)$,
and thus
\begin{displaymath}
\widetilde{\sigma}(A',B') = \sigma(A'',B'').
\end{displaymath}
Define $(\widehat{A},\widehat{B})$ to be the pair of SYTs
obtained from $(A,B)$ by changing $a'$ into $a$
at the southernmost $a'$-box in $A$ and
$b$ into $b'$ at the westernmost $b$-box in $B$.
We claim that
$A'' \cup B'' = \sigma^k(\widehat{A},\widehat{B}).$
Notice that
the $(i,i+1)$ box is clearly not in $A\cup B$,
so the $(i,i)$ box is filled with $b$ in $A \cup B$.
Since (S$2'$) is applied to the  $(i-1,i)$ $a'$-box,
the $(i,i)$ box in $A' \cup B'$ is filled with $b$.
It means that the westernmost $(i,i)$ $b$-box in $B$ is not affected by $\sigma^k$.
On the other hand,
no boxes are in the $(i-1)$st column of $A \cup B$,
so the southernmost $a'$-box in the $i$th column of $A$ has been moved to
the $(i-1,i)$ $a'$-box in $A'$ through $\sigma^k$.
Combining these properties yields that
$$
\sigma^k(\widehat{A},\widehat{B}) \approx A'\cup B' (\approx A'' \cup B'')
\quad \text{up to }(i-1,i) \text{ and }(i,i).
$$
The claim now follows from the fact that the $(i-1,i)$ box and $(i,i)$
in $\sigma^k(\widehat{A},\widehat{B})$ are $a$ and $b'$, respectively.

Finally, in case where $\widetilde{\sigma}=$ (S6$'$) is applied to the  $(i-1,i)$ $a'$-box in $A'$,
$(\widehat{A},\widehat{B})$ can be obtained
in a similar way as in the second case.
In this case,
the $(i,i+1)$ box may be or may not be in $A \cup B$.
In any case, it can be easily seen that
the westernmost $(i,i)$ $b$-box in $B$ is not affected by $\sigma^k$
and
the southernmost $a'$-box in the $i$th column of $A$ has been
moved to the $(i-1,i)$ $a'$-box in $A'$ through $\sigma^k$.
\end{proof}

\vskip 3mm
From now on, we will use the notation
$\widetilde{\Sigma}$
to distinguish
the modified shifted switching process
from the shifted switching process.
For a shifted perforated $(\bfa,\bfb)$-pair $(A,B)$ such that $B$ extends $A$
we will write $\widetilde{A_B}$ for $\widetilde{\Sigma}(A, B)^{(a)}$
and
$\widetilde{{}^AB}$ for $\widetilde{\Sigma}(A, B)^{(b)}$.
We also call the map $\widetilde{\Sigma}$ obtained via this process
{\it the modified shifted tableau switching}.

Let $A$ and $B$ be SYTs consisting of $\bfa$-boxes and $\bfb$-boxes, respectively.
Define $\omega(A)$ by the SYT obtained by priming the southwesternmost box in $A$.
Recall that $(a')'$ is set to be $a$.
In Particular, for $(A,B)$ such that $B$ extends $A$,
define $\Omega(A,B)$ by $(\omega(A),\omega(B)).$

\begin{lem}\label{lem-hatsigma-sigma}
Suppose that
$(A, B)$ is a shifted perforated $(\bfa,\bfb)$-pair
such that $B$ extends $A$.
If $\Sigma(A, B)$ is not equal to $\widetilde{\Sigma}(A, B)$,
then we have
\begin{itemize}
  \item[(a)] $\widetilde{\Sigma}(A,B) = \Sigma \circ \Omega(A,B).$
  \item[(b)] $\Omega \circ \widetilde{\Sigma}(A,B) = \Sigma(A,B),$
  or equivalently, $\Omega \circ \Sigma = \Sigma \circ \Omega.$
\end{itemize}
\end{lem}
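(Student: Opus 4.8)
The plan is to exploit the explicit description already extracted in the proof of Proposition~\ref{prop-main-sect6}. Since $\Sigma(A,B)\neq\widetilde{\Sigma}(A,B)$, at least one modified switch is used, and by Proposition~\ref{prop-main-sect6}(a) exactly one is; write it as $\widetilde{\sigma}$ applied to $\sigma^{k}(A,B)$, with only ordinary switches before and after. The proof of Proposition~\ref{prop-main-sect6}(b) produces, in each of the three cases $\widetilde{\sigma}\in\{(\mathrm{S}1'),(\mathrm{S}2'),(\mathrm{S}6')\}$, a pair of SYTs $(\widehat{A},\widehat{B})$ with $\widehat{B}$ extending $\widehat{A}$ and $\widetilde{\sigma}\circ\sigma^{k}(A,B)=\sigma^{k+1}(\widehat{A},\widehat{B})$. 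First I would check that $(\widehat{A},\widehat{B})=\Omega(A,B)$: in that construction $\widehat{A}$ toggles the prime at the westernmost $a$-box (case $(\mathrm{S}1')$) or the southernmost $a'$-box (cases $(\mathrm{S}2'),(\mathrm{S}6')$), and I claim the local configuration forcing a modified switch places this box at the southwest tip of the border strip, so that it is precisely the southwesternmost box primed by $\omega$; the analogous statement holds for $\widehat{B}$ and the southwesternmost box of $B$. Granting this, after the single modified switch the modified process on $(A,B)$ and the ordinary process on $\Omega(A,B)$ both run from the common filling $\sigma^{k+1}(\widehat A,\widehat B)$ using only ordinary switches, hence agree; this gives part (a): $\widetilde{\Sigma}(A,B)=\Sigma(\widehat A,\widehat B)=\Sigma\circ\Omega(A,B)$.

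For part (b) I would prove the commutation $\Omega\circ\Sigma=\Sigma\circ\Omega$ and then combine. The engine is the compatibility of the switching process with standardization from Remark~\ref{Remark-Diff}(iv),(v). The key geometric lemma to establish is that $\omega$ is invisible to standardization, i.e. $\stan(\omega(A))=\stan(A)$ and $\stan(\omega(B))=\stan(B)$: the southwesternmost box of a shifted perforated tableau sits exactly at the boundary between the primed and unprimed letters in the standardization order, so toggling its prime moves it from the bottom of the primed block to the front of the unprimed block (or vice versa) while keeping every standardized value fixed. Using $A_B=A_{\stan(B)}$ together with $\stan(\omega(B))=\stan(B)$, the $B$-toggle has no effect on the $\bfa$-result, so $\omega(A)_{\omega(B)}=\omega(A)_B$; symmetrically, since ${}^AB={}^{\stan(A)}B$ and $\stan(\omega(A))=\stan(A)$, the $A$-toggle has no effect on the $\bfb$-result, so ${}^{\omega(A)}\omega(B)={}^A\omega(B)$.

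It then remains to locate the single surviving toggle. Because $\stan(\omega(A))=\stan(A)$, the order in which switches are applied in $\Sigma(A,B)$ and in $\Sigma(\omega(A),B)$ coincides, so every box travels along the same switching path; hence $\omega(A)_B$ differs from $A_B$ only by a prime-toggle at the endpoint of the switching path of the southwesternmost box of $A$. By Lemma~\ref{lem-sw-path} and Lemma~\ref{lem-path-intersect}, this endpoint is the southwesternmost box of $A_B$, so $\omega(A)_B=\omega(A_B)$, and likewise ${}^A\omega(B)=\omega({}^AB)$. Combining, $\Sigma\circ\Omega(A,B)=(\omega({}^AB),\omega(A_B))=\Omega\circ\Sigma(A,B)$, which is the desired commutation. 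Since $\omega^2=\mathrm{id}$ forces $\Omega^2=\mathrm{id}$, part (b) then follows from part (a) by $\Omega\circ\widetilde{\Sigma}=\Omega\circ\Sigma\circ\Omega=\Sigma\circ\Omega\circ\Omega=\Sigma$.

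The main obstacle will be the two structural verifications rather than the formal bookkeeping: first, that the box toggled by the Proposition~\ref{prop-main-sect6}(b) construction is genuinely the southwesternmost box, so that $(\widehat A,\widehat B)=\Omega(A,B)$; and second, the pair of claims that $\stan\circ\omega=\stan$ and that the southwesternmost box of $A$ is carried by the switching process to the southwesternmost box of $A_B$. Both require a careful reading of the shape of a shifted perforated border strip near its southwest tip, using the defining conditions of Definition~\ref{Def-Perforated} (no $a'$-box to the southeast of any $a$-box, at most one $\bfa$ on the main diagonal) and the path dichotomy of Lemma~\ref{lem-sw-path}. I expect the noncrossing property encoded in Lemma~\ref{lem-path-intersect} to do the heavy lifting for the endpoint claim, exactly as it did in the proof of Proposition~\ref{prop-preserving-sp}.
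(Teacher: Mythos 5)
Your overall skeleton matches the paper's own proof quite closely: part (a) is obtained exactly as you propose, by identifying the pair $(\widehat{A},\widehat{B})$ constructed in the proof of Proposition \ref{prop-main-sect6}(b) with $\Omega(A,B)$ and observing that after the unique modified switch the two processes run identically; and part (b) rests on the same three ingredients the paper uses, namely $\stan\circ\omega=\stan$, the standardization compatibility of Remark \ref{Remark-Diff}(iv),(v), and the tracking of the southwesternmost box to the southwesternmost box of the output via Lemma \ref{lem-sw-path}. Recasting (b) as the commutation $\Sigma\circ\Omega=\Omega\circ\Sigma$ and then composing with (a) and $\Omega^2=\mathrm{id}$ is only cosmetically different from the paper's direct computation of $\Omega\circ\Sigma\circ\Omega=\Sigma$.

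There is, however, a genuine gap in your part (b): you never use the hypothesis $\Sigma(A,B)\neq\widetilde{\Sigma}(A,B)$, and your key inference --- ``since $\stan(\omega(A))=\stan(A)$ the order of switches coincides, so every box travels the same switching path, and the outputs differ only by a prime toggle at one endpoint'' --- is not valid as stated. Equal standardizations only fix the order in which $\bfa$-boxes are processed; which switch actually fires also depends on the primes of nearby letters. Concretely, (S5) versus (S6) is decided by whether the east neighbour is $b'$ or $b$, and these move the $\bfa$-box in different directions; (S3) versus (S1) is decided by the same prime; and the diagonal switches (S3), (S4), (S7) alter primes of letters in place (a $b'$ becomes $b$ under (S3), an $a$ becomes $a'$ under (S4) and (S7)). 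So toggling the single prime at the southwesternmost box of $B$ (resp.\ $A$) could, a priori, change which switch is applied, hence change the paths, or create prime discrepancies away from the tracked endpoint. This is exactly where the paper invokes the hypothesis: from $\Sigma(A,B)\neq\widetilde{\Sigma}(A,B)$ it deduces that (S3), (S4), (S7) occur in neither process, and only then do all switches preserve all primes, so that the toggle propagates untouched along genuinely coincident paths to the endpoint located by Lemma \ref{lem-sw-path}. To repair your argument you must either insert this deduction, or prove directly (say via the noncrossing statements of Lemma \ref{lem-path-intersect}) that the toggled letter can never occupy a position whose prime decides which switch applies; neither step appears in your write-up.
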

\begin{proof}
(a)
The proof of Proposition \ref{prop-main-sect6}(b) shows that
if a modified switch is applied to
$\sigma^k(A,B)$ for some $k \geq 0$
then
\begin{equation}\label{eq-lem-hatsigma-sigma}
\widetilde{\sigma}\circ \sigma^k(A,B) = \sigma^{k+1}(\omega(A),\omega(B)).
\end{equation}
By Proposition \ref{prop-main-sect6}(a),
only nonmodified switches can be applied to either side of \eqref{eq-lem-hatsigma-sigma}.
Therefore we can conclude that
$$
\sigma^l \circ \widetilde{\sigma}\circ \sigma^k(A,B) = \sigma^l \circ \sigma^{k+1}(\omega(A),\omega(B)) \quad \text{for all } l \geq 0.
$$

(b)
Note that
\begin{eqnarray*}
\Omega \circ \Sigma \circ \Omega (A,B) &=& \Omega \circ \Sigma(\omega(A),\omega(B))\\
&=&\Omega ({}^{\omega(A)}\omega(B),\omega(A)_{\omega(B)})\\
&=&(\omega({}^{\omega(A)}\omega(B)),\omega(\omega(A)_{\omega(B)})).
\end{eqnarray*}
Since $\Sigma(A,B) \neq \widetilde{\Sigma}(A,B)$ by assumption,
the switches (S3), (S4) and (S7) cannot occur in the modified shifted switching process.
This again implies that
they cannot occur in the shifted switching process, too.
In particular, since neither (S4) nor (S7) is not used in obtaining $({}^A\omega(B),A_{\omega(B)})$
from $(A, \omega(B))$, the southwesternmost box in $A$ is moved to that in $A_{\omega(B)}$
by Lemma \ref{lem-sw-path}
and these boxes are both primed or both unprimed.
It follows that
$\omega(A_{\omega(B)}) = {\omega(A)}_{\omega(B)}$.
In addition,
since (S3) is not used in the shifted jeu de taquin of $B$
depending on $\omega(A)$,
the southwesternmost box in $B$ is also moved to
that in ${}^{\omega(A)}B$
and these two boxes are both primed or both unprimed.
This implies that
$\omega({}^{\omega(A)}B) = {}^{\omega(A)}\omega(B)$.
Now combining Remark \ref{Remark-Diff}(iv) with
the fact that $\stan(A) = \stan(\omega(A))$ and $\stan(B) = \stan(\omega(B))$,
we can derive that
\begin{eqnarray*}
&{}^{\omega(A)}B = {}^{\stan(\omega(A))}B = {}^{\stan(A)}B = {}^AB &  \text{and}\\
&A_{\omega(B)} = A_{\stan(\omega(B))} =   A_{\stan(B)} = A_B. &
\end{eqnarray*}
Consequently
\begin{displaymath}
\Omega \circ \widetilde{\Sigma}(A,B)
=\left(\omega({}^{\omega(A)}\omega(B)),\omega(\omega(A)_{\omega(B)})\right)
=\left({}^{\omega(A)}B,A_{\omega(B)}\right)
=({}^{A}B,A_{B}).
\end{displaymath}
\end{proof}

\vskip 3mm
\begin{lem}\label{thm-AB-SSYT-MSA}
Let $(A,B)$ be a shifted perforated $(\bfa,\bfb)$-pair such that $B$ extends $A$.
Then we have
\begin{itemize}
\item[{\rm (a)}] $\widetilde{A_B} \cup \widetilde{{}^B A}$ has the same shape as $A \cup B$.
\item[{\rm (b)}] $\widetilde{A_B}$ extends $\widetilde{{}^AB}$. In addition, $\widetilde{{A}_B}$ and $\widetilde{{}^AB}$ are SSYTs of border strip shape.
\end{itemize}
\end{lem}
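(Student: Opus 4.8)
The plan is to deduce part (a) together with the border-strip and extension content of part (b) from the unmodified result Theorem~\ref{Thm-Perforated}, using Lemma~\ref{lem-hatsigma-sigma} to reroute the modified process through the ordinary one, and then to secure semistandardness separately by tracking the main diagonal through the whole procedure. Part (a) I would dispose of immediately: each switch (S1) through (S7), as well as each modified switch (S1$'$), (S2$'$), (S6$'$), merely rearranges and re-primes the entries sitting in a fixed set of cells and never moves the underlying cells, so it leaves $\sh(A\cup B)$ unchanged; hence $\widetilde{A_B}\cup\widetilde{{}^AB}$ has the same shape as $A\cup B$.

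For the border-strip shape and the extension statement in (b), I would split into two cases according to whether the modified process agrees with the unmodified one. If $\widetilde{\Sigma}(A,B)=\Sigma(A,B)$, then $\widetilde{{}^AB}={}^AB$ and $\widetilde{A_B}=A_B$, and the assertion is exactly Theorem~\ref{Thm-Perforated}(b). Otherwise, Lemma~\ref{lem-hatsigma-sigma}(a) gives $\widetilde{\Sigma}(A,B)=\Sigma\circ\Omega(A,B)=\Sigma(\omega(A),\omega(B))$. Since $\omega$ alters no shapes, $(\omega(A),\omega(B))$ is again a shifted perforated $(\bfa,\bfb)$-pair with $\omega(B)$ extending $\omega(A)$, its legitimacy as such being guaranteed by the constructions used in the proof of Proposition~\ref{prop-main-sect6}(b). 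Applying Theorem~\ref{Thm-Perforated}(b) to $(\omega(A),\omega(B))$ then shows in both cases that $\widetilde{A_B}$ and $\widetilde{{}^AB}$ are SYTs of border strip shape with $\widetilde{A_B}$ extending $\widetilde{{}^AB}$.

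It then remains to prove that neither $\widetilde{A_B}$ nor $\widetilde{{}^AB}$ has a primed entry on the main diagonal, which upgrades ``SYT'' to ``SSYT''. Here I would argue by invariance, using the standing assumption of this section that $A\cup B$ has no primed letter on the main diagonal: I claim each step of the modified process preserves this property. The verification is a case check on which switches can place a primed entry in a diagonal cell $(i,i)$. The diagonal switches (S3), (S4) and (S7) carry the (unprimed) occupant of $(i,i)$ to $(i+1,i+1)$ and bring the unprimed $\bfb$ from $(i+1,i+1)$ to $(i,i)$, so they keep both diagonal cells unprimed. The switch (S5) never sends a box onto a diagonal cell, since its moving box lands in the off-diagonal upper-right cell while the cell below its upper-left box lies in row $i+1$ and column $\le i$, outside any shifted shape. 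In every remaining application of (S1), (S2) or (S6), a box can reach a diagonal cell only in precisely the three configurations that would leave a primed letter there; but by construction those are exactly the configurations in which the modified process instead applies (S1$'$), (S2$'$) or (S6$'$), each engineered to leave an unprimed letter on the diagonal, while every non-problematic application deposits an unprimed letter. Thus the invariant is maintained to completion, so $\widetilde{A_B}\cup\widetilde{{}^AB}$ is free of primed diagonal entries and both tableaux are SSYTs.

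The main obstacle I anticipate is this diagonal-invariance analysis: one must be sure the enumeration of switches is exhaustive and, crucially, that in the modified process the only diagonal cells ever (re)filled arise either from the three modified switches or from the diagonal switches (S3), (S4), (S7), all of which keep the diagonal unprimed. A secondary point worth pinning down is the validity of $(\omega(A),\omega(B))$ as a shifted perforated pair in the second case, which I would extract from the explicit constructions in Proposition~\ref{prop-main-sect6} rather than re-proving it.
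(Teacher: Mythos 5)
Your proposal is correct and follows essentially the same route as the paper: part (a) from the fact that no switch (modified or not) moves cells, and part (b) by splitting on whether a modified switch occurs and, when it does, rerouting the modified process through the ordinary one via Lemma \ref{lem-hatsigma-sigma} so that Theorem \ref{Thm-Perforated} can be applied. Your only deviations are harmless: you invoke Lemma \ref{lem-hatsigma-sigma}(a) and apply Theorem \ref{Thm-Perforated} to $(\omega(A),\omega(B))$ (whose validity as a shifted perforated pair is indeed what the constructions in Proposition \ref{prop-main-sect6} supply), whereas the paper invokes Lemma \ref{lem-hatsigma-sigma}(b) to write $\widetilde{\Sigma}(A,B)=\Omega\circ\Sigma(A,B)$ and applies Theorem \ref{Thm-Perforated} to $(A,B)$ itself; and your diagonal case-check spells out exactly the content that the paper compresses into the single assertion that no primed letters lie on the main diagonal of $\widetilde{\Sigma}(A,B)$.
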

\begin{proof}
(a) The assertion follows from the fact that
neither a switch nor a modified switch
does not affect on the shape.

(b) When $\Sigma(A,B) = \widetilde{\Sigma}(A,B)$, there is nothing to prove.
When $\Sigma(A,B) \neq \widetilde{\Sigma}(A,B)$,
$\widetilde{\Sigma}(A,B)$ is equal to $\Omega \circ \Sigma(A,B)$ by Lemma \ref{lem-hatsigma-sigma}(b).
It follows that
$\widetilde{\Sigma}(A,B)^{(a)}$ extends $\widetilde{\Sigma}(A,B)^{(b)}$.
Since no primed letters are on the main diagonal in
$\widetilde{\Sigma}(A,B)$,
we obtain the desired result.
\end{proof}

\vskip 3mm
The subsequent theorem shows that
the set of pairs of SSYTs
is invariant
under the modified shifted tableau switching.

\begin{thm}\label{thm-main-section6}
Let $(S,T)$ be a pair of SSYTs such that $T$ extends $S$.
The modified shifted tableau switching $(S,T) \mapsto (\widetilde{{}^ST},\widetilde{S_T})$ satisfies the following.
\begin{itemize}
\item[{\rm (a)}] $\widetilde{S_T} \cup \widetilde{{}^S T}$ has the same shape as $S \cup T$.
In addition, $\widetilde{S_T}$ (resp., $\widetilde{{}^ST}$) has the same weight as $S$ (resp., $T$).

\item[{\rm (b)}] $\widetilde{S_T}$ extends $\widetilde{{}^S T}$. In addition, $\widetilde{S_T}$ and $\widetilde{{}^S T}$ are SSYTs.
\end{itemize}
\end{thm}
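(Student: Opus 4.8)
The plan is to follow the proof of Theorem \ref{Thm-Sec3-SW} almost verbatim, replacing its single-layer input Theorem \ref{Thm-Perforated} by the modified counterpart Lemma \ref{thm-AB-SSYT-MSA}, and then to supply the one genuinely new ingredient, namely that no primed letter survives on the main diagonal. For part (a) the shape assertion is immediate: each elementary step of the modified process is a single-pair modified switching, which by Lemma \ref{thm-AB-SSYT-MSA}(a) leaves the shape of that pair invariant and fixes all other boxes, so the total shape is unchanged throughout Algorithm \ref{algorithm2}. The weight assertion is equally direct, since every switch (S1)--(S7) and every modified switch (S1$'$), (S2$'$), (S6$'$) merely interchanges one $\bfa$-box with one $\bfb$-box, at most toggling a prime; hence the value-$i$ boxes of $S$ are permuted among themselves and end up constituting $\widetilde{S_T}$, giving $\wt_i(\widetilde{S_T})=\wt_i(S)$, and symmetrically $\wt_i(\widetilde{{}^ST})=\wt_i(T)$.

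For part (b) I would first establish that $\widetilde{{}^ST}$ and $\widetilde{S_T}$ are SYTs with $\widetilde{S_T}$ extending $\widetilde{{}^ST}$, by the same induction on $m=\max(S)$ used in Theorem \ref{Thm-Sec3-SW}(b). The base case and inductive step there invoke Theorem \ref{Thm-Perforated}(b) to see that each border-strip layer becomes an SYT of border strip shape in the correct extension relation; here Lemma \ref{thm-AB-SSYT-MSA}(b) plays exactly that role, so the layerwise assembly goes through unchanged. One point requires care: in the unmodified setting one reads off ``${}^ST$ is an SYT'' directly from Remark \ref{remark-switch=jdt}(i), but this shortcut is \emph{not} available, because the modified switches do not reduce to the ordinary shifted slides. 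Instead I would lean on Lemma \ref{lem-hatsigma-sigma}, which says that on each pair either $\widetilde{\Sigma}=\Sigma$ or $\widetilde{\Sigma}=\Omega\circ\Sigma$; since $\omega$ only toggles the prime on a southwesternmost box of a border strip and preserves the SYT property, each layer remains an SYT, and the induction then delivers that $\widetilde{{}^ST}$ and $\widetilde{S_T}$ are SYTs.

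The crux, and what I expect to be the main obstacle, is the semistandardness, i.e.\ that neither $\widetilde{{}^ST}$ nor $\widetilde{S_T}$ carries a primed entry on the main diagonal. I would argue this by a \emph{last-touch} argument. Fix an occupied diagonal box $(i,i)$ and consider the last single-pair modified process in Algorithm \ref{algorithm2} that alters $(i,i)$; call the pair it acts on $(X,Y)$ with $Y$ extending $X$. By Lemma \ref{thm-AB-SSYT-MSA}(b) the output $\widetilde{{}^XY}\cup\widetilde{X_Y}$ of that process is a pair of SSYTs, so $(i,i)$ is unprimed immediately after the process, and as no later step touches $(i,i)$ it remains unprimed in $\widetilde{{}^ST}\cup\widetilde{S_T}$. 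The delicate part is to confirm that every intermediate pair $(X,Y)$ genuinely satisfies the hypotheses of Lemma \ref{thm-AB-SSYT-MSA}, that is, is a shifted perforated pair with $Y$ extending $X$; this is precisely what Proposition \ref{prop-main-sect6} together with the layerwise structure of Algorithm \ref{algorithm2} guarantees, while Proposition \ref{prop-main-sect6}(a) ensures that at most one modified switch occurs per pair, so the diagonal, once cleaned, is not re-primed within that pair. Combining the SYT conclusion of part (b) with this diagonal analysis then yields that $\widetilde{{}^ST}$ and $\widetilde{S_T}$ are SSYTs, completing the proof.
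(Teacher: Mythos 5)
Your proposal is correct and takes essentially the same route as the paper, whose entire proof is the single remark that, in view of Lemma \ref{thm-AB-SSYT-MSA}, the argument of Theorem \ref{Thm-Sec3-SW} carries over. The two refinements you supply---substituting Lemma \ref{lem-hatsigma-sigma} for the unavailable Remark \ref{remark-switch=jdt}(i) shortcut, and the last-touch argument for diagonal boxes---are sound elaborations of details the paper leaves implicit (and the latter is in fact already contained in the layerwise SSYT conclusion of Lemma \ref{thm-AB-SSYT-MSA}(b), since a union of SSYT layers forming an SYT has no primed diagonal entries).
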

\begin{proof}
In view of Lemma \ref{thm-AB-SSYT-MSA},
the proof can be done essentially in the same way as in
the proof of Theorem \ref{Thm-Sec3-SW}.
\end{proof}

\vskip 3mm
The following example illustrates how
the modified shifted switching process works.
Compare the result below with Example \ref{example-section3}.
Let
\begin{center}
\begin{tikzpicture}
\def \hhh{5mm}    
\def \vvv{5mm}    
\def \hhhhh{55mm}  
\node[] at (\hhh*0.8,-\vvv*0.4) {$S=$};
\draw[fill=black!30] (\hhh*3,0) rectangle (\hhh*4,\vvv*1);
\draw[fill=black!30] (\hhh*4,0) rectangle (\hhh*5,\vvv*1);
\draw[-] (\hhh*5,0) rectangle (\hhh*6,\vvv*1);
\draw[-] (\hhh*6,0) rectangle (\hhh*7,\vvv*1);
\draw[fill=black!30] (\hhh*2,-\vvv*1) rectangle (\hhh*3,\vvv*0);
\draw[fill=black!30] (\hhh*3,-\vvv*1) rectangle (\hhh*4,\vvv*0);
\draw[fill=black!30] (\hhh*4,-\vvv*1) rectangle (\hhh*5,\vvv*0);
\draw[-] (\hhh*5,-\vvv*1) rectangle (\hhh*6,\vvv*0);
\draw[fill=black!30] (\hhh*3,-\vvv*2) rectangle (\hhh*4,-\vvv*1);
\draw[-] (\hhh*4,-\vvv*2) rectangle (\hhh*5,-\vvv*1);
\draw[-] (\hhh*5,-\vvv*2) rectangle (\hhh*6,-\vvv*1);
\draw[-] (\hhh*4,-\vvv*3) rectangle (\hhh*5,-\vvv*2);
\draw[-] (\hhh*5,-\vvv*3) rectangle (\hhh*6,-\vvv*2);
\node[] at (\hhh*3.5,\vvv*0.5) {$1'$};
\node[] at (\hhh*4.5,\vvv*0.5) {$1$};
\node[] at (\hhh*5.5,\vvv*0.5) {};
\node[] at (\hhh*6.5,\vvv*0.5) {};
\node[] at (\hhh*7.5,\vvv*0.5) {};
\node[] at (\hhh*2.5,-\vvv*0.5) {$1$};
\node[] at (\hhh*3.5,-\vvv*0.5) {$1$};
\node[] at (\hhh*4.5,-\vvv*0.5) {$2'$};
\node[] at (\hhh*5.5,-\vvv*0.5) {};
\node[] at (\hhh*6.5,-\vvv*0.5) {};
\node[] at (\hhh*3.5,-\vvv*1.5) {$2$};
\node[] at (\hhh*4.5,-\vvv*1.5) {};
\node[] at (\hhh*5.5,-\vvv*1.5) {};
\node[] at (\hhh*2.5,-\vvv*2.5) {};
\node[] at (\hhh*3.5,-\vvv*2.5) {};
\node[] at (\hhh*3.6+\hhhhh*0.5,-\vvv*0.4) {and};
\node[] at (\hhh*0.9+\hhhhh,-\vvv*0.4) {$T=$};
\draw[-] (\hhh*3+\hhhhh,0) rectangle (\hhh*4+\hhhhh,\vvv*1);
\draw[-] (\hhh*4+\hhhhh,0) rectangle (\hhh*5+\hhhhh,\vvv*1);
\draw[-] (\hhh*5+\hhhhh,0) rectangle (\hhh*6+\hhhhh,\vvv*1);
\draw[-] (\hhh*6+\hhhhh,0) rectangle (\hhh*7+\hhhhh,\vvv*1);
\draw[-] (\hhh*2+\hhhhh,-\vvv*1) rectangle (\hhh*3+\hhhhh,\vvv*0);
\draw[-] (\hhh*3+\hhhhh,-\vvv*1) rectangle (\hhh*4+\hhhhh,\vvv*0);
\draw[-] (\hhh*4+\hhhhh,-\vvv*1) rectangle (\hhh*5+\hhhhh,\vvv*0);
\draw[-] (\hhh*5+\hhhhh,-\vvv*1) rectangle (\hhh*6+\hhhhh,\vvv*0);
\draw[-] (\hhh*3+\hhhhh,-\vvv*2) rectangle (\hhh*4+\hhhhh,-\vvv*1);
\draw[-] (\hhh*4+\hhhhh,-\vvv*2) rectangle (\hhh*5+\hhhhh,-\vvv*1);
\draw[-] (\hhh*5+\hhhhh,-\vvv*2) rectangle (\hhh*6+\hhhhh,-\vvv*1);
\draw[-] (\hhh*4+\hhhhh,-\vvv*3) rectangle (\hhh*5+\hhhhh,-\vvv*2);
\draw[-] (\hhh*5+\hhhhh,-\vvv*3) rectangle (\hhh*6+\hhhhh,-\vvv*2);
\node[] at (\hhh*3.5+\hhhhh,\vvv*0.5) {};
\node[] at (\hhh*4.5+\hhhhh,\vvv*0.5) {};
\node[] at (\hhh*5.5+\hhhhh,\vvv*0.5) {$1'$};
\node[] at (\hhh*6.5+\hhhhh,\vvv*0.5) {$1$};
\node[] at (\hhh*2.5+\hhhhh,-\vvv*0.5) {};
\node[] at (\hhh*3.5+\hhhhh,-\vvv*0.5) {};
\node[] at (\hhh*4.5+\hhhhh,-\vvv*0.5) {};
\node[] at (\hhh*5.5+\hhhhh,-\vvv*0.5) {$1'$};
\node[] at (\hhh*3.5+\hhhhh,-\vvv*1.5) {};
\node[] at (\hhh*4.5+\hhhhh,-\vvv*1.5) {$1'$};
\node[] at (\hhh*5.5+\hhhhh,-\vvv*1.5) {$1$};
\node[] at (\hhh*4.5+\hhhhh,-\vvv*2.5) {$2$};
\node[] at (\hhh*5.5+\hhhhh,-\vvv*2.5) {$2$};
\end{tikzpicture}.
\end{center}
Applying the modified shifted switching process to $(S^{(2)}, T^{(1)})$ in
$S^{(1)}S^{(2)}T^{(1)}T^{(2)}$ yields that
\vskip 2mm
\begin{center}
\begin{tikzpicture}
\def \hhh{5mm}    
\def \vvv{5mm}    
\def \aaa{1}      
\def \bbb{1}      
\draw[-,black!20] (\hhh*3,0) rectangle (\hhh*4,\vvv*1);
\draw[-,black!20] (\hhh*4,0) rectangle (\hhh*5,\vvv*1);
\draw[-] (\hhh*5,0) rectangle (\hhh*6,\vvv*1);
\draw[-] (\hhh*6,0) rectangle (\hhh*7,\vvv*1);
\draw[-,black!20] (\hhh*2,-\vvv*1) rectangle (\hhh*3,\vvv*0);
\draw[-,black!20] (\hhh*3,-\vvv*1) rectangle (\hhh*4,\vvv*0);
\draw[fill=black!30] (\hhh*4,-\vvv*1) rectangle (\hhh*5,\vvv*0);
\draw[-] (\hhh*5,-\vvv*1) rectangle (\hhh*6,\vvv*0);
\draw[-,black!20] (\hhh*4,-\vvv*3) rectangle (\hhh*5,-\vvv*2);
\draw[-,black!20] (\hhh*5,-\vvv*3) rectangle (\hhh*6,-\vvv*2);
\draw[fill=black!30] (\hhh*3,-\vvv*2) rectangle (\hhh*4,-\vvv*1);
\draw[-] (\hhh*4,-\vvv*2) rectangle (\hhh*5,-\vvv*1);
\draw[-] (\hhh*5,-\vvv*2) rectangle (\hhh*6,-\vvv*1);
\node[] at (\hhh*3.5,\vvv*0.5) {};
\node[] at (\hhh*4.5,\vvv*0.5) {};
\node[] at (\hhh*5.5,\vvv*0.5) {$1'$};
\node[] at (\hhh*6.5,\vvv*0.5) {$1$};
\node[] at (\hhh*2.5,-\vvv*0.5) {};
\node[] at (\hhh*3.5,-\vvv*0.5) {};
\node[] at (\hhh*4.5,-\vvv*0.5) {$2'$};
\node[] at (\hhh*5.5,-\vvv*0.5) {$1'$};
\node[] at (\hhh*3.5,-\vvv*1.5) {$2$};
\node[] at (\hhh*4.5,-\vvv*1.5) {$1'$};
\node[] at (\hhh*5.5,-\vvv*1.5) {$1$};
\node[] at (\hhh*4.5,-\vvv*2.5) {};
\node[] at (\hhh*5.5,-\vvv*2.5) {};
\draw[-,thick,red] (\hhh*\aaa+\hhh*2,-\vvv*\bbb) to (\hhh*\aaa+\hhh*4,-\vvv*\bbb)
to (\hhh*\aaa+\hhh*4,-\vvv*\bbb-\vvv)
to (\hhh*\aaa+\hhh*2,-\vvv*\bbb-\vvv)
to (\hhh*\aaa+\hhh*2,-\vvv*\bbb-\vvv*0);
\end{tikzpicture}
\hskip 2mm
\begin{tikzpicture}
\def \hhh{5mm}    
\def \vvv{5mm}    
\def \aaa{2}      
\def \bbb{1}      
\draw[|->] (\hhh*0.5,-\vvv*0.5) -- (\hhh*1.5,-\vvv*0.5) node[midway,above] {\tiny (S1$'$)};
\draw[-,black!20] (\hhh*3,0) rectangle (\hhh*4,\vvv*1);
\draw[-,black!20] (\hhh*4,0) rectangle (\hhh*5,\vvv*1);
\draw[-,black!20] (\hhh*4,-\vvv*3) rectangle (\hhh*5,-\vvv*2);
\draw[-,black!20] (\hhh*5,-\vvv*3) rectangle (\hhh*6,-\vvv*2);
\draw[-] (\hhh*5,0) rectangle (\hhh*6,\vvv*1);
\draw[-] (\hhh*6,0) rectangle (\hhh*7,\vvv*1);
\draw[-,black!20] (\hhh*2,-\vvv*1) rectangle (\hhh*3,\vvv*0);
\draw[-,black!20] (\hhh*3,-\vvv*1) rectangle (\hhh*4,\vvv*0);
\draw[fill=black!30] (\hhh*4,-\vvv*1) rectangle (\hhh*5,\vvv*0);
\draw[-] (\hhh*5,-\vvv*1) rectangle (\hhh*6,\vvv*0);
\draw[-] (\hhh*3,-\vvv*2) rectangle (\hhh*4,-\vvv*1);
\draw[fill=black!30] (\hhh*4,-\vvv*2) rectangle (\hhh*5,-\vvv*1);
\draw[-] (\hhh*5,-\vvv*2) rectangle (\hhh*6,-\vvv*1);
\node[] at (\hhh*3.5,\vvv*0.5) {};
\node[] at (\hhh*4.5,\vvv*0.5) {};
\node[] at (\hhh*5.5,\vvv*0.5) {$1'$};
\node[] at (\hhh*6.5,\vvv*0.5) {$1$};
\node[] at (\hhh*2.5,-\vvv*0.5) {};
\node[] at (\hhh*3.5,-\vvv*0.5) {};
\node[] at (\hhh*4.5,-\vvv*0.5) {$2'$};
\node[] at (\hhh*5.5,-\vvv*0.5) {$1'$};
\node[] at (\hhh*3.5,-\vvv*1.5) {$1$};
\node[] at (\hhh*4.5,-\vvv*1.5) {$2'$};
\node[] at (\hhh*5.5,-\vvv*1.5) {$1$};
\node[] at (\hhh*4.5,-\vvv*2.5) {};
\node[] at (\hhh*5.5,-\vvv*2.5) {};
\draw[-,thick,red] (\hhh*\aaa+\hhh*2,-\vvv*\bbb) to (\hhh*\aaa+\hhh*4,-\vvv*\bbb)
to (\hhh*\aaa+\hhh*4,-\vvv*\bbb-\vvv)
to (\hhh*\aaa+\hhh*2,-\vvv*\bbb-\vvv)
to (\hhh*\aaa+\hhh*2,-\vvv*\bbb-\vvv*0);
\end{tikzpicture}
\hskip 2mm
\begin{tikzpicture}
\def \hhh{5mm}    
\def \vvv{5mm}    
\def \aaa{2}      
\def \bbb{0}      
\draw[|->] (\hhh*0.5,-\vvv*0.5) -- (\hhh*1.5,-\vvv*0.5) node[midway,above] {\tiny (S1)};
\draw[-,black!20] (\hhh*3,0) rectangle (\hhh*4,\vvv*1);
\draw[-,black!20] (\hhh*4,0) rectangle (\hhh*5,\vvv*1);
\draw[-] (\hhh*5,0) rectangle (\hhh*6,\vvv*1);
\draw[-] (\hhh*6,0) rectangle (\hhh*7,\vvv*1);
\draw[-,black!20] (\hhh*2,-\vvv*1) rectangle (\hhh*3,\vvv*0);
\draw[-,black!20] (\hhh*3,-\vvv*1) rectangle (\hhh*4,\vvv*0);
\draw[fill=black!30] (\hhh*4,-\vvv*1) rectangle (\hhh*5,\vvv*0);
\draw[-] (\hhh*5,-\vvv*1) rectangle (\hhh*6,\vvv*0);
\draw[-,black!20] (\hhh*4,-\vvv*3) rectangle (\hhh*5,-\vvv*2);
\draw[-,black!20] (\hhh*5,-\vvv*3) rectangle (\hhh*6,-\vvv*2);
\draw[-] (\hhh*3,-\vvv*2) rectangle (\hhh*4,-\vvv*1);
\draw[-] (\hhh*4,-\vvv*2) rectangle (\hhh*5,-\vvv*1);
\draw[fill=black!30] (\hhh*5,-\vvv*2) rectangle (\hhh*6,-\vvv*1);
\node[] at (\hhh*3.5,\vvv*0.5) {};
\node[] at (\hhh*4.5,\vvv*0.5) {};
\node[] at (\hhh*5.5,\vvv*0.5) {$1'$};
\node[] at (\hhh*6.5,\vvv*0.5) {$1$};
\node[] at (\hhh*2.5,-\vvv*0.5) {};
\node[] at (\hhh*3.5,-\vvv*0.5) {};
\node[] at (\hhh*4.5,-\vvv*0.5) {$2'$};
\node[] at (\hhh*5.5,-\vvv*0.5) {$1'$};
\node[] at (\hhh*3.5,-\vvv*1.5) {$1$};
\node[] at (\hhh*4.5,-\vvv*1.5) {$1$};
\node[] at (\hhh*5.5,-\vvv*1.5) {$2'$};
\draw[-,thick,red] (\hhh*\aaa+\hhh*2,-\vvv*\bbb) to (\hhh*\aaa+\hhh*4,-\vvv*\bbb)
to (\hhh*\aaa+\hhh*4,-\vvv*\bbb-\vvv)
to (\hhh*\aaa+\hhh*3,-\vvv*\bbb-\vvv)
to (\hhh*\aaa+\hhh*3,-\vvv*\bbb-\vvv*2)
to (\hhh*\aaa+\hhh*2,-\vvv*\bbb-\vvv*2)
to (\hhh*\aaa+\hhh*2,-\vvv*\bbb-\vvv*0);
\end{tikzpicture}
\hskip 2mm
\begin{tikzpicture}
\def \hhh{5mm}    
\def \vvv{5mm}    
\draw[|->] (\hhh*0.5,-\vvv*0.5) -- (\hhh*1.5,-\vvv*0.5) node[midway,above] {\tiny (S5)};
\draw[-,black!20] (\hhh*3,0) rectangle (\hhh*4,\vvv*1);
\draw[-,black!20] (\hhh*4,0) rectangle (\hhh*5,\vvv*1);
\draw[-] (\hhh*5,0) rectangle (\hhh*6,\vvv*1);
\draw[-] (\hhh*6,0) rectangle (\hhh*7,\vvv*1);
\draw[-,black!20] (\hhh*2,-\vvv*1) rectangle (\hhh*3,\vvv*0);
\draw[-,black!20] (\hhh*3,-\vvv*1) rectangle (\hhh*4,\vvv*0);
\draw[-] (\hhh*4,-\vvv*1) rectangle (\hhh*5,\vvv*0);
\draw[fill=black!30] (\hhh*5,-\vvv*1) rectangle (\hhh*6,\vvv*0);
\draw[-,black!20] (\hhh*4,-\vvv*3) rectangle (\hhh*5,-\vvv*2);
\draw[-,black!20] (\hhh*5,-\vvv*3) rectangle (\hhh*6,-\vvv*2);
\draw[-] (\hhh*3,-\vvv*2) rectangle (\hhh*4,-\vvv*1);
\draw[-] (\hhh*4,-\vvv*2) rectangle (\hhh*5,-\vvv*1);
\draw[fill=black!30] (\hhh*5,-\vvv*2) rectangle (\hhh*6,-\vvv*1);
\node[] at (\hhh*3.5,\vvv*0.5) {};
\node[] at (\hhh*4.5,\vvv*0.5) {};
\node[] at (\hhh*5.5,\vvv*0.5) {$1'$};
\node[] at (\hhh*6.5,\vvv*0.5) {$1$};
\node[] at (\hhh*2.5,-\vvv*0.5) {};
\node[] at (\hhh*3.5,-\vvv*0.5) {};
\node[] at (\hhh*4.5,-\vvv*0.5) {$1'$};
\node[] at (\hhh*5.5,-\vvv*0.5) {$2'$};
\node[] at (\hhh*3.5,-\vvv*1.5) {$1$};
\node[] at (\hhh*4.5,-\vvv*1.5) {$1$};
\node[] at (\hhh*5.5,-\vvv*1.5) {$2'$};
\end{tikzpicture}.
\end{center}
Keeping the modified shifted switching process, we finally obtain
\vskip 2mm
\begin{center}
\begin{tikzpicture}
\def \hhh{5mm}    
\def \vvv{5mm}    
\def \hhhhh{60mm}  
\node[] at (\hhh*0.6,-\vvv*0.4) {$\widetilde{{}^ST}=$};
\draw[-] (\hhh*3+\hhhhh*0,0) rectangle (\hhh*4+\hhhhh*0,\vvv*1);
\draw[-] (\hhh*4+\hhhhh*0,0) rectangle (\hhh*5+\hhhhh*0,\vvv*1);
\draw[-] (\hhh*5+\hhhhh*0,0) rectangle (\hhh*6+\hhhhh*0,\vvv*1);
\draw[-] (\hhh*6+\hhhhh*0,0) rectangle (\hhh*7+\hhhhh*0,\vvv*1);
\draw[-] (\hhh*2+\hhhhh*0,-\vvv*1) rectangle (\hhh*3+\hhhhh*0,\vvv*0);
\draw[-] (\hhh*3+\hhhhh*0,-\vvv*1) rectangle (\hhh*4+\hhhhh*0,\vvv*0);
\draw[-] (\hhh*4+\hhhhh*0,-\vvv*1) rectangle (\hhh*5+\hhhhh*0,\vvv*0);
\draw[-] (\hhh*5+\hhhhh*0,-\vvv*1) rectangle (\hhh*6+\hhhhh*0,\vvv*0);
\draw[-] (\hhh*3+\hhhhh*0,-\vvv*2) rectangle (\hhh*4+\hhhhh*0,-\vvv*1);
\draw[-] (\hhh*4+\hhhhh*0,-\vvv*2) rectangle (\hhh*5+\hhhhh*0,-\vvv*1);
\draw[-] (\hhh*5+\hhhhh*0,-\vvv*2) rectangle (\hhh*6+\hhhhh*0,-\vvv*1);
\draw[-] (\hhh*4+\hhhhh*0,-\vvv*3) rectangle (\hhh*5+\hhhhh*0,-\vvv*2);
\draw[-] (\hhh*5+\hhhhh*0,-\vvv*3) rectangle (\hhh*6+\hhhhh*0,-\vvv*2);
\node[] at (\hhh*3.5+\hhhhh*0,\vvv*0.5) {$1'$};
\node[] at (\hhh*4.5+\hhhhh*0,\vvv*0.5) {$1$};
\node[] at (\hhh*5.5+\hhhhh*0,\vvv*0.5) {$1$};
\node[] at (\hhh*2.5+\hhhhh*0,-\vvv*0.5) {$1$};
\node[] at (\hhh*3.5+\hhhhh*0,-\vvv*0.5) {$1$};
\node[] at (\hhh*4.5+\hhhhh*0,-\vvv*0.5) {$2$};
\node[] at (\hhh*3.5+\hhhhh*0,-\vvv*1.5) {$2$};
\node[] at (\hhh*3.3+\hhhhh*0.5,-\vvv*0.5) {and};
\node[] at (\hhh*0.3+\hhhhh*1,-\vvv*0.4) {$\widetilde{S_T}=$};
\draw[-] (\hhh*3+\hhhhh*1,0) rectangle (\hhh*4+\hhhhh*1,\vvv*1);
\draw[-] (\hhh*4+\hhhhh*1,0) rectangle (\hhh*5+\hhhhh*1,\vvv*1);
\draw[-] (\hhh*5+\hhhhh*1,0) rectangle (\hhh*6+\hhhhh*1,\vvv*1);
\draw[fill=black!30] (\hhh*6+\hhhhh*1,0) rectangle (\hhh*7+\hhhhh*1,\vvv*1);
\draw[-] (\hhh*2+\hhhhh*1,-\vvv*1) rectangle (\hhh*3+\hhhhh*1,\vvv*0);
\draw[-] (\hhh*3+\hhhhh*1,-\vvv*1) rectangle (\hhh*4+\hhhhh*1,\vvv*0);
\draw[-] (\hhh*4+\hhhhh*1,-\vvv*1) rectangle (\hhh*5+\hhhhh*1,\vvv*0);
\draw[fill=black!30] (\hhh*6+\hhhhh*1,-\vvv*1) rectangle (\hhh*5+\hhhhh*1,-\vvv*0);
\draw[-] (\hhh*3+\hhhhh*1,-\vvv*2) rectangle (\hhh*4+\hhhhh*1,-\vvv*1);
\draw[fill=black!30] (\hhh*4+\hhhhh*1,-\vvv*2) rectangle (\hhh*5+\hhhhh*1,-\vvv*1);
\draw[fill=black!30] (\hhh*5+\hhhhh*1,-\vvv*2) rectangle (\hhh*6+\hhhhh*1,-\vvv*1);
\draw[fill=black!30] (\hhh*4+\hhhhh*1,-\vvv*3) rectangle (\hhh*5+\hhhhh*1,-\vvv*2);
\draw[fill=black!30] (\hhh*5+\hhhhh*1,-\vvv*3) rectangle (\hhh*6+\hhhhh*1,-\vvv*2);
\node[] at (\hhh*6.5+\hhhhh*1,\vvv*0.5) {$1$};
\node[] at (\hhh*5.5+\hhhhh*1,-\vvv*0.5) {$1'$};
\node[] at (\hhh*4.5+\hhhhh*1,-\vvv*1.5) {$1'$};
\node[] at (\hhh*5.5+\hhhhh*1,-\vvv*1.5) {$2'$};
\node[] at (\hhh*4.5+\hhhhh*1,-\vvv*2.5) {$1$};
\node[] at (\hhh*5.5+\hhhhh*1,-\vvv*2.5) {$2'$};
\end{tikzpicture}.
\end{center}
\vskip 2mm

\begin{rem}
In [18] Stembridge gave a combinatorial proof that Schur $Q$-functions are symmetric.
To do this he showed that there is a
content-reversing involution on the set of SYTs of a given shape,
say $\lambda$.
The set of SSYTs of shape $\lambda$,
however, is not closed under this involution,
thus Stembridge's method does not induce a combinatorial proof
that Schur $P$-functions are symmetric.

We remark that our shifted tableau switching (Theorem~\ref{Thm-Sec3-SW})
and modified shifted tableau switching (Theorm~\ref{thm-main-section6})
also give a combinatorial proof of the symmetry of Schur $Q$- and Schur $P$-functions, respectively.
To be precise we can define a content-reversing involution
by switching the border strip formed by $i,i'$ with the border strip formed by $i+1,(i+1)'$
and then performing the interchanges $i\leftrightarrow i+1, i' \leftrightarrow (i+1)'$.
It should be emphasized that the involution coming from our shifted tableau switching is different from Stembridge's involution.
Also we remark that Cho [3] gave another combinatorial proof of the symmetry of Schur P-functions
by giving a Bender-Knuth type involution on the set of semistandard decomposition tableaux.
\end{rem}

\vskip 2mm
The rest of this section is devoted to providing analogues of 
Theorem~\ref{thm-main-section4} and Example~\ref{example-section4}
for the modified shifted tableau switching.
For $\lambda, \mu, \nu \in \Lambda^+$,
the coefficients $g^\nu_{\lambda \mu}$ appearing in 
\begin{equation*}\label{expansion of skew Schur P function}
P_{\nu/\lambda}(x) = \underset{\mu}{\sum}g_{\lambda  \mu}^{\nu}P_{\mu}(x)
\qquad \text{or} \qquad
Q_{\ld}(x) Q_{\mu}(x) = \sum_{\nu} g_{\lambda  \mu}^{\nu} Q_\nu(x) \, 
\end{equation*}
are nonnegative integer coefficients 
since $g^\nu_{\lambda \mu} = 2^{\ell(\lambda)+\ell(\mu)-\ell(\nu)} f_{\lambda  \mu}^{\nu}$
and 
$\ell(\lambda)+\ell(\mu)-\ell(\nu) \geq 0$ whenever $f_{\lambda  \mu}^{\nu} \geq 0$.

To find an appropriate tableau model for these coefficients,
we note that, for each $i$,
the rightmost entry $i$ of $|\w(T)|$ in Theorem \ref{Stembridge's result}(b) is unprimed.
Indeed it was pointed out in \cite[page 126]{Ste1}
that the lattice property
is unaffected when the primes of these extremal entries are changed arbitrarily.
This shows that the number of SYTs of shape $\nu/\lambda$ and weight $\mu$
satisfying the lattice property is given by
$2^{\ell(\mu)} f^{\nu}_{\lambda \mu}.$
Since every entry on the main diagonal of $T$ is an extremal entry,
we see, consequently, that the number of SSYTs
of shape $\nu/\lambda$ and weight $\mu$ satisfying the lattice property
is given by
\begin{displaymath}
\frac{2^{\ell(\mu)} f^{\nu}_{\lambda \mu}}{2^{\ell(\nu) - \ell(\lambda)}}
\ \left(= g^{\nu}_{\lambda \mu}\right).
\end{displaymath}
Denote by $\mathcal{G}^\nu_{\lambda\mu}$
the set of SSYTs
of shape $\nu/\lambda$ and weight $\mu$
satisfying the lattice property.
We call an SSYT in $\mathcal{G}^\nu_{\lambda\mu}$
a {\it modified Littlewood-Richardson-Stembridge (LRS) tabelau}.
\begin{example} \label{exm-modi-LRS}
(a) Let $\ld = (4,2),$ $\mu =(4,3,1)$ and $\nu =(6,5,2,1)$.
Then
there are exactly eight modified {\rm LRS} tableaux of shape $\nu / \ld$ and weight $\mu$
as follows:
\vskip 2mm
\begin{center}
\begin{tikzpicture}
\def\hhh{5mm}
\def\vvv{5mm}
\draw[-] (\hhh*5,\vvv*0) rectangle (\hhh*6,\vvv*1);
\draw[-] (\hhh*6,\vvv*0) rectangle (\hhh*7,\vvv*1);
\draw[-] (\hhh*4,-\vvv*1) rectangle (\hhh*5,\vvv*0);
\draw[-] (\hhh*5,-\vvv*1) rectangle (\hhh*6,\vvv*0);
\draw[-] (\hhh*6,-\vvv*1) rectangle (\hhh*7,\vvv*0);
\draw[-] (\hhh*3,-\vvv*2) rectangle (\hhh*4,-\vvv*1);
\draw[-] (\hhh*4,-\vvv*2) rectangle (\hhh*5,-\vvv*1);
\draw[-] (\hhh*4,-\vvv*3) rectangle (\hhh*5,-\vvv*2);
\node at (\hhh*5.5,\vvv*0.5) () {$1'$};
\node at (\hhh*6.5,\vvv*0.5) () {$1$};
\node at (\hhh*4.5,-\vvv*0.5) () {$1$};
\node at (\hhh*5.5,-\vvv*0.5) () {$1$};
\node at (\hhh*6.5,-\vvv*0.5) () {$2'$};
\node at (\hhh*3.5,-\vvv*1.5) () {$2$};
\node at (\hhh*4.5,-\vvv*1.5) () {$2$};
\node at (\hhh*4.5,-\vvv*2.5) () {$3$};
\end{tikzpicture}
\hskip 8mm
\begin{tikzpicture}
\def\hhh{5mm}
\def\vvv{5mm}
\draw[-] (\hhh*5,\vvv*0) rectangle (\hhh*6,\vvv*1);
\draw[-] (\hhh*6,\vvv*0) rectangle (\hhh*7,\vvv*1);
\draw[-] (\hhh*4,-\vvv*1) rectangle (\hhh*5,\vvv*0);
\draw[-] (\hhh*5,-\vvv*1) rectangle (\hhh*6,\vvv*0);
\draw[-] (\hhh*6,-\vvv*1) rectangle (\hhh*7,\vvv*0);
\draw[-] (\hhh*3,-\vvv*2) rectangle (\hhh*4,-\vvv*1);
\draw[-] (\hhh*4,-\vvv*2) rectangle (\hhh*5,-\vvv*1);
\draw[-] (\hhh*4,-\vvv*3) rectangle (\hhh*5,-\vvv*2);
\node at (\hhh*5.5,\vvv*0.5) () {$1$};
\node at (\hhh*6.5,\vvv*0.5) () {$1$};
\node at (\hhh*4.5,-\vvv*0.5) () {$1$};
\node at (\hhh*5.5,-\vvv*0.5) () {$2$};
\node at (\hhh*6.5,-\vvv*0.5) () {$2$};
\node at (\hhh*3.5,-\vvv*1.5) () {$1$};
\node at (\hhh*4.5,-\vvv*1.5) () {$2$};
\node at (\hhh*4.5,-\vvv*2.5) () {$3$};
\end{tikzpicture}
\hskip 8mm
\begin{tikzpicture}
\def\hhh{5mm}
\def\vvv{5mm}
\draw[-] (\hhh*5,\vvv*0) rectangle (\hhh*6,\vvv*1);
\draw[-] (\hhh*6,\vvv*0) rectangle (\hhh*7,\vvv*1);
\draw[-] (\hhh*4,-\vvv*1) rectangle (\hhh*5,\vvv*0);
\draw[-] (\hhh*5,-\vvv*1) rectangle (\hhh*6,\vvv*0);
\draw[-] (\hhh*6,-\vvv*1) rectangle (\hhh*7,\vvv*0);
\draw[-] (\hhh*3,-\vvv*2) rectangle (\hhh*4,-\vvv*1);
\draw[-] (\hhh*4,-\vvv*2) rectangle (\hhh*5,-\vvv*1);
\draw[-] (\hhh*4,-\vvv*3) rectangle (\hhh*5,-\vvv*2);
\node at (\hhh*5.5,\vvv*0.5) () {$1$};
\node at (\hhh*6.5,\vvv*0.5) () {$1$};
\node at (\hhh*4.5,-\vvv*0.5) () {$1$};
\node at (\hhh*5.5,-\vvv*0.5) () {$2'$};
\node at (\hhh*6.5,-\vvv*0.5) () {$2$};
\node at (\hhh*3.5,-\vvv*1.5) () {$1$};
\node at (\hhh*4.5,-\vvv*1.5) () {$2$};
\node at (\hhh*4.5,-\vvv*2.5) () {$3$};
\end{tikzpicture}
\hskip 8mm
\begin{tikzpicture}
\def\hhh{5mm}
\def\vvv{5mm}
\draw[-] (\hhh*5,\vvv*0) rectangle (\hhh*6,\vvv*1);
\draw[-] (\hhh*6,\vvv*0) rectangle (\hhh*7,\vvv*1);
\draw[-] (\hhh*4,-\vvv*1) rectangle (\hhh*5,\vvv*0);
\draw[-] (\hhh*5,-\vvv*1) rectangle (\hhh*6,\vvv*0);
\draw[-] (\hhh*6,-\vvv*1) rectangle (\hhh*7,\vvv*0);
\draw[-] (\hhh*3,-\vvv*2) rectangle (\hhh*4,-\vvv*1);
\draw[-] (\hhh*4,-\vvv*2) rectangle (\hhh*5,-\vvv*1);
\draw[-] (\hhh*4,-\vvv*3) rectangle (\hhh*5,-\vvv*2);
\node at (\hhh*5.5,\vvv*0.5) () {$1$};
\node at (\hhh*6.5,\vvv*0.5) () {$1$};
\node at (\hhh*4.5,-\vvv*0.5) () {$1'$};
\node at (\hhh*5.5,-\vvv*0.5) () {$2'$};
\node at (\hhh*6.5,-\vvv*0.5) () {$2$};
\node at (\hhh*3.5,-\vvv*1.5) () {$1$};
\node at (\hhh*4.5,-\vvv*1.5) () {$2$};
\node at (\hhh*4.5,-\vvv*2.5) () {$3$};
\end{tikzpicture}
\end{center}
\vskip 3mm
\begin{center}
\begin{tikzpicture}
\def\hhh{5mm}
\def\vvv{5mm}
\draw[-] (\hhh*5,\vvv*0) rectangle (\hhh*6,\vvv*1);
\draw[-] (\hhh*6,\vvv*0) rectangle (\hhh*7,\vvv*1);
\draw[-] (\hhh*4,-\vvv*1) rectangle (\hhh*5,\vvv*0);
\draw[-] (\hhh*5,-\vvv*1) rectangle (\hhh*6,\vvv*0);
\draw[-] (\hhh*6,-\vvv*1) rectangle (\hhh*7,\vvv*0);
\draw[-] (\hhh*3,-\vvv*2) rectangle (\hhh*4,-\vvv*1);
\draw[-] (\hhh*4,-\vvv*2) rectangle (\hhh*5,-\vvv*1);
\draw[-] (\hhh*4,-\vvv*3) rectangle (\hhh*5,-\vvv*2);
\node at (\hhh*5.5,\vvv*0.5) () {$1'$};
\node at (\hhh*6.5,\vvv*0.5) () {$1$};
\node at (\hhh*4.5,-\vvv*0.5) () {$1'$};
\node at (\hhh*5.5,-\vvv*0.5) () {$1$};
\node at (\hhh*6.5,-\vvv*0.5) () {$2'$};
\node at (\hhh*3.5,-\vvv*1.5) () {$2$};
\node at (\hhh*4.5,-\vvv*1.5) () {$2$};
\node at (\hhh*4.5,-\vvv*2.5) () {$3$};
\end{tikzpicture}
\hskip 8mm
\begin{tikzpicture}
\def\hhh{5mm}
\def\vvv{5mm}
\draw[-] (\hhh*5,\vvv*0) rectangle (\hhh*6,\vvv*1);
\draw[-] (\hhh*6,\vvv*0) rectangle (\hhh*7,\vvv*1);
\draw[-] (\hhh*4,-\vvv*1) rectangle (\hhh*5,\vvv*0);
\draw[-] (\hhh*5,-\vvv*1) rectangle (\hhh*6,\vvv*0);
\draw[-] (\hhh*6,-\vvv*1) rectangle (\hhh*7,\vvv*0);
\draw[-] (\hhh*3,-\vvv*2) rectangle (\hhh*4,-\vvv*1);
\draw[-] (\hhh*4,-\vvv*2) rectangle (\hhh*5,-\vvv*1);
\draw[-] (\hhh*4,-\vvv*3) rectangle (\hhh*5,-\vvv*2);
\node at (\hhh*5.5,\vvv*0.5) () {$1$};
\node at (\hhh*6.5,\vvv*0.5) () {$1$};
\node at (\hhh*4.5,-\vvv*0.5) () {$1$};
\node at (\hhh*5.5,-\vvv*0.5) () {$2$};
\node at (\hhh*6.5,-\vvv*0.5) () {$2$};
\node at (\hhh*3.5,-\vvv*1.5) () {$1$};
\node at (\hhh*4.5,-\vvv*1.5) () {$2'$};
\node at (\hhh*4.5,-\vvv*2.5) () {$3$};
\end{tikzpicture}
\hskip 8mm
\begin{tikzpicture}
\def\hhh{5mm}
\def\vvv{5mm}
\draw[-] (\hhh*5,\vvv*0) rectangle (\hhh*6,\vvv*1);
\draw[-] (\hhh*6,\vvv*0) rectangle (\hhh*7,\vvv*1);
\draw[-] (\hhh*4,-\vvv*1) rectangle (\hhh*5,\vvv*0);
\draw[-] (\hhh*5,-\vvv*1) rectangle (\hhh*6,\vvv*0);
\draw[-] (\hhh*6,-\vvv*1) rectangle (\hhh*7,\vvv*0);
\draw[-] (\hhh*3,-\vvv*2) rectangle (\hhh*4,-\vvv*1);
\draw[-] (\hhh*4,-\vvv*2) rectangle (\hhh*5,-\vvv*1);
\draw[-] (\hhh*4,-\vvv*3) rectangle (\hhh*5,-\vvv*2);
\node at (\hhh*5.5,\vvv*0.5) () {$1$};
\node at (\hhh*6.5,\vvv*0.5) () {$1$};
\node at (\hhh*4.5,-\vvv*0.5) () {$1$};
\node at (\hhh*5.5,-\vvv*0.5) () {$2'$};
\node at (\hhh*6.5,-\vvv*0.5) () {$2$};
\node at (\hhh*3.5,-\vvv*1.5) () {$1$};
\node at (\hhh*4.5,-\vvv*1.5) () {$2'$};
\node at (\hhh*4.5,-\vvv*2.5) () {$3$};
\end{tikzpicture}
\hskip 8mm
\begin{tikzpicture}
\def\hhh{5mm}
\def\vvv{5mm}
\draw[-] (\hhh*5,\vvv*0) rectangle (\hhh*6,\vvv*1);
\draw[-] (\hhh*6,\vvv*0) rectangle (\hhh*7,\vvv*1);
\draw[-] (\hhh*4,-\vvv*1) rectangle (\hhh*5,\vvv*0);
\draw[-] (\hhh*5,-\vvv*1) rectangle (\hhh*6,\vvv*0);
\draw[-] (\hhh*6,-\vvv*1) rectangle (\hhh*7,\vvv*0);
\draw[-] (\hhh*3,-\vvv*2) rectangle (\hhh*4,-\vvv*1);
\draw[-] (\hhh*4,-\vvv*2) rectangle (\hhh*5,-\vvv*1);
\draw[-] (\hhh*4,-\vvv*3) rectangle (\hhh*5,-\vvv*2);
\node at (\hhh*5.5,\vvv*0.5) () {$1$};
\node at (\hhh*6.5,\vvv*0.5) () {$1$};
\node at (\hhh*4.5,-\vvv*0.5) () {$1'$};
\node at (\hhh*5.5,-\vvv*0.5) () {$2'$};
\node at (\hhh*6.5,-\vvv*0.5) () {$2$};
\node at (\hhh*3.5,-\vvv*1.5) () {$1$};
\node at (\hhh*4.5,-\vvv*1.5) () {$2'$};
\node at (\hhh*4.5,-\vvv*2.5) () {$3$};
\end{tikzpicture}
\end{center}
Therefore $g^\nu_{\lambda \mu} = 8$.
\vskip 2mm
\noindent
(b) For $\nu \in \Lambda^+$,
$g^{\nu}_{\emptyset \, \nu}=1$
and the corresponding modified LRS tableau is $R_\nu$.
\end{example}


\begin{thm}\label{thm-main-section7}
Suppose that $S$ and $T$ are SSYTs such that $T$ extends $S$.
If the modified shifted tableau switching transforms $(S,T)$ into $(\widetilde{{}^ST}, \widetilde{S_T})$,
then we have the following.
\begin{itemize}
\item[(a)] The modified shifted tableau switching transforms
 $(\widetilde{{}^ST}, \widetilde{S_T})$ into $(S,T)$.

\item[(b)] When $T$ is a modified LRS tableau, so is $\widetilde{{}^ST}$.

\item[(c)] When $S$ is a modified LRS tableau, so is $\widetilde{S_T}$.
\end{itemize}
\end{thm}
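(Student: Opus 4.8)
The plan is to follow the architecture of the proof of Theorem~\ref{thm-main-section4}, replacing the shifted tableau switching $\Sigma$ by its modified counterpart $\widetilde{\Sigma}$ and exploiting the precise comparison between the two processes recorded in Lemma~\ref{lem-hatsigma-sigma}. The algebraic backbone is the pair of identities $\Omega\circ\Sigma=\Sigma\circ\Omega$ and $\Omega\circ\Omega={\rm id}$ (the latter being immediate from $(a')'=a$), together with the ordinary perforated involution $\Sigma({}^AB,A_B)=(A,B)$ of Lemma~\ref{lem-sw-involution}. As in Section~\ref{sect-prop-ss}, I would first establish the statement at the level of shifted perforated pairs and then lift it to pairs of SSYTs.

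The key new ingredient is the modified analogue of Lemma~\ref{lem-sw-involution}: for a shifted perforated $(\bfa,\bfb)$-pair $(A,B)$ with $B$ extending $A$ and no primed letters on the main diagonal of $A\cup B$, one has $\widetilde{\Sigma}(\widetilde{{}^AB},\widetilde{A_B})=(A,B)$. I would split into two cases according to whether a modified switch occurs. If none occurs, then $\widetilde{\Sigma}(A,B)=\Sigma(A,B)=({}^AB,A_B)$, and since $(A,B)$ carries no diagonal prime, applying $\widetilde{\Sigma}$ once more and invoking Lemma~\ref{lem-sw-involution} returns $(A,B)$. If a modified switch does occur, then Lemma~\ref{lem-hatsigma-sigma} gives $\widetilde{\Sigma}(A,B)=\Omega\circ\Sigma(A,B)$, and a direct computation using the commutation together with Lemma~\ref{lem-sw-involution} yields $\widetilde{\Sigma}\bigl(\Omega\circ\Sigma(A,B)\bigr)=\Omega\circ\Omega(A,B)=(A,B)$. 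The main obstacle of the entire argument lives in this second case: one must verify that a modified switch is invoked in the backward pass $\widetilde{\Sigma}(\Omega\circ\Sigma(A,B))$ exactly when one is invoked in the forward pass, so that the two copies of $\Omega$ genuinely cancel. I would settle this symmetry by pinning down, as in Remark~\ref{Remark-Diff}(iii) and the proof of Proposition~\ref{prop-main-sect6}, that a modified switch is used precisely when $\Sigma$ would place a primed letter in the southernmost box on the main diagonal, that this box is exactly the southwesternmost box toggled by $\Omega$, and that $\Omega(A,B)$ therefore falls into the same case.

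Once the perforated involution is in hand, part~(a) for pairs of SSYTs follows exactly as in the proof of Theorem~\ref{thm-main-section4}(a): the modified switchings of two noninteracting component pairs $S^{(i)}\cup T^{(j)}$ commute, so the modified analogue of Remark~\ref{Remark-Diff2}(ii) reduces the global involution to the perforated one. For parts~(b) and~(c) I would use that, by Lemma~\ref{lem-hatsigma-sigma} applied at each component step together with the identity $\stan(A)=\stan(\omega(A))$ for border strips, $\widetilde{{}^ST}$ is obtained from the ordinary output ${}^ST$ by toggling the primes of the southwesternmost boxes of the strips $({}^ST)^{(i)}$ only. Read right to left from the top, each such box is the last occurrence of $i$ in $\w({}^ST)$, i.e.\ an extremal entry, and by Stembridge's observation quoted just before Example~\ref{exm-modi-LRS} the lattice property is insensitive to the primes of extremal entries; hence $\w(\widetilde{{}^ST})$ is a lattice word if and only if $\w({}^ST)$ is. Since ${}^ST$ is produced from $T$ by shifted slides (Remark~\ref{remark-switch=jdt}(i)), we have $\w({}^ST)\equiv_{\rm \tiny SK}\w(T)$, and the lattice property of $T$ transfers to ${}^ST$; combined with $\widetilde{{}^ST}$ being an SSYT by Theorem~\ref{thm-main-section6}(b), this gives that $\widetilde{{}^ST}$ is a modified LRS tableau, proving~(b). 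Part~(c) then reduces to~(a) and~(b) just as in Theorem~\ref{thm-main-section4}(c): $\widetilde{S_T}$ is a modified LRS tableau if and only if $\widetilde{{}^{\widetilde{{}^ST}}\widetilde{S_T}}$ is, and the latter equals $S$ by part~(a). A secondary delicate point is the transfer of the lattice property across $\equiv_{\rm \tiny SK}$, since Lemma~\ref{lem-BKT} is phrased for full LRS words; I expect to handle it by unpriming all extremal entries to reduce lattice-ness to genuine LRS-ness before applying Lemma~\ref{lem-BKT}.
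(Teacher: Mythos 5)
Your proposal is correct and matches the paper's own proof almost step for step: part (a) is reduced to the perforated-pair statement and, in the modified-switch case, settled by the chain $\widetilde{\Sigma}\circ\widetilde{\Sigma}=\Sigma\circ\Omega\circ\Sigma\circ\Omega=\Sigma\circ\Omega\circ\Omega\circ\Sigma=\Sigma\circ\Sigma={\rm id}$ via Lemma \ref{lem-hatsigma-sigma} and Lemma \ref{lem-sw-involution}; part (b) combines Lemma \ref{lem-BKT} with Stembridge's observation that changing primes of extremal (rightmost) entries preserves the lattice property; and part (c) is deduced from (a) and (b) exactly as in Theorem \ref{thm-main-section4}(c). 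The one place you are more careful than the paper is the forward/backward symmetry of when a modified switch occurs (needed to apply Lemma \ref{lem-hatsigma-sigma}(a) to the second pass), which the paper's chain of equalities uses implicitly; your plan to verify it is a refinement, not a divergence.
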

\begin{proof}
(a)
It is enough to show that
$\widetilde{\Sigma} \circ \widetilde{\Sigma}(A \cup B) = A \cup B$
for any shifted perforated $(\bfa,\bfb)$-pair $(A,B)$ such that
$B$ extends $A$. 
If a modified switch is not used in the modified shifted switching process,
the assertion follows from Lemma \ref{lem-sw-involution}.
Otherwise,
we have
\begin{align*}
\widetilde{\Sigma}(\widetilde{{}^AB},\widetilde{A_B})
&= \widetilde{\Sigma}(\widetilde{\Sigma}(A,B))  &   \\
&=  \Sigma\left(\Omega\left( \Sigma \left(\Omega (A,B)\right)\right)\right)
  & \text{(by Lemma \ref{lem-hatsigma-sigma}(a))}\\
&=  \Sigma\left(\Omega\left(\Omega \left(\Sigma (A,B)\right)\right)\right)
   & \text{(by Lemma \ref{lem-hatsigma-sigma}(b))}\\
&=  \Sigma\left(\Sigma (A,B)\right)
 &  \text{(since $\Omega \circ \Omega = {\rm id}$)}\\
&= (A,B)  &  \text{(by Lemma \ref{lem-sw-involution})} \,
\end{align*}
as desired.

(b)
Suppose that $(S,T)$ is a pair of SSYTs such that $T$ extends $S$.
Denote by $(S',T')$ any pair obtained by applying
a sequence of shifted and modified switches to $(S,T)$.
To prove our assertion,
it is suffices to show that
$\w(T)$ is a lattice word if and only if $\w(T')$ is a lattice word. 
Notice that every word shifted Knuth equivalent to an LRS word is an LRS word 
by Lemma \ref{lem-BKT}
and every word obtained by changing the primes of any rightmost letters
is also a lattice word by \cite[page 126]{Ste1},
and hence one can immediately derive that 
every word shifted Knuth equivalent to a lattice word is a lattice word.
Moreover, by Lemma \ref{lem-hatsigma-sigma}(a),
$\w(T')$ is obtained from $\w(T)$ by applying a sequence of shifted Knuth transformations and
by priming some rightmost letters if necessary.
This shows that
$\w(T)$ is a lattice word if and only if $\w(T')$ is a lattice word.

(c) Let $S$ be a modified LRS tableau.
From (b) it follows that $\widetilde{S_T}$ is a modified LRS tableau if and only if $\widetilde{{}^{\widetilde{{}^ST}}\widetilde{S_T}}$
is a modified LRS tableau.
Now the assertion is straightforward by (a).
\end{proof}
%

\begin{example}
\label{example-section7}
Let $\lambda, \mu, \nu \in \Lambda^+$ with $\lambda,\mu \subseteq \nu$.

(a) (The modified shifted Littlewood-Richardson rule)
By Theorem \ref{thm-main-section7} we obtain a bijection
\begin{displaymath}
\Sigma: \mathcal{G}^\ld_{\emptyset \ld} \times {\mathcal{Y}}(\nu/\lambda)
\rightarrow
\bigcup_{\mu \subseteq \nu} {\mathcal{Y}}(\mu) \times
\mathcal{G}^\nu_{\mu \lambda},
\quad
(R_\lambda, T) \mapsto \left(\widetilde{{}^{R_\lambda}T}, \widetilde{({R_\lambda})_T}\right).
\end{displaymath}
This immediately gives a combinatorial interpretation of
$P_{\nu/\lambda}(x) = \sum_{\mu} g^{\nu}_{\mu \lambda} P_\mu(x)$.

(b) (Symmetry of the modified shifted Littlewood-Richardson coefficients)
By Theorem \ref{thm-main-section7} we obtain a bijection
\begin{displaymath}
\Sigma :
\mathcal{G}^\ld_{\emptyset \ld} \times \mathcal{G}^\nu_{\lambda \mu}
\rightarrow
\mathcal{G}^\mu_{\emptyset \mu} \times \mathcal{G}^\nu_{\mu \lambda},
\quad
(R_\lambda,T) \mapsto \left(R_\mu,\widetilde{({R_\lambda})_T}\right).
\end{displaymath}
Hence, not only have we proven $g^{\nu}_{\ld \mu} = g^{\nu}_{\mu \ld},$
we have constructed an explicit involution that interchanges the inner shape with
the weight of a modified LRS tableau.
\end{example}

\vskip 5mm

\end{document}